\newtheorem*{rep@theorem}{\rep@title}
\newcommand{\newreptheorem}[2]{%
\newenvironment{rep#1}[1]{%
 \def\rep@title{#2 \ref{##1}}%
 \begin{rep@theorem}}%
 {\end{rep@theorem}}}
\newtheorem*{rep@cor}{\rep@title}
\newcommand{\newrepcor}[2]{%
\newenvironment{rep#1}[1]{%
 \def\rep@title{#2 \ref{##1}}%
 \begin{rep@cor}}%
 {\end{rep@cor}}}
\newtheorem*{rep@prop}{\rep@title}
\newcommand{\newrepprop}[2]{%
\newenvironment{rep#1}[1]{%
 \def\rep@title{#2 \ref{##1}}%
 \begin{rep@prop}}%
 {\end{rep@prop}}}
\newtheorem{cor}{Corollary}[section]
\newtheorem{theorem}[cor]{Theorem}
\newtheorem{prop}[cor]{Proposition}
\newtheorem{lemma}[cor]{Lemma}
\theoremstyle{definition}
\newtheorem{defi}[cor]{Definition}
\theoremstyle{remark}
\newtheorem{remark}[cor]{Remark}
\newtheorem*{remark*}{Remark}
\newtheorem{example}[cor]{Example}
\newtheorem*{notation*}{Notation}
\newlist{steps}{enumerate}{1}
\setlist[steps, 1]{itemsep=8pt,leftmargin=0cm,itemindent=.5cm,labelwidth=\itemindent,labelsep=0cm,align=left,label = \textbf{\emph{Step \arabic*}:\,}}
\newcommand{\C}{{\mathbb C}}
\newcommand{\R}{{\mathbb R}}
\newcommand{\Z}{{\mathbb Z}}
\newcommand{\Hyp}{\mathbb{H}}
\newcommand{\AdS}{\mathbb{A}\mathrm{d}\mathbb{S}}
\newcommand{\HP}{\mathsf{HP}}
\newcommand{\RP}{\mathbb R\mathrm{P}}
\newcommand{\PSL}{\mathrm{PSL}}
\newcommand{\PGL}{\mathrm{PGL}}
\newcommand{\GL}{\mathrm{GL}}
\newcommand{\SO}{\mathrm{SO}}
\newcommand{\so}{\mathfrak{so}}
\newcommand{\ddt}{\left.\frac{d}{dt}\right|_{t=0}}
\newcommand{\Isom}{\mathrm{Isom}}
\renewcommand{\O}{\mathrm{O}}
\newcommand{\perpp}{{\perp_{1}}}
\newcommand{\perpm}{{\perp_{-1}}}
\newcommand{\p}[1]{\ensuremath{\boldsymbol{#1^+} }}
\newcommand{\m}[1]{\ensuremath{\boldsymbol{#1^-} } }
\renewcommand{\l}[1]{\ensuremath{\boldsymbol{#1}} }
\newcommand{\pd}[1]{\ensuremath{\boldsymbol{\dot{#1}^+} } }
\newcommand{\md}[1]{\ensuremath{\boldsymbol{\dot{#1}^-} } }
\newcommand{\ld}[1]{\ensuremath{\boldsymbol{\dot{#1} } } }
\begin{document}

\setcounter{secnumdepth}{2}
\setcounter{tocdepth}{2}

\title[Character varieties of a transitioning Coxeter 4-orbifold]{Character varieties of a\\ transitioning Coxeter 4-orbifold}

\author[Stefano Riolo]{Stefano Riolo}
\address{Stefano Riolo: Dipartimento di Matematica, Universit\`a di Pisa \newline Largo Bruno Pontecorvo 5\\ 56127 Pisa\\ Italy}
\email{stefano.riolo@dm.unipi.it}

\author[Andrea Seppi]{Andrea Seppi}
\address{Andrea Seppi: Institut Fourier, UMR 5582, Laboratoire de Math\'ematiques,
Universit\'e Grenoble Alpes, CS 40700, 38058 Grenoble cedex 9, France.} \email{andrea.seppi@univ-grenoble-alpes.fr}

\thanks{The authors were partially supported by FIRB 2010 project ``Low dimensional geometry and topology'' (RBFR10GHHH003), by the 2020 Germaine de Sta\"el project ``Deformations of geometric structures on higher-dimensional manifolds'', and are members of the national research group GNSAGA.
The first author was supported by the Mathematics Department of the University of Pisa (research fellowship ``Deformazioni di strutture iperboliche in dimensione quattro''), and by the Swiss National Science Foundation (project no.~PP00P2-170560).}

\begin{abstract}
In 2010, Kerckhoff and Storm discovered a path of hyperbolic 4-polytopes eventually collapsing to an ideal right-angled cuboctahedron. This is expressed by a deformation of the inclusion of a discrete reflection group (a right-angled Coxeter group) in the isometry group of hyperbolic 4-space. More recently, we have shown that the path of polytopes can be extended to Anti-de Sitter geometry so as to have geometric transition on a naturally associated 4-orbifold, via a transitional half-pipe structure.

In this paper, we study the hyperbolic, Anti-de Sitter, and half-pipe character varieties of Kerckhoff and Storm's right-angled Coxeter group near each of the found holonomy representations, including a description of the singularity that appears at the collapse. An essential tool is the study of some rigidity properties of right-angled cusp groups in dimension four.
\end{abstract}

\maketitle

%

\section{Introduction}

In the Seventies, Thurston \cite{thurstonnotes} introduced the notion \emph{degeneration} of $(G,X)$-struc\-tures, later widely studied and used in dimension three \cite{Hthesis,P98,CHK,P01,P02,BLP,MR2140265,P07,P13,Kozai_thesis,LM2,LM1,Kozai}. Typical instances of this phenomenon are paths of hyperbolic cone structures on a 3-manifold eventually collapsing to some lower-dimensional orbifold, whose geometric structure is said to \emph{regenerate} to 3-dimensional hyperbolic structures.

In his thesis \cite{danciger}, Danciger showed that when the limit is 2-dimensional and hyperbolic, it often regenerates to Anti-de Sitter (AdS) structures as well, so as to have \emph{geometric transition} from hyperbolic to AdS structures (see also \cite{dancigertransition,dancigerideal,AP,surveyseppifillastre,trettel_thesis}). To that purpose, he introduced half-pipe (HP) geometry, which is a \emph{limit geometry} \cite{CDW} of both hyperbolic and AdS geometries inside projective geometry, and encodes the behaviour of such a collapse ``at the first order''. One can indeed suitably ``rescale'' the structures inside the ``ambient'' projective geometry along the direction of collapse, so as to get at the limit a 3-dimensional ``transitional'' HP structure. 

Concerning dimension four, Kerckhoff and Storm \cite{KS} described a path $t \mapsto \mathcal{P}_t$, $t \in (0, 1]$, of hyperbolic 4-polytopes which collapse as $t \to 0$ to a 3-dimensional ideal right-angled cuboctahedron. This induces a path of incomplete hyperbolic structures on a naturally associated 4-orbifold $\mathcal{O}$. The orbifold fundamental group of $\mathcal{O}$ is a rank-22 right-angled Coxeter group $\Gamma_{22}$, which embeds in $\Isom(\Hyp^4)$ as a discrete reflection group when $t=1$.
In \cite{transition_4-manifold} (see also \cite{TSG}), we found a similar path of AdS 4-polytopes such that the two paths, suitably rescaled, can be joined so as to give geometric transition on the orbifold $\mathcal{O}$. In particular, there is a transitional HP orbifold structure on $\mathcal{O}$ joining the two paths. 

Keckhoff and Storm's deformation has been studied and used in \cite{MR} to show, among other things, the first examples of collapse of 4-dimensional hyperbolic cone structures to 3-dimensional ones. Similarly, thanks to the found AdS deformation and HP transitional structure, in \cite{transition_4-manifold} the authors provided the first examples of geometric transition from hyperbolic to AdS cone structures in dimension four.

The goal of this paper is to describe the hyperbolic, AdS, and HP character varieties of the right-angled Coxeter group $\Gamma_{22}$, including a study of the behaviour at the collapse. The results are summarised in Theorem \ref{teo:main} below.

\subsection{The three character varieties of $\Gamma_{22}$}

Let $G$ be $\Isom(\Hyp^4)$, $\Isom(\AdS^4)$, or the group $G_{\HP^4}$ of transformations of half-pipe geometry, and let $G^+<G$ be the subgroup of orientation-preserving transformations. 

Recall that $\mathrm{Hom}(\Gamma_{22},G)$ 
is naturally a real algebraic affine set \cite{Weil}.
We call \emph{character variety} of $\Gamma_{22}$ the {(topological) quotient
$$X(\Gamma_{22},G)=\mathrm{Hom}(\Gamma_{22},G)/G^+$$
by the action of $G^+$ by conjugation. When $G$ is reductive, that is in the hyperbolic and AdS settings, it is also possible to define the GIT quotient, which has a structure of real semialgebraic set by general results \cite{GIT}. We will come back to this point of view at the end of the subsection.}

The holonomy representations of the geometric structures on the orbifold $\mathcal{O}$ constructed in \cite{KS,transition_4-manifold} provide a smooth path $t\mapsto{[\rho^G_t]}$ in $X(\Gamma_{22},G)$. 
This path was originally defined in \cite{KS} when $G=\Isom(\Hyp^4)$ only for $t\in(0,1]$, and is easily continued analytically also for non-positive times. The Anti-de Sitter path, introduced in \cite{transition_4-manifold}, is only defined for $t\in(-1,1)$ and diverges as $|t|\to 1^-$, while for $G=G_{\HP^4}$ there is a ``trivial'' path of non-equivalent HP representations (defined for $t\in\R$, and diverging as $|t|\to+\infty$) differing from one another by ``stretching'' in the ambient real projective space (see below {in Section \ref{sec:intro2}}).

The representations obtained at $t=0$ correspond geometrically to a ``collapse'' and play a special role in two ways. First, they correspond to a ``symmetry'' in the character varieties, since the representations $\rho_t^G$ and $\rho_{-t}^G$ 
are conjugated in $G$ but not in $G^+$. Second, interpreting $\Isom(\Hyp^4)$, $\Isom(\AdS^4)$ and $G_{\HP^4}$ as subgroups of $\PGL(5,\R)$, the three representations $\rho_0^G$ coincide. They correspond to a representation (we omit the superscript $G$ here)
$$\rho_0\colon\Gamma_{22}\to\mathrm{Stab}(\Hyp^3)<G~,$$
for a fixed copy of $\Hyp^3$ in $\Hyp^4$, $\AdS^4$, or $\HP^4$, respectively. Projecting the image of $\rho_0$ in $\mathrm{Stab}(\Hyp^3)\cong\Isom(\Hyp^3)\times\Z/2\Z$ to $\Isom(\Hyp^3)$ gives the reflection group of an ideal right-angled cuboctahedron (see Figure \ref{fig:cuboct}).




Our main result is resumed as follows. (A precise explaination of the terminology is given below the statement; see Figure \ref{fig:rep_var} for a schematic picture.)
%

\begin{theorem} \label{teo:main}
Let $G$ be $\Isom(\Hyp^4)$, $\Isom(\AdS^4)$, or $G_{\HP^4}$. A neighbourhood $\mathcal U$ of $[\rho_0]$ in $X(\Gamma_{22},{G})$ consists of two smooth, transverse, components $\mathcal V$ and $\mathcal H$ satisfying $\mathcal{V} \cap \mathcal{H} = \{ [\rho_0] \}$: 
\begin{itemize}
\item the curve $\mathcal V$ of the conjugacy classes of all the holonomy representations $\rho_t^G$;
\item a $12$-dimensional ball $\mathcal H$, identified to a neighbourhood of the complete hyperbolic orbifold structure of the ideal right-angled cuboctahedron in its deformation space. 
\end{itemize}
The group $G/G^+\cong\Z/2\Z$ acts on $\mathcal U$ fixing $\mathcal{H}$ point-wise and sending $[\rho_t^G]$ to $[\rho_{-t}^G]$.
\end{theorem}

\begin{figure}
\includegraphics[scale=.33]{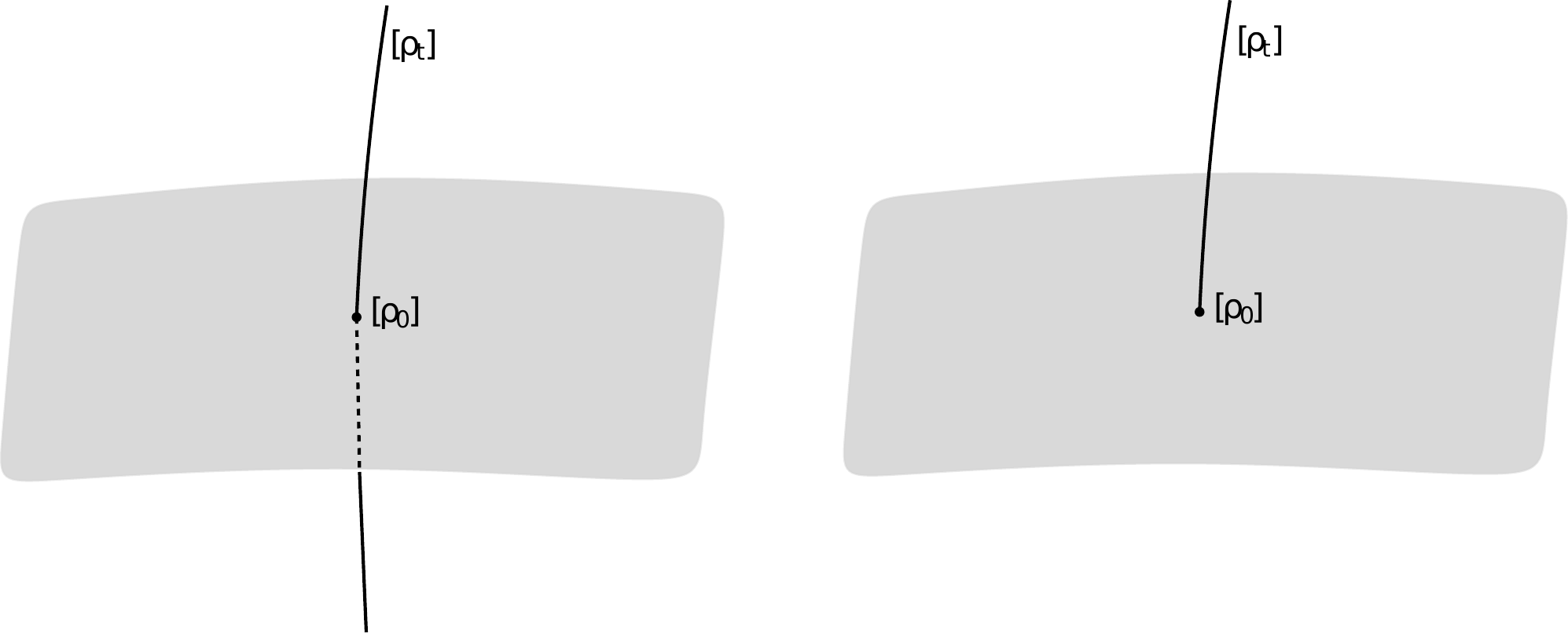}
\caption[The character variety near the collapse.]{\footnotesize On the left, a topological picture of $X(\Gamma_{22},G)$ near the collapse, which corresponds to the point $[\rho_0]$. The vertical component $\mathcal V$ is 
{$\{[\rho^G_t]\}_t$}. The horizontal component $\mathcal H$ is 12-dimensional and corresponds to the deformations of the complete hyperbolic structure of the ideal right-angled cuboctahedron. On the right, the corresponding neighbourhood in 
$\mathrm{Hom}(\Gamma_{22},G)/G$, i.e. in the further quotient of $X(\Gamma_{22},G)$ by $G/G^+\cong\Z/2\Z$.} \label{fig:rep_var}
\end{figure} 


Let us include some comments to elucidate the content of Theorem \ref{teo:main}. First, our proofs actually show that the representation $\rho_0$ has a neighbourhood in $\mathrm{Hom}(\Gamma_{22},G)$ that is homeomorphic to $(\mathcal H\cup\mathcal V)\times G^+$, in such a way that the action of $G^+$ corresponds to obvious left multiplication by $G^+$ on the second factor {(see Remark \ref{rmk:local product})}.

Let $\widetilde{\mathcal{U}}$, $\widetilde{\mathcal{V}}$ and $\widetilde{\mathcal{H}}$ be the preimages in $\mathrm{Hom}(\Gamma_{22},G)$ of $\mathcal{U}$, $\mathcal{V}$ and $\mathcal{H}$, respectively. By ``smoothness'' of the ``components'' $\mathcal{V}$ and $\mathcal{H}$ of $\mathcal{U}$ we actually refer to $\widetilde{\mathcal{V}}$, $\widetilde{\mathcal{H}}$ and $\widetilde{\mathcal{U}}$, 
respectively. In particular, $\widetilde{\mathcal{V}}$ and $\widetilde{\mathcal{H}}$ are smooth 
manifolds (of dimension 11 and 22, respectively). {The smoothness of $\widetilde{\mathcal{V}}$ and $\widetilde{\mathcal{H}}$ together with the local product structure in a neighbourhood of $\rho_0$ induce a smooth structure on the components $\mathcal{H}$ and $\mathcal{V}$ in the quotient.}

The ``transversality'' of 
$\mathcal V$ and $\mathcal H$ is defined 
as follows: $\widetilde{\mathcal{V}} \cap \widetilde{\mathcal{H}}$ is the $G$-orbit of $\rho_0$, and 
the Zariski tangent spaces of 
$\widetilde{\mathcal{V}}$ and $\widetilde{\mathcal{H}}$ intersect transversely in the Zariski tangent space of $\mathrm{Hom}(\Gamma_{22},G)$ at $\rho_0$ (and hence at any other point of its orbit). {In particular, every infinitesimal deformation tangent to both $\widetilde{\mathcal{V}}$ and $\widetilde{\mathcal{H}}$ is tangent to the $G^+$-orbit of $\rho_0$.} (See Section \ref{sec:intro_alg} below and Remark \ref{rem:transverse} for more details.) 

{Our analysis will also show that, when $G$ is $\Isom(\Hyp^4)$ or $\Isom(\AdS^4)$, the character variety $X(\Gamma_{22},G)$ is homeomorphic to the GIT quotient $\mathrm{Hom}(\Gamma_{22},G)/\!\!/G^+$ near each $[\rho_t]$ (see Remark \ref{remark quotient hausdorff}). In other words, $X(\Gamma_{22},G)$ is Hausdorff near $[\rho_t]$. Moreover, the natural smooth structure of each component is coherent with the real semialgebraic structure of the GIT quotient (see also Remark \ref{rem:transverse}).}

In order to further discuss our results, we first need to describe the three paths of geometric representations of Theorem \ref{teo:main}.

\subsection{The three deformations} \label{sec:intro2}

For $t=1$, the hyperbolic polytope ${\mathcal P}_1 \subset \Hyp^4$ is obtained in \cite{KS} from the ideal right-angled 24-cell by removing two opposite bounding hyperplanes. So $\mathcal{P}_1$ has two ``Fuchsian ends'', and in particular its volume is infinite. The reflection group
$$\Gamma_{22}<\mathrm{Isom}(\Hyp^4)$$
associated to $\mathcal P_1$ is thus a right-angled Coxeter group obtained by removing from the reflection group $\Gamma_{24}$ of the ideal right-angled 24-cell two generators (reflections at two opposite facets).

As a sort of ``reflective hyperbolic Dehn filling'', Kerckhoff and Storm show that the inclusion $\Gamma_{22} \to \mathrm{Isom}(\Hyp^4)$ is not locally rigid. This is done by moving the bounding hyperplanes of $\mathcal P_1$ in such a way that the orthogonality conditions given by the relations of $\Gamma_{22}$ are maintained, and thus obtaining a path $\rho^{\Hyp^4}_t$ of geometric representations of $\Gamma_{22}$.

As $t$ decreases from 1, the combinatorics of $\mathcal{P}_t$ changes a few times, until the volume of $\mathcal{P}_t$ becomes finite. Most of the dihedral angles of $\mathcal{P}_t$ are constantly right, while the varying ones are all equal and tend to $\pi$ as $t \to 0$, when $\mathcal{P}_t$ collapses to the cuboctahedron.
As an abstract group, $\Gamma_{22}$ can be identified to the orbifold fundamental group of an orbifold $\mathcal{O}$ supported on the complement in $\mathcal P_t$ of the ridges with non-constant dihedral angle.

Kerckhoff and Storm show moreover that the space of conjugacy classes of representations $\Gamma_{22}\to\mathrm{Isom}(\Hyp^4)$ deforming the inclusion is a smooth curve outside of the collapse. In other words, for $t \neq 0$ the only non-trivial deformation (up to conjugacy) is given by the found holonomies $\rho^{\Hyp^4}_t$.

In \cite{transition_4-manifold}, we produced a path of AdS 4-polytopes with the same combinatorics of the hyperbolic polytope $\mathcal{P}_t$ for $t \in (0,\varepsilon)$, such that the same orthogonality conditions between the bounding hyperplanes are satisfied, and again collapsing to an ideal right-angled cuboctahedron in a spacelike hyperplane $\Hyp^3$ of $\AdS^4$. Some bounding hyperplanes are spacelike, and some others are timelike. We have in particular a path of AdS orbifold structures on $\mathcal{O}$, with holonomy representation $\rho^{\AdS^4}_t \colon \Gamma_{22} \to \Isom(\AdS^4)$ given by sending each generator to the corresponding AdS reflection.

We moreover find in \cite{transition_4-manifold} a one-parameter family of transitional HP structures on $\mathcal{O}$, with holonomy $\rho_t^{\HP^4}$. To interpret distinct elements in this family, recall that in half-pipe space there is a preferred direction under which the HP metric is degenerate. An HP structure is never rigid, because one can always conjugate with a transformation which ``stretches'' the degenerate direction, and obtain a new structure equivalent to the initial one \emph{as a real projective structure}, but inequivalent \emph{as a half-pipe structure}. We discover here (this is part of the content of Theorem \ref{teo:main}) that such stretchings are the only possible deformations, so that the found HP structures are essentially unique. 

Finally, we remark that the geometric transition described in \cite{transition_4-manifold} induces a continuous deformation connecting in the $\PGL(5,\R)$-character variety ``half'' of the path in $\mathcal V\subset X(\Gamma_{22},\Isom(\Hyp^4))$ (which is exactly the path of hyperbolic representations exhibited by Kerckhoff and Storm, for $t\in(0,1]$) and ``half'' of the analogous path in $X(\Gamma_{22},\Isom(\AdS^4))$, going through a single half-pipe representations $\rho_{t_0}^{\HP^4}$ with $t_0\neq 0$ (this value of $t_0$ can be chosen arbitrarily, up to reparameterising the entire deformation).

\subsection{About the result}

Theorem \ref{teo:main} contains several novelties. First, while the smoothness of the $\Isom(\Hyp^4)$-character variety for $t>0$ was proved in \cite{KS}, the smoothness on the AdS and HP sides is completely new. Second, the study of the character variety at the ``collapsed'' point $[\rho_0]$ is a new result in all three settings. Some motivations follow.

First of all, we found worthwhile analysing the behaviour of the deformation space of the AdS orbifold $\mathcal O$ --- equivalently, the $\Isom(\AdS^4)$-character variety of $\Gamma_{22}$ near $[\rho_t]$ --- and compare it with the hyperbolic counterpart. In fact, the literature seems to miss a study of deformations of AdS polytopes in this spirit. With respect to hyperbolic geometry, one may expect more flexibility in AdS geometry, but we find that the behaviour on the AdS side is the same as the hyperbolic counterpart. (In regard, see however \cite[Question 9.3]{questionsads} and the related discussion.)

Regarding the collapse, in \cite[Section 14]{KS} Kerckhoff and Storm mention that the family of hyperbolic polytopes $\mathcal P_t\subset\Hyp^4$, $t>0$, is expected to have interesting geometric limits by rescaling in the direction transverse to the collapse. On the other hand, they assert that ``providing the details of this geometric construction would require more space than perhaps is merited here''.

The work \cite{transition_4-manifold} provides a complete description of such a geometric limit in half-pipe geometry, which, after the work of Danciger, seems the best suited in order to analyse this kind of collapse. One can in fact consider the limiting HP structure as an object which encodes the collapse \emph{at the first order}, essentially keeping track of the derivatives of all the associated geometric quantities. Thus, if \cite{transition_4-manifold} describes the collapse \emph{at level of geometric structures}, Theorem \ref{teo:main} (in the hyperbolic and AdS setting) gives a precise description of the collapse \emph{at level of the character variety}.

Finally, our study of the HP character variety shows that the only deformations of the found half-pipe orbifold structure are obtained by ``stretching'' in the degenerate direction. The presence of many commutation relations forces the rigidity of the HP structures. 

Together with the hyperbolic and AdS picture, this shows that ``nearby''  there is no collapsing path of hyperbolic or AdS orbifold structures  other than the ones we found (up to reparameterisation). This should be compared with some 3-dimensional examples found by Danciger \cite[Section 6]{dancigertransition}, where the transitional HP structure deforms non-trivially to nearby HP structures that regenerate to non-equivalent AdS structures, despite not regenerating to hyperbolic structures. 

All in all, Theorems \ref{teo:main} exhibits a strong lack of flexibility around this example. Its proof, explained in the next section, suggests that this could be more generally due to dimension issues, confirming the usual feeling that ``the rigidity increases with the dimension''.

An overview of the ideas behind the proof of Theorem \ref{teo:main} follows. 
We start in Section \ref{sec:intro_geo} with 
the geometric tools, which lead to the topological description of 
the neighbourhood {$\mathcal{U}$} of $[\rho_0]$
. In 
Section \ref{sec:intro_alg}, we then outline the algebraic aspects, which lead to the description of the Zariski tangent space and the transversality statement. 

\subsection{Cusp rigidity in dimension four} \label{sec:intro_geo}

The holonomy representations $\rho_t$ have the property that each generator in the standard presentation of $\Gamma_{22}$ is sent by $\rho_t$ to a (hyperbolic or AdS) reflection, and this property is preserved by small deformations. As in \cite{KS}, we thus reduce to studying the configurations of hyperplanes of reflections satisfying certain orthogonality conditions. Once this set-up is established, there are two main facts to prove: the smoothness of the character variety outside the collapse, and the description of the collapse itself.

For the first fact, the proof on the AdS side follows the general lines of the proof given in \cite{KS} for the hyperbolic case. However, different arguments are required for one point of fundamental importance concerning a property of rigidity of cusp representations in dimension four. 

In fact, in \cite{KS} a preliminary lemma is proved, which can be summarised by saying that in dimension 4 ``cusp groups stay cusp groups''. More precisely, if we consider the orbifold fundamental group of a Euclidean cube $\Gamma_{\mathrm{cube}}$, this property states that any  representation of $\Gamma_{\mathrm{cube}}$ into $\Isom(\Hyp^4)$ sending the six standard generators to reflections in six distinct hyperplanes 
{with the property of being asymptotic to a common point at infinity} (a ``cusp group'') can only be deformed by preserving 
{this property}. Note that the analogue fact is false in dimension three, where the situation is more flexible.

We do prove the analogous property for Anti-de Sitter geometry in dimension 4 (Proposition \ref{prop cube group ads}), where a cusp group is defined analogously. There are however remarkable differences due to the different nature of hyperbolic and AdS geometries, for instance a cusp group in $\AdS^4$ will be generated by 4 reflections in timelike hyperplanes and 2 reflections in spacelike hyperplanes. The proof of this rigidity property in AdS uses therefore ad hoc arguments and is somehow more surprising than its hyperbolic counterpart, as in general a little more flexibility might be expected for AdS geometry. 

Once this fundamental property is established, the proof of the smoothness of the curve is based on a careful analysis of the structure of the group $\Gamma_{22}$ and the possible deformations of the polytope $\mathcal P_t$, relying on the application of the above rigidity property to each peripheral subgroup (there is a cusp group $\Gamma_{\mathrm{cube}}<G$ associated to each ideal vertex of $\mathcal P_t$). The methods are rather elementary, although some intricate computation is necessary, and the general strategy is similar to that of the hyperbolic analogue provided in \cite{KS}.

Let us now explain our arguments to analyse the collapse in both the $\Hyp^4$ and $\AdS^4$ character variety. The proof is essentially the same for both cases, so let us focus on the hyperbolic case (that is, $G=\Isom(\Hyp^4)$) in this introduction for definiteness.

It is not difficult to describe the two components $\mathcal V$ and $\mathcal H$ of the neighbourhood $\mathcal U$ from a geometric point of view: the ``vertical'' curve $\mathcal V$ consists of the conjugacy classes of the holonomy representations $\rho^G_t$ of $\mathcal O$, $t>0$, plus the natural extension of the path for $t<0$ given by $r\circ \rho^{G}_{-t}\circ r$. Here $r$ is the reflection in the totally geodesic copy of $\Hyp^3$ to which the polytope collapses as $t=0$. On the other hand, the ``horizontal'' 12-dimensional component $\mathcal H$ consists of representations which fix setwise this copy of $\Hyp^3$, and deform the reflection group of the ideal right-angled cuboctahedron in $\Hyp^3$.

One then has to show that there exists a neighborhood of $[\rho_0]$ such that every point in this neighborhood belongs to one of these two components --- namely, there are no other conjugacy classes of representations nearby $[\rho_0]$. To prove this, we refine the study of the rigidity properties of the cusps. We introduce a notion of \emph{collapsed cusp group}: a representation of $\Gamma_{\mathrm{cube}}$ defined similarly to cusp groups, but allowing that two generators are sent to reflections in the same hyperplane. The restriction of $\rho_0$ to each peripheral subgroup is in fact a collapsed cusp group. Then we prove a more general version of the aforementioned rigidity property ``cusp groups stay cusp groups'', by showing that ``collapsed cusp groups either stay collapsed, or deform to cusp groups''. More precisely, representations nearby a collapsed cusp group either keep the property that two opposite generators are sent to the same reflection, or they become cusp groups in the usual sense.

By an analysis of the character variety in the spirit of Kerckhoff and Storm, we show that the ``vertical'' curve $\mathcal V$ is smooth also at $t=0$ if we impose that the 
{asymptoticity conditions} are preserved. Applying the  more general property of rigidity which includes the ``collapsed'' case is then the fundamental step to conclude the proof.

Concerning the proof for the half-pipe case, it follows a similar line, but many steps are dramatically simpler. The key point is again a rigidity property for four-dimensional (collapsed) cusp groups, which is showed rather easily by using the isomorphism between $G_{\HP^4}$ and the group of isometries of Minkowski space $\R^{1,3}$, which is a semidirect product $\O(1,3)\ltimes \R^{1,3}$. The proof then parallels the steps for the hyperbolic and AdS case, except that the smoothness of the vertical component $\mathcal V$ is granted by the fact that --- thanks to this semidirect product structure of $G_{\HP^4}$ --- $\mathcal V$ identifies with the first cohomology vector space $H^1_{\rho_0}(\Gamma_{22},\R^{1,3})$. The proof that this vector space is 1-dimensional (see \eqref{eq: cohomology Gamma22} below) requires a certain amount of technicality and relies on a precise study of the group-theoretical structure of $\Gamma_{22}$.

\subsection{The Zariski tangent space and the  first cohomology group} \label{sec:intro_alg}

Let $\mathfrak g$ be the Lie algebra of $G$, and $\mathrm{Ad}\colon G\to\mathrm{Aut}(\mathfrak g)$ be the adjoint representation. 
The Zariski tangent space to $\mathrm{Hom}(\Gamma_{22},G)$ at $\rho$ is naturally identified to the space $Z^1_{\mathrm{Ad}\,\rho}(\Gamma_{22},\mathfrak{g})$ of cocycles with coefficients twisted by $\mathrm{Ad} \circ \rho$. Roughly speaking, the cohomology group $H^1_{\mathrm{Ad}\,\rho}(\Gamma_{22},\mathfrak{g})$  plays the same role for the 
quotient $X(\Gamma,G)$ at $[\rho]$. Indeed the coboundaries $B^1_{\mathrm{Ad}\,\rho}(\Gamma_{22},\mathfrak{g})$ are precisely the infinitesimal deformations tangent to the orbit. See \cite{Weil,JM} for more details.

Since $\rho_0$ preserves a totally geodesic copy of $\Hyp^3$, we have a natural decomposition:
\begin{equation} \label{eq:decomp}
H^1_{\mathrm{Ad}\,\rho_0}(\Gamma_{22},\mathfrak g)\cong H^1_{\mathrm{Ad}\,\rho_0}(\Gamma_{22},\mathfrak o(1,3))\oplus H^1_{\rho_0}(\Gamma_{22},\R^{1,3})~.
\end{equation}

%
We show that the vector space $H^1_{\mathrm{Ad}\,\rho_0}(\Gamma_{22},\mathfrak{g})$ is 13-dimensional. In the decomposition \eqref{eq:decomp}, the first factor is 12-dimensional and ``tangent'' to $\mathcal{H}$, while the second factor is 1-dimensional and ``tangent'' to $\mathcal{V}$; moreover integrable vectors are precisely those lying in one of these two factors. This statement is made more precise by looking at the representation variety $\mathrm{Hom}(\Gamma_{22},G)$, where 
the tangent spaces of the smooth varieties $\widetilde{\mathcal{V}}$ and $\widetilde{\mathcal{H}}$ are generated by the preimages in $Z^1_{\mathrm{Ad}\,\rho_0}(\Gamma_{22},\mathfrak{g})$ of the two factors in the decomposition \eqref{eq:decomp}. Hence the 
intersection $\widetilde{\mathcal{V}} \cap \widetilde{\mathcal{H}}$ is transverse and consists precisely of the orbit of $\rho_0$.

As mentioned above,  we prove (in Proposition \ref{prop:first cohomology group}) that the second factor in the decomposition \eqref{eq:decomp} has dimension 1, namely 
\begin{equation} \label{eq: cohomology Gamma22}
H^1_{\rho_0}(\Gamma_{22},\R^{1,3})\cong \R~.
\end{equation}
The non-trivial elements in this vector space are obtained geometrically from the half-pipe holonomy representations  that we constructed in \cite{transition_4-manifold}, and are easily shown to be ``first-order deformations'' of paths in $\mathcal V$. On the other hand, the first factor in the decomposition \eqref{eq:decomp} is 12-dimensional by general reasons, namely by an orbifold version of hyperbolic Dehn filling {(note that the cuboctahedron has exactly 12 vertices)}. Its elements are again integrable and tangent to deformations in $\mathcal H$. This cohomological computation is the main algebraic step in the proof of Theorem  \ref{teo:main}. 

As another noteworthy comment on the consequences of \eqref{eq: cohomology Gamma22}, recall Danciger's result \cite[Theorem 1.2]{dancigertransition}: the existence of geometric transition on a compact half-pipe 3-manifold $\mathcal X$, with singular locus a knot $\Sigma$, is proved under the sole condition 
\begin{equation} \label{eq:1-dimensionality-n=3}
H^1_{\mathrm{Ad}\,\rho_0}(\pi_1(\mathcal X\smallsetminus\Sigma), \mathfrak{so}(1,2))\cong\R~.
\end{equation}

Now, for any representation $\rho\colon\Gamma\to\O(1,2)$ with $\Gamma$ finitely generated there is a natural identification
$$H^1_{\mathrm{Ad}\,\rho}(\Gamma,\mathfrak{so}(1,2))\cong H^1_{\rho}(\Gamma,\R^{1,2})~.$$
In presence of a collapse of hyperbolic or AdS structures of dimension $n$, the holonomy representations of a rescaled HP structure naturally provide elements of the cohomology group $H^1_{\rho_0}(\pi_1(\mathcal X\smallsetminus\Sigma),\R^{1,n-1})$. In particular, the correct generalisation of Danciger's condition \eqref{eq:1-dimensionality-n=3} to any dimension $n$ would be:
\begin{equation} \label{eq:1-dimensionality}
H^1_{\rho_0}(\pi_1(\mathcal X\smallsetminus\Sigma),\R^{1,n-1})\cong \R~,
\end{equation}
in agreement with what we found for $\Gamma_{22}$ (the orbifold fundamental group of $\mathcal O$) --- compare with \eqref{eq: cohomology Gamma22}. 
In conclusion, this suggests that a higher-dimensional regeneration result in the spirit of Danciger, although far from reach at the present time, might be reasonable. 

\subsection*{Organisation of the paper}

In Section \ref{sec:prelim}, we establish the set-up for the proof of Theorem \ref{teo:main} in the hyperbolic and AdS cases. In Section \ref{sec:cusp_rigidity}, which may be of independent interest, we study deformations of some right-angled Coxeter groups represented as ``cusp groups'' in $\Isom(\Hyp^n)$ {and} $\Isom(\AdS^n)$ 
for $n=3,4$. In Section \ref{sec:char_var}, we study the $\Isom(\Hyp^n)$ and $\Isom(\AdS^n)$ character varieties of $\Gamma_{22}$, concluding the first part of the proof of Theorem \ref{teo:main} for the hyperbolic and AdS case. Section \ref{sec:cusp_groups_HP} is the analogue of Sections \ref{sec:prelim} and \ref{sec:cusp_rigidity} for half-pipe geometry. In Section \ref{sec:gp cohomology} we provide the explicit computation of \eqref{eq: cohomology Gamma22}, and use it for the first part of the proof of Theorem \ref{teo:main} in the HP case. In Section \ref{sec:extra} we conclude the proof of Theorem \ref{teo:main} {in all the three cases}, relying on algebraic computations of the first cohomology group.

\subsection*{Acknowledgments}

We are grateful to Francesco Bonsante and Bruno Martelli for interesting discussions, useful advices, and encouragement. We also thank Jeffrey Danciger and Gye-Seon Lee for interest in this work and related discussions
, and Cl\'ement Gu\'erin and Andrea Maffei for ad-hoc explanations about character varieties. We are grateful to an anonymous referee whose suggestions highly improved the exposition and results of this paper.

We thank the mathematics departments of Pavia, Luxembourg and Neuch\^atel, for the warm hospitality during the respective visits while part of this work was done. The stage of this collaboration was set during the workshop ``Moduli spaces'', held in Ventotene in September 2017: we are grateful to the organisers for this opportunity.

\section{Reflections in hyperbolic and AdS geometry} \label{sec:prelim}

In this section we establish the set-up for the study of hyperbolic and AdS character varieties of right-angled Coxeter groups. 
 
\subsection{Hyperbolic and AdS geometry}\label{subsec:defi hyp ads}

We begin with the necessary definitions and notation. Let $q_{\pm1}$ be the quadratic form on $\R^{n+1}$ of signature $(-, +, \ldots, +, \pm)$ defined by:
$$q_{\pm1}(x) = -x_0^2 + x_1^2 + \ldots+x_{n-1}^2 \pm x_n^2~,$$
and let $b_{\pm1}$ be the associated bilinear form. 

The $n$-dimensional \emph{hyperbolic space} $\Hyp^n$ is defined {via the ``hyperboloid model'' as:
$$\Hyp^n=\{x\in\R^{n+1}\,|\,q_1(x) 
= -1\, , \,x_0>0\}~.$$
The restriction of $q_1$ endows $\Hyp^n$} with a complete Riemannian metric of constant negative sectional curvature, whose isometry group $\Isom(\Hyp^n)$ is identified to {an index-two subgroup of $\O(1,n)$, namely the subgroup of those linear isometries of $q_1$ which preserve $\Hyp^n$. Despite this subgroup is defined by an inequality, if $n$ is even $\Isom(\Hyp^n)$ is naturally isomorphic to the algebraic Lie group $\SO(1,n)$ (via $A \mapsto A$ if $A$ preserves $\Hyp^n \subset \R^{n+1}$, and $A \mapsto -A$ otherwise).} The \emph{boundary at infinity} of $\Hyp^n$
is the projectivisation of the cone of null directions  for the quadratic form $q_1$:
$$\partial\Hyp^n=\{x\in\R^{n+1}\,|\,q_1(x)=0\}/\R^*~.$$
{Hence both $\Hyp^n$ and $\partial\Hyp^n$ can be seen as subsets of $\RP^n$. The topology of $\RP^n$ induces a natural topology on $\Hyp^n \cup \partial \Hyp^n$, which makes it homeomorphic to the closed $n$-ball $D^n$. Finally, given a subset $A \subset \Hyp^n$ that is closed as a subspace of $\Hyp^n$, its \emph{ideal closure} $\overline{A}$ is the closure of $A$ in 
$\RP^n$. In particular, we have $\overline{\Hyp}{}^n = \Hyp^n \cup \partial \Hyp^n$.}

The $n$-dimensional \emph{Anti-de Sitter space} is defined as:
{$$\AdS^n=\{x\in\R^{n+1}\,|\,q_{-1}(x)
= -1 \}~,$$
and the restriction of $q_{-1}$ endows $\AdS^n$} with a Lorentzian metric of constant negative sectional curvature. {Observe that $\AdS^n$ is 
homeomorphic to $S^1 \times \R^{n-1}$}. 
{Its isometry group $\Isom(\AdS^n)$ is identified to the algebraic Lie groups $\O(q_{-1})\cong\O(2,n-1)$}. The \emph{boundary at infinity} of $\AdS^n$ is defined {as the image of the null directions for the quadratic form $q_{-1}$, now in the projective sphere $\widetilde{\RP}{}^n:=(\R^{n+1}\smallsetminus\{0\})/\R_{>0}$:
$$\partial\AdS^n=\{x\in\R^{n+1}\,|\,q_{-1}(x)=0\}/\R_{>0}~.$$
Interpreting $\AdS^n$ and $\partial\AdS^n$ as subsets of $\widetilde{\RP}{}^n$ gives $\AdS^n\cup\partial\AdS^n$ a natural topology, which makes it 
homeomorphic to $S^1 \times D^{n-1}$. Again, the \emph{ideal closure} of a 
subset $A \subset \AdS^n$ {that is closed in $\AdS^n$} is its closure $\overline{A}$ in 
$\widetilde{\RP}{}^n$, and we have $\overline{\AdS}{}^n = \AdS^n \cup \partial \AdS^n$.}

\subsection{Hyperplanes and reflections} \label{sec:hyperplanes}

{A \emph{hyperplane} of $\Hyp^n$ (resp. $\AdS^n$) is
the intersection, when non-empty, of a linear hyperplane of $\R^{n+1}$ with $\Hyp^n$ (resp. $\AdS^n$). Notice that, unlike the hyperbolic case, the intersection of a linear hyperplane with $\AdS^n$ is always non-empty, and sometimes disconnected (see the discussion preceding Lemma \ref{lemma hyperplane ads}). In both cases hyperplanes are totally geodesic.}

A \emph{reflection} in hyperbolic, resp. Anti-de Sitter, geometry is a non-trivial involution $r \in \Isom(\Hyp^n)$, resp. $\Isom(\AdS^n)$, that fixes point-wise a  hyperplane.

Let us denote by $\perpp$ the orthogonal complement with respect to the bilinear form $b_1$, and let $X \in \R^{n+1}$ be a vector. {The linear hyperplane} $X^{\perpp}$ of $\R^{n+1}$ intersects $\Hyp^n$ if and only if $q_1(X)>0$, i.e. if $X$ is \emph{spacelike} for $q_1$. Hence to every $q_1$-spacelike vector $X$ is associated a hyperplane 
$$H_{X}={X^{\perpp}}\cap\Hyp^n~.$$
It is clearly harmless to assume that $q_1(X)=1$, so that the vector $X$ is uniquely determined up to changing the sign.

Given a hyperplane $H_{X}$ in $\Hyp^n$, there is a unique reflection $r_{X}$ fixing the given hyperplane. Indeed, the reflection $r_{X}$ is  the linear transformation in $\O(q_1)$ which fixes $ X^\perpp$ and acts on the subspace generated by $X$ as minus the identity. Two spacelike unit vectors $X$ and $Y$ give the same reflection if and only if $X = \pm Y$. Finally, it is a simple exercise to show that  two reflections $r_{X}$ and $r_{Y}$ commute if and only if either $X=\pm Y$ or $X$ and $Y$ are orthogonal for the bilinear form $b_1$.

We summarise the above considerations in the following statement:

\begin{lemma} \label{lemma:commuting hyp}
There is a two-sheeted covering map
$$\{X\in\R^{n+1}\,:\,q_1(X)=+1\}\to\{r\in\Isom(\Hyp^n):\,r\mathrm{\ is\ a\ reflection}\}~,$$
which maps a spacelike unit vector $X$ to the unique reflection $r_{X}$ fixing  $H_{X}$ point-wise. Moroever, two distinct reflections $r_{X}$ and $r_{Y}$ commute if and only if $b_1(X,Y)=0$. 
\end{lemma}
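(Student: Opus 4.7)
The plan is to verify each of the three assertions along the outline already sketched in the paragraphs preceding the statement. For the covering map itself, I first check well-definedness: given a spacelike unit vector $X$, the decomposition $\R^{n+1}=\langle X\rangle\oplus X^{\perpp}$ is $b_1$-orthogonal, and the linear involution $R_X$ acting as $-1$ on $\langle X\rangle$ and as $+1$ on $X^{\perpp}$ lies in $\O(q_1)$; its projective class fixes $H_X$ pointwise, hence is the reflection $r_X$. For surjectivity, an arbitrary reflection $r\in\PO(q_1)$ pointwise fixes a totally geodesic hyperplane, which is necessarily of the form $H_X$ for some spacelike unit $X$; comparing $r$ with $r_X$ on the direction normal to $H_X$ (where both act as the unique non-trivial involution) then forces $r=r_X$. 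The fibres are $\{\pm X\}$: if $r_X=r_Y$, then $R_X=\pm R_Y$ in $\O(q_1)$, so $X^{\perpp}=Y^{\perpp}$, and the unit-norm condition yields $X=\pm Y$.

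For the commutation criterion, with $R_X,R_Y\in\O(q_1)$ as above, their projective classes commute in $\PO(q_1)$ if and only if $R_XR_Y=\epsilon\,R_YR_X$ for some $\epsilon\in\{\pm 1\}$. In the case $\epsilon=+1$, the fact that $R_Y$ commutes with $R_X$ means $R_Y$ preserves the eigenspaces of $R_X$, so $R_Y(X)=\pm X$; the minus sign would force $X\in\langle Y\rangle$ and hence $r_X=r_Y$, contrary to assumption, while the plus sign is precisely the condition $b_1(X,Y)=0$. Conversely, if $b_1(X,Y)=0$, a direct computation with the explicit formula $R_Z(v)=v-2b_1(Z,v)Z$ gives $R_XR_Y=R_YR_X$.

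The only mildly subtle step, and the one I view as the main obstacle, is ruling out the anticommuting case $\epsilon=-1$. I would handle this by testing the hypothetical identity $R_XR_Y=-R_YR_X$ on an arbitrary vector $v\in\{X,Y\}^{\perpp}$: both $R_X$ and $R_Y$ fix such a $v$, so the left-hand side yields $v$ while the right-hand side yields $-v$, forcing $v=0$. Since $\{X,Y\}^{\perpp}$ has dimension at least $n-1\geq 2$ in the dimensions relevant to this paper, this is a contradiction, and only the commuting case $\epsilon=+1$ survives. Combining the two steps then gives the lemma.
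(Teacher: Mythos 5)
Your proof is correct and follows essentially the same route as the paper, which states this lemma as a summary of the preceding discussion and leaves the verification (uniqueness of the reflection fixing $H_X$, the fibres $\{\pm X\}$, and the commutation criterion) as a simple exercise. Your treatment of the sign ambiguity in $\PO(q_1)$ — ruling out the anticommuting lift $R_XR_Y=-R_YR_X$ by evaluating on $\{X,Y\}^{\perpp}$ — correctly handles the one point the paper leaves implicit.
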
 
  
The subset of vectors in $\R^{n+1}$ such that $q_1(X)=+1$ is usually called \emph{de Sitter space}. 

Let us now move to Anti-de Sitter geometry. {A hyperplane}  is called \emph{spacelike}, \emph{timelike} or \emph{lightlike} if the induced bilinear form, obtained as the restriction of the Lorentzian metric of $\AdS^n$, is positive definite, indefinite or degenerate, respectively. {Spacelike hyperplanes are disconnected, and each of the two connected components is an isometrically embedded copy of $\Hyp^{n-1}$.} Timelike hyperplanes are isometrically embedded copies of $\AdS^{n-1}$.

Let us denote by $\perpm$ the orthogonality relation with respect to $b_{-1}$. We have:

\begin{lemma} \label{lemma hyperplane ads}
Given a vector $X \in \R^{n+1}$, the intersection $X^\perpm\cap\AdS^n$ is non-empty, and is:
\begin{itemize}
\item a spacelike hyperplane if $q_{-1}(X)<0$, 
\item a timelike hyperplane if $q_{-1}(X)>0$,
\item a lightlike hyperplane if $q_{-1}(X)=0$.
\end{itemize}
\end{lemma}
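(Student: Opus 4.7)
The plan is to reduce everything to a signature computation on the linear hyperplane $X^{\perpm}\subset\R^{n+1}$. Since $q_{-1}$ has two negative and $n-1$ positive directions, a standard orthogonal decomposition gives the signature of the restriction $q_{-1}|_{X^{\perpm}}$ as a function of $q_{-1}(X)$: two negative and $n-2$ positive directions if $q_{-1}(X)>0$; one negative and $n-1$ positive directions if $q_{-1}(X)<0$; and degenerate with one-dimensional radical $\R X$ and a non-degenerate quotient of one negative and $n-2$ positive directions if $q_{-1}(X)=0$. In each case the restricted form admits at least one negative direction, hence there exists $Y\in X^{\perpm}$ with $q_{-1}(Y)<0$, and projectivising any such $Y$ shows that $X^{\perpm}\cap\AdS^n$ is non-empty.

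To identify the causal type, I would pick $Y\in X^{\perpm}$ with $q_{-1}(Y)=-1$, so that $[Y]\in\mathrm P(X^{\perpm})\cap\AdS^n$. The tangent space of $\AdS^n$ at $[Y]$ is canonically identified with $Y^{\perpm}$ equipped with $q_{-1}|_{Y^{\perpm}}$, which is the Lorentzian form (one negative and $n-1$ positive directions) inducing the AdS metric. The tangent space at $[Y]$ of the hyperplane $\mathrm P(X^{\perpm})\cap\AdS^n$ is therefore the subspace $X^{\perpm}\cap Y^{\perpm}$ of $Y^{\perpm}$, endowed with the restriction of this Lorentzian form.

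The conclusion then follows from a last signature check: the vector $X$ lies in the Lorentzian space $Y^{\perpm}$ and keeps there its ambient causal type, so its $b_{-1}$-orthogonal inside $Y^{\perpm}$ has one negative and $n-2$ positive directions if $q_{-1}(X)>0$ (Lorentzian, hence a timelike hyperplane); is positive definite if $q_{-1}(X)<0$ (hence a spacelike hyperplane); and is degenerate with $X$ in its radical if $q_{-1}(X)=0$ (hence a lightlike hyperplane). The whole argument is elementary linear algebra, and the only point requiring some care is the careful bookkeeping of signatures and the tangent-space identification after projectivising, so I do not expect any substantial obstacle.
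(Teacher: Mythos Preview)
Your argument is correct. The paper states this lemma without proof, treating it as a standard fact about the signature of the restricted form; your signature bookkeeping and tangent-space identification supply exactly the elementary details one would expect, and there are no gaps.
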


The hyperplane of fixed points of an AdS reflection is either spacelike or timelike. Similarly to the hyperbolic case, given a vector $X$ such that $q_{-1}(X)=\pm 1$, the unique reflection fixing 
$$H_{X}= {X^{\perpm}}\cap\AdS^n$$ is induced by the linear transformation in $\O(q_{-1})$ acting on $X^\perpm$ as the identity and on $\mathrm{Span}(X)$ (which is  in direct sum with  $X^\perpm$ since $q_{-1}(X)\neq 0$) as minus the identity.

In conclusion, we have another summarising statement:

\begin{lemma} \label{lemma:map refl ads}
There is a two-sheeted covering map
$$\{X\in\R^{n+1}\,:\,q_{-1}(X)=\pm 1\}\to\{r\in\Isom(\AdS^n):\,r\mathrm{\ is\ a\ reflection}\}~,$$
which maps a spacelike or timelike unit vector $X$ to the unique reflection $r_{X}$ fixing  $H_{X}$ point-wise. Moroever, two distinct reflections $r_{ X}$ and $r_{ Y}$ commute if and only if $b_{-1}(X,Y)=0$. 
\end{lemma}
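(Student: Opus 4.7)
The plan is to follow the same pattern as the proof of Lemma \ref{lemma:commuting hyp} in the hyperbolic case, exploiting the preliminary observations made after Lemma \ref{lemma hyperplane ads}. The one genuinely new feature compared to the hyperbolic setting is that now both signs $q_{-1}(X)=\pm 1$ must be allowed, corresponding respectively to timelike and spacelike reflection hyperplanes; this is exactly accounted for in the definition of the domain of the covering map.

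First I would verify that the map $X\mapsto r_{ X}$ is well-defined, which is essentially already contained in the discussion preceding the statement: whenever $q_{-1}(X)=\pm 1$ one has $\R^{n+1}=\mathrm{Span}(X)\oplus X^{\perpm}$, and the linear involution acting as the identity on $X^{\perpm}$ and as $-1$ on $\mathrm{Span}(X)$ lies in $\O(q_{-1})$ and descends to a non-trivial involution of $\AdS^n$ fixing $H_{X}$ pointwise. Next I would establish surjectivity: given a reflection $r\in\Isom(\AdS^n)$, lift it to an involution $\tilde r\in\O(q_{-1})$ whose $+1$-eigenspace $V\subset\R^{n+1}$ has dimension $n$. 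A short computation shows that the $-1$-eigenspace of $\tilde r$ lies in $V^{\perpm}$, so by dimension count $V^{\perpm}$ is a line on which $\tilde r$ acts as $-1$, and $V$ is non-degenerate for $q_{-1}$. Any generator $X\in V^{\perpm}$ therefore satisfies $q_{-1}(X)\neq 0$, and after rescaling $q_{-1}(X)=\pm 1$ and $r=r_{ X}$.

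The two-sheeted property is then straightforward: $X$ and $-X$ yield the same reflection, and if $r_{ X}=r_{ Y}$ for unit vectors $X,Y$ then $\mathrm{Span}(X)=\mathrm{Span}(Y)$ (both being the $-1$-eigenspace), and the normalisation forces $Y=\pm X$. For the commutation criterion, I would use that $r_{ X}$ acts on $\R^{n+1}$ by the explicit formula
\[
r_{ X}(Z)=Z-\frac{2\,b_{-1}(X,Z)}{q_{-1}(X)}\,X~,
\]
and observe that $r_{ X}$ and $r_{ Y}$ commute if and only if $r_{ X}$ preserves the fixed hyperplane $ Y^{\perpm}$ of $r_{ Y}$, which in turn holds if and only if $Y$ is an eigenvector of $r_{ X}$. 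The eigenvalue $+1$ corresponds to $b_{-1}(X,Y)=0$ and the eigenvalue $-1$ forces $Y\in\mathrm{Span}(X)$, i.e.\ $r_{ X}=r_{ Y}$; under the assumption that the two reflections are distinct, the commutation is thus equivalent to $b_{-1}(X,Y)=0$.

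The only mildly delicate point—and the one I would treat most carefully—is ruling out the possibility that an AdS reflection could have a lightlike fixed hyperplane, since Lemma \ref{lemma hyperplane ads} allows all three types of hyperplanes. This is precisely handled by the non-degeneracy argument in the surjectivity step, which shows that the fixed hyperplane of any element of $\O(q_{-1})\smallsetminus\{1\}$ with $n$-dimensional fixed locus is automatically non-degenerate. Everything else is routine linear algebra on $(\R^{n+1},q_{-1})$, directly parallel to the hyperbolic case.
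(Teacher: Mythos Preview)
Your proposal is correct and follows exactly the approach the paper intends: in the paper this lemma is stated as a ``summarising statement'' of the preceding discussion, without a formal proof, so your write-up simply fills in the details that the paper leaves as an exercise. In particular, your careful treatment of the one subtle point---that the fixed hyperplane of an AdS reflection cannot be lightlike, established via the eigenspace/non-degeneracy argument---is precisely the claim the paper asserts without justification (``The hyperplane of fixed points of an AdS reflection is either spacelike or timelike''), and your argument for it is the natural one.
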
 

The space $\{X\in\R^{n+1}\,:\,q_{-1}(X)=\pm 1\}$ has two connected components, as well as the space of reflections. The component defined by $q_{-1}(X)=-1$  is a {copy of} Anti-de Sitter space itself. 

\subsection{Relative position of hyperplanes} \label{sec:relative}

It will be useful to discuss the relative position of hyperplanes
. For hyperbolic geometry, this is easily summarised:

\begin{lemma} \label{lemma angle hyp}
Let $H_{X}$ and $H_{Y}$ be two distinct hyperplanes in $\Hyp^n$, for $q_1(X)=q_1(Y)=1$. Then the following hold:
\begin{itemize}
\item $H_{X}$ and $H_{Y}$ intersect in $\Hyp^n$ if and only if $|b_1(X,Y)|<1$;
\item {$\overline{H}_{X}$ and $\overline{H}_{Y}$ intersect in $\partial\Hyp^n$} if and only if $|b_1(X,Y)|=1$;
\item {$\overline{H}_{X}$ and $\overline{H}_{Y}$ are disjoint in $\overline{\Hyp}{}^n$}  if and only if $|b_1(X,Y)|>1$.
\end{itemize}
\end{lemma}

%

{In the second item of Lemma \ref{lemma angle hyp}, $\overline{H}_X$ and $\overline{H}_Y$ intersect 
in exactly one point at infinity $p \in \partial \Hyp^n$. In this case, we say that $H_X$ and $H_Y$ are \emph{asymptotic (at $p$).} 
By little abuse, sometimes we also say that a hyperplane $H$ is \emph{asymptotic} to a point at infinity $p$ if $p \in \overline{H}$. Note the difference between the two notions: if the first item holds, then $H_X$ and $H_Y$ are not asymptotic, despite being asymptotic to $p$ for any point at infinity $p \in \overline{H}_X \cap \overline{H}_Y \neq \emptyset$.}

\begin{figure}
\includegraphics[scale=0.33]{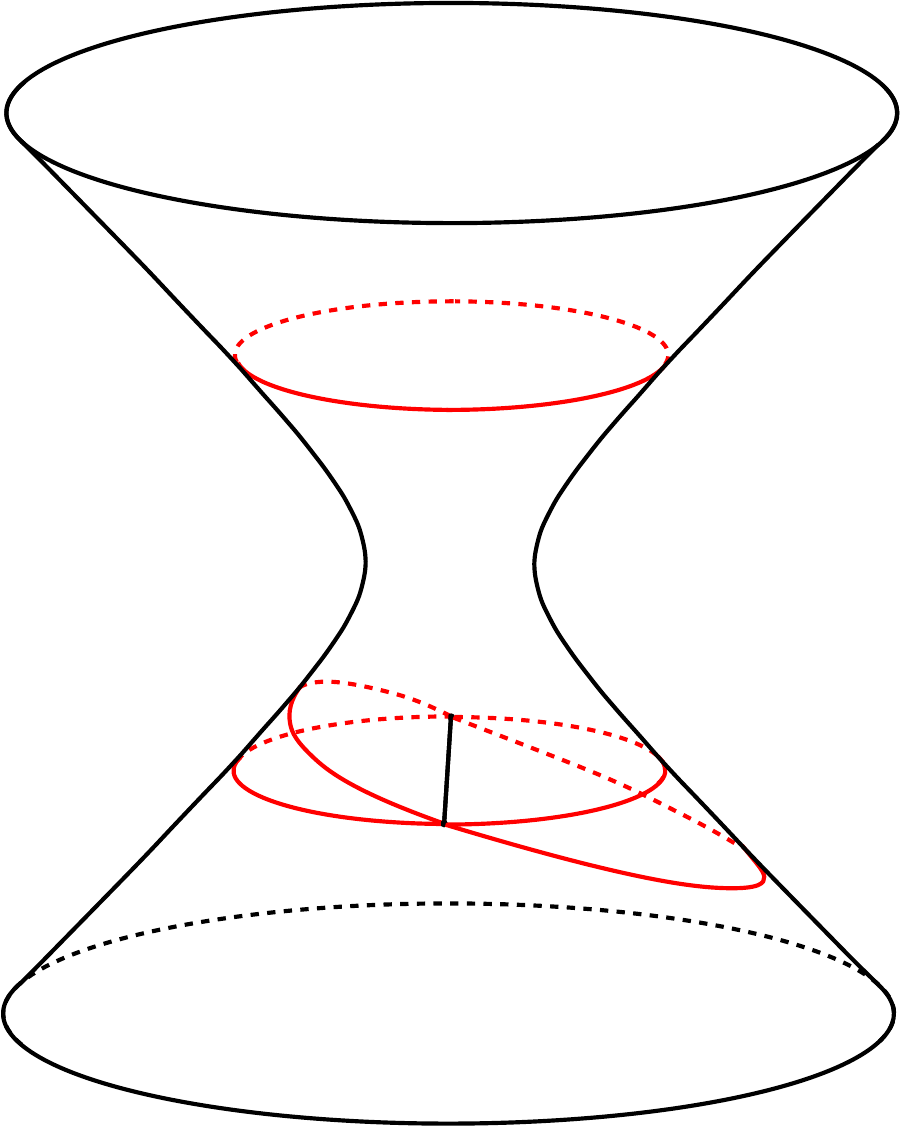}
\caption[Intersecting spacelike hyperplanes in $\AdS^n$.]{\footnotesize Two spacelike hyperplanes in $\AdS^n$ can intersect in a totally geodesic spacelike hyperplane, at a point at infinity, or be disjoint. The picture ($n=3$) is in an affine chart for {the real projective sphere}, where Anti-de Sitter space is the interior of a one-sheeted hyperboloid. {Each disk drawn in the picture represents a connected component of a spacelike hyperplane, and has an isometric copy on the ``opposite'' affine chart of $\widetilde\RP{}^n$,} {obtained by applying minus the identity to $\AdS^n$.}}
\label{fig:AdSplanes}
\end{figure}

\begin{figure}
\begin{minipage}[c]{.4\textwidth}
\centering
\includegraphics[scale=.33]{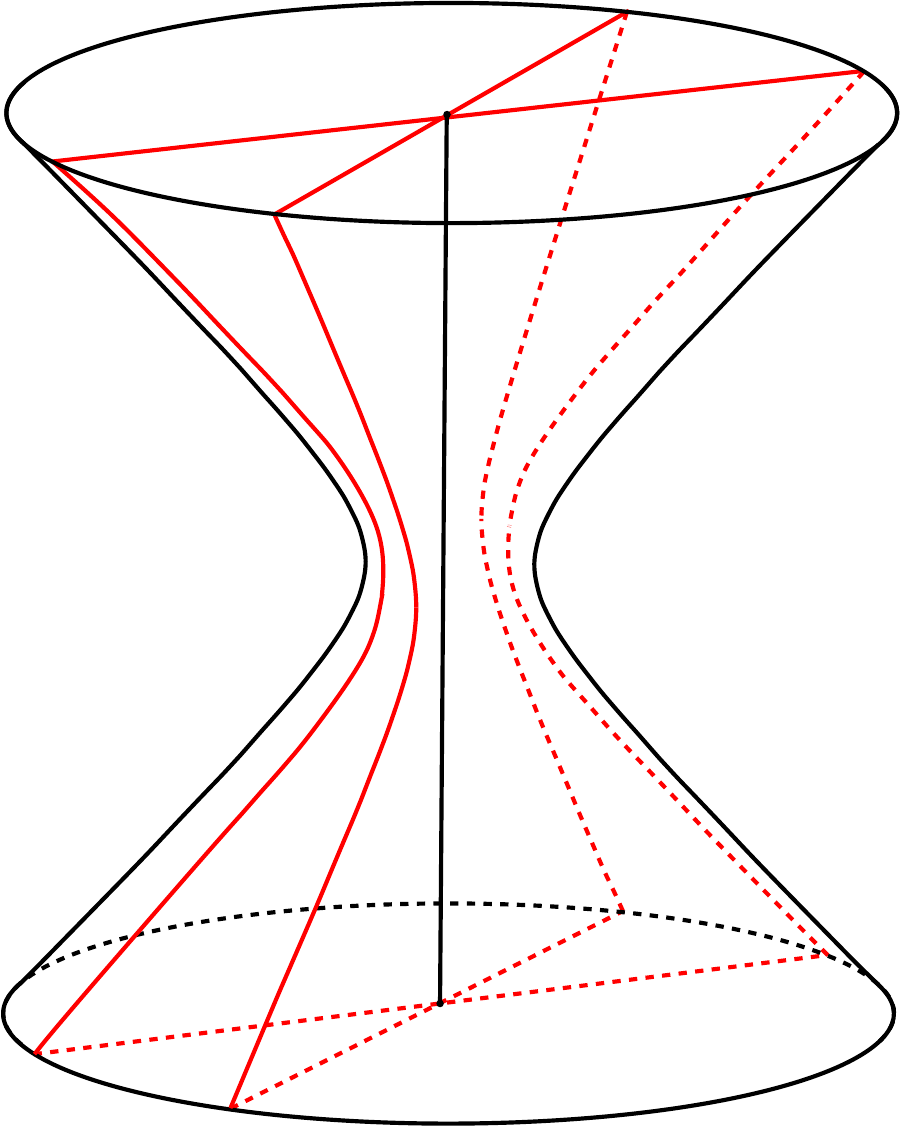}
\end{minipage}
\hspace{1cm}
\begin{minipage}[c]{.4\textwidth}
\centering
\includegraphics[scale=.33]{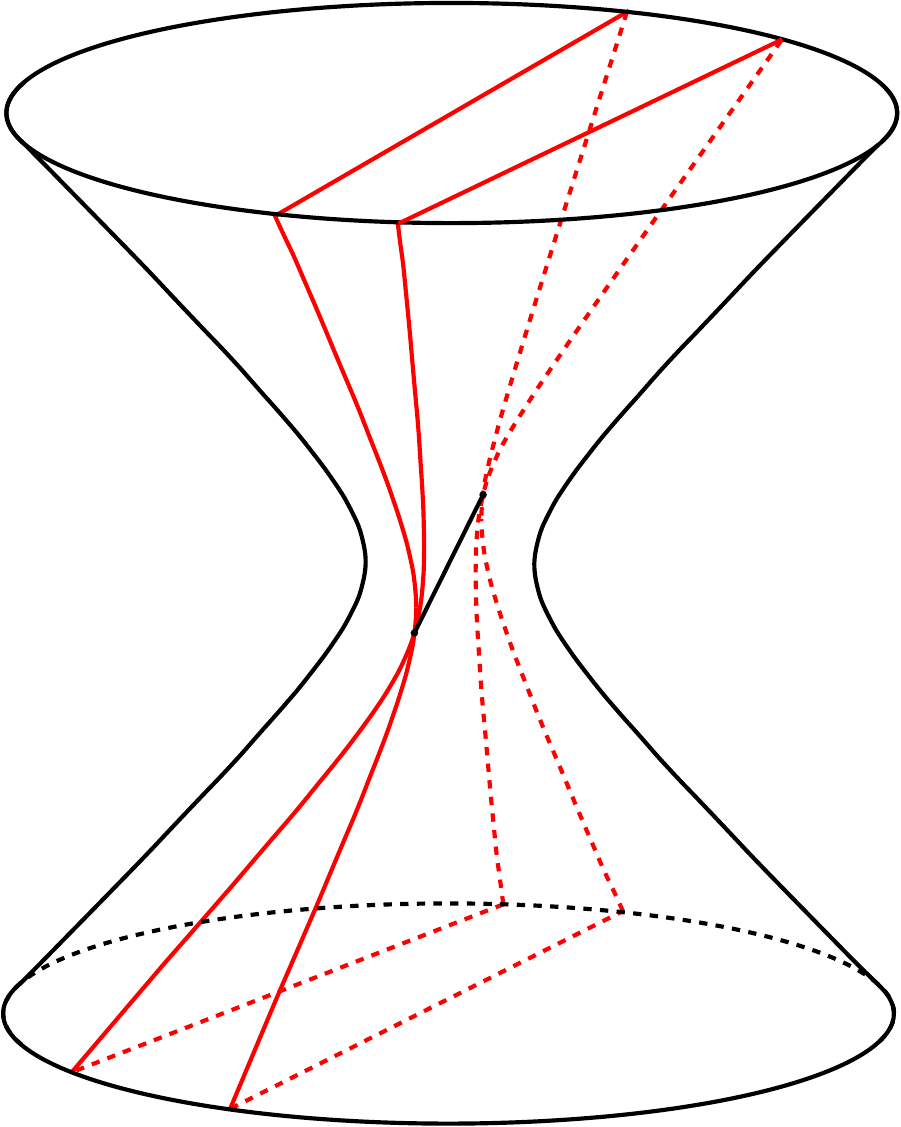}
\end{minipage}
\caption[Intersecting timelike hyperplanes in $\AdS^n$.]{\footnotesize Two timelike planes in $\AdS^3$ intersecting in a timelike (left) or spacelike (right) line. {One should keep in mind that the hyperplanes enter into the ``opposite''} {affine chart, where one has a completely analogous picture.}}
\label{fig:AdSplanes_timelike}
\end{figure}

For Anti-de Sitter hyperplanes, it is necessary to distinguish several cases. Here we will only consider the cases of interest for the proofs of our main results.
For spacelike hyperplanes, we have (see Figure \ref{fig:AdSplanes}):

\begin{lemma}\label{lem: AdS angle}
Let $H_{X}$ and $H_{Y}$ be two distinct spacelike hyperplanes in $\AdS^n$, for $q_{-1}(X)=q_{-1}(Y)=-1$. Then,
\begin{itemize}
\item $H_{X}$ and $H_{Y}$ intersect in $\AdS^n$ if and only if $|b_{-1}(X,Y)|>1$;
\item {$\overline{H}_{X}$ and $\overline{H}_{Y}$ intersect in $\partial\AdS^n$} if and only if $|b_{-1}(X,Y)|=1$;
\item {$\overline{H}_{X}$ and $\overline{H}_{Y}$ are disjoint in $\overline{\AdS}{}^n$}  if and only if $|b_{-1}(X,Y)|<1$.
\end{itemize}
\end{lemma}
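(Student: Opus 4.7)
The plan is to reduce all three dichotomies to a single linear-algebra computation: the signature of the restriction of $q_{-1}$ to the $q_{-1}$-orthogonal complement $W^\perpm$ of the $2$-plane $W = \mathrm{Span}(X, Y) \subset \R^{n+1}$. Since $H_X \neq H_Y$ forces $X$ and $Y$ to be linearly independent, $W$ has dimension $2$ and $W^\perpm$ has dimension $n-1$. The projective intersection of the two hyperplanes is exactly $\mathrm P(W^\perpm)$, and a class $[v]$ there lies in $\AdS^n$, in $\partial\AdS^n$, or outside $\AdS^n \cup \partial\AdS^n$ according to whether $q_{-1}(v)$ is negative, zero, or positive. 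So each of the three cases amounts to identifying which of these three types of nonzero vectors $W^\perpm$ contains, as $\beta := b_{-1}(X, Y)$ varies.

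First I would compute the Gram matrix of $q_{-1}|_W$ in the basis $\{X, Y\}$: it equals $\left(\begin{smallmatrix} -1 & \beta \\ \beta & -1 \end{smallmatrix}\right)$, with trace $-2$ and determinant $1 - \beta^2$. This separates the three regimes at once: $q_{-1}|_W$ is negative definite when $|\beta| < 1$, has signature $(1,1)$ when $|\beta| > 1$, and is rank $1$ and negative semidefinite when $|\beta| = 1$, with $1$-dimensional kernel spanned by $N := X + \beta Y$ (checking $q_{-1}(N) = -1 + \beta^2 = 0$ and $b_{-1}(N, X) = b_{-1}(N, Y) = 0$ directly).

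In the two non-degenerate regimes $\R^{n+1} = W \oplus W^\perpm$, and the signature of $q_{-1}|_{W^\perpm}$ is obtained by subtraction from the global signature of $q_{-1}$ (which has $n-1$ positive and $2$ negative directions). For $|\beta| < 1$ the complement is positive definite, so $W^\perpm \setminus \{0\}$ consists only of spacelike vectors and the closures of $H_X, H_Y$ are disjoint. For $|\beta| > 1$ the complement is Lorentzian of signature $(n-2, 1)$, so $W^\perpm$ contains timelike directions and the two hyperplanes meet already inside $\AdS^n$.

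The degenerate case $|\beta| = 1$ is the only delicate step, since $W$ is not transverse to $W^\perpm$ and a direct-sum argument is unavailable. Here $\R N \subset W \cap W^\perpm$ is, by double orthogonality, exactly the radical of $q_{-1}|_{W^\perpm}$, so the non-degenerate part of $q_{-1}|_{W^\perpm}$ has dimension $n-2$. To pin down its signature I would pick a hyperbolic companion $N'$ of $N$ chosen inside $X^\perpm$ and decompose $\R^{n+1} = \R X \oplus \mathrm{Span}(N, N') \oplus V$ with $V \subset W^\perpm$ of dimension $n-2$; Sylvester's law applied to the global signature forces $V$ to be positive definite. Thus $q_{-1}|_{W^\perpm}$ is positive semidefinite with radical precisely $\R N$, so $\mathrm P(W^\perpm)$ contains the single null class $[N]$ and otherwise only spacelike classes, and the closures of $H_X$ and $H_Y$ meet exactly at $[N] \in \partial\AdS^n$. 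This signature analysis in the degenerate case is where the real work lies; the other two cases are routine orthogonal-complement computations.
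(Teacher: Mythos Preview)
Your proof is correct. The paper states this lemma without proof (it is presented as a standard fact about Anti-de Sitter hyperplanes, alongside the analogous Lemmas for $\Hyp^n$ and for timelike AdS hyperplanes), so there is no argument in the paper to compare against.

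Your approach via the signature of $q_{-1}$ on $W^\perpm$, where $W=\mathrm{Span}(X,Y)$, is the natural one. The Gram-matrix computation cleanly separates the three regimes, and your handling of the degenerate case $|\beta|=1$ is the only place requiring care: the decomposition $\R^{n+1}=\R X\oplus\mathrm{Span}(N,N')\oplus V$ with $V\subset W^\perpm$ positive definite is correct, since $V\perp X$ and $V\perp N$ force $V\perp Y$ via $Y=\beta^{-1}(N-X)$. This yields $W^\perpm=\R N\oplus V$ with $q_{-1}|_V$ positive definite and $N$ in the radical, so the closures meet exactly at the single boundary point $[N]$. One small stylistic remark: since the three conditions on $|\beta|$ and the three geometric configurations are each mutually exclusive and exhaustive, it suffices to prove one direction of each biconditional, which is what your argument does; you might state this explicitly.
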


{Similar terminology for asymptoticity is adopted when the second item of Lemma \ref{lem: AdS angle} occurs,} {the only difference with the hyperbolic case being that now $\overline{H}_X$ and $\overline{H}_Y$ intersect in exactly two antipodal points at infinity $\pm p \in \partial \AdS^n$.}

For two AdS timelike hyperplanes the situation is different, as explained in the following lemma. See also Figure \ref{fig:AdSplanes_timelike}.

\begin{lemma} \label{lem: AdS intersection timelike}
Let $H_{X}$ and $H_{Y}$ be two distinct timelike hyperplanes in $\AdS^n$, for $q_{-1}(X)=q_{-1}(Y)=1$. Then $H_{X}$ and $H_{Y}$ intersect in $\AdS^n$ and the intersection $H_{X}\cap H_{Y}$ is:
\begin{itemize}
\item spacelike  if and only if ${|b_{-1}(X,Y)|}>1$;
\item lightlike if and only if ${|b_{-1}(X,Y)|}=1$;
\item timelike if and only if ${|b_{-1}(X,Y)|}<1$.
\end{itemize} 
\end{lemma}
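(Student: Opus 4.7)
The plan is to reduce everything to a signature computation for the restriction of $q_{-1}$ to the 2-plane $V = \mathrm{Span}(X, Y)$ and to its orthogonal complement $W = V^{\perp_{-1}} = X^{\perp_{-1}} \cap Y^{\perp_{-1}}$. By construction $H_X \cap H_Y$ is the projectivisation of $W \cap \{q_{-1} < 0\}$, so both its non-emptiness in $\AdS^n$ and its causal type are encoded in the signature of $q_{-1}|_W$. The latter is determined, via the standard orthogonal splitting argument applied to the ambient form $q_{-1}$ on $\R^{n+1}$ (which has $n-1$ positive and $2$ negative directions), by the signature of $q_{-1}|_V$.

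The Gram matrix of $q_{-1}|_V$ in the basis $(X, Y)$ is $\bigl(\begin{smallmatrix} 1 & b \\ b & 1 \end{smallmatrix}\bigr)$, where $b := b_{-1}(X, Y)$; its eigenvalues are $1 \pm b$. Therefore $V$ is positive definite when $|b| < 1$, of signature $(1, 1)$ when $|b| > 1$, and positive semi-definite with a 1-dimensional radical when $|b| = 1$. In the first two cases $V$ is non-degenerate, so $\R^{n+1} = V \oplus W$; one reads off that $W$ has $2$ negative directions when $|b| < 1$, and $1$ negative direction when $|b| > 1$. In both cases $W \cap \{q_{-1} < 0\}$ is non-empty and the intersection $H_X \cap H_Y$ is a totally geodesic copy of $\AdS^{n-2}$ (timelike) or $\Hyp^{n-2}$ (spacelike), respectively.

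The delicate case is $|b| = 1$. Setting $\epsilon = b \in \{-1, +1\}$, the vector $L := X - \epsilon Y$ is null and $q_{-1}$-orthogonal to both $X$ and $Y$, so it spans the common radical $V \cap W$. Completing $L$ to a Witt pair $(L, L')$ and choosing a $q_{-1}$-orthogonal complement $W_0$ of $\mathrm{Span}(L, L')$ inside $\mathrm{Span}(X + \epsilon Y)^{\perp_{-1}}$, one finds the orthogonal decomposition $W = \mathrm{Span}(L) \oplus W_0$ with $W_0$ Lorentzian of signature $(n-3, 1)$. Hence $W$ still meets $\{q_{-1} < 0\}$, so $H_X \cap H_Y$ is non-empty; but $L$ descends to a non-trivial null vector in the radical of the induced form on the tangent spaces of $H_X \cap H_Y$, so the intersection is lightlike.

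The point requiring care is precisely this degenerate case: one must simultaneously check that $W$ still hosts enough negative directions for the intersection to be non-empty, and that the induced form genuinely degenerates along $L$. Once this is settled, the three cases exhaust all possible values of $|b_{-1}(X, Y)|$, completing the proof.
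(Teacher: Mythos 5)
Your argument is correct and complete. Note that the paper does not actually prove this lemma: it is stated as a standard fact about relative positions of timelike hyperplanes and merely illustrated (Figure 2), so there is no proof to compare against; your signature computation is exactly the standard argument one would supply. The key points all check out: with $V=\mathrm{Span}(X,Y)$ the Gram matrix has eigenvalues $1\pm b$, so for $|b|<1$ the complement $W=V^{\perp_{-1}}$ has two negative directions (giving a copy of $\AdS^{n-2}$, timelike), for $|b|>1$ it has one (giving $\Hyp^{n-2}$, spacelike), and for $|b|=1$ your vector $L=X-\epsilon Y$ is null, orthogonal to both $X$ and $Y$, and spans the radical of $q_{-1}|_W$, while the Witt-pair splitting $W=\mathrm{Span}(L)\oplus W_0$ inside $(X+\epsilon Y)^{\perp_{-1}}$ shows $W$ still contains negative vectors, so the intersection is non-empty and lightlike; mutual exclusivity of the three conclusions then upgrades the implications to equivalences. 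Two cosmetic remarks: you should say explicitly that $L'$ is chosen inside $(X+\epsilon Y)^{\perp_{-1}}$ (as your phrasing implicitly requires), and for $n=3$ the space $W_0$ has signature $(0,1)$, i.e.\ it is negative definite rather than Lorentzian --- harmless, since all you use is that $W_0$ contains a negative direction, but worth phrasing carefully because the paper does apply the lemma with $n=3$.
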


{In the first case of Lemma \ref{lem: AdS angle} and Lemma \ref{lem: AdS intersection timelike}, the intersection $H_{X}\cap H_{Y}$ consists of two totally geodesic copies} of $\Hyp^{n-2}$; in the third case of Lemma \ref{lem: AdS intersection timelike} it is a totally geodesic copy of $\AdS^{n-2}$.

\section{Right-angled cusp groups in hyperbolic and AdS geometry} \label{sec:cusp_rigidity}

In this section, which may be of independent interest, we study deformations of some right-angled Coxeter groups represented as ``cusp groups'' in $\Isom(\Hyp^n)$ and $\Isom(\AdS^n)$, for $n=3,4$. {Let us begin in the next subsection with a general set up.} 

\subsection{Coxeter groups and representation varieties} \label{sec: deform reflections}

Given a finitely presented group $\Gamma$ and an algebraic Lie group $G$, 
we denote by $\mathrm{Hom}(\Gamma,G)$ the space of representations $\rho\colon\Gamma\to G$.  Since $\mathrm{Hom}(\Gamma,G)$ is naturally an affine algebraic set (see also Section \ref{sec:zariski}), it is called \emph{representation variety}.

In the remainder of the paper, we will restrict the attention to the case where $\Gamma$ is a right-angled Coxeter group, which we now define.

\begin{defi}[RACG] \label{defi RACG}
Given a finite set $S$ and a subset $R$ of (unordered) pairs of distinct elements of $S$, the associated \emph{right-angled Coxeter group} has presentation:
$$\langle \, S \ \big| \ {\l s}^2 = 1 \ \forall \, {\l s} \in S,\ {\l s}_1 {\l s}_2 = {\l s}_1 {\l s}_2 \ \forall \, ({\l s}_1, {\l s}_2) \in R \, \rangle~.$$
\end{defi}

For instance, the group generated by the reflections in the sides of a right-angled Euclidean or hyperbolic polytope is a right-angled Coxeter group. 

We will only be interested in representations of a right-angled Coxeter group $\Gamma$ which send every generator 
to a reflection. Let us introduce more formally this space.

\begin{defi}[The set $\mathrm{Hom}_{\mathrm{refl}}$] \label{defi hom refl}
Let $G$ be $\Isom(\Hyp^n)$ or $\Isom(\AdS^n)$ and $\Gamma$ a right-angled Coxeter group as above. We define $\mathrm{Hom}_{\mathrm{refl}}(\Gamma,G)$ as the subset of $\mathrm{Hom}(\Gamma,G)$ of representations $\rho$ such that:
\begin{itemize}
\item for every ${\l s} \in S$, the isometry $\rho({\l s})$ is a reflection,  and
\item for every $({\l s}_1,{\l s}_2) \in R$, the reflections $\rho({\l s}_1)$ and $\rho({\l s}_2)$ are distinct.
\end{itemize}
\end{defi}

Reflections constitute a connected component in the space of order-two isometries in $\Isom(\Hyp^n)$, while in $\Isom(\AdS^n)$ they constitute two connected components, given by reflections in spacelike and timelike hyperplanes. Moreover, by Lemmas \ref{lemma:commuting hyp} and \ref{lemma:map refl ads}, two distinct reflections commute if and only if their fixed hyperplanes are orthogonal. Hence we immediately get:

\begin{lemma} \label{lemma clopen}
The subset $\mathrm{Hom}_{\mathrm{refl}}(\Gamma,G)$ is clopen in $\mathrm{Hom}(\Gamma,G)$.
\end{lemma}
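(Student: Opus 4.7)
The plan is to show that $\mathrm{Hom}_{\mathrm{refl}}(\Gamma, G)$ arises as the intersection of two clopen subsets of $\mathrm{Hom}(\Gamma, G)$: the locus where every generator is sent to a reflection, and the locus where $\rho({\l s}_1) \neq \rho({\l s}_2)$ for every $({\l s}_1, {\l s}_2) \in R$. Openness of both conditions is formal, while closedness of the distinctness condition is the point that requires the structural input from Lemmas \ref{lemma:commuting hyp} and \ref{lemma:map refl ads}.

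For the first locus, every $\rho \in \mathrm{Hom}(\Gamma, G)$ automatically sends each generator into the closed real algebraic subset $\Omega := \{g \in G : g^2 = 1\}$ of $G$, by the relation ${\l s}^2 = 1$. By the paragraph preceding the lemma, reflections form a union of connected components of $\Omega$ (one component in the hyperbolic case, two in the AdS case), and in particular a clopen subset of $\Omega$. Composing with the continuous evaluation map $\rho \mapsto \rho({\l s})$ and intersecting over ${\l s} \in S$ shows that ``every $\rho({\l s})$ is a reflection'' cuts out a clopen subset of $\mathrm{Hom}(\Gamma, G)$.

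For the distinctness condition, openness is immediate from continuity. For closedness, suppose $\rho_n \in \mathrm{Hom}_{\mathrm{refl}}(\Gamma, G)$ converges to $\rho$ and fix a commuting pair $({\l s}_1, {\l s}_2) \in R$. By the first locus already being closed, each $\rho({\l s}_i)$ is a reflection; write it as $r_{X^{(i)}}$. Since the two-sheeted covering of reflections by unit vectors from Lemmas \ref{lemma:commuting hyp} and \ref{lemma:map refl ads} is a local homeomorphism, we can lift to unit vectors $X_n^{(i)} \to X^{(i)}$ with $\rho_n({\l s}_i) = r_{X_n^{(i)}}$. Since the $\rho_n({\l s}_i)$ are distinct commuting reflections, the same lemmas give $b(X_n^{(1)}, X_n^{(2)}) = 0$, and passing to the limit yields $b(X^{(1)}, X^{(2)}) = 0$. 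If one had $\rho({\l s}_1) = \rho({\l s}_2)$, then $X^{(1)} = \pm X^{(2)}$ would force $b(X^{(1)}, X^{(1)}) = \pm 1 \neq 0$, a contradiction. Hence distinctness persists in the limit and $\rho \in \mathrm{Hom}_{\mathrm{refl}}(\Gamma, G)$.

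The main (only) obstacle is precisely the last step: a priori two distinct reflections could collapse to a single one under a limit, and it is exactly the commutation relation --- interpreted geometrically as orthogonality of the polar vectors --- that prevents this degeneration, thereby promoting openness to clopen-ness.
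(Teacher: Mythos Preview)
Your proof is correct and follows exactly the approach the paper intends: the paragraph immediately preceding the lemma is the paper's entire argument, and you have simply spelled out the two implicit steps (reflections are clopen among involutions; orthogonality of the polar vectors, which is a closed condition incompatible with coincidence, forces distinctness to persist in the limit). There is nothing to add.
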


To simplify the computations, we will follow \cite{KS} and adopt a local model for the representation variety which is well adapted to our setting. Roughly speaking, we only consider the deformations of the hyperplanes fixed by the reflection associated to each generator. This will reduce significantly the complexity of the problem, since (in dimension $n$) for each generator we have a vector of $n+1$ entries (giving the hyperplane of reflection) in place of an $(n+1)\times (n+1)$ matrix (giving the reflection itself).

More precisely, the following lemma gives a local parametrisation of the set $\mathrm{Hom}_{\mathrm{refl}}(\Gamma,G)$:

\begin{lemma} \label{lemma homeo model general} \label{lemma homeo model rep var}
The set $\mathrm{Hom}_{\mathrm{refl}}(\Gamma,G)$ is finitely covered by a disjoint union of subsets of $\R^{(n+1)|S|}$ defined by the vanishing of $|S|+|R|$ quadratic conditions.
\end{lemma}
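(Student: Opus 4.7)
The strategy is to parametrise reflections by their ``pole vectors'' using the two-sheeted covering maps of Lemmas \ref{lemma:commuting hyp} and \ref{lemma:map refl ads}. For each generator $\l s \in S$, the reflection $\rho(\l s)$ lifts, up to sign, to a vector $X_{\l s} \in \R^{n+1}$ satisfying $q_\bullet(X_{\l s}) = \varepsilon_{\l s}$, where $\bullet \in \{1,-1\}$ is fixed by the choice of $G$, while $\varepsilon_{\l s} = 1$ in the hyperbolic case and $\varepsilon_{\l s} \in \{+1,-1\}$ in the AdS case (according as the fixed hyperplane of $\rho(\l s)$ is timelike or spacelike).

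Fixing a choice of types $\varepsilon = (\varepsilon_{\l s})_{\l s \in S}$, I would consider
$$V_\varepsilon = \bigl\{(X_{\l s})_{\l s \in S} \in (\R^{n+1})^{|S|} \ \big|\ q_\bullet(X_{\l s}) = \varepsilon_{\l s}\ \forall\, \l s \in S,\ b_\bullet(X_{{\l s}_1}, X_{{\l s}_2}) = 0\ \forall\, ({\l s}_1, {\l s}_2) \in R\bigr\},$$
which is cut out by $|S|$ normalisation equations and $|R|$ orthogonality equations, all quadratic, giving the claimed total of $|S|+|R|$ quadratic conditions. I would then define $\Phi_\varepsilon : V_\varepsilon \to \mathrm{Hom}_{\mathrm{refl}}(\Gamma, G)$ by setting $\rho(\l s) = r_{X_{\l s}}$. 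To see that this is well-defined: each $r_{X_{\l s}}$ is an involution, and by Lemmas \ref{lemma:commuting hyp} and \ref{lemma:map refl ads} the orthogonality $b_\bullet(X_{{\l s}_1}, X_{{\l s}_2}) = 0$ implies that $r_{X_{{\l s}_1}}$ and $r_{X_{{\l s}_2}}$ commute. Moreover, since $q_\bullet(X_{{\l s}_i}) = \varepsilon_{{\l s}_i} \neq 0$, the orthogonality forces $X_{{\l s}_1} \neq \pm X_{{\l s}_2}$, so the two reflections are genuinely distinct --- which is precisely what Definition \ref{defi hom refl} requires.

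Finally, I would verify that the combined map $\bigsqcup_\varepsilon \Phi_\varepsilon$ from the disjoint union (a single piece in the hyperbolic case, $2^{|S|}$ pieces in the AdS case) to $\mathrm{Hom}_{\mathrm{refl}}(\Gamma, G)$ is a $2^{|S|}$-sheeted topological cover. Surjectivity is immediate: any $\rho \in \mathrm{Hom}_{\mathrm{refl}}(\Gamma, G)$ admits a preimage by independently lifting each $\rho(\l s)$ via the covering maps of Lemmas \ref{lemma:commuting hyp} and \ref{lemma:map refl ads}. The covering degree $2^{|S|}$ comes from the sign ambiguity $X_{\l s} \leftrightarrow -X_{\l s}$, and the local homeomorphism property is inherited directly from the two-sheeted coverings of those same lemmas.

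I do not anticipate a substantive obstacle here, as the lemma amounts to a translation of the already-established bijection between reflections and pole vectors (modulo sign) into the language of quadratic conditions on $|S|$-tuples. The only point that requires some bookkeeping is the indexing of the disjoint union by reflection ``types'' $\varepsilon$ in the AdS case, which reflects the fact that reflections in $\Isom(\AdS^n)$ form two connected components (Lemma \ref{lemma:map refl ads}) while those in $\Isom(\Hyp^n)$ form only one.
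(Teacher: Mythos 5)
Your proof is correct and follows essentially the same route as the paper: lift each generator's reflection to a unit pole vector via Lemmas \ref{lemma:commuting hyp} and \ref{lemma:map refl ads}, cut out the lift space by the $|S|$ normalisation and $|R|$ orthogonality quadrics, and observe that the sign ambiguity makes this a $2^{|S|}$-sheeted cover, with the AdS case split into pieces according to the spacelike/timelike type of each generator. Your explicit remark that orthogonality plus nonzero norm forces $X_{{\l s}_1}\neq\pm X_{{\l s}_2}$ (hence distinct reflections, as Definition \ref{defi hom refl} requires) is a point the paper leaves implicit, but the argument is the same.
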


\begin{proof}
Let us first give the proof for $G=\Isom(\Hyp^n)$. Let us identify $\R^{(n+1)|S|}$ to the vector space of functions $f \colon S \to \R^{n+1}$. For every representation $\rho \colon \Gamma \to G$ in $\mathrm{Hom}_{\mathrm{refl}}(\Gamma,G)$, we can choose a function $f$ such that $q_1(f({\l s}))=1$ and $\rho({\l s})=r_{f({\l s})}$ for every generator ${\l s}$. In fact there are $2^{|S|}$ possible choices of such an $f$, differing by changing sign to the image of each generator, and they all satisfy the following conditions: 
\begin{enumerate}
\item The vector $f({\l s})$ is unitary, meaning that $q_{1}(f({\l s}))=1$, hence giving $|S|$ quadratic conditions.
\item For each of the commutation relations ${\l s}_i{\l s}_j={\l s}_j{\l s}_i$ in $\Gamma$, by Lemma \ref{lemma:commuting hyp}  the corresponding vectors $f({\l s}_i)$ and $f({\l s}_j)$ are orthogonal with respect to $b_{1}$.
\end{enumerate}

Conversely, every $f$ satisfying these conditions induces the representation $\rho$ in $\mathrm{Hom}_{\mathrm{refl}}(\Gamma,G)$ defined by $\rho({\l s})=r_{f({\l s})}$. Define a function
$$g \colon \R^{(n+1)|S|}\to \R^{|S|+|R|}$$
by
$$g(f)=((q_1(f({\l s}))-1)_{{\l s}\in S},(b_1(f({\l s}_i),f({\l s}_j)))_{({\l s}_i,{\l s}_j)\in R})~.$$
We have shown that $g^{-1}(0)$ is a $2^{|S|}$-sheeted covering of $\mathrm{Hom}_{\mathrm{refl}}(\Gamma,\Isom(\Hyp^n))$, with deck transformations given by the group $(\Z/2\Z)^{|S|}$. 

The proof for the Anti-de Sitter case is analogous, except that we have to distinguish several cases, depending on whether $\rho({\l s})$ is a reflection in a spacelike or timelike hyperplane. In the former case, we must impose $q_{-1}(f({\l s}))=-1$, and in the latter  $q_{-1}(f({\l s}))=1$ (see Lemma \ref{lemma hyperplane ads}). The orthogonality conditions are the same, but now using the bilinear form $b_{-1}$, by Lemma \ref{lemma:map refl ads}.  In conclusion we have that $\mathrm{Hom}_{\mathrm{refl}}(\Gamma,\Isom(\AdS^n))$ is finitely covered by a disjoint union of $|S|$ subsets each defined by the vanishing of a quadratic function $g\colon\R^{(n+1)|S|}\to \R^{|S|+|R|}$.
\end{proof}

\begin{remark} \label{rmk:action of isometry group}
The covering map of Lemma \ref{lemma homeo model rep var} is equivariant with respect to the following two actions of $\O(q_{\pm 1})$:
\begin{itemize} 
\item The action on the space of functions $f\colon S\to\R^{n+1}$ by post-composition $(A\cdot f)({\l s})=A(f({\l s}))$. This action preserves the zero locus of the defining functions $g\colon\R^{(n+1)|S|}\to \R^{|S|+|R|}$ introduced in the proof of Lemma \ref{lemma homeo model rep var};
\item The action on $\mathrm{Hom}_{\mathrm{refl}}(\Gamma,G)$ given by the $G$-action by conjugation. This action preserves the 
clopen subset $\mathrm{Hom}_{\mathrm{refl}}(\Gamma,G)$ of $\mathrm{Hom}(\Gamma,G)$.
 \end{itemize}
\end{remark}

\subsection{Flexibility in dimension three} \label{sec:cusp_flex}

Let $\Gamma_{\mathrm{rect}}$ denote the right-angled Coxeter group generated by the reflections along the sides of a Euclidean rectangle. The standard presentation of $\Gamma_{\mathrm{rect}}$ has 4 generators (one for each side of the rectangle), and  relations such that each generator has order two and reflections in adjacent sides commute.

\begin{defi}[Cusp group in dimension 3]
The image of a representation of $\Gamma_{\mathrm{rect}}$ into $\Isom(\Hyp^3)$ or $\Isom(\AdS^3)$ is called a \emph{cusp group} if the four generators are sent to reflections in four distinct planes {asymptotic to a common point at infinity}. 
\end{defi}

We will also consider other similar representations of  $\Gamma_{\mathrm{rect}}$, which  occur in correspondence to a collapse, when two non-commuting generators are sent to the same reflection. Let us begin with the hyperbolic case:

\begin{defi}[Collapsed cusp group for $\Hyp^3$]
The image of a representation of $\Gamma_{\mathrm{rect}}$ into $\Isom(\Hyp^3)$  is called a \emph{collapsed cusp group} if the four generators are sent to reflections along three distinct planes {asymptotic to a common point at infinity}.
\end{defi}

Let $\rho'$ be a representation near a given $\rho \in \mathrm{Hom}_{\mathrm{refl}}(\Gamma_{\mathrm{rect}},\Isom(\Hyp^3))$, and ${\l s}$ be a generator of $\Gamma_{\mathrm{rect}}$. In virtue of Lemma \ref{lemma clopen} and the discussion below, we refer to the fixed-point set of $\rho'({\l s})$ as a \emph{plane} of $\rho'$.

In \cite[Lemma 5.1]{KS}, the following property of cusp groups is proved:

\begin{prop} \label{prop rect group hyp}
Let $\rho\colon \Gamma_{\mathrm{rect}}\to\Isom(\Hyp^3)$ be a representation whose image is a cusp group. For all nearby representations whose image is not a cusp group, a pair of opposite planes intersect in $\Hyp^3$, while the other pair of opposite planes have disjoint {ideal} closures in $\overline{\Hyp}^3$. 
\end{prop}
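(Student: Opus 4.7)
The plan is to translate the statement into the linear-algebraic picture of Lemmas \ref{lemma:commuting hyp}, \ref{lemma angle hyp} and \ref{lemma homeo model rep var}, reducing it to a 2-dimensional signature computation. Labelling the generators of $\Gamma_{\mathrm{rect}}$ cyclically as ${\l s}_1,\dots,{\l s}_4$, I will write any representation $\rho'$ near $\rho$ as $\rho'({\l s}_i)=r_{v_i}$ for $q_1$-unit vectors $v_i\in\R^{1,3}$ with $b_1(v_i,v_{i+1})=0$ (indices mod $4$). Set
$$\beta:=b_1(v_1,v_3),\qquad \gamma:=b_1(v_2,v_4).$$
By Lemma \ref{lemma angle hyp}, the desired geometric conclusion is equivalent to saying: whenever $\rho'$ is near $\rho$ and is not a cusp group, one of $1-\beta^2$, $1-\gamma^2$ is strictly positive and the other is strictly negative.

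The key observation is that the commutation relations force both $v_2$ and $v_4$ to lie in the 2-dimensional subspace $V:=\mathrm{span}(v_1,v_3)^{\perpp}$; here I use that $v_1,v_3$ stay linearly independent near $\rho$, because the four fixed planes remain distinct for $\rho'$ close to the cusp group $\rho$. The Gram determinant of $(v_1,v_3)$ equals $1-\beta^2$, and this determines the signature of $q_1|_V$: Lorentzian $(+,-)$ when $|\beta|<1$, positive definite $(+,+)$ when $|\beta|>1$, and degenerate of rank one when $|\beta|=1$. A short computation in an adapted basis for each signature (hyperbola, circle, or pair of parallel lines, respectively) shows that two $q_1$-unit vectors $v_2,v_4\in V$ with $v_2\neq\pm v_4$ satisfy $|b_1(v_2,v_4)|>1$ in the Lorentzian case, $|b_1(v_2,v_4)|<1$ in the positive definite case, and $|b_1(v_2,v_4)|=1$ in the degenerate case. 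Since the four planes of $\rho'$ are still distinct we have $v_2\neq\pm v_4$, and the claimed sign alternation follows.

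Finally, I will verify that the borderline case $|\beta|=1$ with $v_1\neq\pm v_3$ exactly corresponds to a cusp group, so that a nearby representation which is not a cusp group necessarily has $|\beta|\neq 1$ and therefore lies in one of the two strict regimes above. In this case the null line of $q_1|_V$ is spanned by a null vector $p$; the degeneracy gives $b_1(p,v_2)=b_1(p,v_4)=0$, and $p\in\mathrm{span}(v_1,v_3)^{\perpp}$ tautologically yields $b_1(p,v_1)=b_1(p,v_3)=0$, so $p\in\partial\Hyp^3$ is a common ideal fixed point of the four reflections $r_{v_i}$. The only real delicacy I anticipate is bookkeeping to separate this non-collapsed tangency case from the formally possible collapsed cases $v_1=\pm v_3$ or $v_2=\pm v_4$; both are excluded by continuity near the cusp group $\rho$, whose four planes are distinct by hypothesis.
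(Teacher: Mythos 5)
Your proof is correct: the reduction to $\mathrm{Hom}_{\mathrm{refl}}$ via Lemma \ref{lemma clopen}, the placement of $v_2,v_4$ in the $2$-plane $V=\mathrm{Span}(v_1,v_3)^{\perpp}$, the signature bookkeeping (the sign of $1-\beta^2$ fixes the signature of $q_1|_V$, which in turn forces the opposite sign for $1-\gamma^2$ when $v_2\neq\pm v_4$), and the borderline analysis (a degenerate $V$ produces a null vector orthogonal to all four $v_i$, hence a common ideal point and a cusp group, the planes being distinct near $\rho$) all hold. Note, however, that the paper does not reprove this hyperbolic statement: it quotes it from \cite[Lemma 5.1]{KS} and instead writes out the AdS analogues (Propositions \ref{prop rect group ads} and \ref{prop rect group ads collapsed}), remarking that the hyperbolic case is obtained by mimicking them. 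That proof proceeds differently from yours: it normalises two of the vectors by the $\O(q_{\pm1})$-action, then runs a case analysis on the two geodesics cut out on a fixed plane (intersecting, asymptotic, or ultraparallel), with explicit parametrisations by an angle or a distance, and concludes via the relative-position lemmas. Your route is coordinate-free and symmetric in the two opposite pairs: the alternation of the signs of $1-\beta^2$ and $1-\gamma^2$ appears as a duality between a $2$-plane and its orthogonal complement in signature $(1,3)$, and the tangency (cusp-group) case falls out of the rank-one degenerate case with no extra work; it also extends immediately to the collapsed situation of Proposition \ref{prop rect group hyp collapsed} by simply dropping the distinctness of the opposite pair. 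What the paper's normalisation buys is explicit coordinates (the parameters $\theta,\varphi$) that are reused in the four-dimensional cusp-rigidity arguments, and a format that transfers directly to the AdS setting, where one must additionally track spacelike versus timelike hyperplanes and where the relative-position trichotomies (Lemmas \ref{lem: AdS angle} and \ref{lem: AdS intersection timelike}) depend on the type, so a uniform signature count like yours would need to be split by cases anyway.
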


In fact, a simple adaptation of the proof shows:

\begin{prop} \label{prop rect group hyp collapsed}
Let $\rho\colon \Gamma_{\mathrm{rect}}\to\Isom(\Hyp^3)$ be a representation whose image is a cusp group or a collapsed cusp group. For all nearby representations $\rho'$, exactly one of the following possibilities holds:
\begin{enumerate}
\item If ${\l s}_1$ and ${\l s}_2$ are generators such that $\rho({\l s}_1)=\rho({\l s}_2)$, then $\rho'({\l s}_1)=\rho'({\l s}_2)$. 
\item The image of $\rho'$ is a cusp group. 
\item A pair of opposite planes intersect in $\Hyp^3$, while the other pair of opposite planes have disjoint {ideal} closures in $\overline{\Hyp}^3$. 
\end{enumerate}
\end{prop}

The first case may hold only if $\rho$ is a collapsed cusp group. Under this hypothesis, Proposition \ref{prop rect group hyp collapsed} can be rephrased by saying that a deformation of a collapsed cusp group either preserves the property that two planes corresponding to non-adjacent sides of the rectangle coincide (which is the case when the collapsed cusp group remains a collapsed cusp group, for instance), or it falls in the class of representations described in Proposition \ref{prop rect group hyp}, namely, deformations of non-collapsed cusp groups. If $\rho$ is a cusp group, then the content of Proposition \ref{prop rect group hyp collapsed} is the same as Proposition \ref{prop rect group hyp}.

We now move to the AdS version of Propositions \ref{prop rect group hyp} and \ref{prop rect group hyp collapsed}, for which we will give a complete proof. Proofs for the hyperbolic case are easier and can be repeated by mimicking the AdS case. 

Note that for an AdS cusp group the four planes necessarily satisfy the orthogonality conditions as in a rectangle, and therefore two of them are spacelike and two timelike. We will show the following proposition, which is the AdS version of Proposition \ref{prop rect group hyp}.

\begin{prop} \label{prop rect group ads}
Let $\rho\colon \Gamma_{\mathrm{rect}}\to\Isom(\AdS^3)$ be a representation whose image is a cusp group. For all nearby representations whose image is not a cusp group, exactly one of the following possibilities holds:
\begin{enumerate}
\item The {ideal closures of the} two spacelike planes are disjoint in $\overline{\AdS}^3$, whereas the two timelike planes intersect in a timelike geodesic of $\AdS^3$;
\item The two spacelike planes intersect in $\AdS^3$, whereas the two timelike planes intersect in {two spacelike geodesics} of $\AdS^3$.
\end{enumerate}
\end{prop}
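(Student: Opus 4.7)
The plan is to parameterise nearby representations as tuples of $q_{-1}$-unit vectors following Lemma \ref{lemma homeo model rep var}, and then extract the two cases from a linear-algebraic dichotomy governed by the signature of a certain $2$-plane.

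\textbf{Setup.} Combining Lemma \ref{lemma clopen} and Lemma \ref{lemma homeo model rep var}, representations near $\rho$ lift locally to tuples $(X_1,X_2,X_3,X_4)\in(\R^4)^4$ with $q_{-1}(X_1)=q_{-1}(X_3)=-1$ (spacelike planes of reflection) and $q_{-1}(X_2)=q_{-1}(X_4)=+1$ (timelike planes of reflection), together with the cyclic orthogonality $b_{-1}(X_i,X_{i+1})=0$ (indices mod $4$) coming from the commutation relations of $\Gamma_{\mathrm{rect}}$. Since $r_{X_1}$ and $r_{X_3}$ must remain distinct for representations close to the cusp, and two $q_{-1}$-unit vectors are parallel only if they differ by a sign, I may assume $X_1,X_3$ linearly independent; similarly for $X_2, X_4$.

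\textbf{The key dichotomy.} Set $V=\mathrm{span}(X_1,X_3)$ and $W=V^{\perpm}$; this is a $2$-plane of $\R^4$, and the adjacency relations force $X_2,X_4\in W$, so in fact $W=\mathrm{span}(X_2,X_4)$. Writing $b=b_{-1}(X_1,X_3)$, the restriction of $b_{-1}$ to $V$ has Gram matrix $\bigl(\begin{smallmatrix}-1 & b\\ b & -1\end{smallmatrix}\bigr)$, of determinant $1-b^2$, so three cases arise. If $b^2=1$, the form on $V$ is degenerate and the radical $V\cap V^{\perpm}=V\cap W$ contains a nonzero null vector $\xi$; then $\xi$ is $b_{-1}$-orthogonal to all four $X_i$, so the four hyperplanes share the point of $\partial\AdS^3$ represented by $\xi$ and $\rho'$ is a cusp group, contrary to assumption. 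If $b^2<1$, the form on $V$ is negative definite and $W$ is positive definite; two linearly independent unit vectors in a positive-definite plane satisfy $|b_{-1}(X_2,X_4)|<1$ by Cauchy--Schwarz. If $b^2>1$, the form on $V$ has signature $(1,1)$ and so does $W$; parametrising the two branches of $\{q_{-1}=+1\}\subset W$ as $(\pm\cosh t,\sinh t)$ in orthonormal coordinates for $W$ shows that any two linearly independent unit spacelike vectors of $W$ satisfy $|b_{-1}(X_2,X_4)|>1$ strictly.

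\textbf{Conclusion and main difficulty.} Applying Lemma \ref{lem: AdS angle} to the spacelike planes $H_{X_1},H_{X_3}$ (whose normals are timelike) and Lemma \ref{lem: AdS intersection timelike} to the timelike planes $H_{X_2},H_{X_4}$ (whose normals are spacelike), the case $b^2<1$ yields alternative (1) and the case $b^2>1$ yields alternative (2), and the two alternatives are mutually exclusive. The conceptual core of the argument is the identification of the cusp condition with the degeneracy of $b_{-1}|_V$; once that is in place, the rest is elementary linear algebra inside a signature $(2,2)$ space. The step where I expect to need most care is controlling the sign pattern of unit spacelike vectors in a $(1,1)$-plane in order to obtain the strict inequality $|b_{-1}(X_2,X_4)|>1$ rather than $\geq 1$, so that the four possible hyperplane intersection types of Lemma \ref{lem: AdS intersection timelike} collapse to the two asserted in the proposition.
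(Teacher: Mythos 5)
Your argument is correct, and it takes a genuinely different route from the paper's. The paper does not prove Proposition \ref{prop rect group ads} directly: it deduces it from the more general collapsed statement (Proposition \ref{prop rect group ads collapsed}), whose proof normalises $X_1=(1,0,0,0)$, $Y_1=(0,1,0,0)$ by the $\O(q_{-1})$-action, works inside the spacelike plane $H_{X_1}\cong\Hyp^2$, classifies the relative position of the two geodesics $H_{Y_i}\cap H_{X_1}$ (asymptotic, intersecting, or ultraparallel), and then solves the orthogonality constraints in explicit coordinates to read off the position of the second spacelike plane. You instead avoid any normalisation and argue intrinsically in the $(2,2)$-form: the orthogonal splitting into $V=\mathrm{span}(X_1,X_3)$ and $W=V^{\perpm}=\mathrm{span}(X_2,X_4)$, with the trichotomy governed by the sign of $1-b_{-1}(X_1,X_3)^2$; the degenerate case is exactly the cusp case because the radical of $b_{-1}|_V$ produces the common null direction, while in the definite and $(1,1)$ cases Cauchy--Schwarz (respectively its reversed hyperbolic version, which you correctly flag as needing the strictness coming from linear independence) couples $|b_{-1}(X_2,X_4)|$ to the same side of $1$ as $|b_{-1}(X_1,X_3)|$, and Lemmas \ref{lem: AdS angle} and \ref{lem: AdS intersection timelike} convert this into the two alternatives. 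Your "nearby" hypothesis is used only to guarantee, via Lemmas \ref{lemma clopen} and \ref{lemma homeo model rep var} and openness of distinctness, that the types (spacelike/timelike) and the distinctness of opposite planes persist, so in effect your dichotomy holds on the whole relevant component, matching the paper's remark after Proposition \ref{prop rect group ads collapsed}. What each approach buys: yours is coordinate-free, makes the characterisation of cusp groups via the shared null direction transparent, and is arguably shorter for the non-collapsed statement; the paper's coordinate computation extends verbatim to the collapsed case (where $V$ may drop to dimension one and your splitting degenerates), and it produces the explicit angle/distance parameters $\theta,\varphi$ that are reused in the four-dimensional argument of Proposition \ref{prop cube group ads collapsed}.
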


Proposition \ref{prop rect group ads} follows from the more general Proposition \ref{prop rect group ads collapsed} below, which also includes the collapsed case. We will consider only the degeneration of cusp groups to a collapsed cusp group when the two planes which coincide are spacelike, as in the following definition:

\begin{defi}[Collapsed cusp group for $\AdS^3$]
The image of a representation of $\Gamma_{\mathrm{rect}}$ into $\Isom(\AdS^3)$ is called a \emph{collapsed cusp group} if the four generators are sent to reflections along three distinct planes, two timelike and one spacelike, {asymptotic to a common} point at infinity.
\end{defi}

\begin{prop} \label{prop rect group ads collapsed}
Let $\rho\colon\Gamma_{\mathrm{rect}}\to\Isom(\AdS^3)$ be a representation whose image is a cusp group or a collapsed cusp group. For all nearby representations $\rho'$, exactly one of the following possibilities holds:
\begin{enumerate}
\item If ${\l s}_1$ and ${\l s}_2$ are  generators such that $\rho({\l s}_1)=\rho({\l s}_2)$ is a reflection in a spacelike plane, then $\rho'({\l s}_1)=\rho'({\l s}_2)$. 
\item The image of $\rho'$ is a cusp group. 
\item The {ideal closures of the} two spacelike planes are disjoint in $\overline{\AdS}^3$, whereas the two timelike planes intersect in a timelike geodesic of $\AdS^3$.
\item The two spacelike planes intersect in $\AdS^3$, whereas the two timelike planes intersect in {two spacelike geodesics} of $\AdS^3$.
\end{enumerate}
\end{prop}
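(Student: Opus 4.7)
The plan is to invoke Lemma \ref{lemma homeo model rep var} and parametrize nearby representations by quadruples $(X'_1, X'_2, X'_3, X'_4) \in (\R^4)^4$ satisfying $q_{-1}(X'_i) = \epsilon_i \in \{\pm 1\}$ (preserved under small perturbations, as these are discrete invariants) together with the four orthogonality conditions $b_{-1}(X'_i, X'_{i+1}) = 0$ coming from the adjacent pairs of generators (indices mod $4$). I would first observe that for a (possibly collapsed) cusp group in $\AdS^3$, the signs $\epsilon_i$ must alternate around the rectangle: since $X_i$ and $X_{i+1}$ are $b_{-1}$-orthogonal and both lie in $v^\perp$, their nonzero images in the Lorentzian quotient $v^\perp/\langle v\rangle$ are orthogonal and must therefore have opposite causal types. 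So, up to relabeling, $\epsilon_1 = \epsilon_3 = -1$ (spacelike planes) and $\epsilon_2 = \epsilon_4 = +1$ (timelike planes).

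Next I would set $U' = \mathrm{span}(X'_1, X'_3)$ and $W' = \mathrm{span}(X'_2, X'_4)$; the four orthogonality conditions say precisely that $W' \subseteq U'^\perp$. At $\rho$ the two timelike planes are distinct and not antipodal as reflections, so the same is true for $\rho'$ nearby, giving $\dim W' = 2$. The whole argument then splits on $\dim U'$. If $\dim U' = 1$, then $X'_1 = \pm X'_3$, which is case (1) (this is only possible when $\rho$ was already collapsed, since otherwise $X_1$ and $X_3$ remain linearly independent nearby). If $\dim U' = 2$, dimension count forces $W' = U'^\perp$, and the restrictions of $b_{-1}$ to $U'$ and $W'$ have signatures summing to $(2,2)$.

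Since $X'_1 \in U'$ is a unit vector with $q_{-1}(X'_1) = -1$, the signature $(2,0)$ for $U'$ is ruled out, leaving three possibilities. \emph{If $U'$ is negative definite}, classical Cauchy-Schwarz gives $|b_{-1}(X'_1, X'_3)| < 1$ (strict, since $X'_1 \neq \pm X'_3$), and similarly $|b_{-1}(X'_2, X'_4)| < 1$ in the positive definite $W'$; Lemmas \ref{lem: AdS angle} and \ref{lem: AdS intersection timelike} then yield case (3). \emph{If $U'$ is Lorentzian $(1,1)$}, the reverse Cauchy-Schwarz inequality for two unit timelike vectors (and its analogue for the two unit spacelike vectors in the Lorentzian $W'$) gives $|b_{-1}(X'_1, X'_3)| > 1$ and $|b_{-1}(X'_2, X'_4)| > 1$, yielding case (4). \emph{If $U'$ is degenerate}, its $1$-dimensional radical is spanned by a null vector $v'$; since $v' \in U' \cap U'^\perp$ and $W' \subseteq U'^\perp$, we get $v' \perp X'_i$ for every $i$, so the four planes $H_{X'_i}$ share the common point at infinity $[v']$ and $\rho'$ is a cusp group, giving case (2).

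The four possibilities are mutually exclusive (distinguished by $\dim U'$ and the signature of $b_{-1}|_{U'}$) and exhaust all small perturbations. The most delicate point is the structural setup---the alternation of the signs $\epsilon_i$ at the cusp group and the persistence of $\dim W' = 2$ under deformation; once this is in hand, the signature trichotomy on the 2-plane $U'$ and the (classical or reversed) Cauchy-Schwarz arguments are entirely routine.
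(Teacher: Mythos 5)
Your argument is correct, and it takes a genuinely different route from the paper's. The paper normalises $X_1=(1,0,0,0)$, $Y_1=(0,1,0,0)$ by the $\O(q_{-1})$-action and then runs an explicit coordinate case analysis: either the fourth plane shares the common point at infinity (cusp group), or the two geodesics $\ell_i=H_{Y_i}\cap H_{X_1}$ in the copy of $\Hyp^2$ intersect or are ultraparallel, and in each case the remaining vectors are computed explicitly ($Y_2$, $X_2$ in terms of an angle $\theta$ and a distance/angle $\varphi$) before invoking Lemmas \ref{lem: AdS angle} and \ref{lem: AdS intersection timelike}. You instead organise the whole analysis around the signature trichotomy of $b_{-1}$ restricted to $U'=\mathrm{span}(X_1',X_3')$, with $W'=U'^{\perp}$ forced by dimension count, and translate each signature into the proposition's cases via Cauchy--Schwarz (definite case), reverse Cauchy--Schwarz in the $(1,1)$-plane (Lorentzian case), and the null radical giving the shared ideal point (degenerate case); your quotient argument in $v^{\perp}/\langle v\rangle$ also re-derives, rather than assumes, the alternation spacelike/timelike of the planes. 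Your approach is coordinate-free and gets the three non-collapsed outcomes in one uniform stroke; the paper's approach is more pictorial (configurations of geodesics in $\Hyp^2$, matching their figures) and produces explicit geometric parameters that they reuse in the four-dimensional argument. Two small points to tighten: the claim that the signatures of $b_{-1}|_{U'}$ and $b_{-1}|_{W'}$ ``sum to $(2,2)$'' holds only when $b_{-1}|_{U'}$ is nondegenerate (which is all you use, since the degenerate case is treated separately), and the reverse Cauchy--Schwarz inequality for the two \emph{spacelike} unit vectors is special to the two-dimensional Lorentzian plane $W'$, so a one-line verification (e.g.\ writing them as $\pm(\cosh b,\sinh b)$ in an orthonormal basis of $W'$) is worth including.
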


\begin{proof}
By Lemmas \ref{lemma clopen} and \ref{lemma homeo model general}, it is sufficient to analyse a neighborhood of a lift $f\colon S\to\R^4$ of $\rho$, where $S$ is the standard generating set of $\Gamma_{\mathrm{rect}}$. Let us denote ${\l s}_1,{\l s}_2$ the generators which are sent by $\rho$ to a reflection in a spacelike plane, and  ${\l t}_1,{\l t}_2$ those sent to a reflection in a timelike plane. The same will occur for representations nearby $\rho$.

Let us fix a nearby representation $\rho'$ and a lift $f'\colon S\to\R^4$. Let us denote 
$X_i=f'({\l s}_i)$ and $Y_i=f'({\l t}_i)$. Recall that $X_i$ is orthogonal to $Y_j$ for $i,j=1,2$. 

Suppose that $X_1\neq \pm X_2$, for otherwise we are in the case of item (1). Up to the action of $\O(q_{-1})$ (see Remark \ref{rmk:action of isometry group}) and up to changing signs, we can assume once and forever that
$$X_1=(1,0,0,0)\quad\mbox{and}\quad Y_1=(0,1,0,0)~.$$

{Suppose first that the hyperplanes 
 $H_{Y_2}$, $H_{X_1}$ and $H_{Y_1}$ are asymptotic to the same point at infinity $p$}. {(As a side remark, observe that in this case they are also asymptotic to the antipodal point $-p$.)} We can assume ${p=(0,0,1,1)}\in\partial\AdS^3$. Together with the orthogonality of $Y_2$ with $X_1$, this implies (up to changing the sign if necessary)
$$Y_2=(0,1,a,-a)$$
for some parameter $a\neq 0$. Applying the orthogonality of $X_2$ with $Y_1$ and $Y_2$, we now find (always up to a sign)
$$X_2=(1,0,b,-b)$$
for some $b$, which implies that $H_{X_2}$ also {
is asymptotic to} the point {$p=(0,0,1,1)$}. Thus we still have a cusp group and we are in the case of item (2).

Suppose instead that {
$\overline{H}_{Y_2} \cap \overline{H}_{X_1} \cap \overline{H}_{Y_1} = \emptyset$}. {Consider a connected component $H_{X_1}^0$ of $H_{X_1}$, which is a copy of $\Hyp^2$. We have two geodesics in $H_{X_1}^0$: $\ell_1=H_{Y_1}\cap H_{X_1}^0 $ and $\ell_2=H_{Y_2}\cap H_{X_1}^0$}. There are two possibilities: either $\ell_1$ and $\ell_2$ intersect in {$H_{X_1}^0$}, or they are ultraparallel. See Figure \ref{fig:AdSgeodesics} and the related Figure \ref{fig:AdSplanes_timelike}.

\begin{figure}
\begin{minipage}[c]{.4\textwidth}
\centering
\includegraphics[scale=.60]{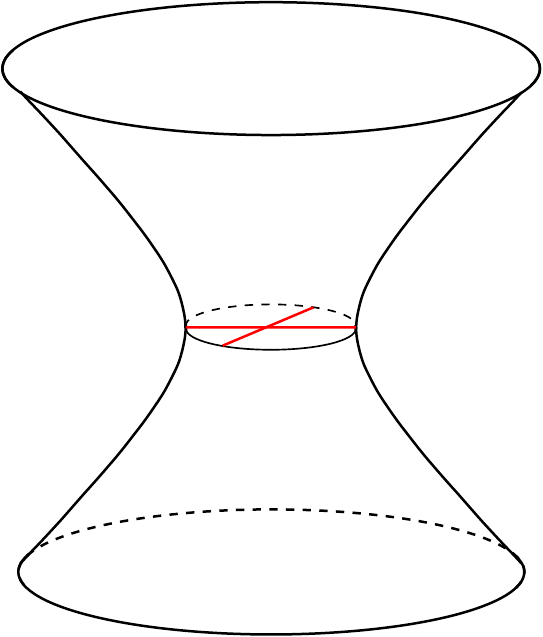}
\end{minipage}
\hspace{1cm}
\begin{minipage}[c]{.4\textwidth}
\centering
\includegraphics[scale=.6]{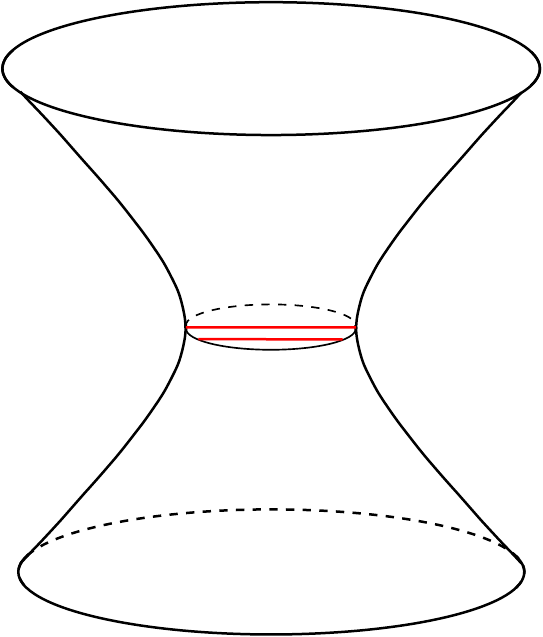}
\end{minipage}
\caption[Two geodesics in $\Hyp^2\subset\AdS^3$.]{\footnotesize The two configurations for the geodesics $\ell_1$ and $\ell_2$ in the ``horizontal'' spacelike plane $H_{X_1}$, as in the proof of Proposition \ref{prop rect group ads collapsed}. The two timelike planes $H_{Y_i}$ and  containing $\ell_i$ and orthogonal to $H_{X_1}$ are as in Figure \ref{fig:AdSplanes_timelike}, left and right figure respectively.}
\label{fig:AdSgeodesics}
\end{figure}

If $\ell_1$ and $\ell_2$ intersect in {$H_{X_1}^0$}, we can assume that $\ell_1\cap\ell_2=\{{(0,0,0,1)}\}$. Equivalently,
$$Y_2=(0,\cos\theta,\sin\theta,0)~,$$
where $\theta$ is the angle between the two geodesics in {$H_{X_1}^0$}. In this case, the two timelike planes $H_{Y_1}$ and $H_{Y_2}$ have timelike intersection by Lemma \ref{lem: AdS intersection timelike} (the intersection is indeed the timelike geodesic ${(\cos(s),0,0,\sin(s))}$). Imposing the orthogonality of $H_{X_2}$ with $H_{Y_1}$ and $H_{Y_2}$, we find (up to a sign)
$$X_2=(\cos\varphi,0,0,\sin\varphi)~,$$
which means that $H_{X_1}$ and $H_{X_2}$ are disjoint in $\overline{\AdS}{}^3$ by Lemma \ref{lem: AdS angle}. (The parameter $\varphi$ is indeed the timelike distance between $H_{X_1}$ and $H_{X_2}$, which is achieved on the timelike geodesic we have just introduced.) So, in this case item (3) of the statement holds.

If $\ell_1$ and $\ell_2$ are ultraparallel, we can assume
$$Y_2=(0,\cosh\theta,0,\sinh\theta)~,$$
where $\theta$ is now the distance between the two aforementioned geodesics in {$H_{X_1}^0$}. In this case, $H_{Y_1}$ and $H_{Y_2}$ have spacelike intersection (which is the geodesic ${(\cosh(s),0,\sinh(s),0)}$, see Lemma \ref{lem: AdS intersection timelike}). Imposing again the orthogonality of $H_{X_2}$ with $H_{Y_1}$ and $H_{Y_2}$, and changing sign if necessary, we find
$$X_2=(\cosh\varphi,0,\sinh\varphi,0)~,$$
namely, $H_{X_1}$ and $H_{X_2}$ intersect in $\AdS^3$ by Lemma  \ref{lem: AdS angle} (the parameter $\varphi$ now being their angle of intersection). Thus, item (4) of the statement holds. This concludes the proof.
\end{proof}

\begin{remark}
In the proof of Proposition \ref{prop rect group ads collapsed} we have used only that $\rho$ can be continuously deformed to $\rho'$. Hence the conclusions of Proposition \ref{prop rect group ads collapsed} and Proposition \ref{prop rect group hyp collapsed} actually hold on the entire connected component of $\mathrm{Hom}_{\mathrm{refl}}(\Gamma_{\mathrm{rect}},G)$ containing $\rho$. 
\end{remark}

\subsection{Rigidity in dimension four} \label{sec:rig dim four}

Let us now move to dimension four.

Let $\Gamma_{\mathrm{cube}}$ be the group generated by the reflections in the faces of a Euclidean cube. The group $\Gamma_{\mathrm{cube}}$ has 6 generators, one for each face, and 12 commutation relations, one for each edge of the cube, involving the two faces adjacent to that edge. Of course, there is also a square-type relation for each generator. There is no relation between the generators corresponding to opposite faces. 

%
\begin{defi}[Cusp group in dimension 4] The image of a representation of $\Gamma_{\mathrm{cube}}$ into $\Isom(\AdS^4)$ or $\Isom(\Hyp^4)$ is called a \emph{cusp group} if the 6 generators are sent to reflections in 6  distinct hyperplanes {asymptotic to a common} point at infinity. 
\end{defi} 

In the AdS case, among these 6 hyperplanes, two opposite hyperplanes are necessarily spacelike, while the other 4 are timelike. 

The following proposition is the fundamental property that can be roughly rephrased as: ``cusp groups stay cusp groups''. Its hyperbolic counterpart is proved in \cite[Lemma 5.3]{KS}.

\begin{prop} \label{prop cube group ads}
Let $\rho\colon \Gamma_{\mathrm{cube}}\to\Isom(\AdS^4)$ be a representation whose image is a cusp group. Then all nearby representations are cusp groups.
\end{prop}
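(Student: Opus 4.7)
My plan is to show directly that the six reflecting hyperplanes of any representation $\rho'$ close to $\rho$ share a common ideal point at infinity, equivalently (by Lemma \ref{lemma hyperplane ads}) that there is a non-zero null vector $p' \in \R^5$ with $p' \perpm f'(\l{s})$ for every generator $\l{s}$ and every lift $f'$ of $\rho'$. I label the generators as $\l{s_1}, \l{s_2}$ (sent to reflections in spacelike hyperplanes) and $\l{t_1}, \l{t_2}, \l{u_1}, \l{u_2}$ (two opposite pairs, sent to timelike reflections). Using Remark \ref{rmk:action of isometry group}, I would conjugate by an element of $\O(q_{-1})$ to put any nearby lift $f'$ in the normal form $f'(\l{s_1}) = e_4 := (0,0,0,0,1)$, and denote $X_2' := f'(\l{s_2})$, $Y_j' := f'(\l{t_j})$, $Z_k' := f'(\l{u_k})$.

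Next I would perform a reduction and a rank bound. Since $\l{s_1}$ commutes with each $\l{t_j}$ and $\l{u_k}$, the vectors $Y_j', Z_k'$ lie in $e_4^{\perpm}$, which with the restriction of $q_{-1}$ is isometric to $\R^{1,3}$; in this subspace they are spacelike unit vectors satisfying the rectangular orthogonality $Y_j' \perpm Z_k'$. Moreover $X_2'$ must lie in $V^{\perpm}$ (the orthogonal taken inside $\R^5$), where $V := \mathrm{span}(Y_1', Y_2', Z_1', Z_2') \subseteq \R^{1,3}$. Were $\dim V = 4$, then $V^{\perpm}$ would reduce to the line $\R e_4$, forcing $X_2' = \pm e_4 = \pm X_1'$, which is excluded by Definition \ref{defi hom refl}. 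Hence $\dim V \le 3$, and since a direct computation on the explicit cusp configuration yields $\dim V = 3$ at $\rho$, the lower semi-continuity of matrix rank ensures $\dim V = 3$ in a neighborhood.

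The heart of the proof is to show that the restriction of $q_{-1}$ to the $3$-dimensional subspace $V \subset \R^{1,3}$ has signature $(0, +, +)$, so that its $1$-dimensional null radical provides the sought $p'$. I plan to rule out every other admissible signature. When $V$ is non-degenerate (signature $(+,+,+)$ or $(-,+,+)$), I examine $\mathrm{span}(Z_1', Z_2')$, whose Gram matrix has determinant $1 - c^2$ for $c = b_{-1}(Z_1', Z_2')$: if $c^2 \neq 1$, the $1$-dimensional orthogonal complement of this span inside $V$ forces $Y_1' = \pm Y_2'$; if $c^2 = 1$, this complement is a null line of $V$, forcing $q_{-1}(Y_j') = 0$; both contradict the hypotheses. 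If $V$ has signature $(-,+,0)$, its non-degenerate quotient is $\R^{1,1}$, in which any two unit spacelike vectors have inner product $\pm\cosh(\theta)$ for some $\theta \in \R$, hence nonzero, contradicting $Y_j' \perpm Z_k'$. If the form on $V$ has rank at most $1$, either $V$ contains no spacelike unit vector or the orthogonality $Y_j' \perpm Z_k'$ fails. Two or more negative eigenvalues cannot occur since $\R^{1,3}$ has only one.

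Assuming the signature of $V$ is indeed $(0, +, +)$, letting $p'$ span its $1$-dimensional null radical gives a non-zero null vector of $\R^5$ orthogonal to $Y_j', Z_k'$ by definition of the radical, and orthogonal to $e_4 = X_1'$ and $X_2'$ because both lie in $V^{\perpm}$. Then $[p'] \in \partial \AdS^4$ is a common ideal point of all six fixed hyperplanes, showing that $\rho'$ is a cusp group. The hard part will be the signature case analysis on $V$, and particularly ruling out the signature $(-,+,0)$, which hinges on the fact that $\R^{1,1}$ admits no two orthogonal unit spacelike vectors. This phenomenon is specific to the signature of $\R^{1,3}$ and is what ultimately enforces the rigidity of cusp groups in $\AdS^4$, in spite of the additional flexibility one might expect in Lorentzian geometry.
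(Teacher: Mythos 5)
Your overall strategy is sound and genuinely different from the paper's. The paper deduces Proposition \ref{prop cube group ads} from the more general ``collapsed'' statement (Proposition \ref{prop cube group ads collapsed}), whose proof is a dimensional reduction: it intersects the configuration with the spacelike hyperplane $H_{X_1}$ (a copy of $\Hyp^3$) and with the timelike hyperplanes $H_{Y_1}$, $H_{Z_1}$ (copies of $\AdS^3$), and plays the three-dimensional flexibility trichotomies (Propositions \ref{prop rect group hyp} and \ref{prop rect group ads collapsed}) against each other to get a contradiction when the deformed group is not a cusp group. You instead argue directly in linear algebra: after normalising $f'(\l{s_1})=e_4$, you study the span $V$ of the four ``timelike'' vectors inside $e_4^{\perpm}\cong\R^{1,3}$, show $\dim V=3$, rule out the non-degenerate signatures via the Gram matrix of $\mathrm{span}(Z_1',Z_2')$ and the orthogonality $Y_j'\perpm Z_k'$, and extract the common ideal point from the null radical of $V$. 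I checked the signature analysis and it works: for a $3$-dimensional subspace of $\R^{1,3}$ the only possibilities are $(+,+,+)$, $(-,+,+)$ and $(0,+,+)$ (your extra cases $(-,+,0)$ and rank $\le 1$ are vacuous there, which is harmless), and your two subcases $c^2\neq 1$ and $c^2=1$ correctly eliminate the non-degenerate options. This buys a self-contained argument that bypasses the three-dimensional propositions entirely, at the price of being tailored to the non-collapsed statement; the paper's route has the advantage of yielding the collapsed version needed later for the analysis at $[\rho_0]$.

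One justification should be repaired. You exclude $X_2'=\pm e_4$ (and likewise $Y_1'=\pm Y_2'$, and implicitly $Z_1'\neq\pm Z_2'$, which you need both for $\dim\mathrm{span}(Z_1',Z_2')=2$ and, in the case $c^2=1$, for the radical vector $Z_1'\mp Z_2'$ to be nonzero) by appealing to Definition \ref{defi hom refl}. That definition only forbids coinciding reflections for pairs of generators in $R$, i.e.\ commuting pairs; opposite faces of the cube satisfy no relation in $\Gamma_{\mathrm{cube}}$, so it does not exclude $\rho'(\l{s_1})=\rho'(\l{s_2})$ --- indeed this is exactly what happens in case (1) of Proposition \ref{prop cube group ads collapsed}. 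In the non-collapsed setting the needed distinctness is true, but for a different reason: the six hyperplanes of $\rho$ are distinct, and the fixed hyperplanes depend continuously on the representation, so for $\rho'$ sufficiently close to $\rho$ the three opposite pairs remain distinct (after choosing the conjugating element and the lift continuously). With that substitution, and phrasing the computation of $\dim V=3$ at $\rho$ for an arbitrary cusp group (they are not all conjugate to one model, but the same orthogonality argument gives $\dim V\ge 3$ at any cusp group, which is all your semicontinuity step needs), your proof goes through.
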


Similarly to dimension three, we will obtain Proposition \ref{prop cube group ads} as a special case of a more general statement including the collapsed case. Let us first give the definition of collapsed cusp group, where two non-commuting generators can be sent to the same reflection (along a spacelike hyperplane in the AdS case):

\begin{defi}[Collapsed cusp group in dimension 4]
The image of a representation of $\Gamma_{\mathrm{cube}}$ into $\Isom(\Hyp^4)$ or $\Isom(\AdS^4)$ is called a \emph{collapsed cusp group} if the 6 generators are sent to reflections along 5 distinct hyperplanes 
{asymptotic to a common} point at infinity. In the AdS case, we require that the unique reflection associated to two generators is along a spacelike hyperplane.
\end{defi}
 
Let us now formulate and prove the more general version of Proposition \ref{prop cube group ads}. 

\begin{prop} \label{prop cube group ads collapsed} 
Let $\rho\colon \Gamma_{\mathrm{cube}}\to\Isom(\AdS^4)$ be a representation whose image is a cusp group or a collapsed cusp group. For all nearby representations $\rho'$, exactly one of the following possibilities holds:
\begin{enumerate}
\item If ${\l s}_1$ and ${\l s}_2$ are  generators such that $\rho({\l s}_1)=\rho({\l s}_2)$ is a reflection in a spacelike hyperplane, then $\rho'({\l s}_1)=\rho'({\l s}_2)$. 
\item The image of $\rho'$ is a cusp group. 
\end{enumerate}
\end{prop}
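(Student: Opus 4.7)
The plan is to work with lifts $f'\colon S \to \R^5$ of nearby representations, as provided by Lemma \ref{lemma homeo model rep var}. Label the three pairs of opposite generators of $\Gamma_{\mathrm{cube}}$ as $(s_i, s_i')$ for $i = 1, 2, 3$, with $(s_1, s_1')$ the pair whose images under $\rho$ are reflections in spacelike hyperplanes (so $q_{-1}(f(s_1)) = -1$), and the other two pairs yielding reflections in timelike hyperplanes. Writing $Y_s := f'(s)$ and $U_i := \mathrm{span}(Y_{s_i}, Y_{s_i'}) \subseteq \R^5$, the commutation structure of $\Gamma_{\mathrm{cube}}$---each face commutes with its four neighbours but not with the opposite face---translates, via Lemma \ref{lemma:map refl ads}, into the pairwise $b_{-1}$-orthogonality $U_i \perpm U_j$ for all $i \neq j$.

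First I would dispose of the easy alternative: if $\rho$ is collapsed and $\rho'(s_1) = \rho'(s_1')$, we are already in case (1). Otherwise $Y_{s_1} \neq \pm Y_{s_1'}$, so $\dim U_1 = 2$; since the two timelike pairs give distinct reflections under $\rho$, by continuity $\dim U_2 = \dim U_3 = 2$ for $\rho'$ sufficiently close to $\rho$. The task then reduces to exhibiting a common null direction, which will give the sought point at infinity for the six hyperplanes.

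The heart of the argument is a dimension count in the non-degenerate quadratic space $(\R^5, q_{-1})$ of signature $(3,2)$. From $U_3 \subseteq (U_1 + U_2)^{\perpm}$ and $\dim(U_1 + U_2)^{\perpm} = 5 - \dim(U_1 + U_2)$ one gets $\dim(U_1 + U_2) \leq 3$. The possibility $\dim(U_1 + U_2) = 2$ would force $U_1 = U_2$, hence $U_1 \perpm U_1$, making $U_1$ totally $q_{-1}$-isotropic, in contradiction with $q_{-1}(Y_{s_1}) = -1$. Therefore $\dim(U_1 + U_2) = 3$, $U_3 = (U_1 + U_2)^{\perpm}$, and $\dim(U_1 \cap U_2) = 1$. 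For any non-zero $v \in U_1 \cap U_2$, the orthogonality $U_1 \perpm U_2$ implies that $v$ is $b_{-1}$-orthogonal to every element of $U_1 + U_2$, so $v \in (U_1 + U_2)^{\perpm} = U_3$ and in particular $v \perpm v$, i.e.\ $q_{-1}(v) = 0$. Moreover, $v$ is $b_{-1}$-orthogonal to each generator vector: for $Y_{s_i}$ or $Y_{s_i'}$, choose any $j \neq i$; then $v \in U_j \perpm U_i$ gives the claim. Therefore $[v] \in \partial\AdS^4$ lies on the closure of every hyperplane $H_{Y_s}$. Since these six hyperplanes remain distinct (by continuity for the timelike pairs and by our standing assumption for the spacelike pair), the image of $\rho'$ is a cusp group, which is case (2).

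The main technical point is the dimension count above, specifically ruling out $U_1 = U_2$. This is precisely where both the four-dimensional setting and the non-isotropy of the vectors $Y_s$ are essential: the ambient signature $(3,2)$ is just enough to force three pairwise-orthogonal non-isotropic $2$-dimensional subspaces to share a common null line, a rigidity that breaks down in dimension three, as witnessed by the multiple alternatives of Proposition \ref{prop rect group ads collapsed}.
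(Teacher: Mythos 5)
Your proof is correct, and it takes a genuinely different route from the paper. The paper proves this proposition by slicing: it intersects the configuration with the spacelike hyperplane $H_{X_1}$ and then with the timelike hyperplanes $H_{Y_1}$, $H_{Z_1}$, reducing to the three-dimensional statements (Propositions \ref{prop rect group hyp} and \ref{prop rect group ads collapsed}) and deriving a contradiction from the incompatible relative positions of $H_{X_1}$ and $H_{X_2}$ dictated by items (3) and (4) of the latter. You instead argue directly in the quadratic space $(\R^5,q_{-1})$: after discarding case (1), the three spans $U_1,U_2,U_3$ of the opposite pairs are pairwise $b_{-1}$-orthogonal $2$-planes containing non-isotropic vectors, and your dimension count ($\dim(U_1+U_2)\le 3$ from $U_3\subseteq (U_1+U_2)^{\perpm}$, with $\dim(U_1+U_2)=2$ excluded by non-isotropy) forces a common line $U_1\cap U_2\cap U_3$ that is isotropic and orthogonal to all six defining vectors, i.e.\ a common ideal point; distinctness of the six hyperplanes is then immediate (within pairs by your standing assumption and openness, across pairs because orthogonal hyperplanes with unit normals cannot coincide). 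What your approach buys: it bypasses the three-dimensional flexibility propositions entirely, it is insensitive to which pairs are spacelike or timelike (only non-isotropy of the normals is used), it transfers verbatim to the hyperbolic case (Proposition \ref{prop cube group ads collapsed 2}), and it makes the dimensional mechanism transparent --- with only two pairs in $\R^4$ the count gives no constraint, which is exactly the flexibility seen in Proposition \ref{prop rect group ads collapsed}. What the paper's route buys is finer geometric information about how a would-be deformation fails (which pairs intersect or become disjoint), and a proof structurally parallel to the hyperbolic argument of Kerckhoff--Storm, reusing lemmas that are needed elsewhere in the paper anyway. One small point of care in your write-up: in the non-collapsed case the inequality $Y_{s_1}\neq\pm Y_{s_1'}$ for $\rho'$ is not an assumption but follows from the same openness-of-distinctness remark you invoke for the timelike pairs; as stated this is implicit but harmless.
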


\begin{proof}
Similarly to the three-dimensional case treated in the previous section, any representation $\rho'$ nearby $\rho$ lies in $\mathrm{Hom}_{\mathrm{refl}}(\Gamma_{\mathrm{cube}},G)$, hence it sends the 6 standard generators of $\Gamma_{\mathrm{cube}}$ to reflections. Moreover, the hyperplanes of $\rho'$ have the same type (spacelike or timelike) as for $\rho$.

Let us pick a lift $f'\colon S\to\R^5$ of $\rho'$, for $S$ the standard generating set of $\Gamma_{\mathrm{cube}}$. Denote by ${\l s}_1,{\l s}_2$ the two generators corresponding to opposite faces of the cube which are sent to reflections in spacelike hyperplanes, and $X_i=f'({\l s}_i)$ (so that $q_{-1}(X_i)=-1$). Similarly we define $Y_i$ and $Z_i$ for $i=1,2$, on which $q_{-1}$ takes value $1$.  Hence each of this 6 vectors is orthogonal to 4 of the others: more precisely, $A_i$ is orthogonal to all the others except $A_j$, for $A\in\{X,Y,Z\}$ and $i,j=1,2$.

Now, let us assume that the hyperplanes $H_{X_1}$ and $H_{X_2}$ do not coincide, that is $X_1\neq \pm X_2$. We shall show that the image of $\rho'$ is still a cusp group.

Let us start by considering the intersection with {a connected component $H_{X_1}^0$ of $H_{X_1}$}, which is a copy of $\Hyp^3$. Here we see the (two-dimensional) planes $H_{Y_1}\cap {H_{X_1}^0}$, $H_{Y_2}\cap {H_{X_1}^0}$, $H_{Z_1}\cap {H_{X_1}^0}$ and $H_{Z_2}\cap {H_{X_1}^0}$, whose associated reflections give a representation $\Gamma_{\mathrm{rect}}\to\Isom(\Hyp^3)$ which is nearby a (rectangular) cusp group. As in the proof of Proposition \ref{prop rect group ads}, it is easy to see that if this representation of  $\Gamma_{\mathrm{rect}}$ is a cusp group in ${H_{X_1}^0}$, then necessarily also $H_{X_2}$ 
{is asymptotic to a common} point at infinity with $H_{Y_1}$, $H_{Y_2}$, $H_{Z_1}$, $H_{Z_2}$. Therefore the image of $\Gamma_{\mathrm{cube}}$ is still a cusp group, since we are assuming that $H_{X_2} \neq H_{X_1}$.

Hence let us assume that the representation of $\Gamma_{\mathrm{rect}}$ is not a cusp group, and we will derive a contradiction. By Proposition \ref{prop rect group hyp} (up to relabelling) we may assume that $H_{Y_1}\cap {H_{X_1}^0}$ and $H_{Y_2}\cap {H_{X_1}^0}$ intersect in ${H_{X_1}^0}$, while $H_{Z_1}\cap {H_{X_1}^0}$ and $H_{Z_2}\cap {H_{X_1}^0}$ are disjoint in ${H_{X_1}^0}$. This implies that $H_{Y_1}\cap H_{Y_2}$ is a timelike plane (i.e. a copy of $\AdS^2$), while $H_{Z_1}\cap H_{Z_2}$ is spacelike (i.e. a copy of $\Hyp^2$). To see this, one can in fact assume that, up to the signs,
$$X_1=(1,0,0,0,0)\qquad Y_1=(0,1,0,0,0)\qquad Y_2=(0,\cos\theta,\sin\theta,0,0)~,$$
and apply Lemma \ref{lem: AdS intersection timelike} --- and similarly for $Z_1$ and $Z_2$.

Now, let us consider the intersection with $H_{Y_1}$, which is a copy of $\AdS^3$. We have thus a representation of $\Gamma_{\mathrm{rect}}$ acting on this copy of $\AdS^3$ as a cusp group or collapsed cusp group, with generators which are reflections in $H_{Z_1}\cap H_{Y_1}$, $H_{Z_2}\cap H_{Y_1}$, $H_{X_1}\cap H_{Y_1}$ and $H_{X_2}\cap H_{Y_1}$. Since $H_{Z_1}\cap H_{Z_2}$ is spacelike, then $H_{Z_1}\cap H_{Z_2}\cap H_{Y_1}$ is also spacelike, and therefore we are in the situation of Proposition \ref{prop rect group ads collapsed} item (4), recalling that $H_{X_1}\neq H_{X_2}$ by our assumption. This implies that $H_{X_1}\cap H_{Y_1}$ and $H_{X_2}\cap H_{Y_1}$ intersect in $H_{Y_1}\subset\AdS^4$.

On the other hand, considering the intersection with $H_{Z_1}$, which is again a copy of $\AdS^3$, since $H_{Y_1}\cap H_{Y_2}$ is timelike, we find that $H_{Y_1}\cap H_{Y_2}\cap H_{Z_1}$ is a timelike geodesic. By Proposition \ref{prop rect group ads collapsed} item (3), $H_{X_1}\cap H_{Z_1}$ and $H_{X_2}\cap H_{Z_1}$ do not intersect in $H_{Z_1}$, which in turn implies (since $H_{X_1}$ and $H_{X_2}$ are both orthogonal to $H_{Z_1}$) that $H_{X_1}$ and $H_{X_2}$ are disjoint in $\AdS^4$. This contradicts the conclusion of the previous paragraph.
\end{proof}

\begin{remark}
In case (1) of Proposition \ref{prop cube group ads collapsed}, i.e. when $\rho({\l s}_1)=\rho({\l s}_2)$, the following possibility is not excluded: for some deformation $\rho'$ of $\rho$, the remaining 4 generators ${\l s}_3,\ldots,{\l s}_6$ (which are sent by $\rho$ to a rectangular cusp group in a copy of $\Hyp^3$) are \emph{not} sent by $\rho'$ to a cusp group. 
\end{remark}

The analogous property for $\Hyp^4$, which is a generalisation of \cite[Lemma 5.3]{KS} can be proved along the same lines. We state it here:

\begin{prop} \label{prop cube group ads collapsed 2}
Let $\rho\colon \Gamma_{\mathrm{cube}}\to\Isom(\Hyp^4)$ be a representation whose image is a cusp group or a collapsed cusp group. For all nearby representations $\rho'$ exactly one of the following possibilities holds:
\begin{enumerate}
\item If ${\l s}_1$ and ${\l s}_2$ are generators such that $\rho({\l s}_1)=\rho({\l s}_2)$, then $\rho'({\l s}_1)=\rho'({\l s}_2)$. 
\item The image of $\rho'$ is a cusp group. 
\end{enumerate}
\end{prop}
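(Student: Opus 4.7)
The plan is to mirror closely the proof of Proposition \ref{prop cube group ads collapsed}, replacing each AdS tool with its hyperbolic counterpart (Lemma \ref{lemma angle hyp} in place of Lemmas \ref{lem: AdS angle}--\ref{lem: AdS intersection timelike}; Propositions \ref{prop rect group hyp}--\ref{prop rect group hyp collapsed} in place of Propositions \ref{prop rect group ads}--\ref{prop rect group ads collapsed}). First, by Lemmas \ref{lemma clopen} and \ref{lemma homeo model rep var}, I would pick a lift $f'\colon S\to\R^5$ of a nearby representation $\rho'$ and label the six resulting $q_1$-unit vectors as $X_1, X_2, Y_1, Y_2, Z_1, Z_2$, grouped by pairs of opposite generators of $\Gamma_{\mathrm{cube}}$, with $X_1, X_2$ corresponding to the (possibly) collapsed pair ${\l s}_1, {\l s}_2$ of the statement. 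The commutation relations give $b_1(A_i, B_j)=0$ whenever $A\neq B\in\{X,Y,Z\}$, and the assumption on $\rho$ (collapse only at the $X$-pair) forces $Y_1\neq\pm Y_2$ and $Z_1\neq\pm Z_2$, both at $\rho$ and at $\rho'$ by continuity. If $X_1=\pm X_2$ we are in case (1) of the statement; henceforth I assume $X_1\neq\pm X_2$ and aim to prove that the image of $\rho'$ is a full cusp group.

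The first substantial step is to restrict to $H_{X_1}\cong\Hyp^3$: the four traces $H_{Y_i}\cap H_{X_1}$ and $H_{Z_j}\cap H_{X_1}$ form a representation of $\Gamma_{\mathrm{rect}}$ into $\Isom(\Hyp^3)$ near a non-collapsed cusp group, and Proposition \ref{prop rect group hyp} leaves two alternatives. In the cusp alternative, all four traces share an ideal point $p\in\partial H_{X_1}$; I would then repeat the linear-algebra step of the AdS proof. Tangency at infinity makes $\mathrm{Span}(Y_1,Y_2)$ and $\mathrm{Span}(Z_1,Z_2)$ two-dimensional degenerate subspaces with common null direction $p$, so $V=\mathrm{Span}(Y_1,Y_2,Z_1,Z_2)$ has dimension $3$ and its $b_1$-orthogonal is $\mathrm{Span}(X_1,X_2)$. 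The unique null line in $V^\perpp\cap X_1^\perpp$ is spanned by $X_2-dX_1$ with $d=b_1(X_1,X_2)$; nullity forces $d^2=1$, and a direct check gives $b_1(X_2, X_2-dX_1)=0$, so $p$ lies on $H_{X_2}$ as well. By Lemma \ref{lemma angle hyp}, $H_{X_1}$ and $H_{X_2}$ are then tangent at infinity at $p$, and $\rho'$ is a cusp group.

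In the alternative case of Proposition \ref{prop rect group hyp}, up to relabelling $Y$ and $Z$ we may assume $H_{Y_1}\cap H_{Y_2}\neq\emptyset$ in $\Hyp^4$ while $H_{Z_1}$ and $H_{Z_2}$ have disjoint closures in $\overline{\Hyp^4}$. I would derive a contradiction by examining the restrictions of $\rho'$ to $H_{Y_1}$ and to $H_{Z_1}$, each a copy of $\Hyp^3$ carrying a rectangular representation near a (possibly collapsed) cusp group, and applying Proposition \ref{prop rect group hyp collapsed} on both. Alternative (2) (a new cusp in the restriction) is excluded because one of the two opposite pairs of traces either meets in the interior or is disjoint at infinity, so it cannot share a common ideal point. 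Alternative (1) is excluded because an equality such as $H_{X_1}\cap H_{Y_1}=H_{X_2}\cap H_{Y_1}$ would force $\mathrm{Span}(X_1)=\mathrm{Span}(X_2)$ inside $Y_1^\perpp$, hence $X_1=\pm X_2$, against our assumption (and similarly for $H_{Z_1}$). So both restrictions fall in alternative (3), producing opposite conditions on $b_1(X_1,X_2)$: from $H_{Y_1}$, since the $Z$-pair is disjoint, the $X$-pair must intersect, yielding $|b_1(X_1,X_2)|<1$; from $H_{Z_1}$, since the $Y$-pair intersects (the Gram matrix of $\{Y_1,Y_2,Z_1\}$ is positive definite when $H_{Y_1}\cap H_{Y_2}\neq\emptyset$, so $H_{Y_1}\cap H_{Y_2}\cap H_{Z_1}$ is a geodesic), the $X$-pair must be disjoint, yielding $|b_1(X_1,X_2)|>1$. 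This is the desired contradiction.

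I anticipate the main obstacle to be a bookkeeping one: carefully tracking how the collapsed hypothesis on $\rho$ descends to each restricted rectangular representation in $H_{X_1}, H_{Y_1}, H_{Z_1}$, and invoking the correct clause of Proposition \ref{prop rect group hyp collapsed} in each. Once that is set up, the underlying linear-algebraic ingredients are elementary consequences of Lemma \ref{lemma angle hyp} and of direct Gram-matrix computations.
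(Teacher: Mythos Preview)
Your proposal is correct and follows essentially the same approach as the paper's proof of the AdS analogue (Proposition \ref{prop cube group ads collapsed}), which is exactly what the paper indicates: it states that the hyperbolic version ``can be proved along the same lines'' without spelling out details. Your linear-algebra argument in the cusp alternative (showing $\dim V=3$ and $V^{\perpp}=\mathrm{Span}(X_1,X_2)$, then forcing $|b_1(X_1,X_2)|=1$) is in fact more explicit than the paper's ``it is easy to see'' at the corresponding step, and the contradiction step via the two restrictions to $H_{Y_1}$ and $H_{Z_1}$ mirrors the AdS proof verbatim with Lemma~\ref{lemma angle hyp} and Propositions~\ref{prop rect group hyp}--\ref{prop rect group hyp collapsed} replacing their AdS counterparts.
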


\section{The hyperbolic and AdS character variety of $\Gamma_{22}$} \label{sec:char_var}

In this section, we study the $\Isom(\Hyp^4)$ and $\Isom(\AdS^4)$ character varieties of the group $\Gamma_{22}$ near the conjugacy classes of the holonomy representations $\rho_t$ found in \cite{KS,transition_4-manifold}. We prove here the ``topological part'' of Theorem \ref{teo:main} (Theorem \ref{teo:main_weak}) in the hyperbolic and AdS case.

\subsection{The group $\Gamma_{22}$} \label{sec:Gamma22}

As in \cite{KS}, we define
$$\Gamma_{22}<\mathrm{Isom}(\Hyp^4)$$
as the group generated by the hyperbolic reflections  along the hyperplanes determined by the $22$ vectors in Table \ref{table:G22}. These hyperplanes bound a right-angled polytope in $\Hyp^4$ of infinite volume, which is obtained by ``removing two opposite walls'' from the ideal right-angled $24$-cell. 

All the dihedral angles between two intersecting hyperplanes are right. Therefore $\Gamma_{22}$ is a right-angled Coxeter group. We will consider $\Gamma_{22}$ as an abstract group, that is the right-angled Coxeter groups on $22$ generators
$$\p0,\ldots,\p7,\m0,\ldots,\m7,\l A,\ldots,\l F$$
satisfying the following relations:
\begin{itemize}
\item $\l s^2=1$ for each generator $\l s$,
\item $\l s_1\l s_2=\l s_2\l s_1$ for each pair $\l s_1,\l s_2$ of generators such that the corresponding vectors in Table \ref{table:G22} are orthogonal with respect to the bilinear form $b_1$. 
\end{itemize}

The generators are partitioned into 3 \emph{types}: \emph{positive} $\p0,\ldots,\p7$, \emph{negative} $\m0,\ldots,\m7$, and \emph{letters} $\l A,\ldots,\l F$. The type is inherited from the standard 3-colouring of the facets of the 24-cell (see \cite{KS} for more details).

The reader can check from Table \ref{table:G22} that there are no commutation condition between two generators of the same type, that every $\l i^+$ commutes with 4 vectors of type $\l j^-$ (including $\m i$), and every $\l X\in\{\l A,\ldots,\l F\}$ commutes with $\m i$ and $\p i$ for 4 choices of $\l i\in\{\l 0,\ldots,\l 7\}$. Hence there are $8\cdot 4+6\cdot 8=80$ commutation relations. Altogether, there are $102=22+80$ relations. 

We would like to stress once more that throughout the following (with a few exceptions which will be remarked opportunely) we will use the symbols $\p i\in\{\p0,\ldots,\p7\}$, $\m i\in\{\m0,\ldots,\m7\}$ and $\l X\in\{\l A,\ldots,\l F\}$ to denote the 22 abstract generators of $\Gamma_{22}$ (rather than vectors in $\R^5$). 

\begin{table} 
\begin{eqnarray*}
\p{0} = \left( \sqrt{2},+1,+1,+1,+1 \right) , & &
\m{0} = \left( \sqrt{2},+1,+1,+1,-1 \right),\\
\p{1} = \left( \sqrt{2},+1,-1,+1,-1 \right),& &
\m{1} = \left( \sqrt{2},+1,-1,+1,+1 \right),\\
\p{2} = \left( \sqrt{2},+1,-1,-1,+1 \right),& &
\m{2} = \left( \sqrt{2},+1,-1,-1,-1 \right),\\
\p{3} = \left( \sqrt{2},+1,+1,-1,-1 \right),& &
\m{3} = \left( \sqrt{2},+1,+1,-1,+1 \right),\\
\p{4} = \left( \sqrt{2},-1,+1,-1,+1 \right),& &
\m{4} = \left( \sqrt{2},-1,+1,-1,-1 \right),\\
\p{5} = \left( \sqrt{2},-1,+1,+1,-1 \right),& &
\m{5} = \left( \sqrt{2},-1,+1,+1,+1 \right),\\
\p{6} = \left( \sqrt{2},-1,-1,+1,+1 \right),& &
\m{6} = \left( \sqrt{2},-1,-1,+1,-1 \right),\\
\p{7} = \left( \sqrt{2},-1,-1,-1,-1 \right),& &
\m{7} = \left( \sqrt{2},-1,-1,-1,+1 \right),\\
\l{A} = \left( 1,+\sqrt{2},0,0,0 \right),& &
\l{B} = \left( 1,0,+\sqrt{2},0,0 \right),\\
\l{C} = \left( 1,0,0,+\sqrt{2},0 \right),& &
\l{D} = \left( 1,0,0,-\sqrt{2},0 \right),\\
\l{E} = \left( 1,0,-\sqrt{2},0,0 \right),& &
\l{F} = \left( 1,-\sqrt{2},0,0,0 \right).
\end{eqnarray*}
\caption{\footnotesize The $22$ unit vectors defining the bounding hyperplanes of a right-angled polytope in $\Hyp^4$. The reflections in these hyperplanes generate the Coxeter group $\Gamma_{22}$. Adding the vectors $(1,0,0,0,\pm\sqrt 2)$ to this list one obtains the ideal right-angled $24$-cell.}\label{table:G22}
\end{table}

\subsection{A curve of geometric representations}

Let us now introduce the representations  of our interest, which appear in the statement of Theorem \ref{teo:main}. Unlike the introduction, we will omit the superscript $G$ hereafter, and the ambient geometry we consider will be clear from the context.

\begin{defi}[The two paths $\rho_t$] \label{defi rhot}
For $t\in (-1,1)$, we define $\rho_t$ to be the representation of $\Gamma_{22}$ in $\Isom(\Hyp^4)$ (resp. $\Isom(\AdS^4)$) sending each generator $\l s$ of $\Gamma_{22}$ to the hyperbolic (resp. AdS) reflection $r_{f_t(\l s)}$ associated to the corresponding vector $f_t(\l s)$ of Table \ref{table:walls} (resp. Table \ref{table:walls2}).
\end{defi}

\begin{table} 
\small
\begin{eqnarray*}
f_t(\p{0}) =\frac{1}{\sqrt{1+t^2}} \left( \sqrt{2}\ t,+t,+t,+t,+1 \right) , & &
f_t(\m{0}) =\frac{1}{\sqrt{1+t^2}} \left( \sqrt{2},+1,+1,+1,-t \right),\\  
f_t(\p{1}) =\frac{1}{\sqrt{1+t^2}} \left( \sqrt{2}\ t,+t,-t,+t,-1 \right),& &
f_t(\m{1}) =\frac{1}{\sqrt{1+t^2}} \left( \sqrt{2},+1,-1,+1,+t \right),\\
f_t(\p{2}) =\frac{1}{\sqrt{1+t^2}} \left( \sqrt{2}\ t,+t,-t,-t,+1 \right),& &
f_t(\m{2}) =\frac{1}{\sqrt{1+t^2}} \left( \sqrt{2},+1,-1,-1,-t \right),\\
f_t(\p{3}) =\frac{1}{\sqrt{1+t^2}} \left( \sqrt{2}\ t,+t,+t,-t,-1 \right),& &
f_t(\m{3}) =\frac{1}{\sqrt{1+t^2}} \left( \sqrt{2},+1,+1,-1,+t \right),\\
f_t(\p{4}) =\frac{1}{\sqrt{1+t^2}} \left( \sqrt{2}\ t,-t,+t,-t,+1 \right),& &
f_t(\m{4}) =\frac{1}{\sqrt{1+t^2}} \left( \sqrt{2},-1,+1,-1,-t \right),\\
f_t(\p{5}) =\frac{1}{\sqrt{1+t^2}} \left( \sqrt{2}\ t,-t,+t,+t,-1 \right),& &
f_t(\m{5}) =\frac{1}{\sqrt{1+t^2}} \left( \sqrt{2},-1,+1,+1,+t \right),\\
f_t(\p{6}) =\frac{1}{\sqrt{1+t^2}} \left( \sqrt{2}\ t,-t,-t,+t,+1 \right),& &
f_t(\m{6}) =\frac{1}{\sqrt{1+t^2}} \left( \sqrt{2},-1,-1,+1,-t \right),\\
f_t(\p{7}) =\frac{1}{\sqrt{1+t^2}} \left( \sqrt{2}\ t,-t,-t,-t,-1 \right),& &
f_t(\m{7}) =\frac{1}{\sqrt{1+t^2}} \left( \sqrt{2},-1,-1,-1,+t \right),\\
f_t(\l{A}) = \left( 1,+\sqrt{2},0,0,0 \right),& &
f_t(\l{B}) = \left( 1,0,+\sqrt{2},0,0 \right),\\
f_t(\l{C}) = \left( 1,0,0,+\sqrt{2},0 \right),& &
f_t(\l{D}) = \left( 1,0,0,-\sqrt{2},0 \right),\\
f_t(\l{E}) = \left( 1,0,-\sqrt{2},0,0 \right),& &
f_t(\l{F}) = \left( 1,-\sqrt{2},0,0,0 \right).
\end{eqnarray*}
\caption{\footnotesize The list of vectors $X$, satisfying $q_1(X)=1$, in Definition \ref{defi rhot}. The representation $\rho_t$ maps each generator $\l s$ to the hyperbolic reflection in the orthogonal complement of $f_t(\l s)$.}\label{table:walls}
\end{table}

\begin{table} 
\small
\begin{eqnarray*}
f_t(\p{0}) =\frac{1}{\sqrt{1-t^2}} \left( \sqrt{2}\ t,+t,+t,+t,+1 \right) , & &
f_t(\m{0}) =\frac{1}{\sqrt{1-t^2}} \left( \sqrt{2},+1,+1,+1,+t \right),\\
f_t(\p{1}) =\frac{1}{\sqrt{1-t^2}} \left( \sqrt{2}\ t,+t,-t,+t,-1 \right),& &
f_t(\m{1}) =\frac{1}{\sqrt{1-t^2}} \left( \sqrt{2},+1,-1,+1,-t \right),\\
f_t(\p{2}) =\frac{1}{\sqrt{1-t^2}} \left( \sqrt{2}\ t,+t,-t,-t,+1 \right),& &
f_t(\m{2}) =\frac{1}{\sqrt{1-t^2}} \left( \sqrt{2},+1,-1,-1,+t \right),\\
f_t(\p{3}) =\frac{1}{\sqrt{1-t^2}} \left( \sqrt{2}\ t,+t,+t,-t,-1 \right),& &
f_t(\m{3}) =\frac{1}{\sqrt{1-t^2}} \left( \sqrt{2},+1,+1,-1,-t \right),\\
f_t(\p{4}) =\frac{1}{\sqrt{1-t^2}} \left( \sqrt{2}\ t,-t,+t,-t,+1 \right),& &
f_t(\m{4}) =\frac{1}{\sqrt{1-t^2}} \left( \sqrt{2},-1,+1,-1,+t \right),\\
f_t(\p{5}) =\frac{1}{\sqrt{1-t^2}} \left( \sqrt{2}\ t,-t,+t,+t,-1 \right),& &
f_t(\m{5}) =\frac{1}{\sqrt{1-t^2}} \left( \sqrt{2},-1,+1,+1,-t \right),\\
f_t(\p{6}) =\frac{1}{\sqrt{1-t^2}} \left( \sqrt{2}\ t,-t,-t,+t,+1 \right),& &
f_t(\m{6}) =\frac{1}{\sqrt{1-t^2}} \left( \sqrt{2},-1,-1,+1,+t \right),\\
f_t(\p{7}) =\frac{1}{\sqrt{1-t^2}} \left( \sqrt{2}\ t,-t,-t,-t,-1 \right),& &
f_t(\m{7}) =\frac{1}{\sqrt{1-t^2}} \left( \sqrt{2},-1,-1,-1,-t \right),\\
f_t(\l{A}) = \left( 1,+\sqrt{2},0,0,0 \right),& &
f_t(\l{B}) = \left( 1,0,+\sqrt{2},0,0 \right),\\
f_t(\l{C}) = \left( 1,0,0,+\sqrt{2},0 \right),& &
f_t(\l{D}) = \left( 1,0,0,-\sqrt{2},0 \right),\\
f_t(\l{E}) = \left( 1,0,-\sqrt{2},0,0 \right),& &
f_t(\l{F}) = \left( 1,-\sqrt{2},0,0,0 \right).
\end{eqnarray*}
\caption{\footnotesize The list of vectors for the definition of $\rho_t$, in the AdS case. The quadratic form $q_{-1}$ takes value $-1$ on the vectors $f_t(\p i)$, and $+1$ on the vectors $f_t(\m i)$ and $f_t(\l X)$.}\label{table:walls2}
\end{table}

Some comments are necessary to explain Definition \ref{defi rhot} and the tables involved:
 
\begin{enumerate}
\item It can be checked that all the orthogonality relations (with respect to the bilinear form $b_1$) between vectors in Table \ref{table:G22} are maintained for the vectors in Table \ref{table:walls} with respect to $b_1$, and in Table \ref{table:walls2} with respect to $b_{-1}$. This shows that Definition \ref{defi rhot} is well-posed, meaning that $\rho_t$ are representations of $\Gamma_{22}$ by Lemma \ref{lemma:map refl ads}.

\item By construction the representations $\rho_t$ are in $\mathrm{Hom}_{\mathrm{refl}}(\Gamma_{22},G)$, for $G=\Isom(\Hyp^4)$ or $\Isom(\AdS^4)$ (see Definition \ref{defi hom refl}). Tables \ref{table:walls}  and  \ref{table:walls2} exhibit continuous lifts $f_t\colon S\to \R^{110}$ as in Lemma \ref{lemma homeo model general}, taking values in a subset of $\R^{110}$ defined by the vanishing of $102$ quadratic conditions, for $S$ the standard generating set of $\Gamma_{22}$. 

\item The vectors of Table \ref{table:walls} coincide with those of Table \ref{table:G22} for $t=1$. Hence, in the hyperbolic case, the path of representations $\rho_t$ is a deformation of the reflection group of the aforementioned right-angled polytope with 22 facets. For $t\in (0,1)$, this coincides with the path of representations exhibited in \cite{KS}. For $t\in (-1,0)$, the representation $\rho_t$ is obtained by conjugating $\rho_{-t}$ by the reflection $r$ in the ``horizontal'' hyperplane $x_4=0$
\begin{equation} \label{eq:relfection singular}
r\colon(x_0,x_1,x_2,x_3,x_4)\mapsto (x_0,x_1,x_2,x_3,-x_4)~.
\end{equation}
(This is seen immediately using Remark \ref{rmk:action of isometry group}.) 

\item On the Anti-de Sitter side, the path $\rho_t$ 
has been exhibited in \cite{transition_4-manifold} for $t\in (-1,0)$. Again, the path is extended here for positive times by conjugation by $r$.

\item Both these paths occur as the holonomy representations of a deformation of hyperbolic and Anti-de Sitter cone-orbifold structures. The purpose of our previous work \cite{transition_4-manifold} was to describe the geometric transition from hyperbolic ($t>0$) to Anti-de Sitter ($t<0$) structures. Since here we are interested in the $\Isom(\Hyp^4)$- and $\Isom(\AdS^4)$-chatacter varieties on their own, we found more useful to treat the two paths $\rho_t$ separately, and extend each of them by conjugation
with the orientation-reversing transformation $r$ also for negative (resp. positive) times. 
\end{enumerate}

\subsection{The collapsed representation and the cuboctahedron}\label{subsec:cuboct}

\begin{table}
\begin{eqnarray*}
 v_{\l {0}}=& \left(\sqrt{2},+1,+1,+1 \right),&\\ 
v_{\l {1}}  =& \left(\sqrt{2},+1,-1,+1 \right),&\qquad v_{\l {A}}  = \left( 1,+\sqrt{2},0,0 \right), \\
v_{\l {2}}  = &\left(\sqrt{2},+1,-1,-1 \right),&\qquad v_{\l {B}}  = \left( 1,0,+\sqrt{2},0 \right),\\
v_{\l {3}}  = &\left(\sqrt{2},+1,+1,-1 \right),&\qquad v_{\l {C}}  = \left( 1,0,0,+\sqrt{2} \right),\\
v_{\l {4}}  = &\left(\sqrt{2},-1,+1,-1 \right),&\qquad v_{\l {D}}  = \left( 1,0,0,-\sqrt{2} \right),\\
v_{\l {5}}  = &\left(\sqrt{2},-1,+1,+1 \right),&\qquad v_{\l {E}}  = \left( 1,0,-\sqrt{2},0 \right),\\
v_{\l {6}}  = &\left(\sqrt{2},-1,-1,+1 \right),&\qquad v_{\l {F}}  = \left( 1,-\sqrt{2},0,0 \right).\\
v_{\l {7}}  = &\left(\sqrt{2},-1,-1,-1 \right),&\\
\end{eqnarray*}
\caption{\footnotesize The vectors $ v_{\l i},v_{\l X}\in \R^{1,3}$ defining the bounding planes of an ideal right-angled cuboctahedron in $\Hyp^3$. These vectors are involved also in the Definition \ref{defi HP holonomy}, introducting the cocycles $\tau_\lambda$ in the vector space $Z^1_{\varrho_0}(\Gamma_{22},\R^{1,3})$.}\label{table:vectors cocycle}
\end{table}

For $t=0$ the hyperbolic and Anti-de Sitter representations $\rho_0$ take value in the stabiliser of the hyperplane {given by} $\{x_4=0\}$. Unlike the case $t\neq 0$, these representations are not holonomies of hyperbolic/AdS orbifold structures, but correspond to what we call the \emph{collapse} of the respective geometric structures.

Let us consider $\Isom(\Hyp^4)$ and $\Isom(\AdS^4)$ as subgroups of {$\GL(5,\R)$}. 
Then the representations $\rho_0$ agree for the hyperbolic and AdS case. {
Indeed, defining
$$\Hyp^3 := \Hyp^4 \cap \{ x_4 = 0 \} = \AdS^4 \cap \{ x_4 = 0, x_0 > 0 \} \subset \R^5~,$$
its stabiliser
$$G_0=\mathrm{Stab_{\Isom(\Hyp^4)}}({\Hyp^3})=\mathrm{Stab_{\Isom(\AdS^4)}}({\Hyp^3}) {< \GL(5,\R)}$$
consists of matrices in the block form:}
$$\left(\begin{array}{ccc|c}
  &&& 0 \\
  
   & A & & \vdots \\
  &&& 0 \\
    \hline  
    0&\ldots&0   & \pm1
\end{array}
\right)~.
$$

The stabiliser $G_0$ is isomorphic to $\Isom(\Hyp^3)\times\Z/2\Z$, where the $\Z/2\Z$-factor is generated by the reflection $r$ of Equation \eqref{eq:relfection singular}, which acts by switching the two sides of $\{x_4=0\}$. Under this isomorphism, the representation $\rho_0$ reads as:

\begin{equation} \label{eq:rep rho0 hyp ads}
\begin{aligned} 
\rho_0(\p i)&=r & &\text{for each } \l i\in\{\l0,\ldots,\l7\} \\
\rho_0(\m i)&=r_{v_{\l i}} & &\text{for each } \l i\in\{\l0,\ldots,\l7\} \\
\rho_0(\l X)&=r_{v_{\l X}} & &\text{for each } \l X\in\{\l A,\ldots,\l F\}
\end{aligned}
\end{equation}
where the vectors $v_{\l i},v_{\l X}\in \R^{1,3}$, collected in Table \ref{table:vectors cocycle}, define the bounding planes $H_{v_{\l i}}$ and $H_{v_{\l X}}$ of an ideal right-angled cuboctahedron in $\Hyp^3$. The triangular faces of this cuboctahedron are of type $\l i$, while the quadrilateral faces are of type $\l X$ (see Figure \ref{fig:cuboct}). 

\begin{figure}
\includegraphics[scale=.8]{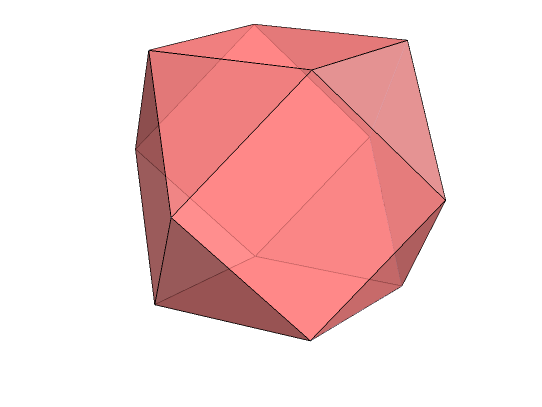}

\vspace{-.8cm}

\caption[The cuboctahedron.]{\footnotesize  
A cuboctahedron is the convex envelop of the midpoints of the edges of a regular cube (or octahedron). It is realised in $\Hyp^3$ as an ideal right-angled polytope.}\label{fig:cuboct}
\end{figure}

\subsection{The conjugacy action} \label{sec:conjugacy-action}

We are ready to start the study of the hyperbolic and AdS character varieties of $\Gamma_{22}$ near the representation $\rho_t$ introduced in Definition \ref{defi rhot}. 
%
%

We begin by analysing the action of $G$ by conjugation on $\mathrm{Hom}(\Gamma_{22},G)$. For $t\neq 0$, nearby $\rho_t$ the action of $G$ is ``good'', namely is free and proper, as we will see in the following two lemmas.

\begin{lemma} \label{lemma centraliser}
For $t\neq0$, the stabiliser of $\rho_t$ in $G$ is trivial. The stabiliser of $\rho_0$ in $G$ is the order-two subgroup generated by the reflection $r$ in the hyperplane 
$\{x_4=0\}$.
\end{lemma}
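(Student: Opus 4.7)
The plan is to reduce the claim to linear algebra in $\R^5$. Lift $g$ to $\tilde g \in \O(q_{\pm 1})$; the commutation of $g$ with each $\rho_t(\l s) = r_{f_t(\l s)}$ lifts to $\tilde g\, r_{f_t(\l s)}\, \tilde g^{-1} \in \{\pm\, r_{f_t(\l s)}\}$, and a quick trace computation (any reflection of $\R^{5}$ has trace $3 \neq -3$) rules out the minus sign. Hence $\tilde g$ preserves each line $\R f_t(\l s)$ and, being $q_{\pm 1}$-orthogonal, acts on it by a scalar $\lambda_{\l s} = \pm 1$.

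Next I would exploit the three linear relations
\[
f_t(\l A) + f_t(\l F) \;=\; f_t(\l B) + f_t(\l E) \;=\; f_t(\l C) + f_t(\l D) \;=\; 2\, e_0,
\]
which hold in both Tables \ref{table:walls} and \ref{table:walls2}. Since any four of the six letter-vectors form a basis of $V_1 = \langle e_0, e_1, e_2, e_3 \rangle$, comparing coefficients after applying $\tilde g$ to these relations forces the six scalars $\lambda_{\l X}$, $\l X \in \{\l A,\ldots,\l F\}$, to agree with a common value $\lambda_0$. Thus $\tilde g|_{V_1} = \lambda_0 \cdot \mathrm{Id}$, and up to rescaling the lift I may assume $\lambda_0 = +1$.

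The argument now splits on the value of $t$. For $t \neq 0$, the key observation is that for each of the four pairs $(\p 0, \p 1)$, $(\p 2, \p 3)$, $(\p 4, \p 5)$, $(\p 6, \p 7)$ and of the four analogous pairs $(\m 0, \m 1)$, $(\m 2, \m 3)$, $(\m 4, \m 5)$, $(\m 6, \m 7)$, the sum of the two vectors lies in $V_1$ while the summands themselves are linearly independent, differing in the signs of the third and fifth coordinates (both nonzero for $t \neq 0$). Applying $\tilde g$ and using $\tilde g|_{V_1} = \mathrm{Id}$, the identity $\lambda_{\p 0} f_t(\p 0) + \lambda_{\p 1} f_t(\p 1) = f_t(\p 0) + f_t(\p 1)$ forces $\lambda_{\p 0} = \lambda_{\p 1} = 1$, and analogously for all remaining $\p i$ and $\m i$. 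Since the $22$ vectors then span $\R^5$, $\tilde g = \mathrm{Id}$ and $g = 1$ in $G$. For $t = 0$, instead, all $f_0(\p i)$ collapse to $\pm e_4$, so the previous step only forces $\tilde g(e_4) = \pm e_4$; combined with $\tilde g|_{V_1} = \mathrm{Id}$, this leaves $\tilde g \in \{\mathrm{Id}, r\}$, where $r$ is the reflection of \eqref{eq:relfection singular}. A direct inspection of \eqref{eq:rep rho0 hyp ads} shows that $r$ does centralise the image of $\rho_0$, so the stabiliser equals $\langle r \rangle$.

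I expect the main technical care to go into the coefficient-comparison in the second paragraph: one has to fix a convenient four-element basis among the letter-vectors and verify that the three displayed relations really do force all six $\lambda_{\l X}$'s to coincide. Once that is in hand, the rest is essentially bookkeeping, and the argument works uniformly in the hyperbolic and AdS settings, since the bilinear form intervenes only in the trace computation of the first paragraph and all subsequent steps rely solely on the shared linear pattern of the entries of Tables \ref{table:walls} and \ref{table:walls2}.
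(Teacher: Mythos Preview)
Your proof is correct and complete, but proceeds quite differently from the paper's argument. Both proofs begin the same way: lifting $g$ to $\tilde g\in\O(q_{\pm 1})$ and observing that $\tilde g$ sends each $f_t(\l s)$ to $\pm f_t(\l s)$ (the paper obtains this from Remark \ref{rmk:action of isometry group}, you from the trace computation). From there the two arguments diverge. The paper argues geometrically: since the six letter vectors span $V_1=\{x_4=0\}$, the matrix $\tilde g$ preserves that hyperplane, and then the paper invokes the fact (from \cite{MR} and \cite{transition_4-manifold}) that $\mathcal P_t\cap\{x_4=0\}$ is an ideal right-angled cuboctahedron whose faces are individually preserved by $\tilde g$; hence $\tilde g|_{V_1}=\pm\mathrm{Id}$. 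For $t\neq 0$ the paper then simply checks that $r$ fails to preserve the hyperplanes $H_{f_t(\p i)}$ and $H_{f_t(\m i)}$.

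Your route is purely linear-algebraic and entirely self-contained: the relations $f_t(\l A)+f_t(\l F)=f_t(\l B)+f_t(\l E)=f_t(\l C)+f_t(\l D)$ force all six letter signs to agree, and then the observation that certain pairs $f_t(\p{2k})+f_t(\p{2k+1})$ and $f_t(\m{2k})+f_t(\m{2k+1})$ lie in $V_1$ pins down the remaining signs for $t\neq 0$. This avoids any appeal to the cuboctahedron or to external references, at the cost of slightly more explicit bookkeeping. One small inaccuracy to fix: for the pairs $(\p 4,\p 5)$ and $(\p 6,\p 7)$ (and their negative counterparts) it is the \emph{fourth} and fifth coordinates that flip sign, not the third and fifth; this does not affect the argument, since what you actually use is only that the fifth coordinates cancel and the two summands are linearly independent, both of which hold for all four pairs.
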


\begin{proof}
We give the proof for the hyperbolic and AdS case at the same time, since they are completely analogous. By Remark \ref{rmk:action of isometry group}, any element in the stabiliser of $\rho_t$ is induced by a matrix $A\in\O(q_{\pm 1})$ which maps every vector $f_t(\l s)$ in Table \ref{table:walls} or Table \ref{table:walls2} either to itself or to its opposite. Since the 6 vectors ${f_t(\l A)},\ldots,{f_t(\l F)}$ do not depend on $t$ and generate the hyperplane $\{x_4=0\}$, the matrix $A$ must preserve the hyperplane $\{x_4=0\}$. 

Moreover, let $\mathcal P_t$ be the polytope  bounded by the $22$ hyperplanes orthogonal to the vectors of Tables \ref{table:walls} or \ref{table:walls2}. It was proved in \cite[Proposition 3.19]{MR} and \cite[Proposition 7.21]{transition_4-manifold} that the intersection of $\mathcal P_t$  with the hyperplane defined by the equation $x_4=0$ is constant and is an ideal right-angled cuboctahedron in $\Hyp^3$ (see Section \ref{subsec:cuboct}). Since the action of $A$ on $\Hyp^3$ necessarily preserves each face of the cuboctahedron, it follows that $A$ must act  on the linear hyperplane $\{x_4=0\}$ as $\pm\mathrm{id}$. 

This shows that the only non-trivial candidates for $A$ are $\pm r$, where $r$ is the reflection of Equation \eqref{eq:relfection singular}. 
For $t=0$, the reflection $r$ preserves all the hyperplanes orthogonal to the vectors of Tables \ref{table:walls} or \ref{table:walls2}, hence the associated element in $G$ generates the stabiliser of $\rho_0$.  When $t\neq 0$, the reflection $r$ does not preserve any of the hyperplanes of the form $H_{f_t(\p i)}$ and $H_{f_t(\m i)}$, hence the stabiliser of $\rho_t$ is trivial in this case.
\end{proof}

The next lemma will be useful to show that the action of $G^+$ by conjugation is proper, in a suitable region of $\mathrm{Hom}_{\mathrm{refl}}(\Gamma_{22},G)$.

\begin{lemma} \label{lemma action proper}
Suppose that $\eta_n$ is a sequence in $\mathrm{Hom}_{\mathrm{refl}}(\Gamma_{22},G)$ converging to some $\rho_t$, and $h_n$ is a sequence in $G$ such that $h_n\cdot \eta_n$  converges. Then $h_n$ has a subsequence that converges in $G$.
\end{lemma}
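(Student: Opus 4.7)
The plan is to transfer the problem to the finite-sheeted cover of $\mathrm{Hom}_{\mathrm{refl}}(\Gamma_{22},G)$ furnished by Lemma \ref{lemma homeo model rep var}, where conjugation by $G^+$ corresponds to post-composition by $\O(q_{\pm 1})$ on functions $f\colon S\to\R^5$ (Remark \ref{rmk:action of isometry group}). I would first fix continuous lifts $f_t$ of $\rho_t$ and $f_n$ of $\eta_n$ so that $f_n\to f_t$, and lift each $h_n$ to an element $\tilde h_n\in\O(q_{\pm 1})$. The hypothesis that $h_n\cdot\eta_n$ converges says that for every generator $\l s\in S$, the reflection $\tilde h_n\,\eta_n(\l s)\,\tilde h_n^{-1}$ converges, and this lifts to the statement that $\tilde h_n\bigl(f_n(\l s)\bigr)$ converges \emph{up to sign}. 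Because $S$ is finite, a diagonal subsequence argument allows these signs to be taken constant in $n$; after adjusting the target lift by the corresponding deck transformation, we obtain $\tilde h_n\bigl(f_n(\l s)\bigr)\to \tilde f(\l s)$ for every $\l s\in S$.

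The core step is a spanning observation, which is the only place one really uses the structure of $\Gamma_{22}$: the vectors $\{f_t(\l s)\}_{\l s\in S}$ span all of $\R^5$, uniformly in $t\in(-1,1)$. Indeed, the six vectors $f_t(\l A),\ldots,f_t(\l F)$ do not depend on $t$ and span the hyperplane $\{x_4=0\}$ (as one checks by forming $f_t(\l A)\pm f_t(\l F)$, $f_t(\l B)-\frac{1}{2}(f_t(\l A)+f_t(\l F))$, and $f_t(\l C)-\frac{1}{2}(f_t(\l A)+f_t(\l F))$), while $f_t(\p 0)$ has last coordinate $1/\sqrt{1\pm t^2}\neq 0$, providing the transverse direction. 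Hence I can pick five generators $\l s_1,\ldots,\l s_5$ (for instance $\l A,\l B,\l C,\l F,\p 0$) whose $f_t$-images form a basis of $\R^5$, and by continuity the same is true of $f_n(\l s_1),\ldots,f_n(\l s_5)$ for $n$ large.

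Let $M_n$ be the $5\times 5$ matrix whose columns are $f_n(\l s_i)$ and $N_n$ the matrix whose columns are $\tilde h_n\bigl(f_n(\l s_i)\bigr)$. Then $\tilde h_n = N_n M_n^{-1}$. By construction $M_n\to M$ with $M$ invertible, and $N_n$ converges, so $\tilde h_n$ converges in $\GL(5,\R)$. Since $\O(q_{\pm 1})$ is closed in $\GL(5,\R)$, the limit lies in $\O(q_{\pm 1})$; projecting back to $G$ then yields the convergence of a subsequence of $h_n$ in $G$.

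The main technical point to be vigilant about is the collapsed case $t=0$, where one might worry that the vectors $\{f_0(\l s)\}$ degenerate into the 4-dimensional hyperplane $\{x_4=0\}$; by Lemma \ref{lemma centraliser} the stabilizer there is of order two, so some flexibility is inevitable. The explicit computation $f_0(\p i)=\pm(0,0,0,0,1)$ shows that the spanning property nonetheless survives at $t=0$, so no ad hoc argument is required at the collapse, and the two-valued ambiguity is completely absorbed by the sign choices discussed at the lifting step.
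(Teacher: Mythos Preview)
Your proof is correct and follows essentially the same strategy as the paper: lift to the finite cover of Lemma~\ref{lemma homeo model rep var}, pick five generators whose associated vectors form a basis of $\R^5$, and recover $\tilde h_n$ as a product of change-of-basis matrices which converge because $\O(q_{\pm 1})$ is closed. Your handling of the sign ambiguities via a diagonal subsequence is a more explicit version of the paper's ``passing to the finite cover, up to subsequences''; and your basis $\{\l A,\l B,\l C,\l F,\p 0\}$ is in fact a more careful choice than the paper's suggested $\{\m 0,\l A,\l B,\l C,\l D\}$, since the latter five vectors all lie in the hyperplane $\{x_4=0\}$ when $t=0$ and so fail to span precisely at the collapse, whereas yours works uniformly for every $t\in(-1,1)$.
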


\begin{proof}
Suppose that $\eta_n\to\rho_t$ and $h_n$ is a sequence in $G$ such that $h_n\cdot\eta_n\to\eta_\infty$. Since $\mathrm{Hom}_{\mathrm{refl}}(\Gamma_{22},G)$ is clopen in the representation variety, the limit point $\eta_\infty$ is in $\mathrm{Hom}_{\mathrm{refl}}(\Gamma_{22},G)$. Passing to the finite cover $g^{-1}(0)$ of Lemma \ref{lemma homeo model rep var}, and up to taking subsequences, we can then assume to have a sequence $f_n$ in $g^{-1}(0)$ (projecting to $\eta_n$) such that $f_n\to f_\infty$ and $h_n\cdot f_n\to \widehat f_\infty$. Here we are thinking of $f_n,f_\infty,\widehat f_\infty$ as functions from the standard generators of $\Gamma_{22}$ to $\R^5$, and (by a small abuse of notation) $h_n$ is a sequence in $\O(q_{\pm 1})$ acting by the obvious action on $\R^5$ (see Remark \ref{rmk:action of isometry group}).

We have to show that $h_n$ converges in $\O(q_{\pm 1})$ up to subsequences. Recall that $f_\infty$ is a lift in $g^{-1}(0)$ of $\rho_t$, and therefore (up to changes of sign) the vectors $f_\infty(\l{s})$ are given by  Table \ref{table:walls} or Table \ref{table:walls2} for some value of $t$. Take five generators $\l{s}_1,\ldots,\l{s}_5$ of $\Gamma_{22}$ such that $f_\infty(\l{s}_1),\ldots,f_\infty(\l{s}_5)$ are linearly independent, for instance $\m 0,\l A,\l B,\l C,\l D$. Since linear independence is an open condition, $\{f_n(\l{s}_1),\ldots,f_n(\l{s}_5)\}$ forms a basis of $\R^5$ for large $n$.

The linear isometry $h_n\in \O(q_{\pm 1})$, considered as a 5-by-5 matrix, is therefore determined by the condition that $h_n$ sends the basis $\{f_n(\l{s}_1),\ldots,f_n(\l{s}_5)\}$ to $\{h_n\cdot f_n(\l{s}_1),\ldots,h_n\cdot f_n(\l{s}_5)\}$. More concretely, we can write $h_n$ (as a matrix) as $(h_{n,1})^{-1}\circ h_{n,2}$, where $h_{n,1}$ is the matrix sending the standard basis to the basis $\{f_n(\l{s}_1),\ldots,f_n(\l{s}_5)\}$, and $h_{n,2}$ is the matrix sending the standard basis to the basis $\{h_n\cdot f_n(\l{s}_1),\ldots,h_n\cdot f_n(\l{s}_5)\}$. Since $f_n$ and $h_n\cdot f_n$ are converging sequences, we have that $h_{n,1} \to h_{\infty,1}$ and $h_{n,2} \to h_{\infty,2}$, and moreover $h_{\infty,1}$ is invertible since $f_\infty(\l{s}_1),\ldots,f_\infty(\l{s}_5)$ is a basis.

Therefore $h_n$ converges to a 5-by-5 matrix $h_\infty=(h_{\infty,1})^{-1}\circ h_{\infty,2}$, which is still in $\O(q_{\pm 1})$ since $\O(q_{\pm 1})$ is closed in the space of 5-by-5 matrices. This concludes the proof.
\end{proof}

{The two lemmas have some consequences on the character variety, which we now define:}

\begin{defi}[Character variety]\label{def:char_var}
Let $G$ be $\Isom(\Hyp^n)$, $G_{\HP^n}$ or $\Isom(\AdS^n)$, and $G^+$ denote its subgroup of orientation-preserving transformations. Given a finitely generated group $\Gamma$, 
{we call \emph{character variety} of $\Gamma$ in $G$ 
the quotient
$$X(\Gamma,G)=\mathrm{Hom}(\Gamma,G)/{G^+}~$$
by the action of $G^+$ by conjugation.}
\end{defi}

{
\begin{remark}\label{rmk:local product}
It follows from Lemmas \ref{lemma centraliser} and \ref{lemma action proper} that the $G^+$-action is locally a product in a neighbourhood of $\rho_t$. More precisely, there is a neighbourhood of the $G^+$-orbit of $\rho_t$ in $\mathrm{Hom}(\Gamma_{22},G)$ homeomorphic to $\mathcal{U} \times G^+$, where $\mathcal U$ is a neighbourhood of $[\rho_t]$ in $X(\Gamma_{22},G)$. Moreover the action of $G^+$ corresponds, under this homeomorphism, to left multiplication on the second factor.
\end{remark}}

\begin{remark} \label{remark quotient hausdorff}
{Let us suppose $G$ is $\Isom(\Hyp^n)$ or $\Isom(\AdS^n)$; in other words $G$ is reductive. Thanks to some well-known results from GIT (see for instance the concise exposition in \cite[Section 2]{CLMsurvey} and the references therein), the GIT quotient $\mathrm{Hom}(\Gamma_{22},G)/\!\!/G^+$ can be identified with the ``Hausdorff quotient'' of the representation variety by conjugation: namely, the quotient by $G^+$ of the subset of $\mathrm{Hom}(\Gamma,G)$ consisting of points with closed $G^+$-orbits.} 

In the portion of the character variety of our interest, no non-Hausdorff pathological situation arises. More precisely, the GIT quotient $\mathrm{Hom}(\Gamma_{22},G)/\!\!/G^+$ coincides with the ordinary topological quotient in a neighbourhood of each $[\rho_t]$. (This holds similarly for $\mathrm{Hom}(\Gamma_{22},G)/\!\!/G$.) For, it follows from Lemma \ref{lemma action proper} that:
\begin{itemize}
\item The $G^+$--action is proper on $G^+\cdot\{\rho_t\ |\ t\in (-1,1)\}$.
\item For each $t$, the $G^+$--orbit of $\rho_t$ is closed. (This follows by applying Lemma \ref{lemma action proper} to the constant sequence $\eta_n\equiv\rho_t$).
\end{itemize}

Actually the latter is true in a neighborhood of $\{\rho_t\ | \ t\in (-1,1)\}$, since in the proof of Lemma \ref{lemma action proper} we only used that, for five generators $\l{s}_1,\ldots,\l{s}_5$ of $\Gamma_{22}$, the corresponding vectors in $\R^5$ are linearly independent, and this is still true in an open neighborhood.

In fact, our argument shows a little more, namely that if $\rho$ is in such a neighbourhood, then $[\rho]$ is separated from any other point in 
$X(\Gamma_{22},G)$. This is because, if $[\rho]$ were not separated from $[\rho']$, we would have a sequence $\rho_n\to\rho$ and a sequence $h_n$ such that $h_n\rho_n h_n^{-1}$ converges to $\rho'$. But Lemma \ref{lemma action proper} shows that $h_n\to h_\infty$ up to subsequences, hence  by continuity $h_\infty$ conjugates $\rho$ and $\rho'$, namely $[\rho]=[\rho']$.
\end{remark}

\subsection{A smoothness result} \label{sec:smoothness}

In the hyperbolic case, the smoothness of the $\Isom(\Hyp^4)$-character variety near the points $[\rho_t]$ with $t \neq 0$ has been essentially proved in \cite[Theorem 12.3]{KS}:

\begin{prop}\label{prop:dimension tangent space hyp}
For $t\in (0,1)$, the set $\mathrm{Hom}(\Gamma_{22},\Isom(\Hyp^4))$ is a smooth $11$-dimensional manifold near $\rho_t$.
\end{prop}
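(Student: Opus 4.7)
The plan is to work at the level of the explicit parametrisation provided by Lemma \ref{lemma homeo model rep var}: representations in $\mathrm{Hom}_{\mathrm{refl}}(\Gamma_{22},\Isom(\Hyp^4))$ are finitely covered, near $\rho_t$, by the zero locus of a quadratic map $g\colon \R^{110}\to\R^{102}$ collecting the $22$ normalisation conditions $q_1(f(\l s))=1$ and the $80$ orthogonality conditions $b_1(f(\l s_i),f(\l s_j))=0$ coming from the commutation relations. It therefore suffices to show that $g^{-1}(0)$ is a smooth manifold of dimension $11$ at the lift $f_t$ given by Table \ref{table:walls}. A convenient way to do this is to exhibit $g^{-1}(0)$ locally as the union of an $11$-dimensional orbit and to prove a matching upper bound on the Zariski tangent space.

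For the lower bound, note that for $t\in(0,1)$ the stabiliser of $\rho_t$ in $G=\Isom(\Hyp^4)$ is trivial (Lemma \ref{lemma centraliser}) and $G^+$ acts properly nearby (Lemma \ref{lemma action proper}), so the $G^+$--orbit of $\rho_t$ is a smooth $10$-dimensional submanifold of the representation variety. Combining this with the explicit $1$-parameter family $\{\rho_{t'}\}$ of Definition \ref{defi rhot}, whose tangent vector at $t'=t$ is clearly transverse to the orbit, one obtains an $11$-dimensional smooth family of representations sitting inside $g^{-1}(0)$ near $f_t$.

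The hard part is the upper bound, that is, showing that no other first-order deformations exist. Here the crucial input is the cusp-rigidity result of Proposition \ref{prop cube group ads collapsed 2} in dimension four: for $t\in(0,1)$ the polytope $\mathcal P_t$ has several ideal vertices, each of which is the common point at infinity of the $6$ bounding hyperplanes associated to a ``cube'' subgroup $\Gamma_{\mathrm{cube}}<\Gamma_{22}$. Since we are in the non-collapsed regime $t\neq 0$, each such peripheral representation is an honest (non-collapsed) cusp group, so Proposition \ref{prop cube group ads collapsed 2} forces any nearby representation to preserve the common tangency at infinity. Passing to first order, this means that every infinitesimal deformation of $\rho_t$ must preserve the rank-$4$ span of the corresponding six vectors in $\R^5$; equivalently, the only admissible infinitesimal motions of the ``cube vertex'' are translations of its tip in $\partial\Hyp^4$ together with conjugation effects.

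With these tangency constraints in place, one concludes by the same combinatorial propagation argument as in \cite[Section 12]{KS}. Pick a fundamental set of hyperplanes, normalise out the $10$ dimensions of conjugation by $\Isom(\Hyp^4)$, and then use the commutation relations of $\Gamma_{22}$ together with the cusp constraints at each ideal vertex to chase around the polytope: moving across an orthogonality relation pins down the next hyperplane up to one degree of freedom, and the cusp-rigidity conditions at each ideal vertex force all those degrees of freedom to be synchronised. The outcome is that only one essential parameter survives, recovering the curve $\rho_{t'}$. Thus the tangent space to $g^{-1}(0)$ at $f_t$ has dimension at most $11$, hence exactly $11$, and $g^{-1}(0)$ is smooth there. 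The main obstacle in this sketch is precisely the combinatorial book-keeping at the various ideal vertices of $\mathcal P_t$ --- tracking which six generators among $\p0,\ldots,\p7,\m0,\ldots,\m7,\l A,\ldots,\l F$ form each cube cusp group, and how the rigidity at one vertex propagates to its neighbours through the shared hyperplanes --- which is the technical heart of \cite[Sections 5--12]{KS}.
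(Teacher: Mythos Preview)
Your outline is essentially the same strategy as the one the paper sketches (and attributes to \cite{KS}) and carries out in detail for the AdS analogue. There is one point where your presentation could be tightened to match the paper's logic.

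You invoke cusp rigidity (Proposition~\ref{prop cube group ads collapsed 2}) and then write ``passing to first order'' to conclude that every element of $\ker dg$ satisfies the infinitesimal tangency constraints. But Proposition~\ref{prop cube group ads collapsed 2} is a statement about \emph{nearby points} of $g^{-1}(0)$, not about $\ker dg$; a priori $\ker dg$ could contain vectors that are not tangent to any curve in $g^{-1}(0)$, so one cannot directly deduce linear constraints on $\ker dg$ from set-theoretic constraints on $g^{-1}(0)$ without knowing smoothness in advance --- which is exactly what you are trying to prove.

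The paper (following \cite{KS}, and explicitly in the AdS case via Definition~\ref{defi Hom0}, Proposition~\ref{prop:dimension tangent space ads} and Corollary~\ref{cor:dimension tangent space ads}) avoids this circularity by separating the two steps. First one introduces the auxiliary map $g_0\colon\R^{110}\to\R^{138}$ that \emph{includes} the $36$ tangency conditions as genuine defining equations, and proves directly that $\ker dg_0$ is $11$-dimensional (this is the ``combinatorial propagation'' you describe, carried out in Lemmas~\ref{lemma:inf action four letters vanish}--\ref{lemma:inf variation E F vanish}); by constant rank, $g_0^{-1}(0)$ is then a smooth $11$-manifold near $f_t$. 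Second, cusp rigidity is used only at the level of sets, to show that a neighbourhood of $\rho_t$ in $\mathrm{Hom}(\Gamma_{22},\Isom(\Hyp^4))$ is actually contained in $\mathrm{Hom}_0(\Gamma_{22},\Isom(\Hyp^4))$, whence the former inherits the smoothness of the latter. Reorganising your argument along these lines removes the gap.
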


(Recalling that for negative times $\rho_t$ is a conjugate of $\rho_{-t}$, the result holds for $t\in (-1,0)$ as well.)
Our main purpose is to extend and generalise the analysis for $t=0$, and do similarly for the $\Isom(\AdS^4)$-representation variety.

Let us first briefly sketch the lines of the proof Proposition \ref{prop:dimension tangent space hyp} given in \cite{KS}. By Lemma \ref{lemma homeo model rep var} (recall $g \colon \R^{(n+1)|S|}\to \R^{|S|+|R|}$ from the proof), it suffices to show that $g^{-1}(0)$ is a smooth submanifold of $\R^{110}$ near any preimage of $\rho_{t_0}$, for all $t_0\in (0,1)$.

Let
\begin{equation}\label{eq:f_t}
f_t\colon\{\mathrm{standard\ generators\ of\ }\Gamma_{22}\}\to\R^5
\end{equation}
be as in Table \ref{table:walls}, so giving an embedding of $(0,1)$ into $g^{-1}(0)\subset \R^{110}$ going through a preimage of $\rho_{t_0}$. The proof in \cite{KS} essentially consists in showing that the kernel of $g\colon \R^{110}\to \R^{102}$ is 11-dimensional for $t\in (0,1)$. Since there is a 10-dimensional smooth orbit given by the action of $\Isom^+(\Hyp^4)$, the proof boils down to showing that the tangent space to the orbit has a 1-dimensional complement, which is indeed given by the tangent space to the 1-dimensional submanifold $\{f_t\ |\ t\in (0,1)\}$. 

Since the action of $\Isom^+(\Hyp^4)$ is smooth, it then follows that the $\Isom^+(\Hyp^4)$-orbit of the curve $\{\rho_t\ |\ t\in (0,1)\}$ is a smooth 11-dimensional manifold, on which the $\Isom^+(\Hyp^4)$-action by conjugation is free and proper by Lemma \ref{lemma centraliser} and Lemma \ref{lemma action proper}.  Hence it follows from Proposition \ref{prop:dimension tangent space hyp} that $X(\Gamma_{22},\Isom(\Hyp^4))$ is a 1-dimensional smooth manifold near $[\rho_t]$, for $t\in (0,1)$.

In the next sections, we will prove the analogous of Proposition \ref{prop:dimension tangent space hyp} for the AdS case. However, we are interested also in the study of the character variety near ``the collapse'', that is the point $[\rho_0]$. Hence we will prove a more detailed statement.

Let $G$ be as usual $\Isom(\Hyp^4)$ or $\Isom(\AdS^4)$. 

\begin{defi}[The set $\mathrm{Hom}_{0}$] \label{defi Hom0}
We define $\mathrm{Hom}_{0}(\Gamma_{22},G)$ as the subset of $\mathrm{Hom}_{\mathrm{refl}}(\Gamma_{22},G)$ of representations $\rho$ such that the following holds. Let $\l s_1,\l s_2$ be any pair of generators of $\Gamma_{22}$ such that the hyperplanes fixed by $\rho_t(\l s_1)$ and $\rho_t(\l s_2)$ are either 
{asymptotic} or equal for some $t\neq 0$. Then, so are the hyperplanes fixed by $\rho(\l s_1)$ and $\rho(\l s_2)$.
\end{defi}

Recall from Lemmas \ref{lemma angle hyp} and \ref{lem: AdS angle} that two hyperplanes are 
{asymptotic} or equal if and only if, using the bilinear form $b_1$ for $\Hyp^4$ and $b_{-1}$ for $\AdS^4$, the product of their orthogonal unit vectors is $1$ in absolute value. It is thus easy to check from Tables \ref{table:walls} and \ref{table:walls2} that this condition is preserved by the deformation $f_t$ for all $t$ both in the hyperbolic and AdS case, and thus the definition is well-posed (i.e. it does not depend on the choice of $t\neq 0$). 

In the setting of Lemma \ref{lemma homeo model rep var}, $\mathrm{Hom}_0(\Gamma_{22},G)$ corresponds to a subset of $g^{-1}(0)\subset\R^{110}$ defined by the vanishing of 36 more quadratic conditions. Indeed, for each of the 12 {ideal vertices of the polytope $\mathcal P_t$ bounded by the hyperplanes of Tables \ref{table:walls} and \ref{table:walls2}}, we have 3 
{asymptoticity} conditions (see \cite[Proposition 7.13]{transition_4-manifold}). Hence $\mathrm{Hom}_0(\Gamma_{22},G)$ is locally homeomorphic to the zero locus of a function $g_0\colon\R^{110}\to\R^{138}$ extending $g$. More precisely:

\begin{lemma} \label{lemma homeo model rep var fixing parabolics}
The set $\mathrm{Hom}_0(\Gamma_{22},G)$ is finitely covered by a disjoint union of subsets of $\R^{110}$ defined by the vanishing of $138$ quadratic conditions.
\end{lemma}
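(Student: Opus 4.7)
The strategy is to reuse the finite cover constructed in Lemma \ref{lemma homeo model rep var} and augment the defining system by exactly $36$ additional quadratic equations expressing the tangency/coincidence conditions at the $12$ ideal vertices of $\mathcal P_t$. More precisely, we start from the map $g\colon \R^{110}\to\R^{102}$ whose zero locus is a $2^{22}$-sheeted cover of $\mathrm{Hom}_{\mathrm{refl}}(\Gamma_{22},G)$, corresponding to the $22$ normalisations $q_{\pm 1}(f(\l s))=\pm 1$ and the $80$ orthogonality relations $b_{\pm 1}(f(\l s_1),f(\l s_2))=0$ coming from the commutation relations of $\Gamma_{22}$. The same lifts $f\colon S\to\R^5$ parametrise $\mathrm{Hom}_0(\Gamma_{22},G)$ as soon as we impose that any pair of ``opposite'' generators distinguished in Definition \ref{defi Hom0} be sent to hyperplanes that are either tangent at infinity or equal.

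By Lemma \ref{lemma angle hyp} in the hyperbolic case and Lemma \ref{lem: AdS angle} in the AdS case (recalling that the peripheral hyperplanes at an ideal vertex of $\mathcal P_t$ are either both spacelike or both timelike, so that the relevant $q_{\pm 1}$-normalisations carry the same sign), two distinct unit vectors $X,Y$ define hyperplanes tangent at infinity precisely when $|b_{\pm 1}(X,Y)|=1$; the same numerical condition also holds when $X=\pm Y$, i.e.~when the two hyperplanes coincide. In either case the single scalar equation
\[
b_{\pm 1}(f(\l s_1),f(\l s_2))^2-1=0
\]
is quadratic in the coordinates of $f\in\R^{110}$, and it is exactly the equation cutting out the tangency/coincidence locus compatible with the lifts already selected by $g^{-1}(0)$.

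The counting then follows from the combinatorics of $\mathcal P_t$: the polytope has $12$ ideal vertices, and the hyperplanes containing a fixed ideal vertex form a cube-group configuration with $3$ pairs of non-commuting (``opposite'') generators of $\Gamma_{22}$; these are precisely the pairs appearing in Definition \ref{defi Hom0}. Each such pair produces one equation of the above form, so we obtain $12\cdot 3=36$ additional quadratic conditions, all genuinely new since orthogonality $b_{\pm 1}(X,Y)=0$ is incompatible with $b_{\pm 1}(X,Y)^2=1$. Assembling these $36$ quadratic functions with the $102$ already present, we define
\[
g_0\colon \R^{110}\longrightarrow\R^{138}
\]
extending $g$; the zero locus $g_0^{-1}(0)$ is, by construction, a subset of $g^{-1}(0)$ whose image under the covering map of Lemma \ref{lemma homeo model rep var} is exactly $\mathrm{Hom}_0(\Gamma_{22},G)$. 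Since the deck transformations $(\Z/2\Z)^{22}$ acting by sign changes preserve all the quadratic conditions defining $g_0$, the restricted map $g_0^{-1}(0)\to\mathrm{Hom}_0(\Gamma_{22},G)$ remains a finite covering, which yields the claim.

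The only point requiring care—and the place a reader could slip—is the combinatorial book-keeping: one must check that the $36$ equations indexed by the $12$ peripheral cube subgroups are all pairwise distinct and are not redundant with the $80$ orthogonality equations already in $g$. Distinctness is clear from the enumeration of pairs of opposite faces across the $12$ ideal vertices (no commutation relation relates opposite generators of a cube group), and non-redundancy follows from the observation above that $|b_{\pm 1}|=1$ and $b_{\pm 1}=0$ cannot hold simultaneously. With this verified, the rest is a direct extension of the proof of Lemma \ref{lemma homeo model rep var}.
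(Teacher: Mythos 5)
Your strategy is the same as the paper's: keep the map $g\colon\R^{110}\to\R^{102}$ from Lemma \ref{lemma homeo model rep var} and append one tangency condition for each of the $3$ pairs of opposite faces at each of the $12$ ideal vertices, giving $36$ new equations and a map $g_0\colon\R^{110}\to\R^{138}$; the counting and the distinctness/non-redundancy remarks agree with what the paper does. There is, however, one concrete slip: the equation you propose, $b_{\pm 1}(f(\l s_1),f(\l s_2))^2-1=0$, is \emph{not} quadratic in the coordinates of $f\in\R^{110}$. The pairing $b_{\pm 1}(f(\l s_1),f(\l s_2))$ is itself a degree-two (bilinear) polynomial on $\R^{110}$, so its square has degree four; as written you only prove that $\mathrm{Hom}_0(\Gamma_{22},G)$ is finitely covered by a set cut out by $102$ quadratic and $36$ quartic conditions, which is not the statement of the lemma.

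The fix is to impose the sign rather than square it away: on each subset of the finite cover require $b_{\pm 1}(f(\l s_1),f(\l s_2))=\varepsilon$ with $\varepsilon\in\{\pm1\}$ fixed (for instance the value taken along the explicit lifts $f_t$), and take the disjoint union over the finitely many sign patterns. Each such condition is bilinear, hence quadratic, and the ``disjoint union of subsets'' in the statement is precisely what absorbs the failure of deck-invariance that motivated your squaring — the deck group $(\Z/2\Z)^{22}$ simply permutes these subsets. This sign-fixed form is also what the paper relies on later, when it differentiates relations such as $\langle \l A,\l B\rangle=-1$ in the smoothness argument (though at points where $|b|=1$ the two formulations have differentials with the same kernel, so your version would still suffice for that application). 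A very minor further point: for the AdS timelike--timelike opposite pairs the criterion $|b_{-1}|=1$ comes from Lemma \ref{lem: AdS intersection timelike} rather than Lemma \ref{lem: AdS angle}, which concerns spacelike hyperplanes.
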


\begin{remark} \label{rem:simmplicityAdS1}
For simplicity of exposition, from now on we will work in the AdS setting, i.e. in the case $G=\Isom(\AdS^4)$. All what follows can be easily adapted to the hyperbolic case. We will therefore omit the proofs and only highlight the points where differences with respect to the AdS case occur.
\end{remark}

The essential property we will prove is that near each of the representations $\rho_t$ the variety $\mathrm{Hom}_0(\Gamma_{22},G)$ is smooth. Hence the goal of the next two sections is to prove the following:

\begin{prop}\label{prop:dimension tangent space ads}
For $t\in(-1,1)$, the set $\mathrm{Hom}_0(\Gamma_{22},\Isom(\AdS^4))$ is a smooth $11$-dimensional manifold near ${\rho}_t$.
\end{prop}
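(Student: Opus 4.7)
The plan is to adapt the strategy of \cite[Theorem 12.3]{KS}, which proves the hyperbolic analogue (Proposition \ref{prop:dimension tangent space hyp}), to the AdS setting using the cusp rigidity results of Section \ref{sec:cusp_rigidity}. By Lemma \ref{lemma homeo model rep var fixing parabolics}, a neighbourhood of any lift $f_t$ of $\rho_t$ in $\mathrm{Hom}_0(\Gamma_{22},\Isom(\AdS^4))$ is homeomorphic to the zero locus of a quadratic map $g_0\colon\R^{110}\to\R^{138}$. To conclude smoothness and $11$-dimensionality by the constant rank theorem, it suffices to show that $\dim\ker dg_0|_{f_t}=11$ at every $t\in(-1,1)$.

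For the lower bound $\dim\ker dg_0|_{f_t}\geq 11$, Lemma \ref{lemma centraliser} ensures that the $\Isom^+(\AdS^4)$-stabiliser of $\rho_t$ is at most the order-two subgroup generated by $r$, so the orbit of $f_t$ is a smooth $10$-dimensional submanifold of $g_0^{-1}(0)$. The one-parameter curve $\{f_{t'}\}$ provides an additional tangent direction independent from the orbit: indeed, $\rho_{t'}$ is not conjugate to $\rho_t$ for $t'$ nearby but distinct from $t$, so the curve is transverse to the orbit at $f_t$.

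For the matching upper bound, take $\xi\in\ker dg_0|_{f_t}$, seen as an infinitesimal deformation $\xi\colon S\to\R^5$. Using the $10$-dimensional gauge freedom of the orbit tangent space, normalise $\xi$ so that it vanishes on the six letter-type generators $\l A,\ldots,\l F$. Under this normalisation, for each of the $12$ ideal vertices of $\mathcal{P}_t$ the restriction of $\xi$ to the corresponding peripheral subgroup (isomorphic to $\Gamma_{\mathrm{cube}}$) is an infinitesimal deformation fixing the four letter-type generators lying in that peripheral subgroup. The infinitesimal version of Proposition \ref{prop cube group ads collapsed} --- applied peripheral subgroup by peripheral subgroup, using the $36$ tangency conditions encoded in the extra defining equations of $\mathrm{Hom}_0$ --- constrains $\xi(\p i)$ and $\xi(\m i)$ to lie in a $1$-dimensional subspace at each cusp, parametrised by a single scalar that encodes the infinitesimal displacement along the cusp axis. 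A combinatorial argument exploiting the overlap of peripheral subgroups through the shared letter-type generators then propagates these per-cusp scalars to a single global parameter $\lambda\in\R$, realising the derivative $\lambda\,\tfrac{d}{dt}f_t$.

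The main obstacle is the case $t=0$, where the restriction of $\rho_0$ to each peripheral subgroup is a collapsed cusp group rather than a genuine cusp group. One must then invoke the full collapsed versions of the rigidity statements --- Propositions \ref{prop rect group ads collapsed} and \ref{prop cube group ads collapsed} --- and in particular rule out first-order deformations that would open up a coincidence $\rho_0(\l s_1)=\rho_0(\l s_2)$ into distinct hyperplanes in a way incompatible with the tangency constraints of $\mathrm{Hom}_0$. The careful combinatorial bookkeeping of the propagation step across all $12$ cusps, based on the explicit presentation of $\Gamma_{22}$ recorded in Section \ref{sec:Gamma22}, is the most technically delicate portion of the argument.
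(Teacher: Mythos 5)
There is a genuine gap at the normalisation step. You claim that the $10$-dimensional gauge freedom of the $\Isom^+(\AdS^4)$-orbit lets you assume $\xi$ vanishes on all six letter generators $\l A,\ldots,\l F$. It does not: the map $\so(q_{-1})\to(\R^5)^6$, $\mathfrak a\mapsto(\mathfrak a\cdot f_t(\l A),\ldots,\mathfrak a\cdot f_t(\l F))$, is injective (an element of $\so(q_{-1})$ vanishing on the span of the letters, i.e.\ on $\{x_4=0\}$, is zero), so the orbit directions contribute exactly a $10$-dimensional space of letter-variations, whereas the variations of the six letters allowed by the letter-only constraints (six norm conditions plus twelve tangency conditions among adjacent letters) form a $12$-dimensional space. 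Hence gauge can only be used to kill \emph{four} letters and reduce $\ld E,\ld F$ to their $e_4$-components (this is Lemma \ref{lemma:inf action four letters vanish} in the paper); proving that these residual components actually vanish is not a normalisation but the technical core of the argument (Lemma \ref{lemma:inf variation E F vanish}), carried out by playing the commutation and tangency relations of $\p1,\m2$ (and $\p4,\m5$) against the already-controlled $\p0,\p3,\m0,\m3$, including the $t$-dependent identity $\mu_1^++t^2\mu_2^-=0$. Assuming all six letters can be gauged away presupposes the structure of the kernel you are trying to establish, so as written the argument is circular at exactly the point where the real work lies. Relatedly, your middle step ("infinitesimal version of Proposition \ref{prop cube group ads collapsed}, applied cusp by cusp, then a combinatorial propagation") is asserted rather than performed; in the paper the per-generator constraints and the propagation of the single parameter $\lambda$ are obtained by direct linear algebra from the explicit relations of $\Gamma_{22}$ (Lemmas \ref{lemma:inf variation 0 and 3} and \ref{lemma:inf variation E F vanish}), not from an infinitesimal rigidity statement, which the paper never formulates.

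Two smaller points. First, the collapsed-cusp rigidity results are not what handles $t=0$ here: inside $\mathrm{Hom}_0$ the $36$ tangency equations are imposed by definition, and the paper's computation is uniform in $t\in(-1,1)$ (at $t=0$ the key identity immediately gives $\mu_1^+=0$); Propositions \ref{prop rect group ads collapsed} and \ref{prop cube group ads collapsed} enter later, in the proof of Theorem \ref{teo: char var hyp}, to show that representations near $\rho_0$ not lying on the horizontal component belong to $\mathrm{Hom}_0$. Second, your lower bound via "nearby $\rho_{t'}$ are not conjugate to $\rho_t$" does not by itself show the tangent vector $\frac{d}{dt}f_t$ is transverse to the orbit; in the paper transversality follows from the same computation that gives the upper bound (an orbit direction whose letter-components vanish must come from $\mathfrak a=0$), so no separate argument is needed.
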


The proof of Proposition \ref{prop:dimension tangent space ads} will be given at the end of Section \ref{subsec inf def plus minus}. From the results on cusp rigidity established in Section \ref{sec:rig dim four}, we obtain the smoothness of $\mathrm{Hom}(\Gamma_{22},\Isom(\AdS^4))$ for $t\neq0$ as a direct corollary:

\begin{cor}\label{cor:dimension tangent space ads}
For $t\in (-1,1)\smallsetminus \{0\}$, the set $\mathrm{Hom}(\Gamma_{22},\Isom(\AdS^4))$ is a smooth $11$-dimensional manifold near $\rho_t$.
\end{cor}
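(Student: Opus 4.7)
The plan is to reduce this corollary to Proposition \ref{prop:dimension tangent space ads} by showing that $\mathrm{Hom}(\Gamma_{22},\Isom(\AdS^4))$ and $\mathrm{Hom}_0(\Gamma_{22},\Isom(\AdS^4))$ coincide in a neighbourhood of $\rho_t$ whenever $t\neq 0$. The key input is the cusp rigidity provided by Proposition \ref{prop cube group ads}, which will guarantee that the tangency conditions defining $\mathrm{Hom}_0$ are automatically preserved under small deformations of $\rho_t$.

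First, since $\mathrm{Hom}_{\mathrm{refl}}(\Gamma_{22},G)$ is clopen in $\mathrm{Hom}(\Gamma_{22},G)$ by Lemma \ref{lemma clopen}, and $\rho_t$ belongs to $\mathrm{Hom}_{\mathrm{refl}}$ by construction, any representation close enough to $\rho_t$ already lies in $\mathrm{Hom}_{\mathrm{refl}}$. Thus it is enough to work with $\mathrm{Hom}_{\mathrm{refl}}$ and prove that the $36$ tangency (or equality) conditions from Definition \ref{defi Hom0} are automatic near $\rho_t$ as soon as $t\neq 0$.

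The central step is to identify the peripheral subgroups giving rise to these conditions. For $t\in(-1,1)\smallsetminus\{0\}$, each of the $12$ ideal vertices of the polytope $\mathcal{P}_t$ is an intersection of $6$ bounding hyperplanes whose orthogonality pattern is that of a Euclidean cube (two opposite spacelike facets and four timelike ones, in the AdS case). Accordingly, each ideal vertex determines a homomorphism $\iota\colon \Gamma_{\mathrm{cube}}\to\Gamma_{22}$ sending the six standard generators to the reflections along those six facets, and the restriction $\rho_t\circ\iota$ takes values in a cusp group in the sense of Section \ref{sec:rig dim four}. At each such ideal vertex we obtain $3$ tangency relations between pairs of facets, giving $12\cdot 3=36$ conditions in total, which exactly match the defining equations of $\mathrm{Hom}_0(\Gamma_{22},G)$ inside $\mathrm{Hom}_{\mathrm{refl}}(\Gamma_{22},G)$.

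Now apply Proposition \ref{prop cube group ads}: for every representation $\rho'$ sufficiently close to $\rho_t$, each composition $\rho'\circ\iota$ is still a cusp group, so its $6$ hyperplanes still share a common point at infinity. This means that all the tangency relations of Definition \ref{defi Hom0} continue to hold for $\rho'$, so $\rho'\in\mathrm{Hom}_0(\Gamma_{22},G)$. Consequently, in a neighbourhood of $\rho_t$ one has the equality
\[
\mathrm{Hom}(\Gamma_{22},\Isom(\AdS^4)) \;=\; \mathrm{Hom}_{\mathrm{refl}}(\Gamma_{22},\Isom(\AdS^4)) \;=\; \mathrm{Hom}_{0}(\Gamma_{22},\Isom(\AdS^4))~,
\]
and Proposition \ref{prop:dimension tangent space ads} immediately yields that this common space is a smooth $11$-dimensional manifold near $\rho_t$.

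The only genuinely substantial input is the cusp-rigidity statement (Proposition \ref{prop cube group ads}), which has already been proved; everything else is bookkeeping: matching the six hyperplanes at each ideal vertex with the cube group, counting the $36$ extra tangency equations, and checking that they are exactly the ones cutting out $\mathrm{Hom}_0$ inside $\mathrm{Hom}_{\mathrm{refl}}$. The hyperbolic case is handled identically, replacing Proposition \ref{prop cube group ads} by Proposition \ref{prop cube group ads collapsed 2}.
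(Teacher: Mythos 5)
Your argument is correct and is essentially the paper's own proof: reduce to $\mathrm{Hom}_0$ by observing that every tangency condition sits inside a cusp group associated to an ideal vertex of $\mathcal P_t$, invoke Proposition \ref{prop cube group ads} (``cusp groups stay cusp groups'') to see that nearby representations still satisfy these conditions, and conclude with Proposition \ref{prop:dimension tangent space ads}. The only difference is cosmetic bookkeeping (you organise the $36$ conditions vertex-by-vertex, the paper completes each tangent pair of generators to a cusp group), so nothing further is needed.
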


\begin{proof}
It is not difficult to check that, when $t\neq 0$, for every pair of generators $\l s_1,\l s_2$ of $\Gamma_{22}$ such that the associated hyperplanes $H_{f_t(\l s_1)}$ and $H_{f_t(\l s_2)}$ are 
{asymptotic}, there are 4 other generators $\l s_3,\ldots,\l s_6$ such that the reflections $r_{\l s_1},\ldots,r_{\l s_6}$ generate a cusp group in $\Isom(\AdS^4)$. By Lemma \ref{prop cube group ads}, the 
{asymptoticity} conditions are preserved since cusp groups stay cusp groups under small deformations. Hence a neighbourhood of $\rho_t$ in $\mathrm{Hom}(\Gamma_{22},\Isom(\AdS^4))$ is actually contained in $\mathrm{Hom}_0(\Gamma_{22},\Isom(\AdS^4))$. The proof now follows from Proposition \ref{prop:dimension tangent space ads}.
\end{proof}

The next sections will be devoted to the proof of Proposition \ref{prop:dimension tangent space ads}. We will adapt some of the ideas of \cite[Sections 5, 11, 12]{KS} used in the proof of Proposition \ref{prop:dimension tangent space hyp} in the hyperbolic case. An analogous argument shows that the statement of Proposition \ref{prop:dimension tangent space ads} holds also for the $\Hyp^4$-character variety, which for $t=0$ is new with respect to the results of \cite{KS}. 

\subsection{Infinitesimal deformations of the letter generators}

Recall Lemma \ref{lemma homeo model rep var fixing parabolics}. Throughout this and the following sections, we denote by
$$g_0\colon \R^{110}\to \R^{138}$$
the quadratic function defining the clopen subset of $\mathrm{Hom}_0(\Gamma_{22},\Isom(\AdS^4))$ that contains the lifts of the representations $\rho_t$. A continuous lift of the path $t\mapsto{\rho}_t$ is defined by $f_t$ in Table \ref{table:walls2}. {To prove Proposition \ref{prop:dimension tangent space ads} in the AdS case, i}t then suffices to show that for all $t\in(-1,1)$ the set $g_0^{-1}(0)\subset\R^{110}$ is a smooth 11-dimensional manifold near each $f_t$. 


\begin{notation*}
Let us fix $t\in(-1,1)$. For simplicity, by an abuse of notation, in this and next section we denote $f_t(\l s)\in\R^5$ by $\l s$. In other words, in what follows $\l s\in\R^5$ denotes a vector (of $q_{-1}$-norm $1$ or $-1$ depending whether the corresponding hyperplane in $\AdS^4$ is timelike or spacelike, respectively) from Table \ref{table:walls2}, and is therefore implicitly considered as a function of $t$. Its derivative in $t$ will be denoted by $\ld s$. The symbol $(\{{\p i}\},\{{\m j}\},\{{\l X}\})$ will denote the corresponding element of $g_0^{-1}(0)\subset\R^{110}$, as a function from the standard generators of $\Gamma_{22}$ to $\R^5$, while $(\{{\pd i}\},\{{\md j}\},\{{\ld X}\})$ will denote an element in the kernel of the differential of $g_0$ at $(\{{\p i}\},\{{\m j}\},\{{\l X}\})$, and will be called an \emph{infinitesimal deformation} of $(\{{\p i}\},\{{\m j}\},\{{\l X}\})$.
\end{notation*}

Observe that the vectors $\l A,\ldots,\l F$ of Table \ref{table:walls2} are constant in $t$, hence the derivative of the path in {$g_0^{-1}(0)$} provided by Table \ref{table:walls2} satisfies $\ld X=0$ for all $\l X\in\{\l A,\ldots,\l F\}$.

By Remark \ref{rmk:action of isometry group}, the natural $\O(q_{-1})$--action on $g_0^{-1}(0)$ is given by $\l s\mapsto A\cdot \l s$ for $A\in\O(q_{-1})$. Therefore the tangent space to the orbit of an element $(\{{\p i}\},\{{\m j}\},\{{\l X}\})$ of $g_0^{-1}(0)$ consists precisely of the elements of the kernel of $dg_0$ of the form
\begin{equation}\label{eq:topolino}
\l s\mapsto \ld s=\mathfrak a\cdot \l s~,
\end{equation}
where $\l s$ varies in $(\{{\p i}\},\{{\m j}\},\{{\l X}\})$ and $\mathfrak a=\left.\frac{d}{dt}\right|_{t=0}A_t\in\so(q_{-1})$, for any smooth path $t \mapsto A_t$ in $\O(q_{-1})$ with $A_0=\mathrm{id}$.

The first step in the proof of Proposition \ref{prop:dimension tangent space ads} is to show that, up to this infinitesimal action, we can assume that \emph{any} infinitesimal deformation vanishes at least on 4 elements of $\{\l A,\ldots,\l F\}$.

\begin{lemma} \label{lemma:inf action four letters vanish}
Fix $t\in(-1,1)$, and let $(\{{\pd i}\},\{{\md j}\},\{{\ld X}\})$ be an infinitesimal deformation of $(\{{\p i}\},\{{\m j}\},\{{\l X}\})$. Up to the action of $\mathfrak a\in\so(q_{-1})$ as in \eqref{eq:topolino}, we can assume that
\begin{equation} \label{tex willer}
\ld A=\ld B=\ld C=\ld D=0~,
\end{equation}
and that
\begin{equation} \label{kit carson}
\ld E=(0,0,0,0,\epsilon)\qquad\text{and}\qquad\ld F=(0,0,0,0,\phi)
\end{equation}
for some $\epsilon,\phi\in\R$.
\end{lemma}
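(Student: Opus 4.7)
The plan is to exhaust the $10$-dimensional infinitesimal $\so(q_{-1})$-action on the letter generators in two stages, and then to extract $\ld D=0$ and the claimed forms of $\ld E,\ld F$ directly from the letter--letter tangency conditions already encoded in $\mathrm{Hom}_0$. Throughout, set $V=\{x_4=0\}\subset\R^5$; every letter $\l X\in\{\l A,\ldots,\l F\}$ lies in $V$ and equals $(v_{\l X},0)$ with $v_{\l X}$ as in Table \ref{table:vectors cocycle}, while a generic deformation $\ld X$ decomposes as $(\ld X_V,\ld X_4)\in V\oplus\R$. Recall that $\so(q_{-1})$ splits (as a vector space) as $\so(V)\oplus V$, the $V$-summand corresponding to boosts between $V$ and $\mathrm{span}(e_4)$.

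\emph{Stage 1 (killing $\ld A,\ld B,\ld C$).} Let $\Phi\colon\so(q_{-1})\to(\R^5)^3$, $\mathfrak a\mapsto(\mathfrak a\cdot\l A,\mathfrak a\cdot\l B,\mathfrak a\cdot\l C)$. Its image lies in the subspace $\mathcal C$ of $(\R^5)^3$ cut out by the three diagonal conditions $b_{-1}(\cdot,\l X)=0$ and the three off-diagonal ones coming from the tangencies $(\l A,\l B),(\l A,\l C),(\l B,\l C)$; since $\l A,\l B,\l C$ are linearly independent these six conditions are independent, so $\dim\mathcal C=9$. On the other hand, the Gram matrix of $(\l A,\l B,\l C)$ for $b_{-1}$ has determinant $-4$, so $\mathrm{span}(\l A,\l B,\l C)$ is non-degenerate; a direct computation in the block decomposition then yields $\dim\ker\Phi=1$, whence $\dim\mathrm{Im}\,\Phi=9=\dim\mathcal C$ and $\Phi$ surjects onto $\mathcal C$. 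Since $(\ld A,\ld B,\ld C)\in\mathcal C$ for any deformation in $\mathrm{Hom}_0$, there is $\mathfrak a_1\in\so(q_{-1})$ with $\mathfrak a_1\cdot\l X=\ld X$ for $\l X\in\{\l A,\l B,\l C\}$, and subtracting the orbit direction \eqref{eq:topolino} normalises $\ld A=\ld B=\ld C=0$.

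\emph{Stage 2 (normalising $\ld D_4$; locating $\ld D_V$).} The residual $1$-dimensional $\ker\Phi$ is spanned by the element of $\so(q_{-1})$ with vanishing $\so(V)$-block and boost vector $v\propto(\sqrt 2,1,1,1)\in V$. Acting on $\l D=(v_{\l D},0)$ it produces a nonzero multiple of $e_4$, so we may also arrange $\ld D_4=0$. The tangencies $(\l A,\l D),(\l B,\l D)$ then reduce to $b_{-1}(v_{\l A},\ld D_V)=b_{-1}(v_{\l B},\ld D_V)=0$, and together with the norm $b_{-1}(v_{\l D},\ld D_V)=0$ they constrain $\ld D_V$ to the $1$-dimensional $b_{-1}|_V$-orthogonal complement of $\mathrm{span}(v_{\l A},v_{\l B},v_{\l D})$, which one computes to be $\lambda(\sqrt 2,1,1,-1)$ for some $\lambda\in\R$ (the orthogonality to $v_{\l D}$ being automatic).

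\emph{Stage 3 (closing the system via $(\l E,\l F)$).} The same analysis for $\l E$ using the tangencies $(\l A,\l E),(\l C,\l E)$ and the norm $(\l E,\l E)$ gives $\ld E_V=\mu(\sqrt 2,1,-1,1)$, and for $\l F$ using $(\l B,\l F),(\l C,\l F),(\l F,\l F)$ gives $\ld F_V=\nu(\sqrt 2,-1,1,1)$. The tangency $(\l D,\l E)$ reads $b_{-1}(\ld D_V,v_{\l E})+b_{-1}(v_{\l D},\ld E_V)=0$ and yields $\mu=-\lambda$; analogously $(\l D,\l F)$ gives $\nu=-\lambda$. Finally, $(\l E,\l F)$ reads
\[
b_{-1}(\ld E_V,v_{\l F})+b_{-1}(v_{\l E},\ld F_V)=0,
\]
and substituting the above expressions collapses this to $4\sqrt 2\,\lambda=0$, so $\lambda=0$. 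Therefore $\ld D_V=\ld E_V=\ld F_V=0$, and the lemma follows with $\epsilon:=\ld E_4$ and $\phi:=\ld F_4$, which are unconstrained by the letter--letter relations (the $\pd i,\md j$ transform passively under the same normalisations).

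The hardest part will be Stage 3: Stages 1 and 2 alone fall one dimension short of killing $\ld D$, because the pair $(\l C,\l D)$ is disjoint rather than tangent and thus contributes no constraint in $\mathrm{Hom}_0$, leaving a one-parameter family $\lambda(\sqrt 2,1,1,-1)$ inside $\ld D_V$ beyond the reach of $\so(q_{-1})$. The decisive input is the seemingly peripheral tangency between $\l E$ and $\l F$: once $\ld E_V$ and $\ld F_V$ are pinned down in terms of $\lambda$ via $(\l D,\l E)$ and $(\l D,\l F)$, this last relation becomes the scalar equation $4\sqrt 2\,\lambda=0$ that closes the system.
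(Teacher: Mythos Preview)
Your proof is correct and follows essentially the same three-step structure as the paper's proof: first kill $\ld A,\ld B,\ld C$ using the $\so(q_{-1})$-action, then use the residual one-parameter freedom to kill $\ld D_4$, and finally close the system via the letter--letter tangency conditions $(\l D,\l E)$, $(\l D,\l F)$, $(\l E,\l F)$ to force $\ld D_V=\ld E_V=\ld F_V=0$. The only methodological difference is in Stage~1: the paper constructs $\mathfrak a$ directly from the antisymmetry condition $\langle\mathfrak a\cdot u,w\rangle+\langle u,\mathfrak a\cdot w\rangle=0$ on a basis, whereas you argue by dimension count ($\dim\ker\Phi=1$, $\dim\mathcal C=9$) that $\Phi$ surjects onto the constraint space; both are routine and your version has the mild advantage of identifying $\ker\Phi$ explicitly, which feeds cleanly into Stage~2.
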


The analogous lemma in the hyperbolic case, for $t\neq 0$, has been proved in \cite[Proposition 11.1]{KS}, and in fact the arguments here follow roughly the same lines as their proof. However, the first part of their proof uses a nice geometric argument which would be complicated to adapt to AdS geometry. For this reason, we rather use a linear algebra argument here. 

\begin{notation*}
To simplify the notation, from here to the end of Section \ref{subsec inf def plus minus}, we denote by $\langle\cdot,\cdot\rangle$ the bilinear form $b_{-1}$. If one wants to repeat the proof for $G=\Isom(\Hyp^4)$, then $\langle\cdot,\cdot\rangle$ 
should denote $b_1$. The reader should pay attention that in Section \ref{sec:gp cohomology} the bracket $\langle\cdot,\cdot\rangle$ will instead be used to denote the Minkowski bilinear form on $\R^4$.
\end{notation*}

\begin{proof}
The proof will follow from three claims.

First we claim that we can assume $\ld A=\ld B=\ld C=0$. Equivalently,
given any infinitesimal deformation $(\{\pd i\},\{\md j\},\{\ld X\})$, we want to show that there exists $\mathfrak a\in\so(q_{-1})$ such that
\begin{equation} \label{eq:dot ABC vanish}
\mathfrak a\cdot \l A=\ld A~,\qquad \mathfrak a\cdot \l B=\ld B~,\qquad \mathfrak a\cdot \l C=\ld C~.
\end{equation}
Indeed, {if \eqref{eq:dot ABC vanish} is true,} we can then subtract to $(\{\pd i\},\{\md j\},\{\ld X\})$ the element in the tangent space to the orbit of the form \eqref{eq:topolino} (i.e. given by $\ld s=\mathfrak a\cdot \l s$) and obtain a new infinitesimal deformation for which $\ld A=\ld B=\ld C=0$.

To show the first claim, consider the basis $\{\l A,\l B,\l C,\l D,e_4\}$ of $\R^5$, where $e_4=(0,0,0,0,1)$. Recall that matrices $\mathfrak a$ in the Lie algebra $\so(q_{-1})$ are characterised by the condition that
\begin{equation} \label{eq:characterise lie algebra}
\langle \mathfrak a\cdot u,w\rangle+\langle u,\mathfrak a\cdot w\rangle=0
\end{equation}
for every $u,w$, and that it suffices in fact to check the condition for all pair of elements $u,w$ of our fixed basis. Moreover, to define the matrix $\mathfrak a$ in $\so(q_{-1})$, it suffices to define it on 4 vectors of the basis of $\R^5$, such that \eqref{eq:characterise lie algebra} holds when $u,w$  are chosen among these 4 vectors. The definition of $\mathfrak a$ on the last vector of the basis is then uniquely determined by \eqref{eq:characterise lie algebra}. 

Let us now apply these preliminary remarks. By differentiating the conditions 
$$\langle \l A,\l A\rangle=\langle \l B,\l B\rangle=\langle \l C,\l C\rangle=1$$
we obtain
\begin{equation}\label{eq: compatibility lie algebra 1}
\langle \l A,\ld A\rangle=\langle \l B,\ld B\rangle=\langle \l C,\ld C\rangle=0~.
\end{equation}
By differentiating the 
{asymptoticity} conditions
$$\langle \l A,\l B\rangle=\langle \l A,\l C\rangle=\langle \l B,\l C\rangle=-1$$
we get the conditions
\begin{equation}\label{eq: compatibility lie algebra 2}
\langle \l A,\ld B\rangle+\langle \ld A,\l B\rangle=0,\quad\langle \l A,\ld C\rangle+\langle \ld A,\l C\rangle=0,\quad\langle \l B,\ld C\rangle+\langle \ld B,\l C\rangle=0~.
\end{equation}
Equations \eqref{eq: compatibility lie algebra 1} and \eqref{eq: compatibility lie algebra 2} show that any linear transformation $\mathfrak a \in \mathfrak{so}(q_{-1})$ sending $\l A$ to $\ld A$, $\l B$ to $\ld B$ and $\l C$ to $\ld C$ satisfies the conditions of \eqref{eq:characterise lie algebra} for all pairs of $u,w$ chosen in $\{\l A,\l B,\l C\}$. {It remains to define $\mathfrak{a}$ on the remaining two elements $\l D$ and $e_4$ of the basis. Equation \eqref{eq:characterise lie algebra} imposes the value of $\langle \mathfrak{a} \cdot \l D, u  \rangle$ and $\langle \mathfrak{a} \cdot e_4, u  \rangle$ for all $u \in \{ \l A, \l B, \l C \}$. Moreover we must have $\langle \mathfrak{a} \cdot \l D, \l D \rangle = \langle \mathfrak{a} \cdot e_4, e_4 \rangle = 0$. Therefore $\mathfrak{a}\cdot \l D$ and $\mathfrak{a}\cdot e_4$ can be chosen with one degree of freedom given by the value of $\langle \mathfrak{a} \cdot \l D, e_4 \rangle = - \langle \l D, \mathfrak{a} \cdot e_4  \rangle$.} This shows that we can find $\mathfrak a \in \mathfrak{so}(q_{-1})$ satisfying Equation \eqref{eq:dot ABC vanish}, and our first claim is proved.

Second, we claim that we can further assume that
$$\langle\ld D,e_4\rangle=0~.$$
To see this second claim, by repeating the same reasoning as in the beginning of this proof, it suffices to find another $\mathfrak a'\in\so(q_{-1})$ so that 
\begin{equation} \label{eq:dot ABC vanish 2}
\mathfrak a'\cdot \l A= \mathfrak a'\cdot \l B= \mathfrak a'\cdot \l C=0\qquad\text{and}\qquad \mathfrak a'\cdot \l D=\langle \ld D,e_4\rangle e_4~.
\end{equation}
Indeed, if \eqref{eq:dot ABC vanish 2} holds then the conditions \eqref{eq:characterise lie algebra} are satisfied for $u,w$ chosen in $\{\l A,\l B,\l C,\l D\}$, and  we have already remarked that $\mathfrak a'\cdot e_4$ will then be uniquely determined by \eqref{eq:characterise lie algebra} in such a way that  
$\mathfrak a'\in \so(q_{-1})$. This shows our second claim.

Finally we claim that, under the above assumptions, necessarily $\ld D=0$, $\ld E=(0,0,0,0,\epsilon)$ and $\ld F=(0,0,0,0,\phi)$. This part of the proof follows closely \cite[Proposition 11.1]{KS}.

As observed in the proof of Corollary \ref{cor:dimension tangent space ads}, since $\langle \l A, \l D \rangle = -1$, the vectors $\l A$ and $\l D$ play the role of two non-commuting generators (reflections along two timelike hyperplanes that are 
{asymptotic}) of a cusp group generated  by the images of $\l A$, $\l D$, $\p3$, $\m3$, $\p2$, and $\m2$. By the assumption that 
{asymptoticity} conditions are preserved (recall that we are in $\mathrm{Hom}_0$), any deformation of $\l A$ and $\l D$ satisfies $\langle \l A,\l D\rangle=-1$. So, by differentiating and using $\ld A=0$, we obtain $\langle \l A,\ld D\rangle=0$. Analogously, $\langle \l B,\ld D\rangle=0$.

Together with $\langle \l D,\ld D\rangle=0$ (which follows from $\langle \l D,\l D\rangle=1$) and the assumption $\langle\ld D,v\rangle=0$, we have necessarily
$$\ld D=(\sqrt 2 \delta,\delta,\delta,-\delta,0)$$
for some $\delta$. Similarly for $\ld E$, using that $\langle \l A,\ld E\rangle=\langle \l C,\ld E\rangle=\langle \l E,\ld E\rangle=0$, we find
$$\ld E=(\sqrt 2 \epsilon',\epsilon',-\epsilon',\epsilon',\epsilon)~.$$
For $\l F$, from $\langle \l B,\ld F\rangle=\langle \l C,\ld F\rangle=\langle \l F,\ld F\rangle=0$ we find
$$\ld F=(\sqrt 2 \phi',-\phi',\phi',\phi',\phi)~.$$
Now using that $\l D$ and $\l E$ remain 
{asymptotic}, and similarly for  the pairs $\{ \l D, \l F \}$ and $\{ \l E, \l F \}$, we have the relations 
$$\langle \l D,\ld E\rangle+\langle \ld D,\l E\rangle=0~,\quad\langle \l D,\ld F\rangle+\langle \ld D,\l F\rangle=0~,\quad\langle \l E,\ld F\rangle+\langle \ld E,\l F\rangle=0~,$$
which read as:
$$2\sqrt 2\,\delta+2\sqrt 2\,\epsilon'=0~,\qquad 2\sqrt 2\,\delta+2\sqrt 2\,\phi'=0~,\qquad 2\sqrt 2\,\epsilon'+2\sqrt 2\,\phi'=0~.$$
Hence $\delta=\epsilon'=\phi'=0$, and this 
shows the claim. The proof of Lemma \ref{lemma:inf action four letters vanish} is complete.
\end{proof}

\subsection{Infinitesimal deformations of the positive and negative generators}\label{subsec inf def plus minus}

We conclude in this section the proof of Proposition \ref{prop:dimension tangent space ads}.

A direct computation from Table \ref{table:walls2} shows that the tangent vector to our explicit path {$f_t$ in $g_0^{-1}(0)$} is given by:
\begin{equation} \label{eq: inf variation walls explicit}
\pd i=\lambda\m i \qquad \md i=\lambda \p i\qquad \ld X=0
\end{equation}
where
$$\lambda=\frac{1}{(1-t^2)^{3/2}}\,,$$
for all $\l i\in\{\l0,\ldots,\l7\}$ and $\l X\in\{\l A,\ldots,\l F\}$. (In the hyperbolic case, from Table \ref{table:walls}, one would instead obtain $\pd i=\lambda\m i$, $\md i=-\lambda \p i$ and $\ld X=0$ for $\lambda=(1+t^2)^{-3/2}$.)

We shall now show that, under the assumption in the statement of Lemma \ref{lemma:inf action four letters vanish}, \emph{every} infinitesimal deformation $(\{\pd i\},\{\md j\},\{\ld X\})$ of $(\{\p i\},\{\m j\},\{\l X\})$ satisfies \eqref{eq: inf variation walls explicit} for some $\lambda$. Again, the proof follows roughly the lines of \cite[Section 12]{KS}, with the necessary adaptations to the AdS setting, and some simplifications. 

\begin{lemma} \label{lemma:inf variation 0 and 3}
Fix $t\in(-1,1)$, and let $(\{{\pd i}\},\{{\md j}\},\{{\ld X}\})$ be an infinitesimal deformation of the  normalised vectors $(\{{\p i}\},\{{\m j}\},\{{\l X}\})$ satisfying \eqref{tex willer} and \eqref{kit carson}. Then
\begin{equation}\label{eq:inf variation 0 3}
\begin{aligned}
\pd 0=\lambda \m 0 &\qquad  \md 0=\lambda \p 0 \\
\pd 3=\lambda \m 3 &\qquad  \md 3=\lambda \p 3
\end{aligned}
\end{equation}
for some $\lambda\in\R$ (depending on $t$). 
\end{lemma}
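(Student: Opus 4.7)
The plan is to leverage the abundance of commutation relations satisfied by $\p{0}, \m{0}, \p{3}, \m{3}$ with the letter generators $\l{A},\ldots,\l{F}$, whose infinitesimal variations are strongly constrained by the assumptions \eqref{tex willer}--\eqref{kit carson}, together with the unit-norm conditions, in order to pin down each of $\pd{0}, \md{0}, \pd{3}, \md{3}$ as a scalar multiple of a suitable vector. A handful of ``cross-commutations'' will then force the four resulting scalars to coincide with a common value $\lambda$.

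First I would read off from Table \ref{table:G22} the letter generators with which each of $\p{0}, \m{0}, \p{3}, \m{3}$ commutes in $\Gamma_{22}$: a direct check shows that $\p{0}$ and $\m{0}$ both commute precisely with $\l{A}, \l{B}, \l{C}$ (the remaining three letters give $b_{-1}$-pairings of absolute value $2\sqrt{2}$), whereas $\p{3}$ and $\m{3}$ commute precisely with $\l{A}, \l{B}, \l{D}$. Differentiating the relations $b_{-1}(\p{0}, \l{X})=0$ for $\l{X}\in\{\l{A},\l{B},\l{C}\}$ and invoking $\ld{X}=0$ from \eqref{tex willer} yields $b_{-1}(\pd{0}, \l{X})=0$; together with $b_{-1}(\pd{0}, \p{0})=0$, obtained by differentiating $q_{-1}(\p{0})=-1$, this amounts to four linear conditions on $\pd{0}$. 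Since $\l{A}, \l{B}, \l{C}, \p{0}$ are linearly independent in $\R^5$ (only $\p{0}$ has a non-trivial $x_4$-component among them) and $b_{-1}$ is non-degenerate, their $b_{-1}$-orthogonal complement is a line; the vector $\m{0}$ lies in this line, since it commutes with $\l{A}, \l{B}, \l{C}$ and with $\p{0}$. Hence $\pd{0}=\lambda_0\,\m{0}$ for some $\lambda_0\in\R$, and the symmetric argument yields $\md{0}=\mu_0\,\p{0}$, $\pd{3}=\lambda_3\,\m{3}$, $\md{3}=\mu_3\,\p{3}$.

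Next I would match the scalars. Differentiating $b_{-1}(\p{0}, \m{0})=0$ and substituting the formulas just found gives $\lambda_0\,b_{-1}(\m{0},\m{0})+\mu_0\,b_{-1}(\p{0},\p{0}) = \lambda_0-\mu_0 = 0$, so $\lambda_0=\mu_0$; the same applied to $\{\p{3},\m{3}\}$ gives $\lambda_3=\mu_3$. A direct computation from Table \ref{table:walls2} then shows that $\p{0}$ and $\m{3}$ also commute, and that $b_{-1}(\p{0},\p{3})=1$ while $b_{-1}(\m{0},\m{3})=-1$. Differentiating $b_{-1}(\p{0}, \m{3})=0$ and substituting produces $-\lambda_0+\mu_3=0$, which combined with the previous equalities yields $\lambda_0=\mu_0=\lambda_3=\mu_3=:\lambda$, as required.

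The argument is essentially linear-algebraic, relying on the non-degeneracy of $b_{-1}$ and on the identification of the right commutations inside $\Gamma_{22}$; no substantial obstacle is expected. The only point of care is to extract the correct commutation pattern from Table \ref{table:G22} and to compute a few cross-pairings of the vectors of Table \ref{table:walls2} at general $t\in(-1,1)$. The hyperbolic version of Lemma \ref{lemma:inf variation 0 and 3} is obtained from the same argument with $b_1$ in place of $b_{-1}$, and corrects the sign in Step 2 to match the formula $\md i=-\lambda\,\p i$ dictated by Table \ref{table:walls}.
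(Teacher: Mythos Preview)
Your proof is correct and follows essentially the same approach as the paper's: deduce $\pd 0\in\mathrm{Span}(\m 0)$ from the four linear constraints coming from orthogonality to $\l A,\l B,\l C,\p 0$, do the symmetric cases, and then match the four scalars via the derivatives of $\langle\p 0,\m 0\rangle=\langle\p 3,\m 3\rangle=0$ and of one cross-relation. The only cosmetic difference is that the paper uses $\langle\p 3,\m 0\rangle=0$ in the last step while you use the symmetric relation $\langle\p 0,\m 3\rangle=0$; the computations are identical.
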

\begin{proof}
Using the assumptions $\ld A=\ld B=\ld C=0$, the derivatives of the relations $\langle \p 0,\l A\rangle=\langle \p 0,\l B\rangle=\langle \p 0,\l C\rangle=0$  yield 
\begin{equation}\label{eq:paperinik}
\langle \pd 0,\l A\rangle=\langle \pd 0,\l B\rangle=\langle \pd 0,\l C\rangle=0~.
\end{equation}
Together with 
\begin{equation}\label{eq:superpippo}
\langle \pd 0,\p 0\rangle=0~,
\end{equation}
we obtain $\pd 0=\lambda_0^+\m 0$ for some $\lambda_0^+$.

Indeed, the vectors $\l A,\l B,\l C$ and $\p 0$ are linearly independent, and $\m 0$ satisfies all the four linear conditions  \eqref{eq:paperinik} and \eqref{eq:superpippo}, hence $\m 0$ spans the space of solutions. Similarly for $\m 0$, we obtain $\md 0=\lambda_0^-\p 0$, and repeating the same argument for $\p 3$ and $\m 3$ (replacing the role of $\l C$ by $\l D$) we find $\pd 3=\lambda_3^+\m 3$ and $\md 3=\lambda_3^-\p 3$.

Now, differentiating the relation $\langle\p 0,\m 0\rangle=0$, we get
$$0=\langle\pd 0,\m 0\rangle+\langle\p 0,\md 0\rangle=\lambda_0^+\langle\m 0,\m 0\rangle+\lambda_0^-\langle\p 0,\p 0\rangle=\lambda_0^+-\lambda_0^-$$
which implies $\lambda_0^+=\lambda_0^-$. Similarly we have $\lambda_3^+=\lambda_3^-$. Finally by differentiating $\langle\p 3,\m 0\rangle=0$ we find
$$0=\langle\pd 3,\m 0\rangle+\langle\p 3,\md 0\rangle=\lambda_3^+\langle\m 3,\m 0\rangle+\lambda_0^-\langle\p 3,\p 0\rangle=\lambda_0^--\lambda_3^+$$
whence $\lambda_0^-=\lambda_3^+$. This concludes the proof.
\end{proof}

We remark that in the hyperbolic case the same computation shows that $\pd 0=\lambda \m 0$, $\md 0=-\lambda \p 0$, 
$\pd 3=\lambda \m 3$ and $\md 3=-\lambda \p 3$ for some $\lambda\in\R$, as the only differences with respect to the AdS argument is that $\langle\p 0,\p 0\rangle=\langle\p 3,\p 3\rangle=1$ and $\langle\p 3,\p 0\rangle=-1$ from Table \ref{table:walls}.

So, using the assumption $\ld A=\ld B=\ld C=\ld D=0$, we have proved that \eqref{eq: inf variation walls explicit} holds for $\p i\in\{\p0,\p3\}$ and $\m i\in\{\m0,\m3\}$. If we knew that $\ld E=\ld F=0$, we could repeat a similar argument to show that \eqref{eq: inf variation walls explicit} holds also for the remaining $\l i^\pm$'s. It thus essentially remains to show that $\ld E=\ld F=0$.

\begin{lemma} \label{lemma:inf variation E F vanish}
Fix $t\in(-1,1)$, and let $(\{{\pd i}\},\{{\md j}\},\{{\ld X}\})$ be an infinitesimal deformation of the  normalised vectors $(\{{\p i}\},\{{\m j}\},\{{\l X}\})$ satisfying \eqref{tex willer} and \eqref{kit carson}. Then
$\ld E=\ld F=0$
and there exists $\lambda\in\R$ such that, for every $\l i\in\{\l 0,\ldots,\l 7\}$,
$$\pd i=\lambda \m i \quad \mbox{and} \quad  \md i=\lambda \p i~.$$
\end{lemma}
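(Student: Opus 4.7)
The plan is to extend the argument of Lemma \ref{lemma:inf variation 0 and 3} to every index $\l i\in\{\l 0,\ldots,\l 7\}$, and to simultaneously force $\epsilon=\phi=0$. Recall that Lemma \ref{lemma:inf variation 0 and 3} already gives $\pd 0=\lambda\m 0$, $\md 0=\lambda\p 0$, $\pd 3=\lambda\m 3$ and $\md 3=\lambda\p 3$ for some common $\lambda\in\R$.

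First, for each remaining index $\l i$ I would set up the same four linear conditions used in Lemma \ref{lemma:inf variation 0 and 3}: $\p i$ commutes in $\Gamma_{22}$ with $\m i$ and with exactly three letters among $\l A,\ldots,\l F$, and differentiating these four relations (together with $\langle\p i,\p i\rangle=-1$) gives four linear equations for $\pd i\in\R^5$. Exactly as in Lemma \ref{lemma:inf variation 0 and 3}, the four vectors involved are linearly independent and $\m i$ spans the common kernel. Under the normalisation \eqref{tex willer}--\eqref{kit carson}, the right-hand side of this linear system is non-zero precisely when $\l E$ or $\l F$ appears among the three letters commuting with $\p i$, which happens for every $\l i\neq\l 0,\l 3$. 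A straightforward computation yields
\[
\pd i=\lambda_i^+\,\m i+v_i(\epsilon,\phi),\qquad \md i=\mu_i^-\,\p i+w_i(\epsilon,\phi),
\]
where $\lambda_i^+,\mu_i^-\in\R$ are free scalars and $v_i,w_i\in\R^5$ are explicit particular solutions depending linearly on $(\epsilon,\phi)$ and vanishing at $(0,0)$.

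Next, I would bring in the commutation relations $\langle\p i,\m j\rangle=0$ that have not yet been used. Each such relation differentiates to $\langle\pd i,\m j\rangle+\langle\p i,\md j\rangle=0$, which, after substituting the expressions above, becomes a linear equation in the unknowns $\lambda,\lambda_i^+,\mu_j^-,\epsilon,\phi$; the cross terms involve inner products such as $\langle\m i,\m j\rangle$ and $\langle\p i,\p j\rangle$, whose values at the representation $\rho_t$ are readily computed from Table \ref{table:walls2}. Applying this systematically to pairs selected so that the corrections $v_i,w_j$ enter non-trivially produces an overdetermined linear system. Solving it would force $\epsilon=\phi=0$ and $\lambda_i^+=\mu_i^-=\lambda$ for every $\l i$, giving the statement of the lemma.

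The hard part is the combinatorial bookkeeping. Several classes of natural relations --- in particular all the letter-letter tangency conditions $\langle\l X,\l Y\rangle=\pm 1$, and the tangencies $\langle\p i,\p j\rangle=\pm 1$, $\langle\m i,\m j\rangle=\pm 1$ associated to the $12$ cusps of $\mathcal P_t$ --- tend to differentiate to tautologies $0=0$ and provide no constraint, as can be checked directly from the explicit form of $\pd i$, $\md i$ and from the fact that $\ld E$, $\ld F$ have only the last coordinate non-zero. One must therefore carefully single out among the $\p$-$\m$ commutations those that genuinely constrain the free parameters, and this selection relies on the specific commutation pattern of $\Gamma_{22}$ recalled in Section \ref{sec:Gamma22}.
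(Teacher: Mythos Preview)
Your outline follows essentially the same route as the paper's proof. Your parametrisation $\pd i=\lambda_i^+\m i+v_i(\epsilon,\phi)$ is equivalent to the paper's two-parameter form $\pd 1=\lambda_1^+\m 1+\mu_1^+\m 0$: the paper omits the $\l E$-equation from the initial linear system for $\pd 1$ (using only $\l A,\l C$ and the norm, hence a genuinely two-dimensional homogeneous solution space spanned by $\m 0,\m 1$), and only afterwards invokes the $\l E$-relation to link $\mu_1^+$ and $\epsilon$ via $\epsilon=2\sqrt 2\,\mu_1^+$. You instead absorb that relation into $v_1$ from the start. The strategies are the same.

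What is missing is the decisive computation, and it is more than bookkeeping. The paper singles out the relation $\langle\p 1,\m 2\rangle=0$, whose derivative yields the $t$-dependent identity
\[
\mu_1^+ + t^2\,\mu_2^- = 0~.
\]
At $t=0$ this alone gives $\mu_1^+=0$ and hence $\epsilon=0$. For $t\neq 0$ one must combine it with the derivative of $\langle\l E,\m 2\rangle=0$, which gives $\mu_1^+ + \mu_2^-=0$, and subtract to obtain $(1-t^2)\mu_2^-=0$. This case split is the heart of the argument: the linear system you assemble has $t$-dependent coefficients and drops rank at $t=0$, so the assertion that ``solving it would force $\epsilon=\phi=0$'' hides exactly the work the lemma is meant to do. Identifying which pair of equations to combine, and why they suffice uniformly in $t\in(-1,1)$, is the actual content.

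One small inaccuracy: the commutation of $\p i$ with $\m i$ differentiates to $\langle\pd i,\m i\rangle+\langle\p i,\md i\rangle=0$, which couples $\pd i$ and $\md i$; it is not a linear equation in $\pd i$ alone. The paper accordingly uses only the letter-relations and the norm to constrain $\pd i$ in the first pass, and brings in the $\p i$--$\m j$ relations in a second step, just as you do.
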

\begin{proof}
Let $\lambda\in\R$ be as in the conclusion of Lemma \ref{lemma:inf variation 0 and 3}. Let us first focus on the variations of $\l 1$ and $\l 2$, similarly to the proof of Lemma \ref{lemma:inf variation 0 and 3}.  Taking the derivatives of the relations $\langle \p 1,\l A\rangle=\langle \p 1,\l C\rangle=0$ and using $\ld A=\ld C=0$, we have
$$\langle \pd 1,\l A\rangle=\langle \pd 1,\l C\rangle=0$$
whereas from $\langle \p 1,\p 1\rangle=-1$ we derive 
$$\langle \pd 1,\p 1\rangle=0~.$$

Here we do not know $\ld E=0$ yet, hence we cannot argue that $\langle \pd 1,\l E\rangle=0$, which would imply that $\pd 1$ is a multiple of $\m 1$. However,  observing that $\l A,\l C$ and $\p 1$ are linearly independent, and that the linear system for the $\pd 1$ given by the above three conditions is satisfied by the vectors $\m 0$ and $\m 1$ (which are linearly independent), by a dimension argument  $\pd 1$ is necessarily a linear combination of $\m 0$ and $\m 1$. Analogously one gets that $\md 1$ is necessarily a linear combination of $\p 0$ and $\p 1$, and replacing $\l C$ by $\l D$, and $\l 0$ by $\l 3$, we find similar relations for $\md 2$ and $\pd 2$. Let us summarise them here:
\begin{align*}
\md 1=\lambda_1^-\p 1+\mu_1^-\p 0~,\\
\pd 1=\lambda_1^+\m 1+\mu_1^+\m 0~,\\
\md 2=\lambda_2^-\p 2+\mu_2^-\p 3~,\\
\pd 2=\lambda_2^+\m 2+\mu_2^+\m 3~.
\end{align*}

We claim here that, as expected from \eqref{eq: inf variation walls explicit}, $\lambda_1^-=\lambda_1^+=\lambda_2^-=\lambda_2^+=\lambda$ and $\mu_1^-=\mu_1^+=\mu_2^-=\mu_2^+=0$, and moreover  $\ld E=0$. In fact, it will suffice to show $\mu_1^+=0$. 
Indeed, recalling the assumption $\ld E=(0,0,0,0,\epsilon)$, the derivative of the relation $\langle \l E,\p 1\rangle=0$ gives 
\begin{equation}\label{eq:derivative E1+}
0=\langle \ld E,\p 1\rangle+\langle \l E,\pd 1\rangle=\frac{1}{\sqrt{1-t^2}}(\epsilon-2\sqrt 2\mu_1^+)~,
\end{equation}
hence we will obtain $\epsilon=0$, namely $\ld E=0$. Once we have $\ld E=0$, we can proceed exactly as in Lemma \ref{lemma:inf variation 0 and 3} to deduce that $\mu_1^-=\mu_1^+=\mu_2^-=\mu_2^+=0$ and then $\lambda_1^-=\lambda_1^+=\lambda_2^-=\lambda_2^+=\lambda$ (which also follows from Equation \eqref{eq:ziopaperone} below). 


We shall need one more intermediate step. By differentiating the relation $\langle \m 0,\p 1\rangle=0$, we find
$$0=\langle \md 0,\p 1\rangle+\langle \m 0,\pd 1\rangle=\lambda\langle \p 0,\p 1\rangle+\lambda_1^+\langle \m 0,\m 1\rangle+\mu_1^+\langle \m 0,\m 0\rangle=\lambda-\lambda_1^++\mu_1^+~.$$
Using similarly  the relations $\langle \p 0,\m 1\rangle=\langle \m 2,\p 3\rangle=\langle \p 2,\m 3\rangle=0$ we find three analogous identities. We summarise these four important identities here:
\begin{equation}\label{eq:ziopaperone}
\lambda=\lambda_1^--\mu_1^-=\lambda_1^+-\mu_1^+=\lambda_2^--\mu_2^-=\lambda_2^+-\mu_2^+~.
\end{equation}

We can now focus on proving that $\mu_1^+=0$. Differentiating $\langle \p 1,\m 2\rangle=0$ we see that
$$0=\lambda_1^+\langle \m 1,\m 2\rangle +\mu_1^+\langle \m 0,\m 2\rangle+\lambda_2^-\langle \p 1,\p 2\rangle +\mu_2^-\langle \p1,\p 3\rangle~.$$
Using $\langle \m 1,\m 2\rangle=-1$, $\langle \p 1,\p 2\rangle=1$ and an explicit computation for the other two terms, we obtain:
$$\lambda_2^--\lambda_1^+=\frac{3+t^2}{1-t^2}\mu_1^++\frac{1+3t^2}{1-t^2}\mu_2^-~.$$
On the other hand, from Equation \eqref{eq:ziopaperone} we have $\lambda_2^--\lambda_1^+=\mu_2^--\mu_1^+$, whence
\begin{equation}\label{eq:t depending condition}
\mu_1^++{t^2}\mu_2^-=0~.
\end{equation}
If $t=0$, we are done. Otherwise, we will combine \eqref{eq:t depending condition} with the derivative of the relation 
$\langle \l E,\m 2\rangle=0$, namely
$$\frac{t}{\sqrt{1-t^2}}(-\epsilon-2\sqrt 2\mu_2^-)=0~,$$
which together with Equation \eqref{eq:derivative E1+} gives $\mu_1^++\mu_2^-=0$. 
Together with \eqref{eq:t depending condition}, this shows that $\mu_1^+=0$.

Having proved that $\ld E=0$, the proof that $\ld F=0$ follows exactly the same lines, with $\l 4$ and $\l 5$ playing the role of $\l 1$ and $\l 2$. Arguing as in Lemma \ref{lemma:inf variation 0 and 3} one then shows that $\pd i=\lambda \m i$ and $\md i=\lambda \p i$ for all $\l i\in\{\l1,\ldots,\l7\}$.
\end{proof}

This provides the conclusion of the proof of Proposition \ref{prop:dimension tangent space ads}.

\begin{proof}[Proof of Proposition \ref{prop:dimension tangent space ads}]
Let us fix $t\in(-1,1)$. We now show that the kernel of the differential of $g_0\colon \R^{110}\to \R^{138}$ is 11-dimensional at $(\{\p i\},\{\m j\},\{\l X\})\in g_0^{-1}(0)$. 

Lemmas \ref{lemma:inf action four letters vanish}, \ref{lemma:inf variation 0 and 3} and \ref{lemma:inf variation E F vanish} showed that every element in the kernel of $dg_0$ is of the form \eqref{eq: inf variation walls explicit} up to adding an element of the form \eqref{eq:topolino}, that is an element tangent to the orbit of the $\Isom(\AdS^4)$-action. It is also easy to see that such element in the tangent space of the orbit is unique, for if two elements $\mathfrak a_1$ and $\mathfrak a_2$ have this property, it follows that $\mathfrak a:=\mathfrak a_1-\mathfrak a_2$ satisfies $\mathfrak a\cdot \l X=0$ for $\l X=\l A,\l B,\l C,\l D$ and the characterising conditions \eqref{eq:characterise lie algebra} (already used in Lemma \ref{lemma:inf action four letters vanish}) show that $\mathfrak a=0$. The very same argument shows that the map defined in \eqref{eq:topolino} from the Lie algebra $\mathfrak{isom}(\AdS^4)$ into the kernel of the differential of $g_0$ (whose image is the tangent space to the orbit of the $\Isom(\AdS^4)$-action) is injective.

In other words, the 10-dimensional tangent space of the orbit has a 1-dimensional complement, consisting precisely of the elements of the form \eqref{eq: inf variation walls explicit}, hence the kernel of the differential of $g_0$ has dimension 11. By the constant rank theorem, $g_0^{-1}(0)$ is a manifold of dimension 11 near the elements in the orbit of $\rho_t$.
\end{proof}

\subsection{Topology of the neighbourhood $\mathcal{U}$} \label{sec:proof teoB ads}

We now state a weaker version of Theorem \ref{teo:main}:
\begin{theorem} \label{teo:main_weak}
Let $G$ be $\Isom(\Hyp^4)$, $\Isom(\AdS^4)$, or $G_{\HP^4}$. Then $[\rho_0]$ has a neighbourhood $\mathcal U=\mathcal V\cup\mathcal H$ in $X(\Gamma_{22},{G})$ homeomorphic to $\mathcal S=\{(x_1^2+\ldots+x_{12}^2)\cdot x_{13}=0\}\subset\R^{13}$, so that:
\begin{itemize}
\item $[\rho_0]$ corresponds to the origin;
\item $\mathcal V$ corresponds to the $x_{13}$-axis, and 
consists of the conjugacy classes of the holonomy representations $\rho_t^G$;
\item $\mathcal H$ corresponds to $\{x_{13}=0\}$, identified to a neighbourhood of the complete hyperbolic orbifold structure of the ideal right-angled cuboctahedron in its deformation space. 
\end{itemize}
The group $G/G^+\cong\Z/2\Z$ acts on $\mathcal S$ by changing sign to the last coordinate $x_{13}$.
\end{theorem} 

This statement is weaker than Theorem \ref{teo:main} because it gives a purely topological description {of $\mathcal{U}$}, while the smoothness and transversality of 
its components will be proved in Section \ref{sec:extra}.

We prove here Theorem \ref{teo:main_weak} in the Anti-de Sitter case. The proof in the hyperbolic case is completely analogous, so we omit it, while the proof in the HP case will be given later in Section \ref{sec: teo B HP}.
%
%
We decided to give a proof only in the AdS case, since the fact that the points $[\rho_t]$ for $t>0$ form a smooth curve (Proposition \ref{prop:dimension tangent space hyp}) has already been proved in \cite{KS}, while its AdS counterpart is completely new. The proof for the hyperbolic case is analogous (recall Remark \ref{rem:simmplicityAdS1}). Moreover, the description of the collapse (namely, at the representation $\rho_0$) is also new in both (hyperbolic and AdS) cases.

\begin{proof}[Proof of Theorem \ref{teo:main_weak} --- AdS case]

We split the proof into several steps. 

\begin{steps}
\item As a first step, let us define $\widetilde{\mathcal V}\subset \mathrm{Hom}(\Gamma_{22},\Isom(\AdS^4))$  as the $\Isom^+(\AdS^4)$--orbit of the curve $\{{\rho}_t\}_{{t\in(-1,1)}}$. Let us also observe that $\widetilde{\mathcal V}$ is contained in the subset $\mathrm{Hom}_0(\Gamma_{22},\Isom(\AdS^4))$ introduced in Definition \ref{defi Hom0}.

Since by Lemma \ref{lemma action proper} the $\Isom^+(\AdS^4)$--action by conjugation is free on $\{{\rho}_t\}_{{t\in(-1,1)}}$, the map $(g,t)\mapsto g\cdot{\rho}_t$ defines a continuous injection
$$\Isom^+(\AdS^4)\times(-1,1)\to\mathrm{Hom}_0(\Gamma_{22},\Isom(\AdS^4))~,$$
where by Proposition \ref{prop:dimension tangent space ads} the latter is a smooth 11-dimensional manifold. By the invariance of domain, this injection is a homeomorphism onto its image, which is $\widetilde{\mathcal V}$. By Lemma \ref{lemma centraliser} and Lemma \ref{lemma action proper}, the $\Isom^+(\AdS^4)$--action by conjugation is free and proper on $\widetilde{\mathcal V}$ thus the projection in the quotient $X(\Gamma_{22},\Isom(\AdS^4))$ is  
$${\mathcal V}:=\{[{\rho}_t]\ | \ t\in{(-1,1)}\}~,$$
which is homeomorphic to a line.

\item The second component ${\mathcal H}$ is defined as follows.

Recall from Section \ref{subsec:cuboct} that we have {a fixed 
copy $\Hyp^3\subset\AdS^4$ defined by 
$x_4 = 0$ and $x_0 > 0$, 
fixed by the reflection $r$. 
Its stabiliser 
$G_0$ is 
$\Isom(\Hyp^3)\times \langle r\rangle$, where 
we consider $\Isom(\Hyp^3)$ as a subgroup of $\Isom(\AdS^4)$.}

Consider the reflection group $\Gamma_{\mathrm{co}}$ of the ideal right-angled cuboctahedron. We define the map 
$$\Psi\colon\mathrm{Hom}(\Gamma_{\mathrm{co}},\Isom(\Hyp^3))\to\mathrm{Hom}(\Gamma_{22},\Isom(\AdS^4))$$ that associates to  $\eta\colon\Gamma_{\mathrm{co}}\to\Isom(\Hyp^3)$ the representation $\Psi_\eta\colon\Gamma_{22}\to \Isom(\AdS^4)$ sending each of the generators $\p0,\ldots,\p7$ of $\Gamma_{22}$ to the reflection $r$, and each of the generators $\m0,\ldots,\m7,\l A,\ldots,\l F$ to the corresponding element of $\Isom(\Hyp^3)<\Isom(\AdS^4)$ through $\eta$. It is then straightforward to check that:
\begin{enumerate}
\item The map $\Psi$ is well-defined and equivariant for the conjugacy action of $\Isom(\Hyp^3)<\Isom(\AdS^4)$.
\item The following induced map 
$$\widehat \Psi \colon \bar{X}(\Gamma_{\mathrm{co}},\Isom(\Hyp^3)) \to X(\Gamma_{22},\Isom(\AdS^4))~,$$
where $\bar{X}(\Gamma_{\mathrm{co}},\Isom(\Hyp^3)) := \mathrm{Hom}(\Gamma_{\mathrm{co}},\Isom(\Hyp^3)) / \Isom(\Hyp^3)$, is injective.
\end{enumerate}

Indeed, (1) holds because, using that $r$ commutes with the elements of $\Isom(\Hyp^3)<\Isom(\AdS^4)$, the images of the generators in $\Isom(\AdS^4)$ through $\Psi_\eta$ satisfy the relations of $\Gamma_{22}$, so that $\Psi_\eta$ is indeed a representation of $\Gamma_{22}$ in $\Isom(\AdS^4)$. The equivariance of $\Psi$ is clear using again that $r$ commutes with $\Isom(\Hyp^3)$. It also follows that $\widehat\Psi$ is well defined. Inded, by the equivariance of $\Psi$, if $\eta_1$ and $\eta_2$ are conjugate in $\Isom(\Hyp^3)$ then $\Psi_{\eta_1}$ and $\Psi_{\eta_1}$ are conjugate in $\Isom(\AdS^4)$. Up to composing with $r$, which commutes with both $\Psi_{\eta_1}$ and $\Psi_{\eta_2}$, then the latter are conjugate in $\Isom^+(\AdS^4)$.

Moreover, (2) holds because if two representations $\Psi_{\eta_1}$ and $\Psi_{\eta_2}$ in the image of $\Psi$ are conjugate by some $g\in\Isom^+(\AdS^4)$, then, since $\Psi_{\eta_1}(\p i)=\Psi_{\eta_2}(\p i)=r$, the isometry $g$ must fix 
$\Hyp^3\subset\AdS^4$, and therefore $g\in G_0 \cong \Isom(\Hyp^3)\times \langle r\rangle$. Moreover, up to composing with $r$, which commutes with both $\Psi_{\eta_i}$, we can also assume that $g$ belongs to the subgroup $\Isom(\Hyp^3)<\Isom(\AdS^4)$, hence $\eta_1$ and $\eta_2$ are conjugate in $\Isom(\Hyp^3)$. 

The set 
$\bar X(\Gamma_{\mathrm{co}},\Isom(\Hyp^3))$ is a 12-dimensional manifold in a neighborhood (say $\mathcal H_0$) of $[\eta_0]$, since it corresponds to a neighbourhood of the complete hyperbolic orbifold structure of the right-angled cuboctahedron in its deformation space. To show this, the same proofs of \cite[Proposition 5.2]{KS} apply (see also the related discussion in \cite[Section 5]{KS}), as a well-known ``reflective'' orbifold version of Thurston's hyperbolic Dehn filling \cite{thurstonnotes} (note that the ideal cuboctahedron has 12 cusps). 

Therefore a neighborhood $\mathcal H_0$ of $[\eta_0]$ 
$\bar X(\Gamma_{\mathrm{co}},\Isom(\Hyp^3))$ is homeomorphic to $\R^{12}$, and we can also assume that $\widehat\Psi|_{\mathcal H_0}$ is a homeomorphism onto its image. Then let us define ${\mathcal H}:=\widehat\Psi(\mathcal H_0)$.

\item We claim that the intersection of ${\mathcal H}$ and ${\mathcal V}$ consists only of the point $[\rho_0]$.

Indeed, suppose $[\rho]\in {\mathcal H}\cap{\mathcal V}$, for $\rho$ in $\mathrm{Hom}(\Gamma_{22},\Isom(\AdS^4))$. On the one hand $\rho=\Psi(\eta)$, where $\eta\in \mathrm{Hom}(\Gamma_{\mathrm{co}},\Isom(\Hyp^3))$ is a deformation  of the orbifold fundamental group of the cuboctahedron. On the other hand $\rho$ lies in  $\widetilde{\mathcal V}\subset\mathrm{Hom_0}(\Gamma_{22},\Isom(\AdS^4))$. In particular, $\eta$ maps each peripheral subgroup of $\Gamma_{\mathrm{co}}$ to a cusp group.

By the Mostow--Prasad rigidity, $\eta$ is conjugate to the holonomy representation $\eta_0$ of the complete right-angled ideal cuboctahedron. Since both $\rho$ and $\rho_0$ send each of the generators $\p0,\ldots,\p7$ to $r$, which commutes with $\Isom(\Hyp^3)<\Isom(\AdS^4)$, the representations $\rho$ and $\rho_0$ are also conjugate in $\Isom(\Hyp^3)$, and therefore $[\rho]=[\rho_0]$.

\item Let us now show that the point $[\rho_0]\in X(\Gamma_{22},\Isom(\AdS^4))$ has a neighbourhood ${\mathcal U}$ which is contained in the union of the two components ${\mathcal V}$ and ${\mathcal H}$.

To see this, let $\rho$ be a representation nearby $\rho_0$. We claim that if two generators  which are sent by $\rho_0$ to the same reflection $r$ (hence necessarily of the form $\p i$ and $\p j$) are sent to reflections in coinciding hyperplanes also by $\rho$, then all generators $\p 0,\ldots,\p 7$ are sent by $\rho$ to the same reflection. That is, if $\rho(\p i)=\rho(\p j)$ for some $\l i,\l j$, then $\rho(\p i)=\rho(\p j)$ for all $\l i,\l j$. This will show our thesis by the rigidity property of Proposition \ref{prop cube group ads collapsed}: if $[\rho]$ is not on the ``horizontal'' component $\mathcal H$, then no two letter generators are sent to the same reflection, and thus all the collapsed cusp groups of $\rho_0$ are cusp groups for $\rho$. That is, $\rho$ lies in $\mathrm{Hom}_0(\Gamma_{22},\Isom(\AdS^4))$ and thus in the ``vertical'' component $\widetilde{\mathcal V}$, since $\widetilde{\mathcal V}$ is open in $\mathrm{Hom}_0(\Gamma_{22},\Isom(\AdS^4))$.

To prove the claim, suppose that two generators $\p i$ and $\p j$ are such that $\rho(\p i)=\rho(\p j)$. By the symmetries of the polytope $\mathcal P_t$ (see 
\cite[Lemma 7.6]{transition_4-manifold}) and Proposition \ref{prop cube group ads collapsed}, we can assume the two generators are $\p 0$ and $\p 1$. Up to conjugation in $\Isom(\AdS^4)$, we can also assume $\rho(\p 0)=\rho(\p 1)=r$. To simplify the notation, let $f$ be a preimage of $\rho$ in $g^{-1}(0)$, which associates to each generator of $\Gamma_{22}$ a vector in $\R^5$ of square norm $1$ or $-1$ with respect to $q_{-1}$.

Up to changing the sign if necessary, $f(\p 0)=f(\p 1)=e_4=(0,0,0,0,1)$. From the relations in $\Gamma_{22}$, the vector $f(\p 2)$  is necessarily orthogonal to $f(\m 1)$, $f(\m 2)$, $f(\m 3)$ and $f(\l A)$. But by the assumption $f(\p 0)=f(\p 1)=e_4$ and the relations involving $\p 0$, the vector $e_4$ is orthogonal to $f(\m 1)$, $f(\m 3)$ and $f(\l A)$, while from the relations involving $\p 1$, the vector $e_4$ is orthogonal to $f(\m 2)$.

For a small deformation of $\rho_0$, the vectors  $f(\m 1)$, $f(\m 2)$, $f(\m 3)$ and $f(\l A)$ are linearly independent, because they are for $\rho_0$ (see Table \ref{table:walls2}). Hence the conditions of being orthogonal to these 4 vectors define a linear system of 4 independent equations, which are satisfied by $e_4$. Hence $f(\p 2)$, which is a solution of the system, coincides with $e_4$ up to rescaling. Since $q(f(\p 2))=-1$, we can assume that $f(\p 2)=e_4$.  Namely, $\rho(\p 2)=r$. By arguing similarly for $\p 3$ and then for all the other generators, one easily finds sufficiently many relations to show that $\rho(\p i)=r$ for each generator $\p i \in \{ \p0, \ldots, \p7 \}$, and therefore $\rho$ is in $\mathcal H$. This proves the claim.
 
\item Summarising the previous steps, we have shown that the class $[\rho_0]$ has a neighborhood ${\mathcal U}$ 
which only consists of points of ${\mathcal H}$ and ${\mathcal V}$. Since we already know that ${\mathcal H}$ and ${\mathcal V}$ are smooth manifolds outside of $\rho_0$, it is harmless to enlarge ${\mathcal U}$ so that it contains entirely ${\mathcal H}$ and ${\mathcal V}$.

We have therefore obtained a neighborhood ${\mathcal U}$ of $[\rho_0]$ in $X(\Gamma_{22},\Isom(\AdS^4))$ homeomorphic to
$$(\{0\}\times\R)\ \cup\ (\R^{12}\times\{0\})\ \subset\ \R^{13}~,$$
where the two components are precisely ${\mathcal H}$ and ${\mathcal V}$.

\item It remains to prove the last sentence about the action of the group $$\Isom(\AdS^4)/\Isom^+(\AdS^4)\cong\Z/2\Z$$ generated by the coset of the reflection $r$.

This is now simple: on the one hand, as observed after Definition \ref{defi rhot}, conjugation by $r$ acts on $\mathcal V$, which is homeomorphic to $(-1,1)$, by $[{\rho}_t]\mapsto [{\rho}_{-t}]$. On the other hand, by construction of $\mathcal H$, conjugation by $r$ fixes pointwise the elements in $\mathcal H$, which are of the form $\Psi_\eta$ for some $\eta\colon\Gamma_{\mathrm{co}}\to\Isom(\Hyp^3)$.
This concludes the proof.
\qedhere
\end{steps}
\end{proof}

We conclude the section with a couple of observations on the nature of the fixed points for the action of $G$ on $\mathrm{Hom}(\Gamma_{22},G)$.

Lemma \ref{lemma centraliser} shows that the stabiliser of each point ${\rho}_t$ in $\mathrm{Hom}(\Gamma_{22},G)$, for the conjugacy action of $G$,  is trivial, except $\rho_0$ which has stabiliser $\langle r\rangle$. In fact, a small adaptation of the proof shows that, in a neighborhood  of $\rho_0$, the stabiliser of all points in the the horizontal component $\mathcal H$ is as well the group $\Z/2\Z$ generated by $r$. This is because we can find a neighborhood of $\rho$ is in the image of $\Psi$ such that, for a lift $f$ of $\rho$, the vectors $f(\l A),f(\l B),f(\l C),f(\l D)\in\R^5$ are linearly independent. Indeed the vectors $f_0(\l A),f_0(\l B),f_0(\l C),f_0(\l D)$ are linearly independent, and being independent is an open condition. By the structure of the group $\Gamma_{22}$, the vectors $f(\l A),f(\l B),f(\l C),f(\l D)$ are necessarily orthogonal to $(0,0,0,0,1)$, since $\rho$ maps each generator $\p i$ to $r$. Hence one can repeat the proof of Lemma \ref{lemma centraliser} and see that an element in the stabiliser of $\rho$ must necessarily fix $\{x_4=0\}$ setwise, and moreover must act trivially on $\{x_4=0\}$. Hence the only possible candidates are the identity and $r$, both of which fix $\rho$ by definition of $\Psi$.

In conclusion, let us consider the full quotient $\mathrm{Hom}(\Gamma_{22},G)/G$, which is a $\Z/2\Z$--quotient of $X(\Gamma_{22},G)$, where $\Z/2\Z\cong G/G^+$. A local picture of this full quotient is given in Figure \ref{fig:rep_var} (right), as a consequence of the fact that the generator of $\Z/2\Z$ acts by changing sign to the $x_{13}$-coordinate, hence as a ``reflection'' with respect to the horizontal component $\mathcal H$. The ``horizontal'' component (which is the projection of $\mathcal H$ to the full quotient $\mathrm{Hom}(\Gamma_{22},G)/G$) entirely consists of points with associated group $\Z/2\Z$. They are ``double'' points in a suitable sense, which reminds ``mirror'' points in the language of orbifolds.

\section{Reflections and cusp groups in HP geometry} \label{sec:cusp_groups_HP}

In this section we introduce half-pipe geometry, discuss its relations with Minkowski geometry, and prove the half-pipe version of the flexibility and rigidity statements for right-angled cusp groups.

\subsection{Half-pipe geometry}

Let us denote by $q_0$ the following degenerate bilinear form on $\R^{n+1}$:
$$q_{0}(x) = -x_0^2 + x_1^2 + \ldots+x_{n-1}^2~.$$
Then \emph{half-pipe space} of dimension $n$ is defined as

$$\HP^n  = {\{x\in\R^{n+1}\,|\,q_0(x)= -1\, , \, x_0>0\}}~,$$
and the group of \emph{half-pipe transformations} is
$$ G_{\HP^n}  = {\mathsf \{ A \in \O(q_0) \, | \, A(\HP^n)=\HP^n\, , \, A e_n = \pm e_n \}}
$$
(here $e_0, \ldots, e_n$ is the canonical basis of $\R^{n+1}$). 

Explicitly, an element $A \in G_{\HP^n}$ has the form
\begin{equation*} 
A = \left(
\begin{array}{ccc|c}
  &&& 0 \\
  
   & \widehat A & & \vdots \\
  &&& 0 \\
    \hline  
    \star & \ldots & \star & \pm1
\end{array}
\right)
\end{equation*}
for some $n$-by-$n$ matrix $\widehat A$ which preserves the bilinear form of signature $(-,+,\ldots,+)$ on $\R^n$ {
the upper sheet $\Hyp^{n-1}$ of the hyperboloid $\{ q_0 = -1 \} \subset \R^n \times \{ 0 \}$}, where the stars denote the entries of any vector in $\R^n$. Hence there is an obvious epimorphism $G_{\HP^n}\to\Isom(\Hyp^{n-1})$, given by $A\mapsto \widehat A$. {Despite an inequality is involved in the definition of $G_{\HP^n}$, this group is naturally an algebraic Lie group via the isomorphism between $G_{\HP^n}$ and  $\O(1,n-1) \ltimes \R^n$ of Lemma \ref{lemma isomorphism}, and the fact that the latter group can be defined as an algebraic subgroup of $\mathrm{Aff}(\R^n) \subset \GL(n+1,\R)$.}


The \emph{boundary at infinity} of $\HP^n$ is 
$$\partial\HP^n  = {\{x\in\R^{n+1}\,|\,q_0(x)=0\}/\R^*}~,$$
and can be visualised as the union of a cylinder constituted by those $[x]\in \partial\HP^n$ such that $(x_0,\ldots,x_{n-1})$ does not vanish, and the point 
at infinity $[e_n]\in \partial\HP^n$. The latter is a distinguished point, since it is preserved by the action of every element of $ G_{\HP^n}$ on $\partial\HP^n$.

{As usual, we consider $\HP^n \cup \overline{\HP}{}^n$ as a subset of $\RP^n$, the \emph{ideal closure} of a 
subset $A \subset \HP^n$ {that is closed in $\HP^n$} is its closure $\overline{A}$ in 
$\RP^n$, 
and we have $\overline{\HP}{}^n = \HP^n \cup \partial \HP^n$.}

There is a natural map from $\HP^n$ to {$\{x\in\HP^n\,:\, x_n=0\}$}, which is a copy of $\Hyp^{n-1}$, given simply by $(x_0,\ldots,x_n)\mapsto(x_0,\ldots,x_{n-1},0)$. We shall call this map the \emph{projection}
$$\pi\colon\HP^n\to\Hyp^{n-1}~.$$
The map $\pi$ is equivariant with respect to the obvious epimorphism $G_{\HP^n}\to\Isom(\Hyp^{n-1})$, and extends to a map {
$\overline{\pi} \colon \overline{\HP}{}^n \smallsetminus \{ [e_n] \} \to \overline{\Hyp}{}^{n-1}$}. 
The fibres of $\overline{\pi}$ are called \emph{degenerate lines}, since they 
extend to projective lines in $\RP^n$ 
by adding the point $[e_n]$ at infinity, and the restriction of the bilinear form $b_0$ associated to $q_0$ is degenerate. Degenerate lines are preserved by the action of $ G_{\HP^n}$.


\subsection{Duality with Minkowski space}

We will find comfortable to exploit the well-known duality between half-pipe and Minkowski geometry. We will not provide details of the proofs here, see \cite{barbotfillastre,surveyseppifillastre,transition_4-manifold} for a more complete treatment.

The fundamental observation is that $\HP^n$ is identified to the space of spacelike affine hyperplanes in Minkowski space $\R^{1,n-1}:=(\R^{n},q_1)$
where $q_1$ is the non-degenerate bilinear form on $\R^n$ introduced in Section \ref{subsec:defi hyp ads}. The correspondence is given by associating to a point $x\in\HP^n$ the affine hyperplane of $\R^{1,n}$ defined by the equation 
\begin{equation}\label{eq:dual correspondence}
b_1((x_0,\ldots,x_{n-1}),(y_0,\ldots,y_{n-1}))+x_n=0~,
\end{equation}
for $b_1$ the bilinear form associated to $q_1$. 

The isometry group $\Isom(\R^{1,n-1})\cong \O(q_1)\ltimes\R^n$ acts naturally on the space of spacelike affine hyperplanes, and the correspondence is also well-behaved with respect to the group actions, as 
we summarise in the following lemma (see for instance \cite{barbotfillastre}, \cite{surveyseppifillastre} or \cite[Lemma 2.8]{transition_4-manifold}).

\begin{lemma}\label{lemma isomorphism}
There is a ``duality'' homeomorphism
$$\{\text{spacelike affine hyperplanes in }\R^{1,n-1}\}\cong\HP^n$$
which is equivariant with respect to a group isomorphism
$$\phi \colon \Isom(\R^{1,n-1}) \to G_{\HP^n}~.$$
\end{lemma}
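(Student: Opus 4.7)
My plan is to verify Equation \eqref{eq:dual correspondence} defines the claimed bijection and to construct $\phi$ as an explicit matrix representation, then check equivariance by a direct computation.

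First, I would show that \eqref{eq:dual correspondence} gives a well-defined map $\Phi$ from $\HP^n$ to the set of spacelike affine hyperplanes in $\R^{1,n-1}$: replacing a representative $x$ by $\lambda x$ merely scales the defining equation by $\lambda$. To see that $\Phi$ is a bijection, I would use the standard fact that every affine hyperplane in $\R^n$ has the form $\{y : b_1(v,y)=-c\}$ for a nonzero vector $v\in\R^n$ and a scalar $c\in\R$, uniquely determined up to simultaneous rescaling $(v,c)\mapsto(\lambda v,\lambda c)$, and that the hyperplane is spacelike precisely when $v$ is $q_1$-timelike. Since $q_0((v,c))=q_1(v)$, this is exactly the condition $[(v,c)]\in\HP^n$. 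Continuity of $\Phi$ and $\Phi^{-1}$ is immediate from the explicit formula.

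Next, I would unpack the shape of elements of $G_{\HP^n}$. The condition $Ae_n=\pm e_n$ together with preservation of $q_0$ up to scale forces, after normalising the projective representative so that the bottom-right entry equals $1$, every $[A]\in G_{\HP^n}$ to be represented uniquely by a matrix
\[
\begin{pmatrix} \widehat A & 0 \\ w^T & 1\end{pmatrix},\qquad \widehat A\in\O(q_1),\ w\in\R^n,
\]
as one sees by computing $A^T\eta_0 A$ in block form, where $\eta_0=\mathrm{diag}(-1,1,\ldots,1,0)$. This provides a set-theoretic bijection $G_{\HP^n}\cong\O(q_1)\times\R^n$ which is manifestly continuous in both directions.

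Finally, I would compute the induced action on hyperplanes of an isometry $(A,b)\in\O(q_1)\ltimes\R^n$, acting on $\R^n$ by $y\mapsto Ay+b$. A direct calculation (using that $A$ preserves $b_1$) shows that the image of $\{y:b_1(v,y)=-c\}$ is $\{y':b_1(Av,y')=-c+b_1(Av,b)\}$, which under $\Phi^{-1}$ corresponds to $[(Av,\,c-b_1(Av,b))]\in\HP^n$. Reading off this formula yields an explicit matrix $\phi(A,b)$ which, by the classification of the previous paragraph, lies in $G_{\HP^n}$; equivariance of $\Phi$ holds by construction. Then $\phi$ is a group homomorphism by composing two isometries and comparing with the corresponding matrix product; bijectivity follows because both source and target are parametrised set-theoretically by $\O(q_1)\times\R^n$ via matching data. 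The only real subtlety, and indeed the main obstacle, is the sign-and-scale bookkeeping in the matrix representation of $G_{\HP^n}$ and ensuring the two parametrisations are consistently normalised; once that is fixed, every step reduces to routine linear algebra.
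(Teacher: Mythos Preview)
Your argument is correct: the bijection via \eqref{eq:dual correspondence}, the block-matrix description of $G_{\HP^n}$, and the explicit computation of the induced action on $(v,c)$ all go through as you describe, and together they establish both the homeomorphism and the equivariance with respect to a group isomorphism. The only slightly loose phrase is ``preservation of $q_0$ up to scale'': once you normalise the bottom-right entry to $1$, the representative is unique and preserves $q_0$ on the nose, so the upper-left block lands exactly in $\O(q_1)$ --- but your conclusion is right and matches the paper's explicit description of $G_{\HP^n}$.

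As for comparison with the paper: there is nothing to compare, since the paper does not prove this lemma at all. It is stated as a known fact with references to \cite{barbotfillastre}, \cite{surveyseppifillastre}, and \cite[Lemma 2.8]{transition_4-manifold}. Your proof supplies precisely the kind of direct verification those references would contain.
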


In this work, we will adopt almost entirely this ``dual'' point of view for half-pipe geometry. In this setting, the boundary $\partial\HP^n$ has a natural identification:
\begin{equation}\label{eq:decomposition bdy}
\partial\HP^n\cong \{\text{lightlike affine hyperplanes in }\R^{1,n-1}\}\cup\{\infty\}~,
\end{equation}
where the point $[e_n]$ in $\partial\HP^n$ corresponds to $\infty$ on the right-hand side, while $\partial\HP^n\smallsetminus\{[e_n]\}$ identifies to the space of lightlike affine hyperplanes using again \eqref{eq:dual correspondence}. Geometrically, the decomposition in the right-hand side of \eqref{eq:decomposition bdy} reflects the fact that, up to taking a subsequence, a sequence of spacelike affine hyperplanes in $\R^{1,n-1}$ may either converge to a lightlike hyperplane or escape from all compact subsets. 

The projection $\pi$ is interpreted in this dual setting as the map which associates to a spacelike affine hyperplane in $\R^{1,n-1}$  its unique parallel linear hyperplane. Equivalently, thinking of $\pi$ with values in $\Hyp^{n-1}$, it associates to a spacelike affine hyperplane its normal direction with respect to the Minkowski product $b_1$. 
Of course $\pi$ extends to the complement of $\infty$ in $\partial\HP^n$, with values in $\partial\Hyp^{n-1}$. 


  

\subsection{Hyperplanes} \label{sec:hyperplanesHP}

Let us now consider hyperplanes in half-pipe geometry. 

\begin{defi}[HP hyperplane]
A \emph{half-pipe hyperplane} is the intersection of $\HP^n$ with a {linear hyperplane in $\R^{n+1}$}. It is called \emph{degenerate} if it contains a degenerate line of $\HP^n$; \emph{non-degenerate} otherwise.
\end{defi}


From now on, we will always think of $\HP^n$ dually as the space of spacelike affine hyperplanes in $\R^{1,n-1}$, using Lemma \ref{lemma isomorphism}. For more details on the proofs of the following statements, see \cite[Section 4.3]{transition_4-manifold}.

\begin{lemma}\label{lemma hyperplane hp}
Any non-degenerate hyperplane of $\HP^n$ is dual to the set of spacelike affine hyperplanes going through a given point $p\in \R^{1,n-1}$.
\end{lemma}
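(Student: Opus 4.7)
The plan is to work directly with the explicit duality \eqref{eq:dual correspondence}, which identifies a point $[x]\in\HP^n$ with the spacelike affine hyperplane
$$H_x=\{y\in\R^{1,n-1}\,:\,b_1((x_0,\ldots,x_{n-1}),(y_0,\ldots,y_{n-1}))+x_n=0\}$$
of $\R^{1,n-1}$. A half-pipe hyperplane is by definition $\Pi\cap\HP^n$ for some projective hyperplane $\Pi\subset\RP^n$, cut out by a linear equation $a_0 x_0+\cdots+a_n x_n=0$ with $(a_0,\ldots,a_n)\neq 0$. First I would observe that degenerate lines are precisely the projective lines through the distinguished point $[e_n]$; since a projective hyperplane passing through $[e_n]$ automatically contains the line joining $[e_n]$ to any of its other points, the non-degeneracy of $\Pi\cap\HP^n$ is equivalent to $[e_n]\notin\Pi$, i.e. to $a_n\neq 0$. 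Under this assumption, one may normalise the defining equation so that $a_n=1$.

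The remainder is a direct matching of linear equations. Given $p=(p_0,\ldots,p_{n-1})\in\R^{1,n-1}$, expanding $b_1$ shows that the condition $p\in H_x$ reads
$$-p_0\,x_0+p_1\,x_1+\cdots+p_{n-1}\,x_{n-1}+x_n=0,$$
which, viewed as a linear equation in $x$, cuts out a projective hyperplane in $\RP^n$ whose coefficient of $x_n$ is $1\neq 0$. Conversely, the normalised equation $a_0 x_0+\cdots+a_{n-1}x_{n-1}+x_n=0$ defining $\Pi$ agrees with the previous one exactly for the unique choice $p_0=-a_0$ and $p_i=a_i$ for $1\leq i\leq n-1$. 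Thus $\Pi\cap\HP^n$ is dual to the family of spacelike affine hyperplanes of $\R^{1,n-1}$ containing this point $p$, and conversely every $p$ arises in this way.

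The argument is essentially a coordinate unpacking of \eqref{eq:dual correspondence}; the only subtle point is the identification of non-degenerate projective hyperplanes with those avoiding $[e_n]$, which rests on the description of degenerate lines. No significant obstacle is anticipated; the Minkowski signs in $b_1$ just dictate the sign flip in the first coordinate of $p$.
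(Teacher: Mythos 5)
Your proof is correct: the paper itself states this lemma without proof, deferring to \cite[Section 4.3]{transition_4-manifold}, and your direct coordinate unpacking of the duality \eqref{eq:dual correspondence} --- identifying non-degenerate hyperplane sections with projective hyperplanes $\Pi$ avoiding $[e_n]$, normalising $a_n=1$, and matching the resulting linear equation with the condition $p\in H_x$ --- is exactly the natural argument behind the statement. The only step you use both directions of but justify in one direction is the non-degeneracy criterion; the converse is immediate (a section containing a degenerate line contains two points of a projective line through $[e_n]$, hence the whole line and thus $[e_n]$), and with that remark, together with the implicit convention that hyperplanes of $\HP^n$ are non-empty intersections, the argument is complete.
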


We will refer to the point $p$ as the \emph{dual} point to the non-degenerate hyperplane, and conversely we will make reference to the hyperplane \emph{dual} to a point of $\R^{1,n-1}$. With this duality approach, it is very easy to describe the relative position of non-degenerate hyperplanes:

\begin{lemma}\label{lemma intersection half pipe}
Given two points $p,q\in \R^{1,n-1}$, their dual hyperplanes
\begin{itemize}
\item intersect in $\HP^n$ if and only if $p-q$ is spacelike,
\item are disjoint in $\HP^n$ but their ideal closures intersect in $\partial\HP^n$ if and only if $p-q$ is lightlike,
\item have disjoint ideal closures in $\overline{\HP}^n$ if and only if $p-q$ is timelike.
\end{itemize}
\end{lemma}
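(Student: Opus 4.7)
The plan is to work throughout in the dual Minkowski picture provided by Lemmas \ref{lemma isomorphism} and \ref{lemma hyperplane hp}. Under this duality the non-degenerate hyperplane $H_p^*\subset\HP^n$ dual to $p\in\R^{1,n-1}$ corresponds to the set of spacelike affine hyperplanes of $\R^{1,n-1}$ passing through $p$, while the decomposition \eqref{eq:decomposition bdy} shows that the closure of $H_p^*$ in $\HP^n\cup\partial\HP^n$ corresponds to the spacelike \emph{or} lightlike affine hyperplanes through $p$. A direct check in coordinates using \eqref{eq:dual correspondence} confirms that the distinguished boundary point $\infty=[e_n]$ is never in such a closure, so it can be safely disregarded. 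In this language the three cases of the lemma translate to a single purely affine-Lorentzian question: when does there exist a spacelike, respectively lightlike, affine hyperplane of $\R^{1,n-1}$ containing both $p$ and $q$?

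Any such hyperplane must contain the affine line $L_{pq}$ through $p$ and $q$, so the problem reduces to the causal type of $L_{pq}$, i.e.\ of the vector $v:=p-q$. If $v$ is spacelike, I would produce an explicit spacelike affine hyperplane through $L_{pq}$ by translating the span of $v$ together with any positive-definite $(n-2)$-dimensional subspace of $v^\perp$ (whose existence is guaranteed, since $v^\perp$ has signature $(-,+,\ldots,+)$); this yields a point of $H_p^*\cap H_q^*\cap\HP^n$. If $v$ is timelike, then $v^\perp$ is positive definite and cannot contain a null or timelike direction, so no spacelike or lightlike affine hyperplane can contain $L_{pq}$, and the closures of $H_p^*$ and $H_q^*$ are disjoint in $\HP^n\cup\partial\HP^n$. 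If $v$ is lightlike, the fact that $v\in v^\perp$ together with the positive semidefiniteness of $v^\perp$ (with radical $\R v$) shows, on the one hand, that $p+v^\perp$ is a lightlike affine hyperplane containing $p$ and $q$, and, on the other hand, that no spacelike affine hyperplane can contain the lightlike line $L_{pq}$; hence the closures of $H_p^*$ and $H_q^*$ meet exactly in $\partial\HP^n\smallsetminus\HP^n$.

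The one step that I expect to require a little care is the lightlike case, where both the existence of a lightlike hyperplane through $p$ and $q$ and the non-existence of a spacelike one must be extracted from the same linear-algebra fact: the Minkowski-orthogonal of a null vector is a degenerate hyperplane of signature $(0,+,\ldots,+)$, whose radical is spanned by the null vector itself. The remaining verifications are routine properties of causal types in $\R^{1,n-1}$, and the argument should not need any input beyond Lemmas \ref{lemma isomorphism} and \ref{lemma hyperplane hp}.
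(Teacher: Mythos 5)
Your argument is correct and is essentially the intended one: the paper itself omits the proof of this lemma (deferring to \cite[Section 4.3]{transition_4-manifold}), but its whole setup is the duality of Lemma \ref{lemma isomorphism}, and your reduction to the causal type of $p-q$ via the existence of a spacelike (resp.\ lightlike) affine hyperplane containing the line through $p$ and $q$ is exactly the natural dual-Minkowski proof. The only step you leave implicit — that the closure of the dual hyperplane in $\HP^n\cup\partial\HP^n$ consists precisely of the spacelike and lightlike affine hyperplanes through the point, with $\infty$ excluded — is indeed routine: the projective hyperplane dual to $p$ meets $\overline{\HP^n}$ in a copy of the closed projective model of $\Hyp^{n-1}$, so its boundary points are limits of interior ones; together with the mutual exclusivity of the three configurations and the trichotomy of causal types, this closes all the ``if and only if'' directions.
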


In half-pipe geometry, the situation for degenerate and non-degenerate hyperplanes is qualitatively different, as we shall see also in Section \ref{subsec:reflHP} below. Let us first characterise degenerate hyperplanes in terms of Minkowski geometry:

\begin{lemma}\label{lemma hyperplane hp deg}
Any degenerate hyperplane of $\HP^n$ is the preimage of a hyperplane in $\Hyp^{n-1}$ by the projection map $\pi\colon\HP^n\to\Hyp^{n-1}$. That is, it is dual to the set of spacelike affine hyperplanes having normal direction in a given hyperplane of $\Hyp^{n-1}$. 
\end{lemma}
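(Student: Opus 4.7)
The plan is to argue directly from the definition of a degenerate hyperplane and to translate the resulting linear condition, via the duality of Lemma \ref{lemma isomorphism}, into a normal-direction condition in Minkowski space.

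First I would observe that, since every degenerate line of $\HP^n$ contains the distinguished point $[e_n]\in\partial\HP^n$, a projective hyperplane $\mathrm{P}(V)\subset\RP^n$ cuts $\HP^n$ in a degenerate hyperplane if and only if $V$ contains $e_n$. Concretely, such a $V$ is the kernel of a linear form $\alpha(y)=\alpha_0 y_0+\ldots+\alpha_{n-1}y_{n-1}$ with $\alpha_n=0$, so every degenerate hyperplane has the form
$$\Pi_\alpha=\{[y]\in\HP^n : \alpha_0 y_0+\ldots+\alpha_{n-1}y_{n-1}=0\}.$$

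Next I would translate this into Minkowski language. Define $w=(-\alpha_0,\alpha_1,\ldots,\alpha_{n-1})\in\R^n$; then
$$\alpha_0 y_0+\alpha_1 y_1+\ldots+\alpha_{n-1}y_{n-1}=b_1\bigl((y_0,\ldots,y_{n-1}),w\bigr),$$
where $b_1$ is the Minkowski bilinear form on $\R^{1,n-1}$. Thus the defining condition for $\Pi_\alpha$ depends on $[y]\in\HP^n$ only through $\pi([y])=[(y_0,\ldots,y_{n-1})]\in\RP^{n-1}$, and says precisely that $\pi([y])$ lies in the $b_1$-orthogonal hyperplane to $w$. Since $\Pi_\alpha$ is non-empty in $\HP^n$ there exists $[y]$ with $q_0(y)<0$ satisfying the condition, i.e.\ a $b_1$-timelike vector orthogonal to $w$; this forces $w$ to be $b_1$-spacelike, and hence $w^{\perp_1}\cap\Hyp^{n-1}$ is a genuine hyperplane $H_w\subset\Hyp^{n-1}$. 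Therefore $\Pi_\alpha=\pi^{-1}(H_w)$, which is the first claim.

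Finally I would translate this back through the duality of Lemma \ref{lemma isomorphism}: a point $[x]\in\HP^n$ corresponds via \eqref{eq:dual correspondence} to the spacelike affine hyperplane of $\R^{1,n-1}$ whose $b_1$-normal direction is $[(x_0,\ldots,x_{n-1})]=\pi([x])$. Hence $[x]\in\Pi_\alpha$ if and only if the dual affine hyperplane in Minkowski space has normal direction in $H_w$, giving the second reformulation in the statement.

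This argument is essentially a bookkeeping exercise; the only non-trivial point is checking that the linear form defining a degenerate hyperplane determines a genuine (rather than degenerate or empty) hyperplane of $\Hyp^{n-1}$, and that is settled by the existence of a timelike vector in its kernel, as above.
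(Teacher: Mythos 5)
Your proof is correct: the characterisation of degenerate hyperplanes as those cut out by linear forms whose kernel contains $e_n$ (more precisely, whose projective hyperplane contains $[e_n]$ in the closure of a degenerate line), the verification that the resulting Minkowski vector $w$ is spacelike, and the translation via \eqref{eq:dual correspondence} into the normal-direction condition are all sound. The paper does not prove this lemma itself but defers to \cite[Section 4.3]{transition_4-manifold}, and your argument is exactly the natural duality computation that proof relies on, so there is nothing substantive to add.
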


There are three possibilities for the relative position of two degenerate hyperplanes {$H_1 = \pi^{-1}(S_1)$ and $H_2 = \pi^{-1}(S_2)$} in $\HP^n$:
\begin{itemize}
\item If $S_1$ and $S_2$ intersect in $\Hyp^{n-1}$, then 
$H_1$ and $H_2$ intersect in $\HP^n$ in the subset $\pi^{-1}(S_1\cap S_2)$;
\item If {$\overline{S}_1$ and $\overline{S}_2$ intersect in $\partial\Hyp^{n-1}$}, then 
{$\overline{H}_1$ and $\overline{H}_2$} intersect in a degenerate line of $\partial \HP^n$;
\item If {$\overline{S}_1$ and $\overline{S}_2$ are} disjoint in $\overline{\Hyp}^{n-1}$, then 
{$\overline{H}_1$ and $\overline{H}_2$} only intersect in $\infty\in\partial\HP^n$.
\end{itemize}


{Asymptoticity of two hyperplanes and asymptoticity of a hyperplane to a point at infinity are defined similarly to the hyperbolic and AdS settings (see Section \ref{sec:relative}).}

\begin{figure}
\includegraphics[scale=0.5]{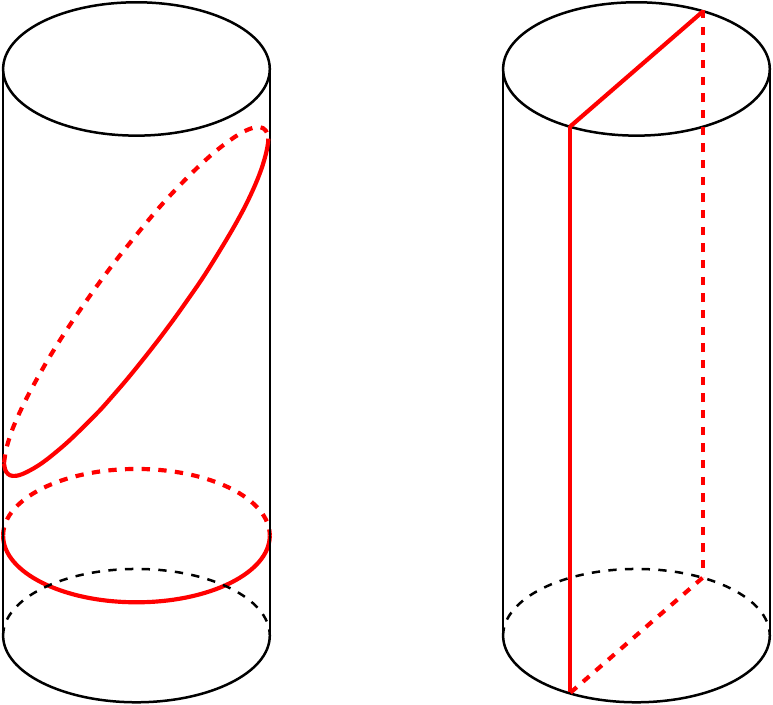}
\caption[Hyperplanes in half-pipe space.]{\footnotesize Hyperplanes in the affine (cylindric) model of $\HP^n$: on the left, two spacelike hyperplanes, on the right, a degenerate hyperplane.}
\label{fig:HPplanes}
\end{figure}

\subsection{Reflections}\label{subsec:reflHP}

Like in pseudo-Riemannian geometry, a \emph{reflection} in $\HP^n$ is a non-trivial involution in $G_{\HP^n}$ that fixes pointwise a hyperplane.

We shall again distinguish two cases:

\begin{prop}\label{prop:nondeg hyperplane HP}
There exists a unique reflection in $G_{\HP^n}$ fixing a given non-degenerate hyperplane in $\HP^n$.
\end{prop}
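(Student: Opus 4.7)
The plan is to use the duality with Minkowski space established in Lemma \ref{lemma isomorphism}, combined with the fact that every non-degenerate hyperplane is dual to a unique point $p \in \R^{1,n-1}$ (Lemma \ref{lemma hyperplane hp}). Write $H \subset \HP^n$ for the non-degenerate hyperplane dual to $p$.

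For existence, I would exhibit an explicit reflection by taking the central symmetry $\sigma_p \colon \R^{1,n-1} \to \R^{1,n-1}$, $y \mapsto 2p - y$. This is an involutive isometry of Minkowski space (linear part $-\mathrm{id}$, translation part $2p$), and by its very definition it preserves, as a set, every affine hyperplane through $p$. Via the equivariance of Lemma \ref{lemma isomorphism}, the corresponding element $\phi(\sigma_p) \in G_{\HP^n}$ is then an involution that fixes each point of $H$, since each point of $H$ is dual to a spacelike affine hyperplane through $p$. A quick check shows $\phi(\sigma_p)$ is not the identity (for example, it acts non-trivially on any point of $\HP^n \smallsetminus H$), hence is a reflection fixing $H$ pointwise.

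For uniqueness, I would appeal to the classical linear-algebra fact that any non-trivial projective involution of $\RP^n$ fixing a projective hyperplane pointwise has exactly one further fixed point $[v] \in \RP^n \smallsetminus H$, and is completely determined by the pair $(H, [v])$. If $r \in G_{\HP^n}$ is any reflection fixing $H$, then $r$ preserves $[e_n]$ (because $Ae_n = \pm e_n$ makes $[e_n]$ a fixed point in $\RP^n$). A direct computation from Lemma \ref{lemma hyperplane hp} shows that, in projective coordinates, $H$ is cut out by the equation $b_1(\xi, p) + x_n = 0$ with $\xi = (x_0, \ldots, x_{n-1})$, so $[e_n] = [0 : \ldots : 0 : 1]$ does not lie on $H$. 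Consequently the additional fixed point of $r$ must be $[e_n]$, which forces $r = \phi(\sigma_p)$.

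I do not foresee any serious obstacle: the whole argument follows from the dictionary of Lemma \ref{lemma isomorphism} and elementary projective geometry. The only point requiring care is the coordinate description of $H$ identifying $[e_n]$ as the second fixed point; the non-degeneracy of $H$ is precisely what ensures $[e_n] \notin H$, in contrast with the degenerate case discussed in the next subsection.
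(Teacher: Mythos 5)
Your proof is correct, and the existence half coincides with the paper's: both exhibit the central symmetry about the dual point $p$, i.e.\ $\phi(-\mathrm{id},2p)$, and check via the equivariant duality of Lemma \ref{lemma isomorphism} that it fixes the hyperplane pointwise. Where you diverge is the uniqueness half. The paper stays entirely on the Minkowski side: any $\phi(A,v)$ fixing setwise all spacelike affine hyperplanes through $p$ must have linear part fixing every timelike direction, hence $A=\pm\mathrm{id}$, and $A=\mathrm{id}$ is ruled out by the order-two condition. You instead argue projectively in $\RP^n$: a non-trivial projective involution fixing a projective hyperplane pointwise has exactly one further fixed point and is determined by the hyperplane together with that point; since every element of $G_{\HP^n}$ fixes $[e_n]$, and the projective span of a non-degenerate HP hyperplane (cut out by $b_1(\xi,p)+x_n=0$) misses $[e_n]$, the extra fixed point is forced to be $[e_n]$, pinning the reflection down to $\phi(-\mathrm{id},2p)$. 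Both arguments are short and correct; yours has the merit of making the contrast with Proposition \ref{prop:deg hyperplane HP} transparent (for a degenerate hyperplane $[e_n]$ lies on its projective span, so the second fixed point is not determined, which is exactly where the one-parameter family comes from), while the paper's argument avoids invoking the classification of projective involutions and runs parallel to its proof of the degenerate case within the same Minkowski dictionary. One small step you should state explicitly: a reflection fixes pointwise only $H=P\cap\HP^n$ a priori, but since $H$ is a non-empty open subset of the projective hyperplane $P$, the fixed-point set (a union of projectivised eigenspaces) must contain all of $P$; this is routine but needed before applying the projective involution fact.
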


\begin{proof}
By Lemma \ref{lemma hyperplane hp}, a reflection in $G_{\HP^n}$ is induced by an element of $\Isom(\R^{1,n-1})$ that fixes setwise all the spacelike hyperplanes going through a point $p\in \R^{1,n-1}$. The involution $\phi(-\mathrm{id},2p)$ therefore has such a property. It is the only reflection fixing the hyperplane dual to $p$. Indeed, for a transformation $\phi(A,v)$ with this property,  the linear part $A$ must fix all the timelike directions in  $\R^{1,n-1}$, hence $A=\pm \mathrm{id}$, but the choice $A=\mathrm{id}$ implies necessarily $v=0$ because $\phi(A,v)$ has order two, and therefore gives  a trivial transformation. 
\end{proof}

Let us now consider degenerate hyperplanes:
\begin{prop}\label{prop:deg hyperplane HP}
There exists a one-parameter family of reflections in $G_{\HP^n}$ fixing a given degenerate hyperplane in $\HP^n$.
\end{prop}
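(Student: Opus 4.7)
The plan is to reduce the statement to a classification of involutions in $\Isom(\R^{1,n-1})$ via the duality of Lemma~\ref{lemma isomorphism}, and then to do a direct linear-algebraic computation. By Lemma~\ref{lemma hyperplane hp deg}, the degenerate hyperplane $\pi^{-1}(S)$ is dually described as the set of spacelike affine hyperplanes in $\R^{1,n-1}$ whose timelike unit normal lies in the linear hyperplane $w^\perp$, where $w\in\R^{1,n-1}$ is a unit spacelike vector dual (in $\R^{1,n-1}$) to the hyperplane $S\subset\Hyp^{n-1}$. Equivalently, these are exactly the spacelike affine hyperplanes containing $w$ as a tangent direction. A reflection in $G_{\HP^n}$ fixing $\pi^{-1}(S)$ pointwise therefore corresponds, under the isomorphism $\phi$, to an involution $\phi(A,v)\in\Isom(\R^{1,n-1})$ preserving setwise every such affine hyperplane.

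Next I would characterise which $(A,v)$ have this preservation property. Writing a generic such hyperplane as $H_{n,c}=\{x\in\R^{1,n-1}\,:\, b_1(x,n)=c\}$, with $n$ ranging over unit timelike vectors in $w^\perp$ and $c\in\R$, a short computation shows that the image is $H_{An,\,c+b_1(v,An)}$. Imposing this to coincide with $H_{n,c}$ for all admissible $n$ and $c$ forces $An=n$ and $b_1(v,n)=0$. As the unit timelike $n\in w^\perp$ span the linear subspace $w^\perp$, one obtains that $A\in\O(q_1)$ must fix $w^\perp$ pointwise, and that $v\in(w^\perp)^\perp=\mathrm{span}(w)$.

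Finally I would impose the involution condition $A^2=\mathrm{id}$ and $(A+\mathrm{id})v=0$ and rule out the trivial case. Since $A\in\O(q_1)$ fixes $w^\perp$ pointwise and squares to the identity, it acts on $\mathrm{span}(w)$ as $\pm \mathrm{id}$. The case $A=\mathrm{id}$ forces $v=0$ and gives the trivial element. The case $A=r_w$, the Minkowski reflection in $w^\perp$, yields $\ker(A+\mathrm{id})=\mathrm{span}(w)$, so the compatibility condition is automatic for every $v=tw$ with $t\in\R$. Each $\phi(r_w,tw)$ is then a non-trivial involution in $G_{\HP^n}$ fixing $\pi^{-1}(S)$ pointwise, and they are pairwise distinct as $t$ varies, producing the claimed one-parameter family.

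The main obstacle is essentially bookkeeping: one must verify that the classification is exhaustive (no candidate involution is missed by requiring $A$-invariance from the outset), and that each element of the family really fixes $\pi^{-1}(S)$ pointwise rather than merely setwise. Geometrically, the free parameter $t$ corresponds to precomposing the ``obvious'' involution $\phi(r_w,0)$ by the translation $\phi(\mathrm{id},tw)$ of $\R^{1,n-1}$, which dually is a transformation of $\HP^n$ preserving every degenerate line (i.e.\ moving along the distinguished direction $[e_n]$). This is consistent with the ``stretching in the degenerate direction'' phenomenon discussed in the introduction.
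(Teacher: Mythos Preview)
Your proof is correct and follows essentially the same approach as the paper's: both use the duality with $\Isom(\R^{1,n-1})$ to reduce to classifying involutions $(A,v)$ whose linear part fixes $w^\perp$ (the paper's $X^\perp$) pointwise, obtain $A\in\{\mathrm{id},r_w\}$, and then determine the admissible translation parts. The only cosmetic difference is that you extract the constraint $v\in\mathrm{span}(w)$ directly from the hyperplane-preservation condition, whereas the paper obtains it from the involution condition and then verifies at the end that the resulting maps really fix the degenerate hyperplane pointwise.
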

\begin{proof}
From Lemma \ref{lemma hyperplane hp deg}, a degenerate hyperplane in $\HP^n$ has the form $\pi^{-1}(H_X)$ where, using the notation of Section \ref{sec:hyperplanes}, $X$ denotes a vector in $\R^{1,n-1}$ such that $q_1(X)=1$ and $H_X$ is the hyperplane in $\Hyp^{n-1}$ induced by the orthogonal complement $X^\perpp$. Any reflection in $G_{\HP^n}$ fixing $\pi^{-1}(H_X)$ pointwise must be of the form $\phi(A,v)$ where the linear part $A$ fixes $X^\perpp$ pointwise. Hence the only possible candidates for $A$ are the identity and the Minkowski reflection in $H_X$, which we denote by $r_X$. Since $(A,v)$ is assumed to be an involution, $A=\mathrm{id}$ only gives the trivial transformation (i.e. $v=0$). On the other hand, imposing the involutive condition for the choice $A=r_X$ we obtain the reflections $\phi(r_X,v)$ for any $v\in\mathrm{Span}(X)$.  These are indeed reflections in the half-pipe hyperplane $\pi^{-1}(H_X)$, since they fix setwise all spacelike hyperplanes of $\R^{1,n-1}$ with normal direction in $H_X$. 
\end{proof}

Finally, it is necessary to analyse conditions which assure that two reflections commute. From Proposition \ref{prop:nondeg hyperplane HP}, it is clear that two reflections $\phi(-\mathrm{id},2p)$ and $\phi(-\mathrm{id},2q)$ in non-degenerate hyperplanes do not commute unless $p=q$, i.e. unless the hyperplanes of reflection coincide.

By Proposition \ref{prop:deg hyperplane HP}, reflections in degenerate hyperplanes are induced by Minkowski reflections in timelike hyperplanes. Hence two reflections $\phi(r_{X_1},v_1)$ and $\phi(r_{X_2},v_2)$ commute if and only if their linear parts commute.

The remaining case is considered in the following lemma, which is straightforward:

\begin{lemma} \label{lemma:commutation conditions}
Let $v,w,X$ be vectors in $\R^{1,n-1}$, with $q_1(X)=1$ and $v\in\mathrm{Span}(X)$. The Minkowski isometries $(r_{X},v)$ and $(-\mathrm{id},w)$ commute if and only if $w=v+u$ with $u\in X^\perpp$. 
\end{lemma}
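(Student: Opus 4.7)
The plan is to reduce everything to a direct calculation using the semidirect product structure of $\Isom(\R^{1,n-1})\cong \O(q_1)\ltimes\R^n$, where composition reads $(A,v)\circ(B,w)=(AB,\,Aw+v)$. The only task is to unwind what ``commute'' means and compare with the condition $w=v+u$ with $u\in X^{\perpp}$.

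First I would compute the two compositions explicitly. Since the linear parts $r_X$ and $-\mathrm{id}$ commute (the linear part $-\mathrm{id}$ is central in $\O(q_1)$), both products have linear part $-r_X$, so the equality of the two sides reduces to an equality of translation parts. Concretely,
\[
(r_X,v)(-\mathrm{id},w)=(-r_X,\,r_X w+v),\qquad (-\mathrm{id},w)(r_X,v)=(-r_X,\,-v+w),
\]
so the commutation condition is equivalent to $r_X w+v=-v+w$, i.e.\
\[
w-r_X w=2v.
\]

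Next I would use the explicit formula for the Minkowski reflection. Since $q_1(X)=1$, we have $r_X(y)=y-2b_1(y,X)X$ for every $y\in\R^{1,n-1}$, hence $w-r_Xw=2\,b_1(w,X)\,X$. The commutation condition therefore becomes
\[
b_1(w,X)\,X=v.
\]
By hypothesis $v\in\mathrm{Span}(X)$, so write $v=\lambda X$ for some $\lambda\in\R$; the condition simplifies to the single scalar equation $b_1(w,X)=\lambda$.

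Finally, setting $u:=w-v$, I would observe that $b_1(u,X)=b_1(w,X)-\lambda\,b_1(X,X)=b_1(w,X)-\lambda$, so the condition $b_1(w,X)=\lambda$ is equivalent to $b_1(u,X)=0$, i.e.\ $u\in X^{\perpp}$. This is exactly the statement of the lemma. There is no real obstacle here: once the semidirect-product multiplication and the formula for $r_X$ are written down, the equivalence is a two-line computation.
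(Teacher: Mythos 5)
Your proof is correct and follows essentially the same route as the paper: both reduce commutation in the semidirect product to the condition $(\mathrm{id}-r_X)(w)=2v$ and then resolve it by decomposing $w$ (or equivalently $w-v$) along $X$ and $X^{\perpp}$. The only cosmetic difference is that you invoke the explicit formula $r_X(y)=y-2b_1(y,X)X$ whereas the paper writes $w=\lambda X+u$ directly; the content is identical.
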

\begin{proof}
An easy computation shows that $(r_{X},v)$ and $(-\mathrm{id},w)$ commute if and only if 
\begin{equation}\label{eq:commuting conditions}
(\mathrm{id}-r_X)(w)=2v~.
\end{equation} 
Writing $w=\lambda X+u$ for $\lambda\in\R$ and $u\in X^\perpp$, we have $r_X(w)=-\lambda X+u$, hence the condition \eqref{eq:commuting conditions} is equivalent to $\lambda X=v$. 
\end{proof}

\subsection{Right-angled cusp groups} \label{subsec:HP cusps}

Let us now discuss the properties of flexibility and rigidity of cusp representations for half-pipe geometry, similarly to what we did for hyperbolic and AdS geometry in Section \ref{sec:cusp_rigidity}. The statements will be completely analogous, but the proofs simpler than their AdS (and hyperbolic) counterparts above.

The definitions of cusp groups and collapsed cusp groups are  parallel to the AdS case:

\begin{defi}[Cusp groups for $\HP^3$]
The image of a representation of $\Gamma_{\mathrm{rect}}$ into $G_{\HP^3}$ is called:
\begin{itemize}
\item a \emph{cusp group} if the four generators are sent to reflections in four distinct planes 
{asymptotic to a common} point in $\partial\HP^3$; 
\item a \emph{collapsed cusp group} if the four generators are sent to reflections along three distinct planes, two degenerate and one non-degenerate, 
{asymptotic to a common} point in $\partial\HP^3$.
\end{itemize}
\end{defi}

It follows from the discussion of the previous section that a cusp group representation must necessarily map two generators corresponding to opposite sides of the rectangle to reflections in degenerate hyperplanes, and the other two generators to reflections in non-degenerate hyperplanes.

The following example describes the structure of a (possibly collapsed) cusp group in HP geometry. By the non-uniqueness of half-pipe reflections in a degenerate plane (Proposition \ref{prop:deg hyperplane HP}), we need to describe not only the planes fixed by the reflections associated to each generators, but also the reflections themselves.

\begin{example} \label{ex:cusp group HP}
Let the image of $\rho\colon\Gamma_{\mathrm{rect}}\to G_{\HP^3}$ be a cusp group or collapsed cusp group, let ${\l s}_1, {\l s}_2$ be the generators such that $\rho({\l s}_1),\rho({\l s}_2)$ are reflections in a non-degenerate plane, and ${\l t}_1, {\l t}_2$ those such that $\rho({\l t}_1),\rho({\l t}_2)$ are reflections in a degenerate plane. 
Up to conjugacy, we can assume that $\rho({\l s}_1)=\phi(-\mathrm{id},0)$, that is, $\rho({\l s}_1)$ is the unique reflection in the dual plane to the origin of $\R^{1,2}$. 

Using Lemma \ref{lemma:commutation conditions}, $\rho({\l t}_1)$ and $\rho({\l t}_2)$ are necessarily of the form $\phi(r_{X_i},0)$, for $X_i$ a unit spacelike vector in $\R^{1,2}$. This means that the two degenerate planes fixed by $\rho({\l t}_i)$ are of the form $\pi^{-1}(H_{X_i})$, for $i=1,2$. Since the {ideal closures of the} four planes are assumed to meet in a single point in $\partial\HP^3$, necessarily the {ideal closure of the} geodesics $H_{X_1}$ and $H_{X_2}$ of $\Hyp^2$ meet in $\partial\Hyp^2$. This means that $X_1^\perpp\cap X_2^\perpp$ is a lightlike line in $\R^{1,2}$. 

Finally, by Lemma \ref{lemma:commutation conditions} $\rho({\l s}_2)$ must be of the form $\phi(-\mathrm{id},w)$ for some $w\in X_1^\perpp\cap X_2^\perpp $. This means that the non-degenerate plane fixed by $\rho({\l s}_2)$ is the dual of the point $w/2\in\R^{1,2}$. If $w=0$, then we have a collapsed cusp group, otherwise a cusp group. See Figure \ref{fig:HPcuspgroups} on the left.
\end{example}

\begin{figure}
\includegraphics[scale=0.65]{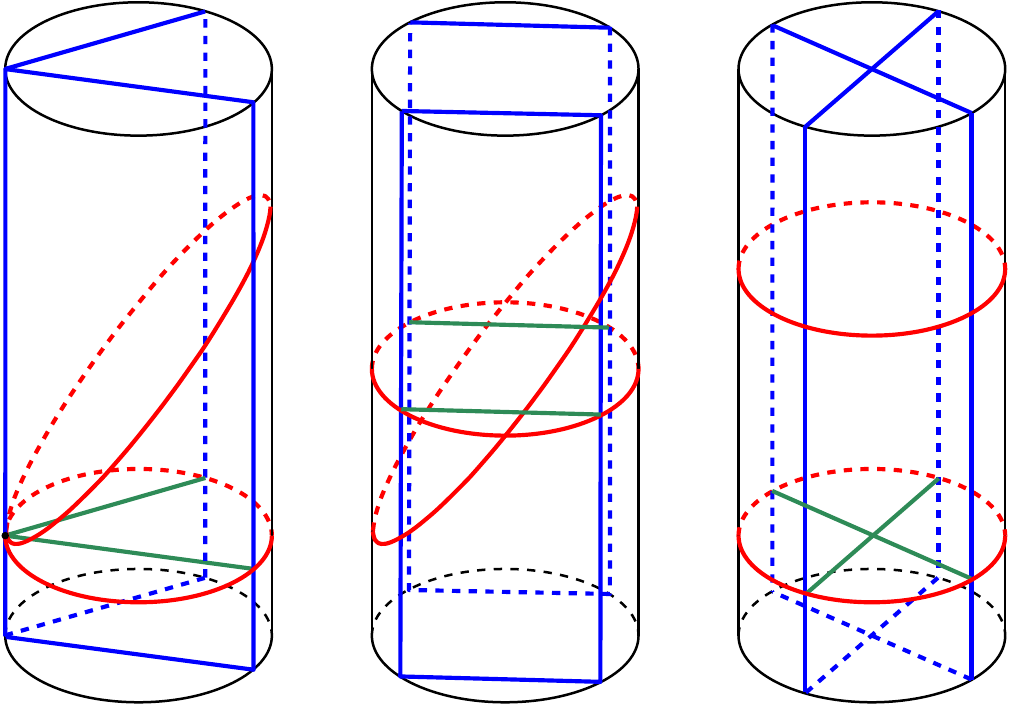}
\caption[Deforming cusp groups in ${G}_{\HP^3}$.]{\footnotesize Three possibilities for a representation of $\Gamma_{\mathrm{rect}}$ in ${G}_{\HP^3}$, as in the proof of Proposition \ref{prop rect group HP}. In red, two non-degenerate planes, in blue two degenerate planes, and in green their intersections, which are geodesics in a copy of $\Hyp^2$. On the left, the green geodesics are 
{asymptotic} and we have a cusp group. In the middle, they are ultraparallel, so the degenerate blue planes of $\HP^3$ are disjoint, while the non-degenerate red planes intersect. On the right, the green geodesics intersect, so do the blue (degenerate) planes, while the red (non-degenerate) planes are disjoint.}
\label{fig:HPcuspgroups}
\end{figure} 


Let us now prove the HP analogue of Propositions \ref{prop rect group hyp} and \ref{prop rect group ads} (see Figure \ref{fig:HPcuspgroups}).

\begin{prop} \label{prop rect group HP}
Let $\rho\colon \Gamma_{\mathrm{rect}}\to G_{\HP^3}$ be a representation whose image is a cusp group or a collapsed cusp group. For all nearby representations $\rho'$, exactly one of the following possibilities holds:
\begin{enumerate}
\item If ${\l s}_1$ and ${\l s}_2$ are generators such that $\rho({\l s}_1)=\rho({\l s}_2)$, then $\rho'({\l s}_1)=\rho'({\l s}_2)$. 
\item The image of $\rho'$ is a cusp group. 
\item A pair of opposite planes intersect in $\HP^3$, while the other pair of opposite planes have disjoint ideal closures in $\overline{\HP}^3$. 
\end{enumerate}
\end{prop}

\begin{proof}
Let $\rho'\colon \Gamma_{\mathrm{rect}}\to G_{\HP^3}$ be a representation nearby $\rho$. As in Example \ref{ex:cusp group HP}, we can assume that the reflection associated to one of the generators ${\l s}_1$ of $\Gamma_{\mathrm{rect}}$ is $\rho'({\l s}_1)=\phi(-\mathrm{id},0)$, so that its fixed plane is the dual plane to the origin of $\R^{1,2}$. Repeating the argument of Example \ref{ex:cusp group HP}, we have $\rho'({\l t}_i)=\phi(r_{X_i},0)$ for some unit spacelike vectors $X_i$, and $\rho'({\l s}_2)=\phi(-\mathrm{id},w)$ for some $w\in X_1^\perpp\cap X_2^\perpp$. 



If $w=0$, we are in case (1). Let us therefore assume $w\neq 0$.
If the {ideal closures of the} geodesics $H_{X_1}$ and $H_{X_2}$ intersect in $\partial\Hyp^2$, then $X_1^\perpp\cap X_2^\perpp$ is a lightlike geodesic, hence the image of $\rho'$ is a cusp group as in Example \ref{ex:cusp group HP} and we are in case (2).

If $H_{X_1}$ and $H_{X_2}$ intersect in $\Hyp^2$, then $X_1^\perpp\cap X_2^\perpp$ is a timelike geodesic, hence  $w$ is timelike. By Lemma \ref{lemma intersection half pipe}, the fixed planes of $\rho'({\l s}_1)$ and $\rho'({\l s}_2)$ are disjoint, while the degenerate hyperplanes fixed by $\rho'({\l t}_1)$ and $\rho'({\l t}_2)$, namely $\pi^{-1}(H_{X_1})$ and $\pi^{-1}(H_{X_2})$, intersect in $\HP^3$ (along a degenerate line). Hence point (3) is fulfilled.

Finally, if $H_{X_1}$ and $H_{X_2}$ are ultraparallel geodesics, then the closures of $\pi^{-1}(H_{X_1})$ and $\pi^{-1}(H_{X_2})$ only intersect in $\{\infty\}$. In this case $X_1^\perpp\cap X_2^\perpp$ is a spacelike geodesic, hence by Lemma \ref{lemma intersection half pipe} the fixed planes of $\rho'({\l s}_1)$ and $\rho'({\l s}_2)$ intersect. Therefore point (3) is fulfilled again.
\end{proof}

Moving to dimension four, we define cusp groups in half-pipe geometry:

\begin{defi}[Cusp groups for $\HP^4$]
The image of a representation of $\Gamma_{\mathrm{cube}}$ into $G_{\HP^4}$ is called:
\begin{itemize}
\item a \emph{cusp group} if the 6 generators are sent to reflections in 6 distinct hyperplanes 
{asymptotic to a common} point at infinity; 
\item a \emph{collapsed cusp group} if the 6 generators are sent to reflections
along five distinct hyperplanes, four degenerate and one spacelike, 
{asymptotic to a common} point at infinity.
\end{itemize}
\end{defi}

The half-pipe version of Proposition \ref{prop cube group ads collapsed} and \ref{prop cube group ads collapsed 2} is now proved along the same lines:

\begin{prop} \label{prop cube group HP collapsed} 
Let $\rho\colon \Gamma_{\mathrm{cube}}\to G_{\HP^4}$ be a representation whose image is a cusp group or a collapsed cusp group. For all nearby representations $\rho'$, exactly one of the following possibilities holds:
\begin{enumerate}
\item If ${\l s}_1$ and ${\l s}_2$ are  generators such that $\rho({\l s}_1)=\rho({\l s}_2)$ is a reflection in a non-degenerate hyperplane, then $\rho'({\l s}_1)=\rho'({\l s}_2)$. 
\item The image of $\rho'$ is a cusp group. 
\end{enumerate}
\end{prop}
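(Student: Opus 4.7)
My plan is to translate the problem to Minkowski geometry via the duality isomorphism $\phi\colon\Isom(\R^{1,3})\to G_{\HP^4}$ of Lemma~\ref{lemma isomorphism}, and then reduce the four-dimensional question to elementary linear algebra in $\R^{1,3}$ together with the three-dimensional hyperbolic cusp flexibility already established in Proposition~\ref{prop rect group hyp}. Reflections in degenerate hyperplanes are distinguished from reflections in non-degenerate ones by the trace of the linear part, so they lie in separated connected components of the space of reflections in $G_{\HP^4}$. For $\rho'$ sufficiently close to $\rho$, the two generators ${\l s}_1,{\l s}_2$ corresponding to opposite faces of the cube therefore remain reflections along non-degenerate hyperplanes, while the four generators ${\l t}_1,\ldots,{\l t}_4$ (which form a $\Gamma_{\mathrm{rect}}$ subgroup under the cube commutations) remain reflections along degenerate hyperplanes.

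The first step is to put $\rho'$ in a normal form by conjugation in $G_{\HP^4}$. By Proposition~\ref{prop:nondeg hyperplane HP} I can write $\rho'({\l s}_1)=\phi(-\mathrm{id},2p_1)$, and conjugating by the translation $\phi(\mathrm{id},-p_1)$ I may assume $\rho'({\l s}_1)=\phi(-\mathrm{id},0)$. By Proposition~\ref{prop:deg hyperplane HP}, $\rho'({\l t}_i)=\phi(r_{X_i},v_i)$ with $X_i$ a unit spacelike vector in $\R^{1,3}$ and $v_i\in\mathrm{Span}(X_i)$. The commutation of $\rho'({\l s}_1)$ with $\rho'({\l t}_i)$, combined with Lemma~\ref{lemma:commutation conditions}, forces $v_i\in X_i^{\perpp}\cap\mathrm{Span}(X_i)=\{0\}$. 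Writing $\rho'({\l s}_2)=\phi(-\mathrm{id},w)$, the same lemma applied to the commutations of $\rho'({\l s}_2)$ with each $\rho'({\l t}_i)$ yields $w\in\bigcap_{i=1}^{4}X_i^{\perpp}$. The four cube commutations inside the cycle ${\l t}_1,{\l t}_2,{\l t}_3,{\l t}_4$ moreover translate into the orthogonality pattern $b_1(X_1,X_2)=b_1(X_1,X_4)=b_1(X_3,X_2)=b_1(X_3,X_4)=0$, so ${\l t}_i\mapsto r_{X_i}$ defines a representation $\Gamma_{\mathrm{rect}}\to\Isom(\Hyp^3)$ close to the (non-collapsed) cusp group induced by $\rho|_{\langle{\l t}_i\rangle}$ under the epimorphism $G_{\HP^4}\to\Isom(\Hyp^3)$.

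The second step is to apply Proposition~\ref{prop rect group hyp} to this auxiliary $\Hyp^3$ representation. If the planes $H_{X_i}$ still form a cusp group in $\Hyp^3$, then $\bigcap_i X_i^{\perpp}$ equals the one-dimensional lightlike line $\mathrm{Span}(p_\infty)$ spanned by their common ideal point, so $w=\lambda\,p_\infty$ for some $\lambda\in\R$. When $\lambda=0$ one has $\rho'({\l s}_1)=\rho'({\l s}_2)$, i.e.\ item (1). When $\lambda\neq 0$, Lemma~\ref{lemma intersection half pipe} shows that the non-degenerate hyperplanes dual to $0$ and $w/2$ meet on $\partial\HP^4$ at the boundary point corresponding to the lightlike affine hyperplane through $0$ and $w/2$; since $w\in\mathrm{Span}(p_\infty)$ this hyperplane is precisely $p_\infty^{\perpp}$, which also lies on the closure of each degenerate hyperplane $\pi^{-1}(H_{X_i})$ since its normal $p_\infty$ belongs to $\overline{H_{X_i}}$. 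All six hyperplanes thus share a single point of $\partial\HP^4$, so $\rho'$ is a cusp group and item (2) holds. In the remaining non-cusp branch of Proposition~\ref{prop rect group hyp}, a direct computation shows that the Gram matrix of $X_1,\ldots,X_4$ with respect to $b_1$ has determinant $(1-b_1(X_1,X_3)^2)(1-b_1(X_2,X_4)^2)\neq 0$, so the four vectors are linearly independent in $\R^{1,3}$, whence $\bigcap_i X_i^{\perpp}=\{0\}$ and $w=0$, giving item (1) again. This last subcase can only occur when $\rho$ is already collapsed, since otherwise $\rho'({\l s}_1)\neq\rho'({\l s}_2)$ persists nearby by continuity.

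The main simplification with respect to the AdS analogue (Proposition~\ref{prop cube group ads collapsed}) is that all the deformation-theoretic content is concentrated in the single auxiliary three-dimensional \emph{hyperbolic} representation produced by $\pi$, so no ad hoc rigidity argument in a three-dimensional Lorentzian geometry is needed. The only delicate point I foresee is verifying, in the cusp branch with $\lambda\neq 0$, that the meeting point of the two non-degenerate hyperplanes on $\partial\HP^4$ really lies on the closure of each of the four degenerate ones; this amounts to the orthogonality $b_1(w,p_\infty)=0$, which is automatic since $p_\infty$ is lightlike and $w\in\mathrm{Span}(p_\infty)$.
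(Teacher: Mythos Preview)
Your proof is correct. The setup and the cusp branch coincide with the paper's: normalize $\rho'({\l s}_1)=\phi(-\mathrm{id},0)$, use Lemma~\ref{lemma:commutation conditions} to kill the translation parts of the degenerate reflections and to constrain $w$, then invoke Proposition~\ref{prop rect group hyp} on the induced $\Gamma_{\mathrm{rect}}$-representation in $\Isom(\Hyp^3)$. The divergence is in the non-cusp branch. The paper restricts $\rho'$ to two different degenerate slices $\pi^{-1}(H_{X_1})$ and $\pi^{-1}(H_{Y_1})$ (each a copy of $\HP^3$) and applies Proposition~\ref{prop rect group HP} in each, forcing the fixed hyperplanes of $\rho'({\l s}_1)$ and $\rho'({\l s}_2)$ to be simultaneously intersecting and disjoint --- a contradiction. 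You instead observe that the Gram matrix of $X_1,\ldots,X_4$ has determinant $(1-b_1(X_1,X_3)^2)(1-b_1(X_2,X_4)^2)\neq 0$ in that branch, whence the $X_i$ span $\R^{1,3}$ and $w=0$. Your route is more elementary (it bypasses Proposition~\ref{prop rect group HP} entirely) and makes explicit that this branch, rather than being impossible, simply lands in case~(1); the paper's contradiction tacitly assumes $w\neq 0$, which is harmless once case~(1) is set aside but is not spelled out. The paper's approach, in return, is more geometric and parallels the AdS proof of Proposition~\ref{prop cube group ads collapsed} more closely.
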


\begin{proof}
Let us denote by ${\l s}_1$ and ${\l s}_2$ the generators of $\Gamma_{\mathrm{cube}}$ (corresponding to opposite faces of the cube) that are sent by $\rho$ to reflections in a non-degenerate hyperplane; by ${\l t}_1,{\l t}_2$ and ${\l u}_1,{\l u}_2$ the other two pairs of opposite generators, which are necessarily sent to reflections in degenerate hyperplanes. By continuity, the same holds for $\rho'$. 

Up to conjugation we can assume that $\rho'({\l s}_1)=\phi(-\mathrm{id},0)$, and therefore by Lemma \ref{lemma:commutation conditions} $\rho'({\l t}_i)=\phi(r_{X_i},0)$ and $\rho'({\l u}_i)=\phi(r_{Y_i},0)$, for $X_i,Y_i$ unit spacelike vectors. The restriction of $\rho'$ to the subgroup generated by these four elements gives a representation of $\Gamma_{\mathrm{rect}}$ in a copy of $\Isom(\Hyp^3)$, and is nearby a 3-dimensional cusp group. 

Suppose first $\rho'|_{\Gamma_{\mathrm{rect}}}$ is a cusp group in $\Isom(\Hyp^3)$. This means that $X_1^\perpp\cap X_2^\perpp\cap Y_1^\perpp\cap Y_2^\perpp$ is a lightlike line in $\R^{1,3}$. Then $\rho'({\l s}_2)$ is of the form $\phi(-\mathrm{id},w)$ and by Lemma  \ref{lemma:commutation conditions} $w\in X_1^\perpp\cap X_2^\perpp\cap Y_1^\perpp\cap Y_2^\perpp$. Hence $\rho$ gives a cusp group in $G_{\HP^4}$ and we are in point (2). 
 
If $\rho'|_{\Gamma_{\mathrm{rect}}}$ does not give a cusp group in $\Hyp^3$, by Proposition \ref{prop rect group hyp} two planes  intersect in $\Hyp^3$, while the {ideal closures of the} other two are disjoint in $\overline{\Hyp}^3$. We will derive a contradiction. Up to relabelling, we can assume that the planes $H_{X_1}$ and $H_{X_2}$ intersect in $\Hyp^3$, while the closures of $H_{Y_1}$ and $H_{Y_2}$ are disjoint. Hence in the degenerate subspace  $\pi^{-1}(H_{X_1})$ (which is a copy of $\HP^3$), the sets $\pi^{-1}(H_{Y_1})\cap \pi^{-1}(H_{X_1})$ and $\pi^{-1}(H_{Y_2})\cap \pi^{-1}(H_{X_1})$ are disjoint. Applying Proposition \ref{prop rect group HP} to the restriction of $\rho'$ to the subgroup generated by ${\l s}_1,{\l s}_2,{\l u}_1,{\l u}_2$,  the fixed planes of $\rho'({\l s}_1)$ and $\rho'({\l s}_2)$ intersect in $\pi^{-1}(H_{X_1})$ (and thus in $\HP^4$).

On the other hand, in $\pi^{-1}(H_{Y_1})$ (which is again a copy of $\HP^3$), $\pi^{-1}(H_{X_1})\cap \pi^{-1}(H_{Y_1})$ and $\pi^{-1}(H_{X_2})\cap \pi^{-1}(H_{Y_1})$ intersect. In fact  $H_{X_1}$ and $H_{X_2}$ intersect in $\Hyp^3$, hence also in $H_{Y_1}$ since $H_{X_1}$ and $H_{X_2}$ are orthogonal to $H_{Y_1}$. By Proposition \ref{prop rect group HP} again, the fixed planes of $\rho'({\l s}_1)$ and $\rho'({\l s}_2)$ are disjoint in $\HP^4$, which contradicts the conclusion of the previous paragraph.
\end{proof}

\section{Group cohomology and the HP character variety of $\Gamma_{22}$} \label{sec:gp cohomology}

The goal of this section is to prove the half-pipe part of Theorem \ref{teo:main_weak}. 
An essential step is an explicit computation of the first cohomology group $H^1_{\varrho_0}(\Gamma_{22},\R^{1,3})$ in Proposition \ref{prop:first cohomology group}, a result for which we will give other applications in Section \ref{sec:extra}.

\subsection{Preliminaries on group cohomology} \label{sec: H1 groups}

We recall here a few notions of group cohomology.

Let $\Gamma$ be a group, $V$ a finite-dimensional real vector space, and $\varrho\colon\Gamma\to\mathrm{GL}(V)$ a representation. The \emph{first cohomology group} of $\Gamma$ associated to $\varrho$ is the quotient

$$H^1_\varrho(\Gamma,V)=Z^1_\varrho(\Gamma,V)\big/B^1_\varrho(\Gamma,V)~,$$
where
\begin{itemize} 
\item the space of \emph{cocycles} is
$$Z^1_\varrho(\Gamma,V)=\left\{\tau\colon\Gamma\to V\,|\,\forall \gamma,\eta\in\Gamma\quad\tau(\gamma\eta)=\varrho(\gamma)\tau(\eta)+\tau(\gamma)\right\}~,$$ 
\item the space of \emph{coboundaries} is 
$$B^1_\varrho(\Gamma,V)=\left\{\tau\colon\Gamma\to V\,|\,\exists v\in V\,\,\forall \gamma\in\Gamma \quad\tau(\gamma)=\varrho(\gamma)v-v\right\}~.$$
\end{itemize}

The space $Z^1_\varrho(\Gamma,V)$ coincides with the space of \emph{affine deformations} of $\varrho$, namely the functions $\tau\colon\Gamma\to V$ such that $(\varrho,\tau)$ gives a representation of $\Gamma$ to $\GL(V)\ltimes V$. The difference $\tau-\tau'$ of two cocycles is a coboundary if and only if the corresponding representations $(\varrho,\tau)$ and $(\varrho,\tau')$ are conjugate in $V$. We have in summary:

\begin{lemma}\label{lem:parametr_lin}
The vector space $H^1_\varrho(\Gamma,V)$ parameterises the representations of $\Gamma$ in $\GL(V)\ltimes V$ having linear part $\varrho$, up to conjugation.
\end{lemma}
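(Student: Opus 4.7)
The plan is to exhibit an explicit bijection between $Z^1_\varrho(\Gamma,V)$ and the set of representations $\Gamma\to\GL(V)\ltimes V$ with linear part $\varrho$, and then identify the equivalence relation on $Z^1_\varrho(\Gamma,V)$ induced by conjugation (by translations, i.e.\ elements of $V\subset\GL(V)\ltimes V$) with the relation of differing by a coboundary.

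First, given any function $\tau\colon\Gamma\to V$, consider the map $\hat\varrho_\tau\colon\Gamma\to\GL(V)\ltimes V$ defined by $\hat\varrho_\tau(\gamma)=(\varrho(\gamma),\tau(\gamma))$. Using the semidirect product law $(A,v)(B,w)=(AB,Aw+v)$, one computes
$$\hat\varrho_\tau(\gamma)\hat\varrho_\tau(\eta)=(\varrho(\gamma\eta),\,\varrho(\gamma)\tau(\eta)+\tau(\gamma))~,$$
so $\hat\varrho_\tau$ is a homomorphism if and only if $\tau(\gamma\eta)=\varrho(\gamma)\tau(\eta)+\tau(\gamma)$ for all $\gamma,\eta\in\Gamma$, i.e.\ if and only if $\tau\in Z^1_\varrho(\Gamma,V)$. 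Conversely, any representation of $\Gamma$ in $\GL(V)\ltimes V$ with linear part $\varrho$ is manifestly of the form $\hat\varrho_\tau$ for some $\tau$. This gives the desired bijection between $Z^1_\varrho(\Gamma,V)$ and such representations.

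Second, let $\tau,\tau'\in Z^1_\varrho(\Gamma,V)$ and let $v\in V$. Using $(\mathrm{id},v)^{-1}=(\mathrm{id},-v)$ and the semidirect product law, a direct computation shows
$$(\mathrm{id},v)\,\hat\varrho_\tau(\gamma)\,(\mathrm{id},v)^{-1}=(\varrho(\gamma),\,\tau(\gamma)+v-\varrho(\gamma)v)~.$$
Therefore $\hat\varrho_\tau$ and $\hat\varrho_{\tau'}$ are conjugate by the translation $(\mathrm{id},v)$ if and only if $\tau'(\gamma)-\tau(\gamma)=-(\varrho(\gamma)v-v)$ for every $\gamma\in\Gamma$, i.e.\ if and only if $\tau-\tau'\in B^1_\varrho(\Gamma,V)$. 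Passing to the quotient yields the claimed parametrisation by $H^1_\varrho(\Gamma,V)$.

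Both steps are elementary checks involving only the multiplication in the semidirect product, so no real obstacle is expected; the only subtlety worth pointing out in the write-up is that conjugation has to be restricted to the subgroup $V=\{(\mathrm{id},v):v\in V\}$ of $\GL(V)\ltimes V$, since conjugation by a general $(A,v)$ changes the linear part from $\varrho$ to $A\varrho A^{-1}$ and would not preserve the condition that the linear part is $\varrho$.
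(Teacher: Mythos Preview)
Your proof is correct and matches the paper's approach exactly: the paper does not give a formal proof of this lemma but states the two key facts in the paragraph immediately preceding it (that $Z^1_\varrho(\Gamma,V)$ coincides with affine deformations of $\varrho$, and that two cocycles differ by a coboundary if and only if the corresponding representations are conjugate in $V$), which is precisely what you have carefully verified. Your explicit remark that conjugation must be by elements of $V$ rather than all of $\GL(V)\ltimes V$ is the same restriction the paper makes when it says ``conjugate in $V$''.
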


When $\Gamma$ is a right-angled Coxeter group, the space $Z^1_\varrho(\Gamma,V)$ has the following description in terms of generators and relations.

\begin{lemma}\label{lemma cocycle generators}
Let $\Gamma$ be a right-angled Coxeter group as in Defintion \ref{defi RACG}, and $\varrho\colon\Gamma\to\mathrm{GL}(V)$ a representation. Then $Z^1_\varrho(\Gamma,V)$ is isomorphic to the vector space of functions $\tau\colon S\to V$ such that:
\begin{itemize}
\item $\tau({\l s})\in \mathrm{Ker}(\mathrm{id}+\varrho({\l s}))$ for all ${\l s}\in S$, and
\item $(\mathrm{id}-\varrho({\l s}_i))\tau({\l s}_j)=(\mathrm{id}-\varrho({\l s}_j))\tau({\l s}_i)$ for all $({\l s}_i,{\l s}_j)\in R$.
\end{itemize}
\end{lemma}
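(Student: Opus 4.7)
The plan is to exploit Lemma \ref{lem:parametr_lin} and reduce the statement to verifying which functions on generators extend to a homomorphism $\widetilde{\varrho}=(\varrho,\tau)\colon\Gamma\to\GL(V)\ltimes V$. Since every cocycle $\tau$ is determined by its values on $S$ (as $\Gamma$ is generated by $S$ and the cocycle identity gives a recipe to compute $\tau$ on any word), the content of the lemma is to identify exactly the constraints on $\tau|_S$ coming from the two families of relations defining $\Gamma$.

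First I would set up the correspondence: a function $\tau\colon S\to V$ together with $\varrho$ defines a map ${\l s}\mapsto(\varrho({\l s}),\tau({\l s}))$ from $S$ to the semidirect product $\GL(V)\ltimes V$, which extends to a homomorphism from the free group on $S$ to $\GL(V)\ltimes V$. By the universal property of the presentation in Definition \ref{defi RACG}, this homomorphism factors through $\Gamma$ if and only if the images of the defining relators are trivial in $\GL(V)\ltimes V$. By Lemma \ref{lem:parametr_lin}, such factorisations are in bijection with $Z^1_\varrho(\Gamma,V)$, so it remains to translate each family of relators into a condition on $\tau|_S$.

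Next, I would compute both translations directly using the semidirect product law $(A,v)\cdot(B,w)=(AB,Aw+v)$. For the relation ${\l s}^2=1$, squaring gives $(\varrho({\l s}),\tau({\l s}))^2=(\varrho({\l s})^2,(\mathrm{id}+\varrho({\l s}))\tau({\l s}))=(\mathrm{id},0)$, which is equivalent to $(\mathrm{id}+\varrho({\l s}))\tau({\l s})=0$, namely $\tau({\l s})\in\mathrm{Ker}(\mathrm{id}+\varrho({\l s}))$. For the commutation relation ${\l s}_i{\l s}_j={\l s}_j{\l s}_i$ with $({\l s}_i,{\l s}_j)\in R$, computing both sides yields
$$(\varrho({\l s}_i)\varrho({\l s}_j),\,\varrho({\l s}_i)\tau({\l s}_j)+\tau({\l s}_i))=(\varrho({\l s}_j)\varrho({\l s}_i),\,\varrho({\l s}_j)\tau({\l s}_i)+\tau({\l s}_j))~.$$
The equality of the linear parts is automatic since $\varrho$ is already a representation, so the only new condition is equality of translational parts, which rearranges exactly to $(\mathrm{id}-\varrho({\l s}_i))\tau({\l s}_j)=(\mathrm{id}-\varrho({\l s}_j))\tau({\l s}_i)$.

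Finally, I would conclude by noting that both directions have been verified simultaneously: a function $\tau\colon S\to V$ satisfies the two listed conditions if and only if the assignment ${\l s}\mapsto(\varrho({\l s}),\tau({\l s}))$ respects all defining relations of $\Gamma$, if and only if it extends to a homomorphism $\widetilde{\varrho}\colon\Gamma\to\GL(V)\ltimes V$ with linear part $\varrho$, if and only if the induced extension $\tau\colon\Gamma\to V$ is a cocycle. There is no real obstacle here; the argument is essentially bookkeeping in the semidirect product. The only point where one should be careful is to observe that for a right-angled Coxeter group all relators are either of the form ${\l s}^2$ or of the form $[{\l s}_i,{\l s}_j]$ with $({\l s}_i,{\l s}_j)\in R$, so no further condition arises.
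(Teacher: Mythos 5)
Your proof is correct and takes essentially the same approach as the paper: the paper's proof is a short verification that a cocycle is determined by its values on $S$ and that applying the cocycle identity to the relations ${\l s}^2=1$ and ${\l s}_i{\l s}_j={\l s}_j{\l s}_i$ yields exactly the two listed conditions, which is the same computation you perform in the semidirect product $\GL(V)\ltimes V$. Your phrasing via Lemma \ref{lem:parametr_lin} and the universal property of the presentation only makes the sufficiency direction (that a function on $S$ satisfying the two conditions extends to a cocycle) a bit more explicit than the paper's one-line argument.
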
 
\begin{proof}
Clearly a cocycle in $Z^1_\varrho(\Gamma,V)$ is determined by its values on the generators. 
The conditions that have to be satisfied by $\tau$ for each relation are $0=\tau({\l s}^2)=\varrho({\l s})\tau({\l s})+\tau({\l s})$, from which we get the first point, and $\tau({\l s}_i{\l s}_j)=\tau({\l s}_j{\l s}_i)$ for every $({\l s}_i,{\l s}_j)\in R$. Expanding $\tau({\l s}_i{\l s}_j)=\varrho({\l s}_i)\tau({\l s}_j)+\tau({\l s}_i)$ we obtain the second point.
\end{proof}

\subsection{A curve of geometric representations} \label{sec:geom_cocycle}

We now introduce the half-pipe representations of our interest, which have been computed in \cite[Remark 7.16]{transition_4-manifold} by applying a rescaling argument to the hyperbolic or AdS holonomy representations $\rho_t$.

\begin{notation*}
Throughout the following, we will denote by $\langle\cdot,\cdot\rangle$ the Minkowski product of $\R^{1,3}$ (previously denoted by $b_1$) and by $v^\perp\subset\R^{1,3}$ the orthogonal complement of $v\in\R^{1,3}$ with respect to the Minkowski product.
\end{notation*}

Recall that by Lemma \ref{lemma isomorphism} the transformation group $G_{\HP^4}$ is isomorphic to $\Isom(\R^{1,3})\cong \O(1,3)\ltimes\R^4$. We will exhibit the half-pipe holonomies as representations in $\Isom(\R^{1,3})$.

\begin{defi}[The HP representation $\rho_\lambda$]\label{defi HP holonomy}
Given $\lambda\in\R$, we define a representation
$$\rho_\lambda=(\varrho_0,\tau_\lambda)\colon\Gamma_{22}\to\O(1,3)\ltimes\R^4$$
on the standard generators of $\Gamma_{22}$ as follows. The linear part $\varrho_0$ is independent of $\lambda$ and is defined by:
\begin{equation} \label{eq:rep rho0}
\begin{aligned} 
\varrho_0(\p i)&=-\mathrm{id} & &\text{for each } \l i\in\{\l0,\ldots,\l7\} \\
\varrho_0(\m i)&=r_{v_{\l i}} & &\text{for each } \l i\in\{\l0,\ldots,\l7\} \\
\varrho_0(\l X)&=r_{v_{\l X}} & &\text{for each } \l X\in\{\l A,\ldots,\l F\}
\end{aligned}
\end{equation}
while the translation part is:
\begin{equation} \label{eq:cocycle tau}
\begin{aligned} 
\tau_\lambda(\p i)=\tau_\lambda(\m i)&=(-1)^i\lambda v_{\l i} & &\text{for each } \l i\in\{\l 0,\ldots,\l 7\} \\
\tau_\lambda(\l X)&=0 & &\text{for each } \l X\in\{\l A,\ldots,\l F\}
\end{aligned}
\end{equation}
where the vectors $v_{\l s}$ are defined in Table \ref{table:vectors cocycle}.
\end{defi}

Recall that the vectors in Table \ref{table:vectors cocycle} definine the bounding planes of an ideal right-angled cuboctahedron in $\Hyp^3$. Moreover, $r_{v}$ denotes the reflection in $\O(1,3)$ in the hyperplane $v^\perp$, namely, the linear transformation acting on $v^\perp$ as the identity and on the subspace generated by $v$ as minus the identity. 

\begin{remark}
%
When $\lambda=0$, the representation $\rho_0$ is naturally identified to those introduced in Definition \ref{defi rhot} for $t=0$. Indeed, recall from Section \ref{subsec:cuboct} that in the hyperbolic and AdS case $\rho_0$ takes value in the stabiliser $G_0$ of the hyperplane $\{x_4=0\}$, and $G_0$ is a common subgroup of $\Isom(\Hyp^4)$ and $\Isom(\AdS^4)$, both seen as a subgroups of {$\GL(5,\R)$}. 

Now, the representation $\rho_0=(\varrho_0,0)$ introduced  in Definition \ref{defi HP holonomy} also takes value in the stabiliser of $\{x_4=0\}$ in $G_{\HP^4}$ which coincides again with the subgroup $G_0$ of {$\GL(5,\R)$}. Under the isomorphism with $\Isom(\R^{1,3})$, the group $G_0$ is dually identified with the stabiliser of the origin in $\R^{1,3}$, namely the linear subgroup $\O(1,3)<\Isom(\R^{1,3})$. {The explicit isomorphism $\O(1,3)\cong G_0 $ is
given by 
\begin{equation} \label{eq:isomorphism G0}
\O(1,3) \ni A \mapsto \pm \begin{pmatrix} A & 0 \\ 0 & 1\end{pmatrix}\in G_0~,
\end{equation}
where the sign $\pm$ is positive if $A$ preserves $\Hyp^{3} \subset \R^{1,3}$, and negative otherwise.}

Under the isomorphism \eqref{eq:isomorphism G0}, the representation $\rho_0=(\varrho_0,0)$ of Definition \ref{defi HP holonomy} (with zero translation part) coincides with the ``collapsed'' representation expressed in \eqref{eq:rep rho0 hyp ads}. This justifies that in the statement of Theorem \ref{teo:main} we refer to the \emph{same} representation $\rho_0$ in all three geometries.
\end{remark}

The goal of the following section is to compute the  first cohomology group associated to the representation $\varrho_0\colon\Gamma_{22}\to \O(1,3)$ of Definition \ref{defi HP holonomy}. Applications of the result will then be given in Sections \ref{sec: teo B HP} and \ref{sec:zariski2}.

\subsection{The geometric cocycle is a generator} \label{sec:geom_cocycle_gen}

Recall Definition \ref{defi HP holonomy}. The goal of this section is to prove the following:

\begin{prop} \label{prop:first cohomology group}
The vector space $H^1_{\varrho_0}(\Gamma_{22},\R^{1,3})$ has dimension one.
\end{prop}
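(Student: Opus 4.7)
First, I would apply Lemma~\ref{lemma cocycle generators} to parametrize cocycles concretely. Writing $w_i := \tau(\p i) \in \R^{1,3}$ for $i = 0, \ldots, 7$, the square relations impose no constraint on $w_i$ (since $\varrho_0(\p i) = -\mathrm{id}$), but force $\tau(\m j) = a_j v_j$ and $\tau(\l X) = b_X v_X$ for scalars $a_j, b_X \in \R$. Substituting into the commutation conditions of Lemma~\ref{lemma cocycle generators} yields: $a_j = \langle w_i, v_j\rangle$ whenever $\p i$ commutes with $\m j$; $b_X = \langle w_i, v_X\rangle$ whenever $\p i$ commutes with $\l X$; and a tautology for commuting $(\m i, \l X)$ pairs, since in that case $v_i \perp v_X$. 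Thus a cocycle is determined by $(w_0, \ldots, w_7) \in (\R^{1,3})^8$ subject to the requirement that, for each $j$ (resp.\ each $X$), the function $i \mapsto \langle w_i, v_j\rangle$ (resp.\ $\langle w_i, v_X\rangle$) be constant on the four-element set of indices $i$ with $\p i$ commuting with $\m j$ (resp.\ $\l X$). A coboundary corresponds to the constant assignment $w_i \equiv -2v$ for some $v \in \R^{1,3}$, whence $\dim B^1_{\varrho_0}(\Gamma_{22}, \R^{1,3}) = 4$.

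It thus suffices to prove that $\dim Z^1_{\varrho_0}(\Gamma_{22}, \R^{1,3}) = 5$. The lower bound $\geq 5$ is supplied by the ``geometric'' cocycle $\tau_\lambda$ of Definition~\ref{defi HP holonomy}: the verification reduces to the identity $\langle v_i, v_j\rangle = (-1)^{i+j}$ whenever $\p i$ and $\m j$ commute, which follows from the sign-pattern structure of Table~\ref{table:G22}. Indeed, such a commuting pair differs in exactly one of the four sign positions, and whether this position is the fourth ``collapse'' coordinate (dropped in passing from $\R^{1,4}$ to $\R^{1,3}$) or one of the first three determines whether the inner product equals $+1$ or $-1$.

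For the upper bound, I would use a coboundary to normalize $w_0 = 0$ and then show that $(w_1, \ldots, w_7)$ lies in a one-parameter family. For each $k \in \{1, \ldots, 6\}$ --- the indices at sign-pattern distance $2$ from $0$ --- the generators $\m j$ and $\l X$ that commute simultaneously with $\p 0$ and $\p k$ impose three or four conditions of the form $\langle w_k, v_\star\rangle = 0$. A direct computation shows these have rank exactly $3$, the redundancy being an incidence relation like $v_0 + v_1 = \sqrt{2}\,(v_A + v_C)$ among the four face vectors of the cuboctahedron meeting at a common ideal vertex. Hence $w_k \in \R\,(v_k - v_0)$. Propagating the analogous constraints from the remaining ``shared-neighbour'' relations between $\p k$ and $\p{k'}$ with $k, k' \geq 1$, and treating $\p 7 = -\p 0$ separately (it has no common commuting $\m j$ or $\l X$ with $\p 0$, but is linked to the others through the intermediate $\p k$'s), forces the scalars $\mu_k$ in $w_k = \mu_k(v_k - v_0)$ to satisfy $\mu_k = (-1)^k \mu$ for a single $\mu \in \R$, recovering exactly the geometric cocycle.

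The main obstacle is this final consistency check: the full linear system consists of $42$ scalar conditions on $(w_0, \ldots, w_7) \in \R^{32}$, of which exactly $27$ must be independent to yield $\dim Z^1 = 5$. All the redundancies trace back to tangency and orthogonality relations among the $v_\star$ at the $12$ ideal vertices of the cuboctahedron, together with the combinatorial symmetries inherited from the $24$-cell. These redundancies are precisely what makes $H^1$ nontrivial, and verifying them carefully --- although elementary linear algebra --- is the ``certain amount of technicality'' the authors allude to in the introduction.
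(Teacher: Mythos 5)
Your reformulation of the problem is correct: parametrising a cocycle by $(w_0,\ldots,w_7)=(\tau(\p 0),\ldots,\tau(\p 7))$, noting that the square relations leave the $w_i$ free while forcing $\tau(\m j)\in\mathrm{Span}(v_{\l j})$ and $\tau(\l X)\in\mathrm{Span}(v_{\l X})$, that the $(\m i,\l X)$ relations are vacuous, that coboundaries are the constant assignments (so $\dim B^1_{\varrho_0}(\Gamma_{22},\R^{1,3})=4$), and that the statement is equivalent to $\dim Z^1_{\varrho_0}(\Gamma_{22},\R^{1,3})=5$. Your normalisation ($w_0=0$) is also a genuinely different route from the paper, which instead uses the coboundary freedom to make the cocycle vanish on $\l A,\l B,\l C,\l D$ via the invertible Gram matrix of $v_{\l A},\ldots,v_{\l D}$ (Lemma \ref{lemma:four letters vanish}) and then determines all remaining values by explicit orthogonality computations (Lemmas \ref{lemma:cocycle condition squares}--\ref{lemma:lettersEFvanish}). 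However, there is a concrete error in the one intermediate computation you commit to. For odd $k$ the common commuting generators of $\p 0$ and $\p k$ include $\m 0$ and $\m k$ themselves: for $k=1$ they are $\m 0,\m 1,\l A,\l C$, so the conditions read $\langle w_1,v_{\l 0}\rangle=\langle w_1,v_{\l 1}\rangle=\langle w_1,v_{\l A}\rangle=\langle w_1,v_{\l C}\rangle=0$, and since $\langle v_{\l 1}-v_{\l 0},v_{\l 0}\rangle=-2\neq 0$ these force $w_1\in\R\,(v_{\l 0}+v_{\l 1})$, not $\R\,(v_{\l 1}-v_{\l 0})$. Consistently, the geometric cocycle normalised by $w_0=0$ has $w_k=\lambda\bigl((-1)^k v_{\l k}-v_{\l 0}\bigr)$; your ansatz $w_k=\mu_k(v_{\l k}-v_{\l 0})$ with $\mu_k=(-1)^k\mu$ is incompatible with your own constraints for odd $k$ (they would force $w_k=0$) and does not contain the cocycle $\tau_\lambda$ it is supposed to recover.

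More importantly, the decisive part of the argument is only asserted, not carried out: the ``propagation'' among the constraints linking $w_k$ and $w_{k'}$ for $k,k'\geq 1$, the treatment of $\p 7$ (which indeed shares no commuting $\m j$ or $\l X$ with $\p 0$ and must be reached through the intermediate generators), and the verification that the $42$ conditions on $\R^{32}$ have rank exactly $27$ are precisely the content of the proposition --- this is where the paper invests its sequence of lemmas --- and your final paragraph explicitly defers that check as ``elementary linear algebra'' to be ``verified carefully''. Without showing, for instance, that the crossed relations $\p k\m j=\m j\p k$ with $j\in\{2,4,6,7\}$ and the relations involving $\l E,\l F$ collapse the six scalars $\mu_1,\ldots,\mu_6$ (and then $w_7$) to a single parameter with the sign pattern $(-1)^k$, the upper bound $\dim Z^1\leq 5$ is not established. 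So the setup is right and the strategy is viable, but as written the proof has a gap exactly at its crux, compounded by the sign error above.
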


To prove Proposition \ref{prop:first cohomology group}, we will show that every cohomology class in $H^1_{\varrho_0}(\Gamma_{22},\R^{1,3})$ is represented by a cocycle $\tau_\lambda$ of the form \eqref{eq:cocycle tau}, for some $\lambda\in\R$. 

We already know from \cite[Remark 7.16]{transition_4-manifold} that $\rho_\lambda=(\varrho_0,\tau_\lambda)$ of Definition \ref{defi HP holonomy} is a representation of $\Gamma_{22}$, hence $\tau_\lambda$ is a cocycle. This can however be checked directly from \eqref{eq:rep rho0} and \eqref{eq:cocycle tau} using Lemma \ref{lemma cocycle generators}.
Let us introduce some additional notation:

\begin{defi}[The subspace $U_0$] \label{defi:subspace cocycles}
We denote by $U_0$ the 1-dimensional vector subspace of $Z^1_{\varrho_0}(\Gamma_{22},\R^{1,3})$ composed of cocycles of the form \eqref{eq:cocycle tau}, for some $\lambda\in\R$.
\end{defi}

Let us observe that $\tau_\lambda$ vanishes on all the letter generators and that $\tau_\lambda\!\left(\m{i}\right)$ and $\tau_\lambda\!\left(\p{i}\right)$  are all vectors of norm $|\lambda|$ for the Minkowski product on $\R^{1,3}$, since all the $v_{\l i}$ have unit Minkowski norm. 

The ultimate goal  will be to show that any cocycle $\tau\in Z^1_{\varrho_0}(\Gamma_{22},\R^{1,3})$ has a unique decomposition $\tau=\tau_\lambda-\eta$, for some $\eta\in B^1_{\varrho_0}(\Gamma_{22},\R^{1,3})$ and $\tau_\lambda\in U_0$. The proof will follow from a sequence of computational lemmas.

\begin{lemma} \label{lemma:cocycle condition squares}
Let $\tau\in Z^1_{\varrho_0}(\Gamma_{22},\R^{1,3})$. Then,
\begin{equation*} 
\begin{aligned} 
\tau(\m i)&\in\mathrm{Span}(v_{\l i})& &\text{for each }\m i\in\{\m 0,\ldots,\m 7\},\ \text{and} \\
\tau(\l X)&\in\mathrm{Span}(v_{\l X})& &\text{for each }\l X\in\{\l A,\ldots,\l F\}.
\end{aligned}
\end{equation*}
\end{lemma}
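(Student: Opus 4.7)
The plan is to read off both inclusions directly from the first condition in Lemma \ref{lemma cocycle generators}, namely that a cocycle $\tau\in Z^1_{\varrho_0}(\Gamma_{22},\R^{1,3})$ must satisfy $\tau(\l s)\in\mathrm{Ker}(\mathrm{id}+\varrho_0(\l s))$ for every generator $\l s$. The commutation relations (second condition of that lemma) should not be needed here; they will come into play only later, when constraining the $\tau(\p i)$'s.

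First I would invoke the definition \eqref{eq:rep rho0} of $\varrho_0$: on the generators $\m i$ and $\l X$, the linear part is precisely the Minkowski reflection $r_{v_{\l i}}$, respectively $r_{v_{\l X}}$. Recall that for any $v\in\R^{1,3}$ with $\langle v,v\rangle\neq 0$, the reflection $r_v\in\O(1,3)$ fixes $v^\perp$ pointwise and sends $v$ to $-v$. Consequently, the endomorphism $\mathrm{id}+r_v$ acts as $2\cdot\mathrm{id}$ on $v^\perp$ and as $0$ on $\mathrm{Span}(v)$, so in particular $\mathrm{Ker}(\mathrm{id}+r_v)=\mathrm{Span}(v)$. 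This is the only computation required.

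Applying this with $v=v_{\l i}$ (which has unit Minkowski norm, see Table \ref{table:vectors cocycle}) gives $\tau(\m i)\in\mathrm{Span}(v_{\l i})$, and with $v=v_{\l X}$ gives $\tau(\l X)\in\mathrm{Span}(v_{\l X})$, exactly as claimed.

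There is no real obstacle: the statement is a one-line consequence of Lemma \ref{lemma cocycle generators} once the kernel of $\mathrm{id}+r_v$ has been identified. I expect the subsequent lemmas to do the actual work, namely to analyse the $\tau(\p i)$'s (for which the first cocycle condition gives no information, as $\varrho_0(\p i)=-\mathrm{id}$ makes $\mathrm{id}+\varrho_0(\p i)$ identically zero) by exploiting the many commutation relations $\p i\,\m j=\m j\,\p i$ and $\p i\,\l X=\l X\,\p i$, together with the information on $\tau(\m i)$ and $\tau(\l X)$ obtained here.
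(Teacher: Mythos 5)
Your proposal is correct and is essentially identical to the paper's own proof: both apply the first condition of Lemma \ref{lemma cocycle generators} and identify $\mathrm{Ker}(\mathrm{id}+r_{v})=\mathrm{Span}(v)$ for the reflections $\varrho_0(\m i)=r_{v_{\l i}}$ and $\varrho_0(\l X)=r_{v_{\l X}}$. Your closing remarks about the $\tau(\p i)$'s and the role of the commutation relations also match how the subsequent lemmas in the paper proceed.
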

\begin{proof}
By Lemma \ref{lemma cocycle generators} we get
$\tau(\m i)\in \mathrm{Ker}(\mathrm{id}+\varrho_0(\m i))$. This kernel equals the subspace generated by $v_{\l i}$ since $\varrho_0(\m i)$ is the Minkowski reflection fixing the hyperplane $v_{\l i}^\perp$. The proof for the letter generators is the same.
\end{proof}

The following step is a first reduction of the problem.
\begin{lemma}\label{lemma:four letters vanish}
Let $\tau\in Z^1_{\varrho_0}(\Gamma_{22},\R^{1,3})$. Then there exists a unique $\eta\in B^1_{\varrho_0}(\Gamma_{22},\R^{1,3})$ such that, if ${\hat\tau}=\tau-\eta$, then
\begin{equation} \label{eq:four letters vanish}
{\hat\tau}(\l A)={\hat\tau}(\l B)={\hat\tau}(\l C)={\hat\tau}(\l D)=0~.
\end{equation}
\end{lemma}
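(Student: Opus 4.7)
The plan is to exhibit directly the unique coboundary that subtracts the ``letter components'' of $\tau$. Recall that any $\eta \in B^1_{\varrho_0}(\Gamma_{22},\R^{1,3})$ has the form $\eta(\gamma) = \varrho_0(\gamma)v - v$ for a unique $v \in \R^{1,3}$, so finding $\eta$ amounts to finding $v$.

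First I would compute $\eta$ on the four letter generators explicitly. For $\l X \in \{\l A,\l B,\l C,\l D\}$, the linear part is the reflection $\varrho_0(\l X) = r_{v_{\l X}}$ across the Minkowski hyperplane $v_{\l X}^\perp$, and since $\langle v_{\l X}, v_{\l X}\rangle = 1$ by Table \ref{table:vectors cocycle}, we have
$$\eta(\l X) = r_{v_{\l X}}(v) - v = -2\,\langle v, v_{\l X}\rangle\, v_{\l X}~.$$
On the other hand, by Lemma \ref{lemma:cocycle condition squares}, $\tau(\l X) \in \mathrm{Span}(v_{\l X})$, so we may write $\tau(\l X) = \alpha_{\l X}\, v_{\l X}$ for a unique scalar $\alpha_{\l X} \in \R$. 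The condition $(\tau - \eta)(\l X) = 0$ for $\l X \in \{\l A, \l B, \l C, \l D\}$ therefore reduces to the linear system
$$\langle v, v_{\l X}\rangle = -\tfrac{1}{2}\alpha_{\l X}\,, \qquad \l X \in \{\l A, \l B, \l C, \l D\}$$
in the four unknowns given by the components of $v\in\R^{1,3}$.

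The key step is now the claim that the vectors $v_{\l A}, v_{\l B}, v_{\l C}, v_{\l D}$ are linearly independent in $\R^{1,3}$, which is a direct computation from Table \ref{table:vectors cocycle} (for instance, the determinant of the matrix whose rows are $v_{\l A},\ldots,v_{\l D}$ equals $-4\sqrt{2}\neq 0$). Equivalently, the linear map
$$v \mapsto \bigl(\langle v, v_{\l A}\rangle,\langle v, v_{\l B}\rangle,\langle v, v_{\l C}\rangle,\langle v, v_{\l D}\rangle\bigr)$$
is an isomorphism $\R^{1,3}\to \R^4$, since the Minkowski form is non-degenerate. Thus the system above admits a unique solution $v$, which produces the unique coboundary $\eta = \varrho_0(\cdot)v - v$ whose values on the letters coincide with $\tau$.

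For the uniqueness statement it suffices to observe: if $\eta_1(\gamma) = \varrho_0(\gamma)v_1 - v_1$ and $\eta_2(\gamma) = \varrho_0(\gamma)v_2 - v_2$ both satisfy $\eta_i(\l X) = \tau(\l X)$ for $\l X \in \{\l A,\l B,\l C,\l D\}$, then $w := v_1 - v_2$ satisfies $r_{v_{\l X}}(w) = w$, i.e.\ $\langle w, v_{\l X}\rangle = 0$ for each of the four letters. The linear independence of $v_{\l A},\ldots,v_{\l D}$ then forces $w = 0$, hence $\eta_1 = \eta_2$. There is no real obstacle here beyond the linear-algebra verification of linear independence of the four letter-vectors, which is straightforward from the explicit table.
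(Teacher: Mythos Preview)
Your proof is correct and is essentially the same argument as the paper's: both reduce the question to the invertibility of the linear map $v\mapsto(\varrho_0(\l X)v-v)_{\l X\in\{\l A,\l B,\l C,\l D\}}$, using the formula $r_{v_{\l X}}(v)-v=-2\langle v,v_{\l X}\rangle v_{\l X}$ together with the fact that $v_{\l A},v_{\l B},v_{\l C},v_{\l D}$ form a basis of $\R^{1,3}$. The only cosmetic difference is that the paper packages this as the Gram matrix $(-2\langle v_{\l X},v_{\l Y}\rangle)$ being invertible by non-degeneracy, whereas you check the linear independence via a direct determinant and then invoke non-degeneracy of the form to solve the dual system.
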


After the proof of Lemma \ref{lemma:four letters vanish}, we will show that if ${\hat\tau}$ satisfies \eqref{eq:four letters vanish}, then it is of the form \eqref{eq:cocycle tau} for some $\lambda\in\R$. Together with Lemma \ref{lemma:four letters vanish}, this will imply that 
\begin{equation} \label{eq:cocycle tau2}
Z^1_{\varrho_0}(\Gamma_{22},\R^{1,3})=U_0\oplus B^1_{\varrho_0}(\Gamma_{22},\R^{1,3})
\end{equation}
and therefore that $H^1_{\varrho_0}(\Gamma_{22},\R^{1,3})$ is one-dimensional.

\begin{proof}[Proof of Lemma \ref{lemma:four letters vanish}]
Let $\tau$ be any cocycle. By Lemma \ref{lemma:cocycle condition squares}, we have that $\tau(\l X)\in \mathrm{Span}(v_{\l X})$ for all $\l X\in\{\l A,\l B,\l C,\l D\}$.  Define the linear map
$$L\colon \R^{1,3}\to \mathrm{Span}(v_{\l A})\oplus\mathrm{Span}(v_{\l B})\oplus\mathrm{Span}(v_{\l C})\oplus\mathrm{Span}(v_{\l D})$$
by
$$L(w)=(\varrho_0(\l A)w-w,\varrho_0(\l B)w-w,\varrho_0(\l C)w-w,\varrho_0(\l D)w-w)~.$$
The proof follows if we show that $L$ is invertible.

Let us write the matrix associated to $L$ in the basis $\{v_{\l A},v_{\l B},v_{\l C},v_{\l D}\}$ on the source and on the target. Recalling that the $v_{\l X}$ are all unit vectors for the Minkowski product $\langle\cdot,\cdot\rangle$ and that $\rho_0(\l X)$ is the reflection in $v_{\l X}^\perp$, we have
$$\varrho_0(\l X)v_{\l Y}-v_{\l Y}=\varrho_0(\l X)\left(\langle v_{\l Y},v_{\l X}\rangle v_{\l X}\right)-\langle v_{\l Y},v_{\l X}\rangle v_{\l X}=-2\langle v_{\l Y},v_{\l X}\rangle v_{\l X}~.$$
This shows that the associated matrix of $L$ is
$$-2\begin{pmatrix} \langle v_{\l A},v_{\l A}\rangle & \langle v_{\l A},v_{\l B}\rangle & \langle v_{\l A},v_{\l C}\rangle & \langle v_{\l A},v_{\l D}\rangle \\ 
\langle v_{\l B},v_{\l A}\rangle & \langle v_{\l B},v_{\l B}\rangle & \langle v_{\l B},v_{\l C}\rangle & \langle v_{\l B},v_{\l D}\rangle \\
\langle v_{\l C},v_{\l A}\rangle & \langle v_{\l C},v_{\l B}\rangle & \langle v_{\l C},v_{\l C}\rangle & \langle v_{\l C},v_{\l D}\rangle \\
\langle v_{\l D},v_{\l A}\rangle & \langle v_{\l D},v_{\l B}\rangle & \langle v_{\l D},v_{\l C}\rangle & \langle v_{\l D},v_{\l D}\rangle 
\end{pmatrix},$$
which is invertible by the non-degeneracy of the Minkowski product.
\end{proof}

\begin{remark}\label{rmk:dimB1}
The proof of Lemma \ref{lemma:four letters vanish} also shows that $B^1_{\varrho_0}(\Gamma_{22},\R^{1,3})$ {has dimension 4}. Indeed, we have a surjective linear map 
$\R^{1,3}\to B^1_{\varrho_0}(\Gamma_{22},\R^{1,3})$ that sends $w\in\R^{1,3}$ to the coboundary $\tau(\gamma)=\varrho_0(\gamma)w-w$. 
This map is  injective, by the injectivity of the map $L$ introduced in the proof of Lemma \ref{lemma:four letters vanish}. Hence $\dim B^1_{\varrho_0}(\Gamma_{22},\R^{1,3})=4$.
\end{remark}

Let us now compute the cocycle condition which arises from any orthogonality condition in $\Gamma_{22}$.

\begin{lemma} \label{lemma:cocycle condition orthogonal}
Let $\tau\in Z^1_{\varrho_0}(\Gamma_{22},\R^{1,3})$.
\begin{itemize}
\item For any relation in $\Gamma_{22}$ of the form $\p i \m j=\m j \p i $, we have that $\tau(\p i)-\tau(\m j)\in v_{\l j}^\perp$.
\item For any relation in $\Gamma_{22}$ of the form $\p i \l X=\l X \p i $, we have that $\tau(\p i)-\tau(\l X)\in v_{\l X}^\perp$.
\end{itemize}
\end{lemma}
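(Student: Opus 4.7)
The plan is to apply directly the second condition of Lemma \ref{lemma cocycle generators} to each of the two commutation relations, and then exploit the very explicit form of $\varrho_0$ given by \eqref{eq:rep rho0}. The key computational fact I will use is that for a unit vector $v\in\R^{1,3}$ and any $w\in\R^{1,3}$, the Minkowski reflection satisfies
\[
w-r_{v}(w)=2\langle w,v\rangle v~,
\]
so that the image of $\mathrm{id}-r_v$ is precisely $\mathrm{Span}(v)$.

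For the first bullet, I would set $\l s_1=\p i$ and $\l s_2=\m j$. Lemma \ref{lemma cocycle generators} gives
\[
(\mathrm{id}-\varrho_0(\p i))\,\tau(\m j)=(\mathrm{id}-\varrho_0(\m j))\,\tau(\p i)~.
\]
Since $\varrho_0(\p i)=-\mathrm{id}$, the left-hand side equals $2\tau(\m j)$; since $\varrho_0(\m j)=r_{v_{\l j}}$, the right-hand side equals $2\langle\tau(\p i),v_{\l j}\rangle v_{\l j}$. Hence
\[
\tau(\m j)=\langle\tau(\p i),v_{\l j}\rangle\,v_{\l j}~.
\]
By Lemma \ref{lemma:cocycle condition squares}, $\tau(\m j)\in\mathrm{Span}(v_{\l j})$ and therefore $\langle\tau(\m j),v_{\l j}\rangle$ coincides with its coefficient along $v_{\l j}$, which is precisely $\langle\tau(\p i),v_{\l j}\rangle$. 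Subtracting yields $\langle\tau(\p i)-\tau(\m j),v_{\l j}\rangle=0$, i.e. $\tau(\p i)-\tau(\m j)\in v_{\l j}^{\perp}$, as claimed.

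The second bullet is identical in form, replacing $\m j$ by $\l X$ and $v_{\l j}$ by $v_{\l X}$: applying Lemma \ref{lemma cocycle generators} to the relation $\p i \l X=\l X\p i$ and using $\varrho_0(\p i)=-\mathrm{id}$ and $\varrho_0(\l X)=r_{v_{\l X}}$ gives $\tau(\l X)=\langle\tau(\p i),v_{\l X}\rangle\,v_{\l X}$, and comparing with the inclusion $\tau(\l X)\in\mathrm{Span}(v_{\l X})$ from Lemma \ref{lemma:cocycle condition squares} yields $\tau(\p i)-\tau(\l X)\in v_{\l X}^{\perp}$. There is no real obstacle here: the whole argument is a two-line computation once the explicit formulae \eqref{eq:rep rho0} for $\varrho_0$ and the preliminary constraint from Lemma \ref{lemma:cocycle condition squares} are in place.
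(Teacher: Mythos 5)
Your proof is correct and follows essentially the same route as the paper: both apply the second condition of Lemma \ref{lemma cocycle generators} to the commutation relation, use $\varrho_0(\p i)=-\mathrm{id}$ and the explicit reflection form of $\varrho_0$ on the other generator, and conclude orthogonality (the paper phrases the last step via the kernel of $\mathrm{id}-r_{v_{\l j}}$, while you compute the image $2\langle\tau(\p i),v_{\l j}\rangle v_{\l j}$ explicitly and pair with $v_{\l j}$ — an inessential variation). No gaps.
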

\begin{proof}
Let us show the first point, the second being completely analogous. By Lemma \ref{lemma cocycle generators}
$$\left(\mathrm{id}-\varrho_0(\m j)\right)\tau(\p i)=\left(\mathrm{id}-\varrho_0(\p i)\right)\tau(\m j)=2\tau(\m j)=\left(\mathrm{id}-\varrho_0(\m j)\right)\tau(\m j)~,
$$
where we have used that $\varrho_0(\p i)=-\mathrm{id}$, that $\varrho_0(\m j)$ is the reflection in the Minkowski hyperplane $v_{\l j}^\perp$, and that $\tau(\m j)\in\mathrm{Span}(v_{\l j})$ by Lemma \ref{lemma:cocycle condition squares}. Hence $\tau(\p i)-\tau(\m j)$ is in the kernel of $\mathrm{id}-\varrho_0(\m j)$, namely in $v_{\l j}^\perp$.
\end{proof}

Let us now go back to showing that a cocycle ${\hat\tau}$ as in Lemma \ref{lemma:four letters vanish} is of the form \eqref{eq:cocycle tau}. Our next step is:

\begin{lemma} \label{lemma: cocycle 0 and 3}
Suppose ${\hat\tau}\in Z^1_{\varrho_0}(\Gamma_{22},\R^{1,3})$ satisfies \eqref{eq:four letters vanish}. Then
${\hat\tau}(\p 0)={\hat\tau}(\m 0)=\lambda v_{\l 0}$ and ${\hat\tau}(\p 3)={\hat\tau}(\m 3)=-\lambda v_{\l 3}$
for some $\lambda\in\R$.
\end{lemma}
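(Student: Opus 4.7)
The plan is to pin down $\hat\tau(\p 0), \hat\tau(\m 0), \hat\tau(\p 3), \hat\tau(\m 3)$ one by one, using only the structural Lemmas \ref{lemma:cocycle condition squares} and \ref{lemma:cocycle condition orthogonal} together with the vanishing hypothesis $\hat\tau(\l A)=\hat\tau(\l B)=\hat\tau(\l C)=\hat\tau(\l D)=0$. From Lemma \ref{lemma:cocycle condition squares} we immediately know $\hat\tau(\m 0)=\mu_0 v_{\l 0}$ and $\hat\tau(\m 3)=\mu_3 v_{\l 3}$ for some scalars $\mu_0,\mu_3\in\R$, so the real work is to determine $\hat\tau(\p 0)$ and $\hat\tau(\p 3)$ and then reconcile the four scalars that will appear.

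The key observation, read off directly from Table \ref{table:G22} by computing Minkowski inner products, is that in $\Gamma_{22}$ the generator $\p 0$ commutes with $\l A, \l B, \l C$ (and with $\m 0$), while $\p 3$ commutes with $\l A, \l B, \l D$ (and with $\m 0, \m 3$). Applying Lemma \ref{lemma:cocycle condition orthogonal} to each such commuting pair, together with $\hat\tau$ vanishing on letter generators, forces
\[
\hat\tau(\p 0)\in v_{\l A}^\perp\cap v_{\l B}^\perp\cap v_{\l C}^\perp,\qquad \hat\tau(\p 3)\in v_{\l A}^\perp\cap v_{\l B}^\perp\cap v_{\l D}^\perp.
\]
A short linear algebra computation in $\R^{1,3}$ shows that these triple intersections are exactly the lines $\mathrm{Span}(v_{\l 0})$ and $\mathrm{Span}(v_{\l 3})$ respectively (they are one-dimensional by non-degeneracy of $\langle\cdot,\cdot\rangle$ on $\mathrm{Span}(v_{\l A},v_{\l B},v_{\l C})$ and $\mathrm{Span}(v_{\l A},v_{\l B},v_{\l D})$, and the candidate generators $v_{\l 0}$ and $v_{\l 3}$ visibly satisfy the orthogonality conditions). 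Thus $\hat\tau(\p 0)=\lambda v_{\l 0}$ and $\hat\tau(\p 3)=\nu v_{\l 3}$ for some $\lambda,\nu\in\R$.

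It then remains to identify the four scalars $\lambda,\mu_0,\nu,\mu_3$. For each commuting pair among $\{\p 0,\m 0\}$, $\{\p 3,\m 0\}$, $\{\p 3,\m 3\}$, Lemma \ref{lemma:cocycle condition orthogonal} yields a linear equation; pairing that equation against $v_{\l 0}$ or $v_{\l 3}$ and using the elementary facts $\langle v_{\l 0},v_{\l 0}\rangle=\langle v_{\l 3},v_{\l 3}\rangle=1$ and $\langle v_{\l 0},v_{\l 3}\rangle=-1$ gives successively $\mu_0=\lambda$, $\nu=-\lambda$, and $\mu_3=\nu=-\lambda$, as required. There is no real obstacle here beyond bookkeeping: the whole argument is a direct application of the two structural lemmas combined with a few explicit Minkowski inner product computations on the vectors of Table \ref{table:vectors cocycle}. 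The analogous analysis for the remaining indices $\l 1,\l 2,\l 4,\l 5,\l 6,\l 7$ (and the two letter generators $\l E,\l F$) will be the natural next step, and it is there — not in this lemma — that one expects the combinatorics of the cuboctahedron to impose the full rigidity required for Proposition \ref{prop:first cohomology group}.
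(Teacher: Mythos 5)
Your proposal is correct and follows essentially the same route as the paper: Lemma \ref{lemma:cocycle condition squares} handles $\m 0,\m 3$, the commutations of $\p 0$ with $\l A,\l B,\l C$ and of $\p 3$ with $\l A,\l B,\l D$ together with the vanishing hypothesis force $\hat\tau(\p 0)\in\mathrm{Span}(v_{\l 0})$ and $\hat\tau(\p 3)\in\mathrm{Span}(v_{\l 3})$, and the scalars are matched via Lemma \ref{lemma:cocycle condition orthogonal} applied to mixed relations. The only (immaterial) difference is that you link the indices $\l 0$ and $\l 3$ through the relation $\p 3\m 0=\m 0\p 3$, whereas the paper uses $\p 0\m 3=\m 3\p 0$; both relations hold in $\Gamma_{22}$ and yield the same sign $\langle v_{\l 0},v_{\l 3}\rangle=-1$, hence the same conclusion.
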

\begin{proof}
It follows from Lemma \ref{lemma:cocycle condition squares} that ${\hat\tau}(\m 0)=\mu_0v_{\l 0}$, and similarly ${\hat\tau}(\m 3)=\mu_3v_{\l 3}$. We remark that we have no similar condition on the $\p i$ coming from the relation that $\p i$ squares to the identity.

However, we claim that in our assumption also ${\hat\tau}(\p 0)\in \mathrm{Span}(v_{\l 0})$ and ${\hat\tau}(\p 3)\in \mathrm{Span}(v_{\l 3})$. Indeed, applying Lemma \ref{lemma:cocycle condition orthogonal} to the relation $\p 0\l A= \l A\p 0$ and using that ${\hat\tau}(\l A)=0$ by hypothesis, we get ${\hat\tau}(\p 0)\in v_{\l A}^\perp$. Similarly, from $\p 0\l B= \l B\p 0$ and $\p 0\l C= \l C\p 0$, we obtain that ${\hat\tau}(\p 0)$ is in $v_{\l B}^\perp$ and $v_{\l C}^\perp$. Now, $v_{\l A}$, $v_{\l B}$ and $v_{\l C}$ are linearly independent, hence $v_{\l A}^\perp\cap v_{\l B}^\perp\cap v_{\l C}^\perp$ is 1-dimensional and therefore coincides with $\mathrm{Span}(v_{\l 0})$, since $v_{\l 0}$ is orthogonal to all of them. By applying the same argument to ${\hat\tau}(\p 3)$ and the letters $\l A$, $\l B$, $\l D$ (since by hypothesis ${\hat\tau}$ vanishes on $\l A$, $\l B$, $\l C$ and $\l D$), we obtain that ${\hat\tau}(\p 3)\in \mathrm{Span}(v_{\l 3})$.

Hence we have shown that ${\hat\tau}(\p 0)=\lambda_0v_{\l 0}$ and ${\hat\tau}(\p 3)=\lambda_3v_{\l 3}$. We have to show that $\lambda_0=\mu_0=-\lambda_3=-\mu_3$. Let us apply Lemma \ref{lemma:cocycle condition orthogonal} to the relation $\p 0\m 0=\m 0\p 0$. We obtain
$${\hat\tau}(\p 0)-{\hat\tau}(\m 0)\in v_{\l 0}^\perp~,$$
that is,
$$0=\langle \lambda_0v_{\l 0}-\mu_0v_{\l 0},v_{\l 0}\rangle=\lambda_0-\mu_0~,$$
hence $\lambda_0=\mu_0$. Analogously $\lambda_3=\mu_3$. If we now apply Lemma \ref{lemma:cocycle condition orthogonal} to the relation $\p 0\m 3=\m 3\p 0$ we get
$${\hat\tau}(\p 0)-{\hat\tau}(\m 3)\in v_{\l 3}^\perp~,$$
which in turn gives 
$$0=\langle \lambda_0v_{\l 0}-\lambda_3v_{\l 3},v_{\l 3}\rangle=\lambda_0\langle v_{\l 0},v_{\l 3}\rangle-\lambda_3\langle v_{\l 3},v_{\l 3}\rangle=-\lambda_0-\lambda_3.$$
We conclude by setting $\lambda:=\lambda_0=-\lambda_3$.
\end{proof}

\begin{remark}
The proof of Lemma \ref{lemma: cocycle 0 and 3} only worked for $\l i=\l 0,\l 3$ because we used that ${\hat\tau}$ vanishes on $\l A,\l B,\l C$ and $\l D$, and we needed to pick three linearly independent vectors among these four. Once we show that ${\hat\tau}$ also vanishes on $\l E$ and $\l F$ (Lemma \ref{lemma:lettersEFvanish} below), the same argument will apply exactly in the same way to show that
$${\hat\tau}(\p i)={\hat\tau}(\m i)=\lambda v_{\l i}$$
for $\l i$ odd, and 
$${\hat\tau}(\p i)={\hat\tau}(\m i)=-\lambda v_{\l i}$$
for $\l i$ even. This will therefore conclude the proof that ${\hat\tau}$ is in the form \eqref{eq:cocycle tau}.
\end{remark}

\begin{lemma} \label{lemma:lettersEFvanish}
If ${\hat\tau}\in Z^1_{\rho_0}(\Gamma_{22},\R^{1,3})$ satisfies \eqref{eq:four letters vanish}, then
${\hat\tau}(\l E)={\hat\tau}(\l F)=0$.
\end{lemma}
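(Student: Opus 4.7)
By Lemma \ref{lemma:cocycle condition squares}, write $\hat\tau(\l{E})=\epsilon v_{\l{E}}$ and $\hat\tau(\l{F})=\phi v_{\l{F}}$ for some $\epsilon,\phi\in\R$. I aim to prove $\epsilon=0$; the statement $\phi=0$ will follow by an entirely symmetric argument, substituting $\l{F}$ for $\l{E}$ and the triangle indices $\{\l{4},\l{5}\}$ for $\{\l{1},\l{2}\}$. The overall strategy is to determine the coefficient $\mu_1\in\R$ in the expansion $\hat\tau(\m{1})=\mu_1 v_{\l{1}}$ (given again by Lemma \ref{lemma:cocycle condition squares}) in two different ways, using the commutations $\p{1}\m{1}=\m{1}\p{1}$ and $\p{2}\m{1}=\m{1}\p{2}$ in $\Gamma_{22}$ (both of which are easily verified from Table \ref{table:G22}), and then equate the two results.

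The first step is to produce explicit expressions for $\hat\tau(\p{1})$ and $\hat\tau(\p{2})$ in terms of $\epsilon$ and $\lambda$, essentially replaying the computation in the proof of Lemma \ref{lemma: cocycle 0 and 3}. The generator $\p{1}$ commutes with the letters $\l{A},\l{C}$ (where $\hat\tau$ vanishes by hypothesis), with $\l{E}$, and with $\m{0}$ (where $\hat\tau(\m{0})=\lambda v_{\l{0}}$ by Lemma \ref{lemma: cocycle 0 and 3}). The vanishing of $\hat\tau$ on $\l{A},\l{C}$ first forces $\hat\tau(\p{1})$ into the two-dimensional subspace $v_{\l{A}}^\perp\cap v_{\l{C}}^\perp=\mathrm{Span}(v_{\l{0}},v_{\l{1}})$, and then the two scalar conditions from the $\l{E}$- and $\m{0}$-commutations (Lemma \ref{lemma:cocycle condition orthogonal}) pin down $\hat\tau(\p{1})$ explicitly, producing a $v_{\l{0}}$-coefficient proportional to $\epsilon$ (via $\langle v_{\l{0}},v_{\l{E}}\rangle=-2\sqrt 2$ and $\langle v_{\l{1}},v_{\l{E}}\rangle=0$). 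The entirely parallel analysis applied to $\p{2}$, which commutes with $\l{A},\l{D},\l{E},\m{3}$, yields a similar expression for $\hat\tau(\p{2})$ in $\mathrm{Span}(v_{\l{2}},v_{\l{3}})$ with the same proportionality constant.

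Finally, the cocycle identity applied to $\p{1}\m{1}=\m{1}\p{1}$ produces $\mu_1=\langle\hat\tau(\p{1}),v_{\l{1}}\rangle$, whereas the same identity applied to $\p{2}\m{1}=\m{1}\p{2}$ produces $\mu_1=\langle\hat\tau(\p{2}),v_{\l{1}}\rangle$. Substituting the expressions from the previous step, these two values for $\mu_1$ differ by a nonzero multiple of $\epsilon$, and equating forces $\epsilon=0$.

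The main obstacle is that the \emph{direct} commutations $\l{E}\m{i}=\m{i}\l{E}$ (for $\l{i}\in\{\l{1},\l{2},\l{6},\l{7}\}$) yield no information on $\epsilon$: the cocycle identity associated to each of them is trivially satisfied, thanks to $\langle v_{\l{i}},v_{\l{E}}\rangle=0$ (which reflects that $\l E$ is adjacent, in the cuboctahedron, precisely to the four triangles $\l 1, \l 2, \l 6, \l 7$). The dependence on $\epsilon$ must therefore be detected indirectly, by propagating it through the commutations $\p{1}\l{E}$ and $\p{2}\l{E}$ into the description of $\hat\tau(\p{1})$ and $\hat\tau(\p{2})$, and then reading off the constraint from a redundant determination of $\mu_1$ on the generator $\m{1}$ that they both commute with.
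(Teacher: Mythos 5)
Your argument is correct: all the commutations you invoke do hold in $\Gamma_{22}$, the two linear systems determining $\hat\tau(\p 1)\in\mathrm{Span}(v_{\l 0},v_{\l 1})$ and $\hat\tau(\p 2)\in\mathrm{Span}(v_{\l 2},v_{\l 3})$ are nonsingular, the two determinations of $\mu_1$ come out as $-\lambda$ and $-\lambda+\sqrt{2}\,\epsilon$ so that equating them forces $\epsilon=0$, and the symmetric argument with $\p 4,\p 5$ (anchored at $\m 3,\m 0$ and compared on $\m 4$) indeed gives $\phi=0$. This is essentially the paper's own proof — the same ingredients (Lemmas \ref{lemma:cocycle condition squares}, \ref{lemma:cocycle condition orthogonal} and \ref{lemma: cocycle 0 and 3}) and the same idea of detecting $\epsilon$ indirectly through the positive generators — with only different bookkeeping: the paper expands $\hat\tau(\p 1)$ in the basis $\{v_{\l 1},v_{\l A},v_{\l C},v_{\l E}\}$ and closes using $\p 1\m 2$ together with $\p 0\m 1$ and $\p 3\m 2$, whereas you solve for $\hat\tau(\p 1)$ and $\hat\tau(\p 2)$ separately and double-count $\hat\tau(\m 1)$.
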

\begin{proof}
From Lemma \ref{lemma:cocycle condition squares}, we know that
$${\hat\tau}(\l E)=e v_{\l E}\qquad \text{and}\qquad{\hat\tau}(\l F)=f v_{\l F}~.$$
We wish to show that $e=f=0$. Let us first prove that $e=0$.

Observe that $v_{\l A}$, $v_{\l C}$ and $v_{\l E}$ are linearly independent, and they are all orthogonal to $v_{\l 1}$. Hence $\{v_{\l 1},v_{\l A},v_{\l C},v_{\l E}\}$ is a (non-orthogonal!) basis of unit vectors and we can decompose:
$${\hat\tau}(\p 1)=\lambda_1v_{\l 1}+\alpha v_{\l A}+\gamma v_{\l C}+\epsilon v_{\l E}~.$$
(We ultimately will get, at the end of the proof, that $\lambda_1=-\lambda$ and $\alpha=\gamma=\epsilon=0$, but we do not know this yet.) As a preliminary remark, observe that ${\hat\tau}(\m 1)=\lambda_1v_{\l 1}$, since from the relation $\p 1\m 1=\m 1\p 1$ we obtain
$${\hat\tau}(\p 1)-{\hat\tau}(\m 1)\in v_{\l 1}^\perp~,$$
and comparing with the above decomposition, necessarily ${\hat\tau}(\m 1)=\lambda_1v_{\l 1}$.

Since ${\hat\tau}(\l{A})=0$, from the relation $\p 1\l A=\l A\p 1$ we obtain
$${\hat\tau}(\p{1})\in v_{\l A}^\perp~,$$
namely,
\begin{equation}\label{eq:paperoga1}
0=\langle {\hat\tau}(\p{1}),v_{\l A}\rangle=\alpha-\gamma-\epsilon~.
\end{equation}
From the same computation for the relation $\p 1\l C=\l C\p 1$ we derive
\begin{equation}\label{eq:paperoga2}
0=\langle {\hat\tau}(\p{1}),v_{\l C}\rangle=-\alpha+\gamma-\epsilon~.
\end{equation}
Finally, the relation $\p 1\l E=\l E\p 1$ implies ${\hat\tau}(\p{1})-{\hat\tau}(\l E)\in v_{\l E}^\perp$, whence
\begin{equation}\label{eq:paperoga3}
e=\langle {\hat\tau}(\l E),v_{\l E}\rangle=\langle {\hat\tau}(\p{1}),v_{\l E}\rangle=-\alpha-\gamma+\epsilon~.
\end{equation}
From \eqref{eq:paperoga1}, \eqref{eq:paperoga2} and \eqref{eq:paperoga3} together we find
\begin{equation}\label{eq:paperoga4}
\alpha=\gamma=-\frac{e}{2}\qquad \epsilon=0~.
\end{equation}

On the other hand, consider the relation $\p 1\m 2=\m 2\p 1$. It implies
$${\hat\tau}(\p{1})-{\hat\tau}(\m 2)\in v_{\l 2}^\perp~,$$
where we already know that ${\hat\tau}(\m 2)=\lambda_2v_{\l 2}$. A direct computation gives 
$$0=\langle \lambda_1v_{\l 1}+\alpha v_{\l A}+\gamma v_{\l C}+\epsilon v_{\l E}-\lambda_2v_{\l 2},v_{\l 2}\rangle =-\lambda_1-2\sqrt 2\gamma-\lambda_2~.$$
If we show that $\lambda_1=-\lambda_2$
, we are done for ${\hat\tau}(\l E)$, since $\gamma=0$ implies $e=0$ from \eqref{eq:paperoga4}.

To see this last point, recall that ${\hat\tau}(\p 0)=\lambda v_{\l 0}$ and ${\hat\tau}(\p 3)=-\lambda v_{\l 3}$ as proved in Lemma \ref{lemma: cocycle 0 and 3}. Now, from the orthogonality relation $\p 0\m 1=\m 1\p 0$ we find ${\hat\tau}(\p{0})-{\hat\tau}(\m 1)\in v_{\l 1}^\perp$. Using the preliminary remark at the beginning of the proof,
$$0=\lambda\langle v_{\l 0},v_{\l 1}\rangle-\lambda_1\langle v_{\l 1},v_{\l 1}\rangle=-\lambda-\lambda_1~.$$
Thus $\lambda_1=-\lambda$. Repeating the same argument to the relation  $\p 3\m 2=\m 2\p 3$ one finds $\lambda_2=\lambda$, and therefore $\lambda_1=-\lambda_2$.

The proof that $f=0$ follows the same lines, applied to $\p 4$ in place of $\p 1$, with the letters $\l B$, $\l D$ and $\l F$, and in the final part to $\m 5$ in place of $\m 2$.
\end{proof}

Having shown that ${\hat\tau}(\l X)=0$ for every $\l X$, it remains to show that ${\hat\tau}(\p i)={\hat\tau}(\m i)$ has the form of \eqref{eq:cocycle tau}. For $\l i=\l 0,\l 3$, this is the content of  Lemma \ref{lemma: cocycle 0 and 3}. Following the same proof, one shows first that 
$${\hat\tau}(\p i)={\hat\tau}(\m i)$$
for every $\l i$ (it suffices to modify the proof by picking three letters $\l X$, $\l Y$ and $\l Z$ so that $v_{\l X}$, $v_{\l Y}$ and $v_{\l Z}$ are orthogonal to $v_{\l i}$). Then  using the crossed relations $\p i\m j=\m j\p i$  --- it is easy to see that there are indeed enough of such relations --- one mimics the second part of Lemma  \ref{lemma: cocycle 0 and 3} and obtains that 
$${\hat\tau}(\p i)={\hat\tau}(\m i)=(-1)^i\lambda v_{\l i}~.$$

This concludes the proof of Proposition \ref{prop:first cohomology group}, namely that $\dim H^1_{\varrho_0}(\Gamma_{22},\R^{1,3})=1$.

\subsection{Topology of the neighbourhood $\mathcal{U}$} \label{sec: teo B HP}

We are ready to conclude the proof of our weak version of Theorem \ref{teo:main}, namely Theorem \ref{teo:main_weak}, in the half-pipe case. The  proof {of Theorem \ref{teo:main}} will be completed in Section \ref{sec:extra}.

As a preliminary setup, recall that $G_{\HP^4}\cong\O(1,3)\ltimes\R^{1,3}$, one has a natural map:
$${\mathcal L}\colon X(\Gamma,G_{\HP^4})\to X(\Gamma,\O(1,3))$$
which associates to the conjugacy class of a representation $\rho\colon\Gamma\to G_{\HP^4}$ the conjugacy class of the linear part of $\rho$.
Recalling Lemma \ref{lem:parametr_lin}, one has the identification 
\begin{equation} \label{eq:identification preimage with H1}
{\mathcal L}^{-1}([\varrho])\cong H^1_\varrho(\Gamma,\R^{1,3})~.
\end{equation}
Observe that if $\varrho'=h\circ \varrho\circ h^{-1}$ for $h\in\O(1,3)$, then $H^1_\varrho(\Gamma,\R^{1,3})$ and $H^1_{\varrho'}(\Gamma,\R^{1,3})$ are isomorphic by means of the map $\tau\mapsto h\circ\tau$.

\begin{proof}[Proof of Theorem \ref{teo:main_weak} --- HP case]
The proof follows a similar strategy to the AdS (and hyperbolic) case, so we will split again the proof in several steps which are parallel to those given in Section \ref{sec:proof teoB ads}. Most steps are much simpler here.

\begin{steps}
\item Let us define the vertical component $\mathcal V$ in $X(\Gamma_{22},G_{\HP^4})$ as ${\mathcal L}^{-1}([\varrho_0])$, namely, $\mathcal V$ consists of all the conjugacy classes of representations with linear part in $[\varrho_0]$. By \eqref{eq:identification preimage with H1}, $\mathcal V$ is identified to $H^1_{\varrho_0}(\Gamma_{22},\R^{1,3})$, hence is homeomorphic to a line by Proposition \ref{prop:first cohomology group}. By construction, $\mathcal V$ contains the holonomy of the half-pipe orbifold structures we built in 
\cite{transition_4-manifold}.

\item The second component ${\mathcal H}$ is defined similarly to the AdS case. We define the map 
$$\Psi\colon\mathrm{Hom}(\Gamma_{\mathrm{co}},\Isom(\Hyp^3))\to\mathrm{Hom}(\Gamma_{22},G_{\HP^4})$$ 
sending a representation  $\eta\colon\Gamma_{\mathrm{co}}\to\Isom(\Hyp^3)$ to the representation $\Psi_\eta\colon\Gamma_{22}\to \O(1,3)<\O(1,3)\ltimes\R^{1,3}$ (hence with trivial translation part, which we omit) 
which sends each of the generators $\m0,\ldots,\m7,\l A,\ldots,\l F$ to the corresponding element of $\O(1,3)$, and each $\p i\in\{\p0,\ldots,\p7\}$ to $-\mathrm{id}$.

Again, it is straightforward to check that the induced map  
$$\widehat\Psi\colon \bar X(\Gamma_{\mathrm{co}},\Isom(\Hyp^3))\to X(\Gamma_{22},G_{\HP^4})~.$$
is well-defined and injective.

The representation $\rho_0$ is clearly in the image of $\Psi$, since $\rho_0=\Psi_{\eta_0}$ where $\eta_0$ is the holonomy representation of the complete hyperbolic orbifold structure of the cuboctahedron. As in the AdS case, $[\eta_0]$ has a neighborhood $\mathcal H_0$ in 
$\bar X(\Gamma_{\mathrm{co}},\Isom(\Hyp^3))$ homeomorphic to $\R^{12}$ and on which $\widehat\Psi$ is a homeomorphism onto its image, and we define $\mathcal H$ to be the image of $\mathcal H_0$. 

\item Clearly, the intersection of ${\mathcal H}$ and ${\mathcal V}$ consists only of the point $[\rho_0]$, since any element in $\mathcal H$ has trivial translation part (up to conjugacy).

\item We now show that the point $[\rho_0]\in X(\Gamma_{22},G_{\HP^4})$ has a neighbourhood ${\mathcal U}$ which is contained in the union of the two components ${\mathcal V}$ and ${\mathcal H}$.

Let $\rho$ be a nearby representation, with linear part ${\mathcal L}\rho$ and translation part $\tau\colon\Gamma_{22}\to\R^{1,3}$. Observe that, since $-\mathrm{id}$ is an isolated point in the representations of $\Z/2\Z$ into $\O(1,3)$, for each  generator $\p i\in\{\p 0,\ldots,\p 7\}$ we have ${\mathcal L}\rho(\p i)=-\mathrm{id}$.

We claim that if two distinct generators which are sent by $\rho_0$ to $-\mathrm{id}$ (hence necessarily of the form $\p i$ and $\p j$) are sent by $\rho$ to the same reflection, than all the generators $\p 0,\ldots,\p 7$ are sent by $\rho$ to the same reflection. In other words, if $\tau(\p i)=\tau(\p j)$ for some $\p i \neq \p j$, then $\tau(\p i)=\tau(\p j)$ for all $\p i,\p j$. 

Assuming the claim, the proof then follows by the following argument. We can assume (up to conjugation) that $\tau(\p i)=0$ for all $\p i\in\{\p 0,\ldots,\p 7\}$. By Proposition  \ref{prop cube group HP collapsed}, if some of the collapsed cusp groups of $\rho_0$ is not deformed to a cusp group, then up to conjugation $\rho$ has the property that $\rho(\p i)=(-\mathrm{id},0)$ for all $\p i\in\{\p 0,\ldots,\p 7\}$, and therefore $[\rho]\in \mathcal H$. On the other hand, if all the collapsed cusp groups of $\rho_0$ are deformed in $\rho$ to cusp groups, then the linear part of $\rho$ is of the form ${\mathcal L}\rho=\Psi_\eta$ for a representation $\eta\colon\Gamma_{\mathrm{co}}\to \Isom(\Hyp^3)$ which sends all peripheral groups to (three-dimensional) cusp groups in $\Hyp^3$, and therefore $\eta$ is conjugate to $\eta_0$ in $\Isom(\Hyp^3)$ by the the Mostow--Prasad rigidity. Thus $[{\mathcal L}\rho]=[{\mathcal L}\rho_0]$, which means that $[\rho]\in\mathcal V$.

To prove the claim, suppose that $\tau(\p i)=\tau(\p j)$. We can assume that $\tau(\p i)=\tau(\p j)=0$ by conjugation. Analogously to the same step in the AdS case, by symmetry (see \cite[Lemma 7.6]{transition_4-manifold}) and Proposition \ref{prop cube group ads collapsed}, we can assume the two generators are $\p 0$ and $\p 1$. Hence we have $\rho(\p 0)=\rho(\p 1)=(-\mathrm{id},0)$. We see from the relations involving $\p 0$ that $\rho(\p 0)$ commutes with $\rho(\m 1)$, $\rho(\m 3)$ and $\rho(\l A)$, which have all linear part a reflection in $\Hyp^3$. By  Lemma \ref{lemma:commutation conditions}, $\rho(\m 1)$, $\rho(\m 3)$ and $\rho(\l A)$ have zero translation part. Additionally, we see from the relations involving $\p 1$ that $\rho(\m 2)$ has zero translation part. Now, from the relations involving $\p 2$, we get that $\rho(\p 2)$ commutes with $\rho(\m 1)$, $\rho(\m 2)$, $\rho(\m 3)$ and $\rho(\l A)$. Observe that the linear part of $\rho(\p 2)$ is necessarily $-\mathrm{id}$, in a neighbourhood of $\rho_0$. Hence by applying Lemma \ref{lemma:commutation conditions} again, the translation part of $\rho(\p 2)$ is in the intersection of the hyperplanes of $\R^{1,3}$ fixed by $\rho(\m 1)$, $\rho(\m 2)$, $\rho(\m 3)$ and $\rho(\l A)$. The hyperplanes fixed by $\rho_0(\m 1)$, $\rho_0(\m 2)$, $\rho_0(\m 3)$ and $\rho_0(\l A)$ are $v_{\l 1}^\perp$, $v_{\l 2}^\perp$, $v_{\l 3}^\perp$ and $v_{\l A}^\perp$, where the vectors $v_{\l 1}$, $v_{\l 2}$, $v_{\l 3}$ and $v_{\l A}$ are listed in Table \ref{table:vectors cocycle} and are linearly independent. Hence they remain linearly independent for $\rho$ a deformation of $\rho_0$ in a small neighbourhood. This means that the translation part of $\rho(\p 2)$ is zero, since the only solution of the linear system which imposes the orthogonality to these four linearly independent vectors is the trivial solution. This shows that $\rho(\p 2)=(-\mathrm{id},0)$, which therefore coincides with $\rho(\p 0)=\rho(\p 1)$.

Similarly to the AdS case, one argues similarly for $\p 3$ and then for all the other generators, to show that $\rho(\p i)=(-\mathrm{id},0)$ for each generator $\p i \in \{ \p0, \ldots, \p7 \}$, and this concludes the claim.
 
\item In summary, we showed that  $[\rho_0]$ has a neighborhood ${\mathcal U}$ in $X(\Gamma_{22},\Isom(\AdS^4))$ which only consists of points of ${\mathcal H}$ and ${\mathcal V}$. Additionally, one can  repeat the  same reasoning in the first part of the previous step, to show that for any other $[\rho_0']$ in $\mathcal V$ (hence having the same linear part as $\rho_0$ and non-vanishing translation part) a neighbourhood of $[\rho_0']$ is contained in $\mathcal V$, as a consequence of the half-pipe cusp rigidity of Proposition  \ref{prop cube group HP collapsed} (the non-collapsed case). Hence by taking the union of all these neighbourhoods, one finds a $\mathcal U$ containing $[\rho_0]$ such that $\mathcal U=\mathcal V\cup\mathcal H$.

\item For the last statement, it is evident that conjugation by $\Z/2\Z\cong G_{\HP^4}/G^+_{\HP^4}$ acts by switching sign to the $x_{13}$-coordinate, since conjugation by $(-\mathrm{id},0)$, whose class generates $\Z/2\Z$, acts on $H^1_{\varrho_0}(\Gamma_{22},\R^{1,3})$ by changing the sign. This concludes the proof.
\qedhere
\end{steps}
\end{proof}

\section{Smoothness and transversality} \label{sec:extra}

In this section we complete the proof of Theorem \ref{teo:main}, showing the smoothness and transversality of the two components $\mathcal{V}$ and $\mathcal{H}$ of the neighbourhood $\mathcal{U}$ of $[\rho_0]$ in $X(\Gamma_{22},{G})$. To that purpose, we first need to study the cohomology group $H^1_{\mathrm{Ad}\,\rho_0}(\Gamma_{22},\mathfrak{g})$, complementing and using the results of Section \ref{sec:geom_cocycle_gen}. Then, we conclude the proof by an application of the implicit function theorem.


\subsection{Preliminaries} \label{sec:zariski}


{Let $G$ be $\Isom(\Hyp^4)$, $\Isom(\AdS^4)$ or $G_{\HP^4}$, and $\mathfrak{g}$ be its Lie algebra.} We shall apply the definition of first cohomology group given in Section \ref{sec: H1 groups} 
to the representation
$$\mathrm{Ad}\,\rho_0 \colon \Gamma_{22}\to \mathrm{GL}(\mathfrak g)~,$$
which is the composition of our $\rho_0\colon\Gamma_{22}\to G$ and the adjoint representation $\mathrm{Ad}\colon G\to\mathrm{GL}(\mathfrak g)$
.

In general, for a finitely presented group $\Gamma$ with a given presentation with $s$ generators and $r$ relations, the set $\mathrm{Hom}(\Gamma,G)$ is identified to a subset of  $G^s$ defined by the vanishing of $r$ conditions given by the relations. If we encode these conditions by $F\colon G^s\to G^r$, so as to identify $\mathrm{Hom}(\Gamma,G)$ with $F^{-1}(0)$, then it is known from 
\cite{Weil} that $Z^1_{\mathrm{Ad}\,\rho}(\Gamma,\mathfrak g)$ is naturally identified with the kernel of $dF$ at $\rho$. The isomorphism between these two vector spaces essentially associates to a germ of paths at $\rho$ represented by $t\mapsto\rho_t$ the cocycle $\tau$ defined by
\begin{equation}\label{eq:cocyle tangent}
\tau(\gamma)=\left.\frac{d}{dt}\right|_{t=0}\rho_t(\gamma)\rho(\gamma)^{-1}~,
\end{equation}
which is therefore interpreted as an infinitesimal deformation of $\rho$.

\begin{remark}\label{rmk:zariskiZ1}
In general, the Zariski tangent space at $\rho$ of the real variety associated to $\mathrm{Hom}(\Gamma,G)$ is only a subspace of $Z^1_{\mathrm{Ad}\,\rho}(\Gamma,\mathfrak g)$. We will show that in our situation for $\Gamma_{22}$ they coincide at the point $\rho_0$.
\end{remark}

Let us now look at the coboundaries. It was observed in \cite{Weil} (see also \cite[Lemma 2.2]{JM}) that the subspace of $\mathrm{Ker}(dF)$ corresponding to the tangent space to the $G^+$-orbit of $\rho$ identifies to $B^1_{\mathrm{Ad}\,\rho}(\Gamma,\mathfrak g)$ under the correspondence \eqref{eq:cocyle tangent}. Indeed, by a straightforward computation, the differential of the orbit map $G \to \mathrm{Hom}(\Gamma,G)$ defined by $ g\mapsto g\rho g^{-1}$ maps an element $X\in\mathfrak g$ to the coboundary $\tau(\gamma)=X-\mathrm{Ad}\rho(\gamma)X$. Observe that in our setting,  by Lemma \ref{lemma centraliser}, the action of $G$ is not free at $\rho$; but the action of the identity component of $G$, namely $G^+$, is indeed free. As we will see below (Lemma \ref{lem:dimB1}), this implies by a standard argument that  $B^1_{\mathrm{Ad}\,\rho_0}(\Gamma_{22},\mathfrak g) \cong \mathfrak{g}$.

We conclude by stating a smoothness criterion used by Weil \cite[Lemma 1]{Weil}, essentially consisting of an application of the implicit function theorem. We refer to \cite[Section 2.2]{KS} for more details.

Let $\mathcal{C}$ be an algebraic subset of $\mathrm{Hom}(\Gamma,G)$ containing $\rho$, say obtained by adding $k$ extra polynomial equations. We identify $\mathcal{C}$ with $\tilde{F}^{-1}(0)$, for some $\tilde{F} \colon G^s \to G^{r+k}$ compatible with $F$. Suppose that a neighbourhood of $\rho$ in $\mathcal{C}$ is a smooth submanifold of $G^s$ of the same dimension of the kernel $K$ of $d \tilde{F}$ at $\rho$. Then the following hold: at the point $\rho$, the real variety associated to $\mathcal{C}$ is smooth, its Zariski tangent space is isomorphic to $K$ (and not to a proper subspace, compare with Remark \ref{rmk:zariskiZ1}) and is naturally identified with the tangent space of $\mathcal{C}$ as a submanifold.

\subsection{The first cohomology group} \label{sec:zariski2}

Let us now go back to the representation $\rho_0\colon\Gamma_{22}\to G_0  = \mathrm{Stab}_G(\Hyp^3)$. In this subsection we analyse the vector space $H^1_{\mathrm{Ad}\,\rho_0}(\Gamma_{22},\mathfrak{g})$.

There is a well-known splitting
\begin{equation} \label{eq splitting lie lagebra}
\mathfrak g\cong \mathfrak{isom}(\Hyp^{n-1})\oplus\R^{n}~.
\end{equation}
When $G=\Isom(\Hyp^n)$ or $\Isom(\AdS^n)$, the splitting is given by writing an element $\mathfrak a$ of $\mathfrak g$ as
\begin{equation} \label{eq explicit splitting}
\mathfrak a=\left(
\begin{array}{ccc|c}
  &&& \vdots \\
  
   & \mathfrak a_0 & & \mp w \\
  &&& \vdots \\
    \hline  
    \ldots & w^TJ & \ldots  & 0
\end{array}
\right)~,
\end{equation}
where $J=\mathrm{diag}(-1,1,\ldots,1)$, for $\mathfrak a_0\in\so(1,n-1)$ and $w\in\R^{n}$. When $G=G_{\HP^n}$, the splitting \eqref{eq splitting lie lagebra} is even simpler to obtain, by using the isomorphism $G_{\HP^n}\cong \O(1,n-1)\ltimes\R^{1,n-1}$.

The splitting \eqref{eq splitting lie lagebra} is equivariant with respect to the three natural actions of $G_0$: the adjoint action on $\mathfrak g$, the adjoint action on $\mathfrak{isom}(\Hyp^3)$ by means of the isomorphism $G_0\cong \Isom(\Hyp^3)\times(\Z/2\Z)$, and the action on $\R^{1,3}$ by means of the isomorphism $G_0\cong \O(1,3)$ of \eqref{eq:isomorphism G0}. We thus have natural decompositions

\begin{equation} \label{eq splitting Z1}
Z^1_{\mathrm{Ad}\,\rho_0}(\Gamma_{22},\mathfrak g)=Z^1_{\mathrm{Ad}\,\varrho_0}(\Gamma_{22},\mathfrak{isom}(\Hyp^3))\oplus Z^1_{\varrho_0}(\Gamma_{22},\R^{1,3})~.
\end{equation}
and
\begin{equation} \label{eq splitting B1}
B^1_{\mathrm{Ad}\,\rho_0}(\Gamma_{22},\mathfrak g)=B^1_{\mathrm{Ad}\,\varrho_0}(\Gamma_{22},\mathfrak{isom}(\Hyp^3))\oplus B^1_{\varrho_0}(\Gamma_{22},\R^{1,3})~,
\end{equation}
hence
\begin{equation} \label{eq splitting H1}
H^1_{\mathrm{Ad}\,\rho_0}(\Gamma_{22},\mathfrak g)=H^1_{\mathrm{Ad}\,\varrho_0}(\Gamma_{22},\mathfrak{isom}(\Hyp^3))\oplus H^1_{\varrho_0}(\Gamma_{22},\R^{1,3})~.
\end{equation}

Recall that $\rho_0$ coincides with the composition of {the representation} $\varrho_0 \colon \Gamma_{22} \to G_0$ {defined in \eqref{eq:rep rho0}}  with the inclusion $G_0 \to G$. 

{Let us look at the first factor of the decomposition \eqref{eq splitting H1} of $H^1_{\mathrm{Ad}\,\rho_0}(\Gamma_{22},\mathfrak g)$.}

\begin{prop} \label{prop:12d}
The vector space $H^1_{\mathrm{Ad}\,\varrho_0}(\Gamma_{22},\mathfrak{isom}(\Hyp^3))$ has dimension $12$. 
\end{prop}

\begin{proof}
We claim that the group $H^1_{\mathrm{Ad}\,\varrho_0}(\Gamma_{22},\mathfrak{isom}(\Hyp^3))$ is isomorphic to $H^1_{\mathrm{Ad}\,\iota}(\Gamma_{\mathrm{co}},\mathfrak{isom}(\Hyp^3))$, where $\Gamma_{\mathrm{co}}$ is the reflection group of the right-angled cuboctahedron and $\iota$ is its inclusion into $\Isom(\Hyp^3)$. The latter has dimension 12, since the character variety of $\Gamma_{\mathrm{co}}$ in $\Isom(\Hyp^3)$ is smooth and 12-dimensional near $[\iota]$. We have already mentioned (in Section \ref{sec:proof teoB ads}, Step 2) that this last fact is true by ``reflective hyperbolic Dehn filling''.

To show the claim, {we will show that the restriction morphism $H^1_{\mathrm{Ad}\,\varrho_0}(\Gamma_{22},\mathfrak{isom}(\Hyp^3))\to H^1_{\mathrm{Ad}\,\iota}(\Gamma_{\mathrm{co}},\mathfrak{isom}(\Hyp^3))$ is invertible by explicitly constructing an inverse.} Define a map
$$\psi\colon Z^1_{\mathrm{Ad}\,\iota}(\Gamma_{\mathrm{co}},\mathfrak{isom}(\Hyp^3))\to Z^1_{\mathrm{Ad}\,\varrho_0}(\Gamma_{22},\mathfrak{isom}(\Hyp^3))~,$$
identifying $\Gamma_{\mathrm{co}}$ with the subgroup of $\Gamma_{22}$ generated by $\m0,\ldots\m7,\l A,\ldots,\l F$, by
$$\psi(\tau)(\l X)=\tau(\l X)\qquad \psi(\tau)(\m i)=\tau(\m i)\qquad \psi(\tau)(\p i)=0~,$$
for every $\tau\in Z^1_{\mathrm{Ad}\,\iota}(\Gamma_{\mathrm{co}},\mathfrak{isom}(\Hyp^3))$. {We recall from Definition \ref{defi HP holonomy} that $\varrho_0$ satisfies:
$$\varrho_0(\l X)=\iota(\l X)\qquad \varrho_0(\m i)=\iota(\m i)\qquad \varrho_0(\p i)=-\mathrm{id}~.$$
It follows that 
\begin{equation}\label{eq:adjointrho0}
\mathrm{Ad}\,\varrho_0(\p i)=\mathrm{id}~.
\end{equation}}

{Let us now show that the map $\psi$ is well-defined, and induces a well-defined map in cohomology. First, using Lemma \ref{lemma cocycle generators} and \eqref{eq:adjointrho0}, one checks easily that} $\psi(\tau)(\p i\l X)=\psi(\tau)(\l X\p i)$ and $\psi(\tau)(\p i\m j)=\psi(\tau)(\m j\p i)$. Hence $\psi$ maps cocycles to cocycles. Moreover, it maps coboundaries to coboundaries, for if $\tau(\l s)=\mathrm{Ad}\,\iota(\l s)\mathfrak a-\mathfrak a$ for some $\mathfrak a\in\mathfrak{isom}(\Hyp^3)$, then of course $\psi(\tau)(\l s)=\mathrm{Ad}\,\varrho_0(\l s)\mathfrak a-\mathfrak a$ for $\l s=\m i$ or $\l X$. As $\mathrm{Ad}\,\varrho_0(\p i)=\mathrm{id}$ by \eqref{eq:adjointrho0}, the identity holds trivially also for $\l s=\p i$.

Thus $\psi$ induces a map 
$$\psi\colon H^1_{\mathrm{Ad}\,\iota}(\Gamma_{\mathrm{co}},\mathfrak{isom}(\Hyp^3))\to H^1_{\mathrm{Ad}\,\varrho_0}(\Gamma_{22},\mathfrak{isom}(\Hyp^3))~.$$
{We claim that it is a vector space isomorphism. To see it is injective, suppose $\psi(\tau)$ is a coboundary, namely $\psi(\tau)(\l s)=\mathrm{Ad}\,\varrho_0(\l s)\mathfrak a-\mathfrak a$ for all generators $\l s$. From \eqref{eq:adjointrho0}, we have $\psi(\tau)(\p i)=0$, and from the definition of $\varrho_0$, $\tau(\l s)=\mathrm{Ad}\,\iota(\l s)\mathfrak a-\mathfrak a$ for $\l s=\m i$ or $\l X$. Hence $\tau$ is a coboundary, which concludes injectivity}. It remains to show that it is surjective. To see this, given any cocycle $\sigma\in Z^1_{\mathrm{Ad}\,\varrho_0}(\Gamma_{22},\mathfrak{isom}(\Hyp^3))$, Lemma \ref{lemma cocycle generators} implies that $\sigma(\p i)$  is in the kernel of $\mathrm{id}+\mathrm{Ad}\,\varrho(\p i)$, which in fact equals $2\mathrm{id}$ by \eqref{eq:adjointrho0}. Hence $\sigma(\p i)=0$ and $\sigma$ is in the image of $\psi$. This concludes the proof. 
\end{proof}


The second factor of the decomposition \eqref{eq splitting H1} of $H^1_{\mathrm{Ad}\,\rho_0}(\Gamma_{22},\mathfrak{g})$ has already been computed in Proposition \ref{prop:first cohomology group}. We have in particular: 
\begin{cor} \label{cor:13d}
The vector space $H^1_{\mathrm{Ad}\,\rho_0}(\Gamma_{22},\mathfrak{g})$ has dimension $13$.
\end{cor}

%
%
%
%

We deduce the dimensions of the spaces of cocycles and coboundaries from the following simple lemma, which will also be used in the next section.
\begin{lemma}\label{lem:dimB1}
The orbit map $G^+ \to \mathrm{Hom}(\Gamma_{22},G)$ is an embedding.
\end{lemma}

\begin{proof}
The orbit map is injective as a consequence that the $G^+$-action is free (Lemma \ref{lemma centraliser}).
To see that its differential at any point is injective, suppose by contradiction that a non-zero vector is in the kernel of the differential. Acting by left multiplication on $G^+$, one then finds a nonvanishing vector field on $G^+$ which is, at any point, in the kernel of the differential. Hence the orbit map would be constant on any integral path of this vector field, thus contradicting injectivity.
\end{proof}

\begin{cor} \label{cor:23d}
The vector spaces $Z^1_{\mathrm{Ad}\,\rho_0}(\Gamma_{22},\mathfrak{g})$ and $B^1_{\mathrm{Ad}\,\rho_0}(\Gamma_{22},\mathfrak{g})$ have dimension $23$ and $10$, respectively.
\end{cor}

\begin{proof}
By Lemma \ref{lem:dimB1}, $B^1_{\mathrm{Ad}\,\rho_0}(\Gamma_{22},\mathfrak{g})$, which is the tangent space to the orbit, is isomorphic to $\mathfrak g$ and thus has dimension 10. Combining this with Corollary \ref{cor:13d}, we conclude that $Z^1_{\mathrm{Ad}\,\rho_0}(\Gamma_{22},\mathfrak{g})$ has dimension 10.
\end{proof}

\begin{remark}
One could also check directly that $\dim B^1_{\mathrm{Ad}\,\rho_0}(\Gamma_{22},\mathfrak{g})=10$: from Remark \ref{rmk:dimB1} the second factor  in the decomposition \eqref{eq splitting B1} has dimension 4, and by a similar argument one can prove that the first factor has dimension 6.
\end{remark}

\subsection{Conclusion of the proofs}
We can now prove the main result of the section.

Recall that $G$ is $\Isom(\Hyp^4)$, $\Isom(\AdS^4)$ or $G_{\HP^4}$, and $\mathfrak{g}$ is its Lie algebra.
We denote as usual by $\widetilde {\mathcal U}$, $\widetilde {\mathcal V}$, $\widetilde {\mathcal H} \subset \mathrm{Hom}(\Gamma_{22},G)$ the preimages of $\mathcal U$, $\mathcal{V}$, $\mathcal{H} \subset X(\Gamma_{22},G)$, respectively. All these sets are defined and studied in the proof of Theorem \ref{teo:main_weak} in Sections \ref{sec:proof teoB ads} and \ref{sec: teo B HP}. Recall that $\mathcal{U} = \mathcal{V} \cup \mathcal{H}$ and $\mathcal{V} \cap \mathcal{H} = \{ [\rho_0] \}$. 

For brevity, in the following, given a real affine algebraic set 
$\mathcal{S}$, by ``Zariski tangent space to'' (resp. ``component of'') 
$\mathcal{S}$ we refer to the Zariski tangent space to (resp. a component of) the real variety associated to 
$\mathcal{S}$.

\begin{theorem}\label{teo:main_weak2}
The sets $\widetilde{\mathcal{V}}$ and $\widetilde{\mathcal{H}}$ are smooth components of $\widetilde{\mathcal{U}}$, of dimension $11$ and $22$, respectively. Moreover, $\widetilde{\mathcal{V}} \cap \widetilde{\mathcal{H}}$ is the $G$-orbit of $\rho_0$, and the Zariski tangent spaces of $\widetilde{\mathcal{V}}$ and $\widetilde{\mathcal{H}}$ at $\rho_0$ intersect transversely in the Zariski tangent space of $\mathrm{Hom}(\Gamma_{22},G)$ at $\rho_0$.
\end{theorem}


\begin{proof}

%

We first show that $\widetilde{\mathcal{V}}$ is a smooth component 
of $\widetilde{\mathcal{U}}$ by applying the smoothness criterion stated 
at the end of Section \ref{sec:zariski}.
Note that $\widetilde{\mathcal{V}}$ 
is the intersection of $\widetilde{\mathcal{U}}$ with the algebraic subset $\mathrm{Hom}_0(\Gamma_{22},G)$ of $\mathrm{Hom}(\Gamma_{22},G)$ (see 
Definition \ref{defi Hom0} and the discussion below). In particular $\widetilde{\mathcal{V}}$ is a neighbourhood of $\rho_0$ in $\mathrm{Hom}_0(\Gamma_{22},G)$.

Moreover, 
$\widetilde {\mathcal V}$ is a smooth 11-dimensional manifold. 
This is true in the hyperbolic or AdS case by Proposition \ref{prop:dimension tangent space ads}, and in the HP case since there $\widetilde {\mathcal V}$ is the total space of a smooth vector bundle with fibre $Z^1_{\varrho_0}(\Gamma_{22},\R^{1,3})$ and base the $\Isom(\Hyp^3)$-orbit of $\varrho_0$ (hence a rank-5 bundle over a 6-manifold). Recall indeed that $Z^1_{\varrho_0}(\Gamma_{22},\R^{1,3})$ has dimension 5 by Proposition \ref{prop:first cohomology group} and Remark \ref{rmk:dimB1}.

The tangent space $T_{\rho_0}\widetilde{\mathcal{V}}$ to the smooth manifold $\widetilde{\mathcal{V}}$ is contained in $B^1_{\mathrm{Ad}\,\varrho_0}(\Gamma_{22},\mathfrak{isom}(\Hyp^3))\oplus Z^1_{\varrho_0}(\Gamma_{22},\R^{1,3})$ under the identification \eqref{eq:cocyle tangent} and the splitting \eqref{eq splitting Z1}. Indeed
, in the hyperbolic and AdS case, by a direct computation one can see that the derivative $\ddt \rho_t\circ\rho_0^{-1}$ is in the second factor in the decomposition \eqref{eq splitting Z1}, and gives a nonzero cocycle of the form \eqref{eq:cocycle tau} in $Z^1_{\varrho_0}(\Gamma_{22},\R^{1,3})$ whose class generates $H^1_{\varrho_0}(\Gamma_{22},\R^{1,3})$ (see \eqref{eq:cocycle tau2}). 
In the half-pipe case, the HP representations with linear part $\varrho_0$ are themselves identified to the vector space $Z^1_{\varrho_0}(\Gamma_{22},\R^{1,3})$, and their derivatives are in $Z^1_{\varrho_0}(\Gamma_{22},\R^{1,3})$ itself, seen as the second factor of \eqref{eq splitting Z1}.

As a consequence of Proposition \ref{prop:first cohomology group} and Corollary \ref{cor:23d}, $B^1_{\mathrm{Ad}\,\varrho_0}(\Gamma_{22},\mathfrak{isom}(\Hyp^3))\oplus Z^1_{\varrho_0}(\Gamma_{22},\R^{1,3})$ has dimension 11. Since $T_{\rho_0}\widetilde{\mathcal{V}}$ and $B^1_{\mathrm{Ad}\,\varrho_0}(\Gamma_{22},\mathfrak{isom}(\Hyp^3))\oplus Z^1_{\varrho_0}(\Gamma_{22},\R^{1,3})$ 
have the same dimension, they are equal, and we conclude that $\widetilde{\mathcal{V}}$ is a smooth component of $\widetilde{\mathcal{U}}$
by the smoothness criterion.

Let us now look at $\widetilde{\mathcal{H}}$. Note that it is the intersection of $\widetilde{\mathcal{U}}$ with an algebraic subset of $\mathrm{Hom}(\Gamma_{22},G)$. Combining our study of $\mathcal{H}$ in Sections \ref{sec:proof teoB ads} and \ref{sec: teo B HP} with Lemma \ref{lem:dimB1}, we get that $\widetilde{\mathcal{H}}$ is a smooth $22$-dimensional manifold. Moreover, 
 $T_{\rho_0}\widetilde{\mathcal{H}}$ 
is contained in $Z^1_{\mathrm{Ad}\,\varrho_0}(\Gamma_{22},\mathfrak{isom}(\Hyp^3))\oplus B^1_{\varrho_0}(\Gamma_{22},\R^{1,3})$ under \eqref{eq:cocyle tangent} and \eqref{eq splitting Z1}.

The proof of Proposition \ref{prop:12d} shows that the first factor is isomorphic to $Z^1_{\mathrm{Ad}\,\iota}(\Gamma_{\mathrm{co}},\mathfrak{isom}(\Hyp^3))$. The latter is the space of infinitesimal deformations of the right-angled cuboctahedron, and has dimension $18 = \sharp \, \mathrm{cusps} + \dim(\Isom(\Hyp^3))$ for general and well-known reasons around 3-dimensional hyperbolic Dehn filling (namely, roughly speaking, ``half'' of the infinitesimal deformations of the peripheral subgroups extend to the whole group; see the discussion in \cite[Section 5]{KS}). One can even compute this number directly as indicated in Section \ref{sec:zariski} by means of Table \ref{table:vectors cocycle}.

Since the second factor $B^1_{\varrho_0}(\Gamma_{22},\R^{1,3})$ has dimension 4 by Remark \ref{rmk:dimB1} (or Lemma \ref{lem:dimB1}), we conclude as similarly done for $\widetilde{\mathcal{V}}$ that $\widetilde{\mathcal{H}}$ is a smooth 22-dimensional component of $\widetilde{\mathcal{U}}$.

By the previous considerations, the integrable vectors arising from $\widetilde{\mathcal{V}}$ and $\widetilde{\mathcal{H}}$ generate $Z^1_{\mathrm{Ad}\,\rho_0}(\Gamma_{22},\mathfrak{g})$, 
and the Zariski tangent space at $\rho_0$ to $\mathrm{Hom}(\Gamma_{22},G)$ is the whole $Z^1_{\mathrm{Ad}\,\rho_0}(\Gamma_{22},\mathfrak{g})$ (recall Remark \ref{rmk:zariskiZ1}). 
Moreover $T_{\rho_0}\widetilde{\mathcal{V}}$ and $T_{\rho_0}\widetilde{\mathcal{H}}$, which are identified to the corresponding Zariski tangent spaces, are transverse in $Z^1_{\mathrm{Ad}\,\rho_0}(\Gamma_{22},\mathfrak{g})$. 
Finally, $\widetilde{\mathcal{V}} \cap \widetilde{\mathcal{H}}$ is the $G$-orbit of $\rho_0$ 
thanks to Theorem \ref{teo:main_weak}.
\end{proof}

The proof of Theorem \ref{teo:main} is complete combining Theorems \ref{teo:main_weak} and \ref{teo:main_weak2}.

\begin{remark} \label{rem:integrable}
We have shown in the proof of Theorem \ref{teo:main_weak2} that the Zariski tangent space at $\rho_0$ to $\mathrm{Hom}(\Gamma_{22},G)$ is isomorphic to the whole $Z^1_{\mathrm{Ad}\,\rho_0}(\Gamma_{22},\mathfrak{g})$
. Moreover, integrable vectors to $\widetilde{\mathcal{U}}$ at $\rho_0$ correspond precisely to those in
$$B^1_{\mathrm{Ad}\,\varrho_0}(\Gamma_{22},\mathfrak{isom}(\Hyp^3))\oplus Z^1_{\varrho_0}(\Gamma_{22},\R^{1,3})\ \cup\ Z^1_{\mathrm{Ad}\,\varrho_0}(\Gamma_{22},\mathfrak{isom}(\Hyp^3))\oplus B^1_{\varrho_0}(\Gamma_{22},\R^{1,3})~,$$
since $\widetilde{\mathcal{V}}$ and $\widetilde{\mathcal{H}}$ are the only components of $\widetilde{\mathcal{U}}$ by Theorem \ref{teo:main_weak}.
\end{remark}

\begin{remark} \label{rem:transverse}
Without entering into details on the semialgebraic structure of $X(\Gamma_ {22},G)$ when $G$ is reductive, our analysis implies that, roughly speaking, $\mathcal{V}$ and $\mathcal{H}$ can be thought as ``smooth and transverse components'' of $\mathcal{U}$ in a reasonable and satisfactory sense. See in particular Propositions \ref{prop:first cohomology group} and \ref{prop:12d} and the splitting \eqref{eq splitting H1}, and compare with the topological description of $\mathcal{U}$ in Theorem \ref{teo:main_weak}, the algebraic description of $\widetilde{\mathcal{U}}$ in Theorem \ref{teo:main_weak2}, and the differential geometric one in Remark \ref{rem:integrable}.
\end{remark}


%
%
%

\bibliographystyle{alpha}
\bibliography{sr-bibliography}

\end{document}